\PassOptionsToPackage{reqno}{amsmath}
\documentclass[11pt]{amsart}
\usepackage{amsmath}
\usepackage{mathrsfs}
\usepackage{amssymb,graphicx}
\usepackage{IEEEtrantools}
\usepackage{enumerate}
\usepackage{tikz}
\usepackage{comment}
\usepackage{bm}
\usepackage{graphicx}
\usepackage{bbm}
\usepackage{mathtools}
\usepackage{microtype}
\usepackage[margin=1.15in]{geometry}
\usepackage{xcolor}
\usepackage[colorlinks=true, linkcolor=blue, citecolor=red, urlcolor=cyan]{hyperref}
\usepackage{fancyhdr}
\usepackage{amsmath}             
\usepackage{amssymb}           
\usepackage{amsfonts}           
\usepackage{latexsym}           
\usepackage{amsthm}              
\usepackage{bbm}
\usepackage{tikz}
\usetikzlibrary{decorations.pathreplacing}
\usetikzlibrary{angles}
\usetikzlibrary{calc}
\usepackage[justification=centering]{caption}
\usepackage{type1cm}
\usepackage{todonotes}
\usepackage{standalone}

\theoremstyle{plain}
\newtheorem{theorem}{Theorem}[section]
\newtheorem{lemma}[theorem]{Lemma}
\newtheorem{corollary}[theorem]{Corollary}
\newtheorem{proposition}[theorem]{Proposition}

\theoremstyle{definition}

\newtheorem{remark}[theorem]{Remark}
\newtheorem{example}[theorem]{Example}

\newcommand{\pcl}[1]{\overline{#1}^{\,p}}
\newcommand{\oor}{\overline{r}}
\newcommand{\ur}{\underline{r}}

\newcommand{\de}{\delta}
\newcommand{\dd}{\,\text{d}}
\newcommand{\eps}{\varepsilon}
\newcommand{\e}{\text{e}}
\newcommand{\f}{\frac}
\newcommand{\Si}{\Sigma}
\newcommand{\Sad}{\Sigma_{\text{admis}}}
\newcommand{\Z}{\mathbb{Z}}
\newcommand{\fo}{\mathcal{O}^+}

\newcommand{\F}{\mathcal{F}}
\newcommand{\Q}{\mathbb{Q}}
\newcommand{\R}{\mathbb{R}}
\newcommand{\N}{\mathbb{N}}

\newcommand{\ind}{\mathbf{1}}

\DeclareMathOperator{\diam}{diam}
\DeclareMathOperator{\dimH}{\dim_{\mathrm{H}}}
\DeclareMathOperator{\dimdH}{\dim_{\mathrm{DH}}}
\DeclareMathOperator{\dimpB}{\dim_{\mathrm{B},p}}
\DeclareMathOperator{\dimB}{\dim_{\mathrm{B}}}

\DeclareMathOperator{\dimM}{\dim_{\mathrm{M}}}
\DeclareMathOperator{\dimLM}{\dim_{\mathrm{LM}}}
\DeclareMathOperator{\dimUM}{\dim_{\mathrm{UM}}}
\DeclareMathOperator{\dimL}{\dim_{\mathrm{L}}}

\DeclareMathOperator{\dimbe}{\dim_{\mathrm{Be}}}

\newcommand{\mm}{\mathcal{M}}

\newcommand{\mko}{\scalebox{1.15}{$\mathfrak{o}$}}
\newcommand{\bi}{\mathbf{i}}
\newcommand{\tbi}{\tilde{\mathbf{i}}}
\newcommand{\bj}{\mathbf{j}}
\newcommand{\bu}{\mathbf{u}}
\newcommand{\bv}{\mathbf{v}}

\allowdisplaybreaks[4]

\title{Reverse Iterated Function Systems: Density, Dimensions, and $p$-adic Extension}
\date{}
\author{Jun Jie Miao}
\address{School of Mathematical Sciences,  Key Laboratory of MEA(Ministry of Education) \& Shanghai Key Laboratory of PMMP,  East China Normal University, Shanghai 200241, China}

\email{jjmiao@math.ecnu.edu.cn}

\author{Minghui Xu}
\address{School of Mathematical Sciences, East China Normal University, No. 500, Dongchuan Road, Shanghai 200241, P. R. China}
\email{xmhhh@stu.ecnu.edu.cn}

\begin{document}
\begin{abstract}
	In 1996, Strichartz initiated the study of reverse iterated function systems (RIFSs) on integer lattices and the dimensions of their invariant sets. We develop a dimension theory for such systems through their forward orbits, first on locally compact complete metric spaces and then for one-dimensional affine systems.
	
	For an affine RIFS $\F=\{f_i(x)=r_i x+b_i\}_{i=1}^m$ on $\R$, we introduce the semigroup-growth exponent
	\[
	d_{\F}=\lim_{R\to\infty}\f{\log\#\{g\in G^*(\F):|g'|\le R\}}{\log R},
	\]
	where $G^*(\F)$ is the semigroup of distinct maps generated by $\F$. We prove that this limit exists and satisfies
	$\dimB K(\F^{-1})\le d_{\F}\le s$. A forward orbit has mass dimension $d_{\F}$ precisely when it is locally finite, and has Beurling dimension $d_{\F}$ precisely when it is uniformly locally finite; otherwise, the corresponding dimension is infinite. These formulas extend to suitably discrete invariant sets through a finite-orbit decomposition. Moreover, $d_{\F}=s$ exactly when the dual IFS has no exact overlaps, whereas $d_{\F}=0$ exactly when $\F$ is degenerate. Thus $d_{\F}$ is the intrinsic dimension parameter in the presence of overlaps.
	
	Under finite overlaps, we establish two-sided polynomial orbit-counting estimates. For non-overlapping uniformly locally finite orbits, renewal theory yields a limiting central density in the non-arithmetic case and a multiplicatively periodic asymptotic profile in the arithmetic case. We also determine the lower and discrete Hausdorff dimensions of rational lattice orbits with finite overlaps. Finally, when the expansion ratios are signed powers of a fixed prime $p$, we identify the Euclidean mass and Beurling dimensions of every rational orbit with the $p$-adic box-counting dimensions of the orbit and its associated attractor, without any separation assumption.
\end{abstract}
	\maketitle
	\setcounter{tocdepth}{1}
	\tableofcontents
	\section{Introduction}
	
	\subsection{Background and setting}
	Iterated function systems (IFSs) consisting of finitely many contractions and their associated attractors are fundamental objects in classical fractal geometry; see, for example, \cite{Bk_KJF2} for background. Let $(Y,\rho)$ be a complete metric space, and let
	\[
	\boldsymbol{\Psi}=\{\Psi_i:Y\to Y\}_{i=1}^m
	\]
	be a finite family of contractions. By Hutchinson's classical theorem \cite{Hutchinson1981}, there exists a unique nonempty compact set $E\subseteq Y$ such that
	\begin{equation*}
		E=\bigcup_{i=1}^m\Psi_i(E).
	\end{equation*}
	The family $\boldsymbol{\Psi}$ is called a \emph{contractive iterated function system}, and $E$ is its \emph{attractor}. If $Y=\mathbb R^d$ and every $\Psi_i$ is an affine contraction, then $\boldsymbol{\Psi}$ is called a \emph{self-affine iterated function system} and $E$ a \emph{self-affine set}. If, moreover, every $\Psi_i$ is a similarity, then $E$ is called a \emph{self-similar set}. Determining the dimensions of such attractors, especially self-affine ones, is a central and challenging problem in fractal geometry; see \cite{Falco88,LalGa92,FH08,Hochman,BHR19,Das,Feng23} for some major developments.
	
	The compact-attractor paradigm changes fundamentally when contractions are replaced by expanding maps. In this reverse setting, the maps generate an expanding semigroup for which invariant sets need not exist, while the most natural geometric objects are often unbounded forward orbits. Consequently, the relevant dimension theory is governed by large-scale orbit growth rather than by small-scale covering geometry.
	
	Let $(X,d)$ be an unbounded, locally compact, complete metric space. A finite family $\mathcal F=\{f_i\}_{i=1}^m$ of distinct self-maps of $X$ is called a \emph{reverse iterated function system} (RIFS) if there exists $r>1$ such that
	\begin{equation}\label{def_r}
		d\bigl(f_i(x),f_i(y)\bigr)
		\ge r\,d(x,y)
		\qquad
		(x,y\in X,\ 1\le i\le m).
	\end{equation}
	Such a number $r$ is called an \emph{expansion factor} of $\mathcal F$. A nonempty set $K\subseteq X$ is said to be \emph{invariant} under $\mathcal F$ if
	\begin{equation}\label{eq: FUTF}
		K=\bigcup_{i=1}^m f_i(K).
	\end{equation}
	
	We denote the collection of all invariant sets of $\mathcal F$ by $\mathcal I(\mathcal F)$. An invariant set $K$ is called \emph{non-overlapping} if the union in \eqref{eq: FUTF} is disjoint, or equivalently, if $f_i(K)\cap f_j(K)=\varnothing$ whenever $i\ne j$. More generally, we say that $K$ has \emph{finite overlaps} if $f_i(K)\cap f_j(K)$ is finite whenever $i\ne j$. Finally, the system $\mathcal F$ is called \emph{degenerate at $x_0$} if $x_0\in X$ is a common fixed point of all the maps $f_i$; in this case, we also simply say that $\mathcal F$ is \emph{degenerate}.
	
	A subset $E\subseteq X$ is \emph{locally finite} if its intersection with every bounded set is finite, and it is \emph{uniformly locally finite} if
	\[
		\sup_{x\in X}\#\bigl(E\cap B(x,R)\bigr)<\infty
		\qquad\text{for every }R>0.
	\]
	It is \emph{uniformly discrete} if 
	\[
	\inf_{\substack{x,y\in E\\ x\ne y}}d(x,y)>0.
	\] Uniform local finiteness always implies local finiteness. For subsets of Euclidean space, uniform discreteness implies uniform local finiteness, but the converse need not hold; see Proposition \ref{ud>ulf>lf}. 
	
	Strichartz \cite{Strichartz1996} initiated the study of RIFSs in 1996, concentrating on homogeneous systems
	\[
	\{x\mapsto rx+b_i:\mathbb Z\to\mathbb Z\}_{i=1}^m,
	\qquad r\in\mathbb Z_{>1},\quad b_i\in\mathbb Z,
	\]
	and showing that the mass dimension of a non-overlapping invariant set is equal to the similarity dimension $s=\log m/\log r$ of the dual IFS. Allen, Cruttwell, Hare, and R\"onning \cite{Allen2007} subsequently treated arithmetic systems of the form $\{r^{k_i}x+b_i\}_{i=1}^m$ on $\mathbb Z$ and proved that several discrete dimensions of a non-overlapping invariant set coincide with the dimension of the corresponding dual attractor, and hence with its similarity dimension. Deng \cite{Deng2009} later considered certain overlapping systems under a finite type condition and identified the mass dimension of a forward orbit with the Hausdorff dimension of the dual attractor, expressed through a scale-wise count of distinct cylinder maps. 
	
	Thus, the non-overlapping theory developed in the earlier literature was centered on identifying discrete dimensions with the similarity dimension $s$, while the overlapping case was accessible only under additional finite-type and commensurability assumptions. Although Deng's scale-wise counting formula may be viewed as a finite-type precursor of the exponent introduced below, the existing theory did not provide a presentation-independent semigroup-growth exponent, nor a dimension formula valid for general affine RIFSs with arbitrary overlaps.
	
	A further common feature of the preceding works is that uniform discreteness is built into the ambient setting from the outset. More precisely, the RIFS is assumed to act on a uniformly discrete subset of Euclidean space, typically $\mathbb Z^d$. Consequently, every forward orbit and every invariant set under consideration is automatically uniformly discrete and hence uniformly locally finite and locally finite. The finer distinction among these three properties is therefore invisible in the earlier framework, which does not address how the failure of each property affects the associated dimensions.
	
	The present work removes this a priori discreteness assumption. We first study RIFSs on general unbounded, locally compact, complete metric spaces and then specialize to one-dimensional affine expanding systems on $\R$, without assuming that their forward orbits are discrete in any of the preceding senses. Discreteness thus becomes a dynamical property to be established from the system rather than a standing hypothesis. Section \ref{sec_LFUD} develops arithmetic criteria for the relevant finiteness and separation properties, which provide the hypotheses needed for the dimension theory developed in the subsequent sections.

	The forward orbit of $a\in X$ is
	\[
		\fo_{\F}(a)=\{g(a):g\in G^*(\F)\},
	\]
	where $G^*(\F)$ denotes the semigroup of distinct maps generated by $\F$. Forward orbits are the basic building blocks of locally finite invariant sets; this is one reason for taking them, rather than invariant sets alone, as the primary objects of study. More precisely, within our framework, we reprove a classical result of Strichartz \cite{Strichartz1996}: every locally finite invariant set is a finite union of forward orbits starting at fixed points of maps in
	$G^*(\F)$. Consequently, the finite stability of mass and Beurling dimensions transfers our orbit formulas directly to invariant sets satisfying the corresponding discreteness assumptions. In the lattice setting, the orbit estimates likewise lead to the lower and discrete Hausdorff dimensions of invariant sets with finite overlaps. 
	
	When the maps $f_i$ are surjective, their inverses form a contractive IFS $\F^{-1}=\{f_i^{-1}\}_{i=1}^m$, called the \emph{dual system}, whose attractor is denoted by $K(\F^{-1})$. A number-theoretic motivation for studying the discreteness of forward orbits comes from the spectra
	\[
		\mathcal E_{\lambda,D}=\left\{\sum_{j=0}^n a_j\lambda^j:n\ge0,\ a_j\in D\right\}
	\]
	which are precisely the forward orbits of $0$ under the maps $x\mapsto\lambda x+d$, $d\in D$. For the symmetric digit set $D_m=\{0,\pm1,\ldots,\pm m\}$, Akiyama and Komornik \cite{Akiyama-Komornik} proved that $\mathcal E_{q,D_m}$ has an accumulation point if and only if $q<m+1$ and $q$ is not a Pisot number. Feng \cite{Feng} strengthened this result by proving that $\mathcal E_{q,D_m}$ is dense in $\R$ under exactly the same condition. These theorems may therefore be viewed as sharp arithmetic criteria for the local topology of a special class of RIFS orbits.

	Section \ref{sec_LFUD} develops this connection between arithmetic and orbit separation beyond the symmetric-digit setting. Baker's theorem on linear forms in logarithms \cite{Baker1966} yields uniform discreteness for degenerate systems with algebraic expansion ratios, while Garsia's estimate for bounded-coefficient polynomials \cite{Garsia1962} gives separation criteria for systems associated with Pisot numbers. We also obtain local-finiteness criteria for rational and algebraic data and use Hochman's exponential-separation theorem \cite{Hochman} to relate dimension drop of the dual attractor to a failure of separation or finite overlaps in the forward system. These arithmetic results supply concrete discreteness hypotheses for the dimension theorems of Section \ref{sec_MBdim} and clarify how recurrence near the dual
	attractor obstructs local finiteness.
	
	The dimensions used in this paper come from two complementary large-scale counting procedures. For  $E\subseteq\R$ and $\alpha>0$, define its upper and lower $\alpha$-central densities by
	\[
		\mathcal C_\alpha^+(E)=\limsup_{h\to\infty}
		\f{\#(E\cap[-h,h])}{h^\alpha},\qquad\mathcal C_\alpha^-(E)=\liminf_{h\to\infty}\f{\#(E\cap[-h,h])}{h^\alpha}.
	\]
	The upper and lower mass dimensions are the corresponding critical exponents, or jumping points:
	\begin{gather*}
			\dimUM E=\sup\{\alpha\ge0:\mathcal C_\alpha^+(E)>0\}=\inf\{\alpha\ge0:\mathcal C_\alpha^+(E)<\infty\},\\
			\dimLM E=\sup\{\alpha\ge0:\mathcal C_\alpha^-(E)>0\}=\inf\{\alpha\ge0:\mathcal C_\alpha^-(E)<\infty\}.
	\end{gather*}
	Equivalently,
	\[
		\dimLM E=\liminf_{h\to\infty}
		\f{\log\#(E\cap[-h,h])}{\log h},\qquad\dimUM E=
		\limsup_{h\to\infty}
		\f{\log\#(E\cap[-h,h])}{\log h}.
	\]
	If they coincide, their common value is denoted by $\dimM E$.

	The analogous translation-uniform quantity is obtained from the upper $\alpha$-Beurling density
	\[
		\mathcal D_\alpha^+(E)=
		\limsup_{h\to\infty}\sup_{x\in\R}
		\f{\#(E\cap[x-h,x+h])}{h^\alpha}.
	\]
	Its jumping point is the Beurling dimension:
	\[
		\dimbe E=\sup\{\alpha\ge0:\mathcal D_\alpha^+(E)>0\}=\inf\{\alpha>0:\mathcal D_\alpha^+(E)<\infty\}.
	\]
	Equivalently,
	\[
		\dimbe E=\limsup_{h\to\infty}\sup_{x\in\R}
		\f{\log\#(E\cap[x-h,x+h])}{\log h}.
	\]

	Mass dimensions were introduced by Barlow and Taylor \cite{BarlowTaylor1992} in their study of fractal subsets of $\Z^d$. Glasscock \cite{G} subsequently established Marstrand-type projection theorems for counting and upper mass dimensions, with applications to the dimensions of typical sumsets, and Pathak \cite{Pa} proved a complementary Marstrand-type slicing theorem for mass dimension. Beurling dimension was introduced by Czaja, Kutyniok, and Speegle \cite{Wojciech2008} through generalized Beurling densities in their study of Gabor pseudoframes; their framework also contains mass dimension as a special case. Dutkay, Han, Sun, and Weber \cite{Dutkay2011} later showed, under a mild technical condition, that the Beurling dimension of a Fourier frame for certain affine fractal measures agrees with the Hausdorff dimension of the underlying fractal. These developments illustrate the roles of mass and Beurling dimensions in discrete geometric measure theory and harmonic analysis, respectively. The discrete Hausdorff and lower dimensions of Barlow and Taylor, which require a lattice structure, are recalled in Section \ref{sec_DHDlat}.
	
	\subsection{Main results}
	
	We first develop a general orbit framework that does not require the maps to be surjective. Section \ref{sec_RIFS} introduces the compact space $\Sad^\infty(\F)$ of admissible infinite words and its coding map $\pi$. The compact set
	\[
		\pi\bigl(\Sad^\infty(\F)\bigr)
	\]
	may be viewed as the \emph{admissible core} of the system. Proposition \ref{decoposition} gives an exit decomposition of every forward orbit and, in particular, the criterion
	\[
		\fo_{\F}(a)\text{ is locally finite}\quad\Longleftrightarrow\quad\fo_{\F}(a)\cap\pi\bigl(\Sad^\infty(\F)\bigr)\text{ is finite}.
	\]
	We also prove that $\F$ admits an invariant set if and only if $\Sad^\infty(\F)\ne\varnothing$; see Theorem \ref{existenceofK}. In the locally finite category, Theorem \ref{discreteinvariant} shows that an invariant set exists if and only if $P(\F)\ne\varnothing$, where $P(\F)$ is the set of fixed points of elements of $G^*(\F)$, and that every locally finite invariant set is a finite union of forward orbits based at points of $P(\F)$. Combined with the finite stability of mass and Beurling dimensions, this decomposition explains how the orbit formulas yield the corresponding dimension formulas for locally finite or uniformly locally finite invariant sets.
	
	We next specialize to the affine RIFS
	\[
		\F=\{f_i(x)=r_i x+b_i\}_{i=1}^m
		\qquad\text{on }\R,\qquad |r_i|>1.
	\]
	Its dual attractor $K(\F^{-1})$ has similarity dimension $s$, determined by
	\[
		\sum_{i=1}^m |r_i|^{-s}=1.
	\]
	The similarity dimension counts symbolic words. When distinct words define the same affine map, it therefore overcounts the actual semigroup growth. This leads to the following intrinsic exponent:
	\begin{equation}\label{defofdf}
		d_{\F}:=\lim_{R\to\infty}\f{\log G_{\F}(R)}{\log R},
	\end{equation}
	where $	G_{\F}(R):=\#\{g\in G^*(\F):|g'|\le R\}.$
	The fact that the limit exists is part of the theorem.
	
	\begin{theorem}\label{thm:intro-main}
		Let $\F$ be as above. Then the box-counting dimension of $K(\F^{-1})$ exists and
		\[
			\dimB K(\F^{-1})\le d_{\F}\le s.
		\]
		For every $a\in\R$,
		\[
			\dimM\fo_{\F}(a)=\begin{cases}
				d_{\F},&\text{if }\fo_{\F}(a)\text{ is locally finite},\\
				\infty,&\text{otherwise},
			\end{cases}
		\]
		and
		\[
			\dimbe\fo_{\F}(a)=\begin{cases}
				d_{\F}\le1,&\text{if }\fo_{\F}(a)\text{ is uniformly locally finite},\\
				\infty,&\text{otherwise}.
			\end{cases}
		\]
	\end{theorem}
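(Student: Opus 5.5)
We prove the three assertions in order: existence of $d_{\F}$ with the bounds $\dimB K(\F^{-1})\le d_{\F}\le s$, then the mass dimension formula, then the Beurling dimension formula.

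\textbf{Existence of $d_{\F}$ and the bounds.} Every $g\in G^*(\F)$ is an affine map $g(x)=g'x+c_g$ with $|g'|>1$. Composition gives a submultiplicativity of the form
\[
G_{\F}(R_1R_2)\le C\,G_{\F}(R_1)\,G_{\F}(R_2).
\]
To see this, write a word for $g$ with $|g'|\le R_1R_2$ and cut it at the first place where the derivative of the prefix exceeds $R_1/r_{\max}$. Then $g=g_1\circ g_2$ with $|g_1'|\le R_1$ and $|g_2'|\le C'R_2$. The map $(g_1,g_2)\mapsto g$ is surjective onto $\{g:|g'|\le R_1R_2\}$, and $G(C'R)\le C''G(R)$ holds by the same cutting argument.

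For the opposite direction we use near-supermultiplicativity. Distinct pairs $(g_1,g_2)$ can collide, so we restrict to the first-$k$-letter blocks at scale $R$ and use the cut-set (stopping-time) family $\Sigma_R$ of words whose derivative first exceeds $R$.

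Then $\log G_{\F}$ is sub-additive in $\log R$ up to constants, and Fekete's lemma gives existence of the limit as an infimum. The bound $d_{\F}\le s$ is immediate, since the number of words in $\Sigma_R$ is $\asymp R^{s}$. For $\dimB K(\F^{-1})\le d_{\F}$ we cover $K(\F^{-1})$ by the sets $g^{-1}(K)$ over distinct $g$ in the stopping family $\Sigma_R$. Each has diameter $\asymp R^{-1}$, and the number of distinct ones is at most $G_{\F}(r_{\max}R)$. This yields $\overline{\dim}_B\le d_{\F}$.

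Existence of $\dimB K(\F^{-1})$ itself comes from the same sub-additivity applied to covering numbers of the self-similar set, since $N_{\delta\delta'}\lesssim N_\delta N_{\delta'}$. Alternatively it is the known fact that box dimension exists for self-similar sets.

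\textbf{Mass dimension.} If $\fo_{\F}(a)$ is not locally finite, some bounded interval contains infinitely many orbit points, so $\#(\fo_{\F}(a)\cap[-h,h])=\infty$ for large $h$ and the dimension is $\infty$.

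Assume now that the orbit is locally finite. By Proposition \ref{decoposition}, $F=\fo_{\F}(a)\cap\pi(\Sad^\infty(\F))$ is finite.

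For the upper bound, $|g(a)|\le h$ forces $|g'|\lesssim h$ unless $a$ is close to the fixed point of $g$. Write $g(a)=g'(a-x_g)$, where $x_g=g^{-1}(0)$ lies in a bounded neighbourhood of the core. If $|a-x_g|\ge\eta$, then $|g'|\le h/\eta$. The words with $|a-x_g|<\eta$ are handled through the exit decomposition: such $g$ factor as $g=u\circ v$ with $v(a)$ lying in a finite set near the core and $u$ having controlled derivative. Hence $\#(\fo\cap[-h,h])\lesssim G_{\F}(Ch)\cdot\#(\text{finite set})$.

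For the lower bound, local finiteness means each point of the orbit is hit by only finitely many $g$ with bounded derivative. We choose a base point $b=v(a)$ far from the core (possible by local finiteness). Then $\{u(b):|u'|\le h/|b|\}$ consists of distinct images, because $u\ne u'$ with $u(b)=u'(b)$ would force $b$ onto a fixed-point relation, which happens for only finitely many $u$. These images lie in $[-Ch,Ch]$, giving the lower count $\gtrsim G_{\F}(ch)$.

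This lower-bound step is the main obstacle: the points $g(a)$ for distinct maps $g$ must be injected into distinct orbit points up to bounded multiplicity. The first thing to try is to show that, for $b$ outside a large ball, the fibres of $u\mapsto u(b)$ have uniformly bounded size. The argument compares $u_1(b)=u_2(b)$ with $u_1\neq u_2$: the map $u_2^{-1}u_1$ then fixes $b$, and there are finitely many such $b$ at each scale.

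\textbf{Beurling dimension.} If the orbit is not uniformly locally finite, some window of fixed length contains arbitrarily many points, and a scaling argument pushes this to $\infty$. Applying maps $g$ spreads a dense window into windows of length $|g'|h$, which contain the image cluster, so $\sup_x\#\bigl(\fo\cap[x-h,x+h]\bigr)/h^\alpha$ is unbounded for every $\alpha$.

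If the orbit is uniformly locally finite, then $\#(\fo\cap I)\lesssim|I|$, which gives $\dimbe\le1$, and $\dimbe\ge\dimM=d_{\F}$ holds trivially. For the upper bound on windows $[x-h,x+h]$ we reuse the counting above: orbit points in the window come from maps $g$ with either $|g'|\lesssim h$ or large $|g'|$. Points of the second kind are clustered by their last prefix at scale $h$, and uniform local finiteness bounds each cluster. This gives $\lesssim G_{\F}(Ch)$, so $\dimbe=d_{\F}\le1$.
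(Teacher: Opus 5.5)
The gap is in the mass lower bound. The rest of the proposal essentially follows the paper: the existence of $d_\F$, the bounds $d_\F\le s$ and $\dimB K(\F^{-1})\le d_\F$, the non-locally-finite case, the mass upper bound via the exit decomposition, and the Beurling upper bound via prefixes at scale $h$.

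You assert that for $b$ far from the core the map $u\mapsto u(b)$ is injective, or has uniformly bounded fibres, on $\{u\in G^*(\F):|u'|\le R\}$. From this you deduce $\#(\fo_\F(a)\cap[-Ch,Ch])\gtrsim G_\F(ch)$. Both claims are false, as the following example shows.
\begin{itemize}
\item Take $\F=\{2x,\,2x+1,\,3x,\,3x+1,\,3x+2\}$, so that $K(\F^{-1})=[-1,0]$, and let $b\ge1$ be an integer.
\item Apply a $\{3x,3x+1,3x+2\}$-word of length $l$ and then a $\{2x,2x+1\}$-word of length $k\ge1$. This produces every map $x\mapsto 2^k3^lx+t$ with $0\le t<2^k3^l$. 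Hence $G_\F(R)\ge\tfrac{R}{2}\log_3\tfrac{R}{2}$.
\item On the other hand, every value $u(b)$ with $|u'|\le R$ is an integer in $[b,(b+1)R)$. Also $\#(\fo_\F(b)\cap[-h,h])\le 2h+1$.
\end{itemize}
So your claimed lower count fails, and the fibres have average size of order at least $\log R/(b+1)$, however far $b$ is from the core. The theorem itself is not contradicted, since here $d_\F=1$.

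Your justification breaks because $u_2^{-1}\circ u_1$ lies in the group generated by $\F$, not in $G^*(\F)$. It need not be expanding, and its fixed point is not confined to $P(\F)$ or to a neighbourhood of $\pi(\Sad^\infty(\F))$. A single point $b$ can be that fixed point for unboundedly many pairs.

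The missing idea is the observation behind \eqref{est1}: if $u_1(b)=u_2(b)$ and $u_1'=u_2'$, then $u_1=u_2$. Hence each fibre of $u\mapsto u(b)$ over $\{|u'|\le R\}$ has at most $\#\{r_\bi:|r_\bi|\le R\}\le C(\log R)^m$ elements, since the derivatives are monomials $r_1^{k_1}\cdots r_m^{k_m}$ (Lemma \ref{simplex}). Combine this with $|u(b)|\le(|b|+c)|u'|$, which follows from $|u^{-1}(0)|<c$ as in \eqref{premageof0}. This gives
$\#\bigl(\fo_\F(b)\cap[-(|b|+c)R,(|b|+c)R]\bigr)\ge G_\F(R)/(C(\log R)^m)$ for every $b$. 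The polylogarithmic loss disappears in $\log\#(\fo_\F(b)\cap[-h,h])/\log h$, so no base point far from the core is needed for this direction.

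Your Beurling lower bound $\dimbe\ge\dimM=d_\F$ inherits the gap until this is repaired.

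Two minor points:
\begin{itemize}
\item The near-supermultiplicativity paragraph is unnecessary, since Fekete's lemma only needs the submultiplicative estimate.
\item The inequality $N_{\delta\delta'}\lesssim N_\delta N_{\delta'}$ is not evident for overlapping self-similar sets. The existence of $\dimB K(\F^{-1})$ should rest on a cited result; the paper uses Kesseb\"ohmer--Niemann.
\end{itemize}
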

	
	This is Theorem \ref{massandbeu}. Proposition \ref{equivalencedefofdf} further identifies $d_{\F}$ both as the critical exponent
	\[
		d_{\F}=\inf\left\{t\ge0:\sum_{g\in G^*(\F)}|g'|^{-t}<\infty\right\}
	\]
	and as the unique zero of a subadditive pressure. Thus $d_{\F}$ is an invariant of the affine semigroup rather than of a chosen symbolic presentation. Its two extremal cases have exact structural characterizations:
	\[
		d_{\F}=s\quad\Longleftrightarrow\quad
		\F^{-1}\text{ has no exact overlaps},\qquad d_{\F}=0
		\quad\Longleftrightarrow\quad
		\F\text{ is degenerate};
	\]
	see Propositions \ref{df=s} and \ref{degenerate}.
	
	These results separate two phenomena that could not be distinguished in the uniformly discrete setting of the earlier literature. The spatial distribution of a forward orbit determines whether its mass or Beurling dimension is finite, whereas algebraic identifications among elements of the affine semigroup determine the finite value of that dimension through $d_\F$. To the best of our knowledge, Theorem \ref{thm:intro-main} is the first general result identifying the mass and Beurling dimensions of RIFS forward orbits---and, through the orbit decomposition, of suitably discrete invariant sets---with an intrinsic, presentation-independent semigroup-growth exponent, without imposing any separation or finite-type condition.
	
	The similarity dimension $s$ therefore does not, in general, give the dimension of an RIFS orbit or invariant set. Rather, the classical formulas involving $s$ are recovered precisely in the special case $d_\F=s$, equivalently, when the dual IFS has no exact overlaps. When exact overlaps occur, the reduction in the number of distinct semigroup elements may lead to $d_\F<s$, and it is $d_\F$ that records the resulting dimension drop.

	 When finite overlaps are imposed, one obtains the stronger polynomial estimates of Theorem \ref{mass}. If $\F$ is non-degenerate and $\fo_{\F}(a)$ is locally finite with finite overlaps, then
	\[
		\#\bigl(\fo_{\F}(a)\cap[-h,h]\bigr)\asymp h^s.
	\]
	Here $f(h)\asymp g(h)$ means that there exists a constant $C\geq1$, independent of $h$, such that $C^{-1}g(h)\le f(h)\le Cg(h)$ for all sufficiently large $h$.
	If, in addition, $\fo_\F(a)$ is
	uniformly locally finite, then
	\[
		\sup_{x\in\R}\#\bigl(\fo_{\F}(a)\cap[x-h,x+h]\bigr)\asymp h^s.
	\]
	In particular, $d_{\F}=s$, and the latter situation can occur only when $s\le1$. The distinction between finite overlaps of orbit pieces and exact overlaps of maps is important: the former is a geometric condition on a particular orbit or invariant set, whereas the latter is an algebraic relation in the semigroup.
	
	The polynomial estimates can be sharpened for non-overlapping uniformly locally finite orbits. Theorem \ref{asymptotic} shows that in the non-arithmetic case
	\[
		\f{\#\bigl(\fo_{\F}(a)\cap[-h,h]\bigr)}{h^s}
		\longrightarrow C_{\F,a}>0,
	\]
	where $C_{\F,a}$ is given explicitly. In the arithmetic case there is a positive bounded function $\varphi$, multiplicatively periodic with respect to the common base $r>1$, such that
	\[
		\f{\#\bigl(\fo_{\F}(a)\cap[-h,h]\bigr)}{h^s}
		-\varphi(h)\longrightarrow0.
	\]
	Thus the arithmetic/non-arithmetic dichotomy governs whether the normalized orbit count converges or exhibits persistent logarithmic oscillations.
	
	For lattice systems the preceding estimates also determine the covering-based dimensions of Barlow and Taylor. More precisely, Theorem \ref{thm:main} states that if $\F=\{r_i x+b_i\}_{i=1}^m$ is non-degenerate, $|r_i|\in\Z_{>1}$, $b_i\in\Q$, $a\in\Q$, and $\fo_{\F}(a)$ has finite overlaps, then there exists $N\in\Z_{>0}$ such that $N\fo_{\F}(a)\subseteq\Z$ and
		\[
				\dimL\bigl(N\fo_{\F}(a)\bigr)=\dimdH\bigl(N\fo_{\F}(a)\bigr)=\dimM\fo_{\F}(a)=\dimbe\fo_{\F}(a)=\dimH K(\F^{-1})=d_{\F}=s.
		\]
	This yields the corresponding formula for invariant subsets of $\Z$ with finite overlaps; see Corollary \ref{cor_dssSt}.
	
	Finally, suppose that the expansion ratios are signed powers of a prime $p$. The same affine maps are expanding in the Euclidean metric but contracting in the $p$-adic metric. This permits us to use the Euclidean large-scale geometry of forward orbits to determine the small-scale geometry of attractors in $\Q_p$. More precisely, Theorem \ref{thm_dimpadic} shows that the resulting $p$-adic attractor $K$ satisfies, for every $a\in\Q$,
	\[
		\dimM\fo_{\F}(a)=\dimbe\fo_{\F}(a)=\dim_{\mathrm{B},p}\fo_{\F}(a)=\dimpB K=d_{\F}.
	\]
This identifies the large-scale Euclidean growth of a rational forward orbit with the small-scale $p$-adic geometry of both its closure and the associated
attractor. No separation condition is required, since exact overlaps are automatically encoded by $d_\F$ through the growth of distinct
affine maps in the generated semigroup.
	
	\subsection{Ideas of the proofs and organization}
	
	The existence of $d_{\F}$ follows from a nearly submultiplicative estimate for $G_{\F}(R)$. To transfer semigroup growth to orbit growth, we use
	\[
		f_{\bi}(a)=r_{\bi}\big(a-f_{\bi}^{-1}(0)\bigr)
	\]
	together with a polylogarithmic bound for the number of possible expansion ratios. Points outside the admissible core are handled directly, while the exit decomposition reduces a locally finite orbit meeting the core to finitely many orbits starting outside it. The pressure description detects exact overlaps, finite-overlap counting gives two-sided polynomial estimates, and a renewal equation yields the finer asymptotic results. The lattice theorem then follows from a discrete mass distribution argument, while the $p$-adic theorem uses the fact that the $p$-adic closure of a rational orbit is the union of that orbit and the corresponding attractor.
	
	The paper is organized as follows. Section \ref{sec_RIFS} develops admissible coding, orbit decompositions, and the structure of invariant sets on locally compact complete metric spaces. Section \ref{sec_LFUD} links local finiteness and separation of affine forward orbits on $\R$ to algebraic and number-theoretic properties of the system; these criteria supply the discreteness hypotheses needed by the subsequent dimension theorems. Section \ref{sec_MBdim} introduces $d_{\F}$, proves the mass and Beurling dimension formulas for orbits and invariant sets, and analyzes exact and finite overlaps. Section \ref{sec_Cden} derives precise orbit-counting asymptotics by renewal theory. Section \ref{sec_DHDlat} determines the discrete Hausdorff dimensions of lattice orbits and invariant sets. Finally, Section \ref{sec_padic} applies the orbit results to determine the box-counting dimensions of the associated attractors in $\Q_p$ without any separation assumption.

	\section{Forward orbits and invariant sets in general metric spaces}\label{sec_RIFS}
	
	Let $(X,d)$ be a locally compact complete metric space and let $\F = \{f_i\}_{i=1}^m$ be a reverse iterated function system on $X$ with expansion factor $r$, as defined in \eqref{def_r}. In this section, we study forward orbits and fixed points and provide necessary and sufficient conditions for the existence of invariant sets.
	
	\subsection{Admissible words and the coding map}

	First, we introduce the code space. For each integer $n > 0$, we set
	\[
	\Sigma^n = \{1,\dots,m\}^n,\qquad \Sigma^* = \bigcup_{n=1}^\infty \Sigma^n,
	\]
	with $\Sigma^0 = \{\emptyset\}$ containing only the empty word. The set of infinite words is denoted by $\Sigma^\infty = \{1,\dots,m\}^{\mathbb{N}}$. We topologize $\Sigma^* \cup \Sigma^\infty$ using the metric
	\[
	d_\Sigma(\bi,\bj) =
	\begin{cases}
		0 & \text{if }  \bi= \bj,\\[4pt]
		2^{-|\bi \wedge \bj|} & \text{if } \bi \neq \bj,
	\end{cases}
	\]
	where $|\bi \wedge \bj|$ denotes the length of the longest common prefix of $\bi$ and $\bj$. Under this metric, $\Sigma^\infty$ is a compact metric space. Moreover, $\Sigma^\infty$ is equipped with the left‑shift mapping $\sigma:\Sigma^\infty \to \Sigma^\infty$, defined by $\sigma(i_1 i_2 i_3 \ldots) = i_2 i_3 \ldots$.
	
	For an infinite word $\bi = i_1 i_2 \ldots \in \Sigma^\infty$, its truncation to the first $n$ symbols is denoted by $\bi|_n = i_1 \ldots i_n$. For a finite word $\bi = i_1 \ldots i_n \in \Sigma^*$, we write $|\bi| = n$ for its length, $\bi^- = i_1 \ldots i_{n-1}$ for the word with the last symbol removed, and $\widetilde{\bi} = i_n \ldots i_1$ for its reversal. Given $\bi \in \Sigma^*$ and $\bj \in \Sigma^* \cup \Sigma^\infty$, we denote by $\bi\bj$ their concatenation. Finally, we define the associated compositions
	\[
	f_{\bi} = f_{i_1} \circ f_{i_2} \circ \cdots \circ f_{i_n},\qquad
	f_{\bi}^{-1} = (f_{\bi})^{-1} = f_{i_n}^{-1} \circ \cdots \circ f_{i_1}^{-1},\qquad
	f_{\bi}^n = \underbrace{f_{\bi} \circ \cdots \circ f_{\bi}}_{n},
	\]
	where $f_{\emptyset}$ is taken to be the identity mapping on $X$.
	
	The following two standard facts are needed in our proofs.
	
	\begin{lemma}\label{diag}
		For every sequence $\{\bi_n\}_{n=1}^\infty$ in $\Sigma^*$ with $|\bi_n|\to\infty$, there exist a subsequence $\{\bi_{n_k}\}$ and an infinite word $\bi\in\Sigma^\infty$ such that $\lim_{k\to\infty} |\bi_{n_k}\wedge\bi| = \infty$.
	\end{lemma}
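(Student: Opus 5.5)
I will prove Lemma \ref{diag} by a König-type diagonal argument that builds the infinite word $\bi=i_1i_2\ldots$ one symbol at a time, refining the sequence at each step. Set $S_0=\{n\ge1\}$. The plan is to construct, for each $k\ge1$, a symbol $i_k\in\{1,\dots,m\}$ and an infinite set $S_k\subseteq S_{k-1}$ with the following property: for every $n\in S_k$ we have $|\bi_n|\ge k$ and $\bi_n|_k=i_1\ldots i_k$.

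For the inductive step, suppose $S_{k-1}$ is infinite and every $n\in S_{k-1}$ satisfies $\bi_n|_{k-1}=i_1\ldots i_{k-1}$. Since $|\bi_n|\to\infty$, only finitely many $n$ have $|\bi_n|<k$. Removing these indices therefore leaves an infinite subset of $S_{k-1}$. On that subset, the $k$-th symbol of $\bi_n$ takes only $m$ possible values. By the pigeonhole principle, some value $i_k$ occurs for infinitely many $n$, and we let $S_k$ be the set of those indices.

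Having built the nested sets, we extract the subsequence diagonally: choose $n_1<n_2<\cdots$ with $n_k\in S_k$, which is possible because each $S_k$ is infinite. Let $\bi=i_1i_2\ldots\in\Sigma^\infty$. Then $\bi_{n_k}|_k=\bi|_k$, so $|\bi_{n_k}\wedge\bi|\ge k\to\infty$. Equivalently, $d_\Sigma(\bi_{n_k},\bi)\le 2^{-k}\to0$.

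There is no real obstacle in this argument. The one point requiring care is that the hypothesis $|\bi_n|\to\infty$ is exactly what allows the $k$-th symbol to be defined for all but finitely many indices at every stage, which keeps each $S_k$ infinite.
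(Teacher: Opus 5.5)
Your proof is correct and complete. The paper records this lemma as a ``standard fact'' and gives no proof, and your nested pigeonhole construction with diagonal extraction is exactly the standard argument it relies on. An equivalent shortcut would be to pad each $\bi_n$ to an infinite word and use the sequential compactness of $\Sigma^\infty$, which the paper takes as known.
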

	
	\begin{corollary}\label{denseofsymbolic}
		In the metric space $(\Sigma^*\cup\Sigma^\infty,d_\Sigma)$, the set $\Sigma^*$ of finite words is dense; i.e., $\overline{\Sigma^*} = \Sigma^*\cup\Sigma^\infty$. In particular, $(\Sigma^*\cup\Sigma^\infty,d_\Sigma)$ is a compact space.
	\end{corollary}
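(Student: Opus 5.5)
The statement to be proved is Corollary \ref{denseofsymbolic}: $\Sigma^*$ is dense in $(\Sigma^*\cup\Sigma^\infty,d_\Sigma)$, and this space is compact. The plan is to prove density directly from the metric, and then get compactness by showing sequential compactness, using Lemma \ref{diag} for the one nontrivial case.

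\emph{Density.} Fix $\bi\in\Sigma^\infty$ and consider its truncations $\bi|_n\in\Sigma^*$. Since $\bi|_n\neq\bi$ and the longest common prefix of the two words is $\bi|_n$ itself, we have $|\bi|_n\wedge\bi|=n$. Hence $d_\Sigma(\bi|_n,\bi)=2^{-n}\to0$. So every infinite word is a limit of finite words, and trivially $\Sigma^*\subseteq\overline{\Sigma^*}$. This gives $\overline{\Sigma^*}=\Sigma^*\cup\Sigma^\infty$.

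\emph{Compactness.} Since we are in a metric space, it suffices to show that every sequence $\{\bu_n\}$ in $\Sigma^*\cup\Sigma^\infty$ has a convergent subsequence. We split into two cases.

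\textbf{Case 1: infinitely many terms lie in $\Sigma^\infty$.} Then a subsequence lies in $\Sigma^\infty$. This set is compact: it is the product $\{1,\dots,m\}^{\N}$, and $d_\Sigma$ induces the product topology on it. Alternatively, one can rerun the diagonal argument below. Either way, this subsequence has a convergent subsequence.

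\textbf{Case 2: all but finitely many terms lie in $\Sigma^*$.} Pass to a subsequence of finite words and look at the lengths $|\bu_n|$.
\begin{enumerate}[(a)]
\item If the lengths are bounded, then the $\bu_n$ take values in the finite set $\bigcup_{k\le L}\Sigma^k$. Some word $\bv$ then occurs infinitely often, and the corresponding constant subsequence converges to $\bv$.
\item If the lengths are unbounded, first pass to a subsequence with $|\bu_n|\to\infty$. Lemma \ref{diag} then gives a further subsequence $\{\bu_{n_k}\}$ and an infinite word $\bi\in\Sigma^\infty$ with $|\bu_{n_k}\wedge\bi|\to\infty$. Since $\bu_{n_k}\neq\bi$, we get $d_\Sigma(\bu_{n_k},\bi)=2^{-|\bu_{n_k}\wedge\bi|}\to0$.
\end{enumerate}

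The only step with real content is case (b), and Lemma \ref{diag} handles it entirely. The one remaining point to check is that $d_\Sigma$ is a genuine metric on mixed pairs of finite and infinite words. It satisfies the ultrametric inequality, because $|\bu\wedge\bw|\ge\min\{|\bu\wedge\bv|,|\bv\wedge\bw|\}$. This check is routine.
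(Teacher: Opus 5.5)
Your proof is correct: density follows from $d_\Sigma(\bi|_n,\bi)=2^{-n}\to 0$, and your case split for sequential compactness is complete (infinite words, bounded-length finite words, and unbounded-length finite words via Lemma~\ref{diag}). The paper gives no proof of this corollary and lists it as a standard consequence of Lemma~\ref{diag}, so your argument is exactly the intended derivation.
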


	For a given bounded subset $A \subset X$,  we define the set of admissible words of $\F$ in $\Sigma^\infty$ with respect to $A$ as
	\[
	\Sad^\infty(\F,A)=\bigg\{\bi\in \Si^\infty:f_{\widetilde{\bi|_n}}(X)\cap A=f_{i_n i_{n-1}\cdots i_1}(X)\cap A\ne\emptyset \text{ for } n\in \Z_{>0}\bigg\}.
	\]
	Note that $\Sad^\infty(\F,A)$ may be empty for any such $A$; see Example \ref{exmp_nonexist}. 
	Choose a base point $\mko \in X$ such that $\mm  =\max_{1 \le i \le m} d(\mko, f_i(\mko)) > 0.$
	We denote the open ball centered at $\mko$ with radius $\f{\mm}{r-1} + 1$ by 
	\begin{equation}\label{def_B}
		B = B\Big(\mko, \f{\mm}{r-1} + 1\Big).
	\end{equation}
	\begin{lemma}\label{admis}
		Let $\F$ be an RIFS on $X$ with expansion factor $r$. Then  
		\begin{enumerate}[(i)]
			\item  for all $\bi\in \Si^*$ and all $\bj\in\{\emptyset\}\cup\Si^*$, $f_\bi^{-1}(B\cap f_{\bi\bj}(X))\subset B\cap f_{\bj}(X)$; 
			\item for each bounded set $A$ containing $B$,  $\Sad^\infty(\F,A)=\Sad^\infty(\F,B)$.
		\end{enumerate}
	\end{lemma}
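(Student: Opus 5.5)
The plan is to reduce both parts to a single one-step distance estimate for the preimage under one map $f_k$, and then iterate it.

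\emph{One-step estimate.} By \eqref{def_r}, each $f_k$ is injective. So $f_k^{-1}$ is well defined on $f_k(X)$, and for $y=f_k(x)$ we have
\[
d(\mko,x)\le r^{-1}d\bigl(f_k(\mko),f_k(x)\bigr)\le r^{-1}\bigl(d(f_k(\mko),\mko)+d(\mko,y)\bigr)\le r^{-1}\bigl(\mm+d(\mko,y)\bigr).
\]
Writing $\rho=\f{\mm}{r-1}+1$ for the radius of $B$, if $d(\mko,y)<\rho$ this gives
\[
d(\mko,x)<r^{-1}\Bigl(\f{r\mm}{r-1}+1\Bigr)=\f{\mm}{r-1}+\f1r<\rho .
\]
Hence $f_k^{-1}(B\cap f_k(X))\subset B$.

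\emph{Part (i).} I would induct on $|\bi|$. If $\bi=i_1\bi'$, then $f_\bi^{-1}=f_{\bi'}^{-1}\circ f_{i_1}^{-1}$. Take $y\in B\cap f_{\bi\bj}(X)$, so $y=f_{i_1}(w)$ with $w\in f_{\bi'\bj}(X)$. By the one-step estimate, $w=f_{i_1}^{-1}(y)\in B\cap f_{\bi'\bj}(X)$. The induction hypothesis applied to $\bi'$ (the case $\bi'=\emptyset$ being trivial) then yields $f_\bi^{-1}(y)\in B\cap f_\bj(X)$. The membership $f_{i_1}^{-1}(y)\in f_{\bi'\bj}(X)$ uses injectivity of $f_{i_1}$.

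\emph{Part (ii).} The inclusion $\Sad^\infty(\F,B)\subset\Sad^\infty(\F,A)$ is immediate since $B\subset A$. For the converse, let $\bi\in\Sad^\infty(\F,A)$, fix $n$, and let $D=\sup_{y\in A}d(\mko,y)<\infty$. For $N>n$, pick $y_N\in A\cap f_{i_N\cdots i_1}(X)$ and write $f_{i_N\cdots i_1}=f_{\bu}\circ f_{i_n\cdots i_1}$ with $\bu=i_N\cdots i_{n+1}$. Iterating the one-step inequality $N-n$ times along $\bu$ gives $x=f_\bu^{-1}(y_N)\in f_{i_n\cdots i_1}(X)$ with
\[
d(\mko,x)\le r^{-(N-n)}D+\f{\mm}{r-1}.
\]
This is $<\rho$ once $N$ is large, so $x\in B\cap f_{\widetilde{\bi|_n}}(X)\ne\emptyset$. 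Since $n$ was arbitrary, $\bi\in\Sad^\infty(\F,B)$.

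The only real point is the geometric-series bookkeeping in this last step, which lets points far away in $A$ be pulled back into $B$. It is also the reason for the choice of the radius $\f{\mm}{r-1}+1$: it exceeds the fixed point $\f{\mm}{r-1}$ of the affine recursion $t\mapsto r^{-1}(\mm+t)$.
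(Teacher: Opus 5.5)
Your proof is correct and follows the paper's argument. For (i) it uses the same one-step estimate $d\bigl(\mko,f_k^{-1}(y)\bigr)\le r^{-1}\bigl(\mm+d(\mko,y)\bigr)<\f{\mm}{r-1}+\f1r$ for $y\in B\cap f_k(X)$, iterated by induction; for (ii) it uses the same geometric-series pullback of a point of $A\cap f_{\widetilde{\bi|_N}}(X)$ into $B\cap f_{\widetilde{\bi|_n}}(X)$. Your version of (ii) is slightly cleaner, because the estimate already places the pulled-back point in $B$, so the paper's additional appeal to (i) at that step is unnecessary.
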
	
	\begin{proof}
		(i) Given  $\bi=i_1\ldots i_n\in \Si^*,\bj\in \{\emptyset\}\cup\Si^*$ and $x\in B\cap f_{\bi\bj}(X)$, it is clear that $d(x,\mko)< \f{\mm}{r-1}+1$ and $d(\mko,f_{i_1}(\mko))\leq \mm$,which implies
		\[
		d(f_{i_1}^{-1}(x),\mko)\le\f 1r d(x,f_{i_1}(\mko))\le \f 1r\big(d(x,\mko)+d(\mko,f_{i_1}(\mko))\big) <\f{\mm}{r-1}+1.
		\]
		Hence, $f_{i_1}^{-1}(B\cap f_{\bi\bj}(X))\subset B\cap f_{i_2}\circ\cdots\circ f_{i_n}(f_{\bj}(X))$, and by induction, the conclusion holds.
		
		(ii) Given a bounded set $A\supset B$, it suffices to show $\Sad^\infty(\F, A) \subseteq \Sad^\infty(\F, B)$. 
		
		Choose an arbitrary   $\bi \in \Sad^\infty(\F, A)$ and fix an integer $n>0$. Since $f_{\widetilde{\bi|_{n+k}}}(X) \cap A \neq \emptyset$ for all $k >0$,  there exists  a sequence  $\big\{x_k^{(n)}\big\}_{k=1}^\infty$ such that $x_k^{(n)} \in f_{\widetilde{\bi|_{n+k}}}(X) \cap A$. Therefore, we obtain
		\begin{equation*}
			\begin{aligned}
				d\big(\mko,f_{i_{n+1}}^{-1}\circ\cdots\circ f_{i_{n+k}}^{-1}(x_k^{(n)})\big)&\le\f1 rd\big(f_{i_{n+1}}(\mko),f_{i_{n+2}}^{-1}\circ\cdots\circ f_{i_{n+k}}^{-1}(x_k^{(n)})\big)\\
				&\le\f1 r \Big(d\big(f_{i_{n+1}}(\mko),\mko\big)+d\big(\mko,f_{i_{n+2}}^{-1}\circ\cdots\circ f_{i_{n+k}}^{-1}(x_k^{(n)})\big)\Big)\\
				&\le \f1 rd\big(\mko,f_{i_{n+2}}^{-1}\circ\cdots\circ f_{i_{n+k}}^{-1}(x_k^{(n)})\big)+\f \mm r.
			\end{aligned}
		\end{equation*}
		By repeating the above process, we deduce that for all $k \geq 1$,
		\begin{align}
			d\big(\mko,f_{i_{n+1}}^{-1}\circ\cdots\circ f_{i_{n+k}}^{-1}(x_k^{(n)})\big)&\le r^{-k} d\big(\mko,x_k^{(n)}\big)+\mm\sum_{i=1}^k r^{-i}\notag\\
			&\le r^{-k} \diam A+\mm\sum_{i=1}^k r^{-i} \to \f \mm{r-1}\ (k\to\infty).\label{iterate}
		\end{align}
		Therefore, for each $n>0$, there exists $x_{k_n}^{(n)}\in f_{\widetilde{\bi|_{n+k_n}}}(X)\cap B$.  By (i), we have
		\[
		f_{i_{n+1}}^{-1}\circ\cdots\circ f_{i_{n+k_n}}^{-1}\big(x_{k_n}^{(n)}\big)\in f_{i_{n+1}}^{-1}\circ\cdots\circ f_{i_{n+k_n}}^{-1}\big(f_{\widetilde{\bi|_{n+k_n}}}(X)\cap B\big)\subset  f_{\widetilde{\bi|_n}}(X)\cap B, 
		\]
		which implies $\bi\in \Sad^\infty(\F,B)$, and we have $\Sad^\infty(\F, A) \subseteq \Sad^\infty(\F, B)$. 
	\end{proof}
	In view of Lemma \ref{admis}, the set $\Sad^\infty(\F,A)$ coincides with $\Sad^\infty(\F,B)$ for any sufficiently large bounded set $A\subset X$. This independence allows us to unambiguously define the \emph{set of admissible words} of $\F$ as
	\begin{equation}\label{def_admWS}
		\Sad^\infty(\F) \coloneq \Sad^\infty(\F,B).
	\end{equation}
	Observe that the inclusion $f_{\widetilde{\bi|_{n+1}}}(X) \subseteq f_{\widetilde{\sigma(\bi)|_n}}(X)$ holds for all $\bi\in\Sigma^\infty$ and $n>0$, which immediately implies that 
	\[
	\sigma\big(\Sad^\infty(\F)\big) \subseteq \Sad^\infty(\F).
	\] 
	Moreover, if all mappings in $\F$ are surjective, the admissible words trivially reduce to the full code space, i.e., $\Sad^\infty(\F) = \Sigma^\infty$. 
	
	With this symbolic structure in place, we proceed to define the coding mapping 
	\[
	\pi \colon \Sad^\infty(\F) \longrightarrow X.
	\] 
	Since $r^{-n}\operatorname{diam}(B) \to 0$ as $n \to \infty$, the sequence of preimages converges, allowing us to define
	\begin{equation}\label{def_pi}
		\pi(\bi) = \lim_{n\to\infty} f_{\widetilde{\bi|_n}}^{-1}(x_n) = \lim_{n\to\infty} f_{i_1}^{-1} \circ \cdots \circ f_{i_n}^{-1}(x_n),
	\end{equation}
	where $x_n \in f_{\widetilde{\bi|_n}}(X) \cap B$. It is straightforward to verify that $\pi(\mathbf{i})$ is well-defined; that is, the limit is independent of both the particular sequence $\{x_n\}$ and the particular bounded set $B$. Explicitly, for any bounded set $A \supset B$ and any sequence $x_n' \in f_{\widetilde{\mathbf{i}|_n}}(X) \cap A$, we have
	\[
	\lim_{n\to\infty} f_{\widetilde{\mathbf{i}|_n}}^{-1}(x_n) = \lim_{n\to\infty} f_{\widetilde{\mathbf{i}|_n}}^{-1}(x_n').
	\]
	Moreover, the mapping $\pi$ is readily seen to be continuous.

	Recall that $G^*(\F)$ denotes the semigroup generated by $\F$, and that
	$P(\F)$ is the set of fixed points of maps in $G^*(\F)$; namely,
	\[
	P(\F)=\bigl\{x\in X:g(x)=x\text{ for some }g\in G^*(\F)\bigr\}.
	\]
	Recall also that the forward orbit of $a\in X$ under $\F$ is
	\[
	\fo_\F(a)=\bigl\{g(a):g\in G^*(\F)\bigr\}=\bigl\{f_\bi(a):\bi\in\Si^*\bigr\}.
	\]
	 As with invariant sets, we say that $\fo_\F(a)$ is \emph{non-overlapping} if
	\[
	f_i\bigl(\fo_\F(a)\bigr)\cap f_j\bigl(\fo_\F(a)\bigr)=\emptyset\qquad\text{whenever}\quad i\neq j.
	\]
	More generally, we say that $\fo_\F(a)$ has \emph{finite overlaps} if
	\[
	f_i\bigl(\fo_\F(a)\bigr)\cap f_j\bigl(\fo_\F(a)\bigr)
	\]
	is finite whenever $i\neq j$.
	
	First, we establish the following basic properties.

	\begin{proposition}	\label{onX}
		Let $\F$ be an RIFS on $X$ with expansion factor $r$ and $B$ be given by \eqref{def_B}. 
		\begin{enumerate}[(i)]
			\item $\pi\big(\Sad^\infty(\F)\big)\subseteq \bigcup_{i=1}^m f_i^{-1}\big(\pi\big(\Sad^\infty(\F)\big)\big)$.
			\item $\Sad^\infty(\F)$ and $\pi\big(\Sad^\infty(\F)\big)$ are compact.
			\item $\overline{P(\F)}\subseteq \pi\big(\Sad^\infty(\F)\big)\subset B$. Moreover, if all mappings in $\F$ are surjective, then  $\overline{P(\F)}=\overline{P(\F^{-1})}=K(\F^{-1}).$
			\item If $a\in P(\F)$, then $\fo_\F(a)$ is an invariant set of $\F$. Otherwise, we have
			\[
			\fo_\F(a)=\bigcup_{i=1}^m f_i\big(\fo_\F(a)\big)\cup \big\{f_1(a),\ldots,f_m(a)\big\}.
			\]
			\item The forward orbit $\fo_{\F}(a)$ is either a singleton or an infinite set. Moreover, it is a singleton if and only if $\F$ is degenerate at $a$.
			\item For every  $a\notin \pi\big(\Sad^\infty(\F)\big)$, $\fo_\F(a)$ is locally finite. In particular, if all mappings in $\F$ are surjective, then for every $a \notin K(\F^{-1})$, $\fo_\F(a)$ is locally finite.
		\end{enumerate}
	\end{proposition}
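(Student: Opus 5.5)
The plan is to treat the six items separately. Throughout I use the identity $\pi(\bi)=f_{i_1}^{-1}\bigl(\pi(\sigma\bi)\bigr)$, i.e. $f_{i_1}(\pi(\bi))=\pi(\sigma\bi)$, for $\bi\in\Sad^\infty(\F)$. To get it, take $x_n\in f_{\widetilde{\bi|_n}}(X)\cap B$. Then $y_n:=f_{i_n}^{-1}\circ\cdots\circ f_{i_2}^{-1}(x_n)$ is not obviously in $B$ as written, so I apply Lemma \ref{admis}(i) with the prefix $i_n\cdots i_2$ in the role of $\bi$ and $\bj=i_1$. This gives $f_{i_2}^{-1}\circ\cdots\circ f_{i_n}^{-1}(x_n)\in B\cap f_{i_1}(X)$. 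So these points form an admissible approximating sequence for $\sigma\bi$ and converge to $\pi(\sigma\bi)$, while their images under $f_{i_1}^{-1}$ converge to $\pi(\bi)$. Continuity of $f_{i_1}$, which I take as implicit for self-maps in this setting, then yields the identity. Since $\sigma(\Sad^\infty(\F))\subseteq\Sad^\infty(\F)$, item (i) follows at once.

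For (ii), I would show that $\Sad^\infty(\F)$ is closed in the compact space $\Si^\infty$. The defining condition at level $n$ depends only on $\bi|_n$, so the complement is a union of cylinders and hence open. Compactness of $\pi(\Sad^\infty(\F))$ then follows from continuity of $\pi$.

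For (iii), suppose $x=f_\bj(x)$ with $\bj\in\Si^*$. The expansion estimate from the proof of Lemma \ref{admis} gives $d(x,\mko)\le \mm/(r-1)$, so $x\in B$. Let $\bi$ be the infinite periodic word $\widetilde{\bj}\widetilde{\bj}\cdots$. Then $f_{\widetilde{\bi|_{k|\bj|}}}=f_\bj^k$, and $x\in f_\bj^k(X)\cap B$ for every $k$. Nestedness of the sets $f_{\widetilde{\bi|_n}}(X)$ in $n$ shows that $\bi$ is admissible, and taking $x_n$ constant equal to $x$ gives $\pi(\bi)=x$. The closure statement follows from (ii), and the inclusion in $B$ from $\mm/(r-1)<\mm/(r-1)+1$. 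In the surjective case $\Sad^\infty(\F)=\Si^\infty$ and $\pi$ is the usual coding map of the contractive IFS $\F^{-1}$. Hence its image is $K(\F^{-1})$, and $K(\F^{-1})$ is the closure of the fixed points of the compositions $f_\bj^{-1}$, which are exactly the fixed points of the $f_\bj$.

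For (iv), if $a=g(a)$ with $g\in G^*(\F)$, then $a$ itself lies in $\bigcup_i f_i(\fo_\F(a))$. Every other orbit point $f_\bi(a)$ lies in $f_{i_1}(\fo_\F(a))$, where $\fo_\F(a)$ is taken to include $a$. Without the fixed-point assumption the same bookkeeping produces the extra points $f_i(a)$.

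For (v), if some $f_i(a)\neq a$, then $d(f_i^{n+1}(a),f_i^n(a))\ge r^n d(f_i(a),a)\to\infty$. So the orbit is unbounded and therefore infinite; the converse direction is trivial.

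For (vi), I argue by contrapositive. Suppose $\fo_\F(a)\cap A$ is infinite for some bounded $A\supseteq B\cup\{a\}$. Then there are words $\bu_n$ with $|\bu_n|\to\infty$ and $f_{\bu_n}(a)\in A$. By Lemma \ref{diag}, along a subsequence the reversals $\widetilde{\bu_n}$ converge to some $\bi\in\Si^\infty$. For each fixed $k$, eventually $f_{\bu_n}(a)\in f_{\widetilde{\bi|_k}}(X)\cap A$, so $\bi\in\Sad^\infty(\F,A)=\Sad^\infty(\F)$ by Lemma \ref{admis}(ii). Finally, $a=f_{\widetilde{\bi|_k}}^{-1}\bigl(f_{\bu_n}(a)\bigr)$ after stripping the remaining suffix, and the bound \eqref{iterate} together with the well-definedness of $\pi$ over the set $A$ give $a=\pi(\bi)$.

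I expect the main obstacle to be the word-reversal bookkeeping in (vi), together with justifying that $\pi$ is independent of the bounded set used. The surjective cases then follow from $\pi(\Si^\infty)=K(\F^{-1})$.
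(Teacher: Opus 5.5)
\textbf{Verdict.} Your proposal follows the paper's overall structure, but the admissibility checks in (iii) and (vi) rest on inclusions that are false in general. Both are repaired by a pull-back step that you omit. Parts (i) and (iv) are fine. Your arguments for (ii) and (v) are correct and a bit more direct than the paper's. For (ii), the level-$n$ condition depends only on $\bi|_n$, so $\Sad^\infty(\F)$ is an intersection of finite unions of cylinders. For (v), you iterate a single map that moves $a$.

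\textbf{The gap in (iii).} The sets $f_{\widetilde{\bi|_n}}(X)$ are \emph{not} nested in $n$. Indeed $f_{\widetilde{\bi|_{n+1}}}=f_{i_{n+1}}\circ f_{\widetilde{\bi|_n}}$ adds the new map on the outside, and the only valid inclusion is $f_{\widetilde{\bi|_{n+1}}}(X)\subseteq f_{\widetilde{\sigma(\bi)|_n}}(X)$.
\begin{itemize}
\item \emph{Counterexample.} On $X=\Z$ take $f_1(x)=-2x$, $f_2(x)=-2x+3$ and $\bj=12$. Then $f_\bj(x)=4x-6$ fixes $x=2$, and $\bi=2121\cdots$. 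But $f_{\widetilde{\bi|_1}}(X)=f_2(X)$ is the set of odd integers, which is disjoint from $f_{\widetilde{\bi|_2}}(X)=4\Z-6$. So $x\notin f_{\widetilde{\bi|_1}}(X)$.
\item \emph{Consequence.} At levels not divisible by $\ell=|\bj|$, neither your admissibility claim nor the choice $x_n\equiv x$ is justified.
\item \emph{Repair.} For $n=k\ell+t$ with $0<t<\ell$, one has $\bj^{k+1}=\bu\,\widetilde{\bi|_n}$ with $\bu=j_1\cdots j_{\ell-t}$. Lemma \ref{admis}(i) then gives $f_\bu^{-1}(x)\in B\cap f_{\widetilde{\bi|_n}}(X)$. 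This is what the paper does with its points $f_{\bi|_k}^{-1}(x)$. Afterwards, compute $\pi(\bi)$ along $n=k\ell$ only, where $x$ is a legitimate choice.
\end{itemize}

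\textbf{The gap in (vi).} The same slip recurs. The condition $\widetilde{\bu_n}\to\bi$ means that $\bu_n$ \emph{ends} with $\widetilde{\bi|_k}$, say $\bu_n=\bv_n\widetilde{\bi|_k}$. Hence $f_{\bu_n}(a)\in f_{\bv_n}(X)$, and there is no reason for it to lie in $f_{\widetilde{\bi|_k}}(X)$; in the example above, $f_1(X)\cap f_2(X)=\emptyset$.
\begin{itemize}
\item \emph{Correct witness.} Use the pulled-back point $f_{\bv_n}^{-1}\bigl(f_{\bu_n}(a)\bigr)=f_{\widetilde{\bi|_k}}(a)$. It lies in $f_{\widetilde{\bi|_k}}(X)$ trivially. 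It lies in $B$ by \eqref{iterate}, because $f_{\bu_n}(a)\in A$, $A$ is bounded, and $|\bv_n|\to\infty$.
\item \emph{Conclusion.} With this witness, $\bi\in\Sad^\infty(\F)$ and $\pi(\bi)=\lim_k f_{\widetilde{\bi|_k}}^{-1}\bigl(f_{\widetilde{\bi|_k}}(a)\bigr)=a$ follow at once. This is the paper's argument. Your phrase ``after stripping the remaining suffix'' gestures at it, but your admissibility step uses the wrong point.
\end{itemize}

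\textbf{Minor points.}
\begin{itemize}
\item In (i), the points $f_{i_2}^{-1}\circ\cdots\circ f_{i_n}^{-1}(x_n)$ are not themselves approximants for $\sigma(\bi)$. Rather, $x_n$ is one, by the shift inclusion above, and these points are its pull-backs. Your conclusion in (i) is still right.
\item The inclusion $\pi(\Sad^\infty(\F))\subset B$ needs $d(\mko,\pi(\bi))\le \mm/(r-1)$ for \emph{every} admissible $\bi$, not only for fixed points. This follows from \eqref{iterate}.
\end{itemize}
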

	
	\begin{proof}
		(i) Let $S = \big\{ \bi|_1 \colon \bi \in \Sad^\infty(\F) \big\} \subseteq \Sigma$. For each $i \in S$, we define
		$$
		A_i = \big\{ \bj \in \Sigma^\infty \colon i\bj \in \Sad^\infty(\F) \big\}. 
		$$
		Since  $\sigma\big(\Sad^\infty(\F)\big) \subseteq \Sad^\infty(\F)$, it follows that $A_i \subseteq \Sad^\infty(\F)$. Therefore, we obtain
		$$
		\pi\big(\Sad^\infty(\F)\big) = \bigcup_{i\in S} f_i^{-1}(\pi(A_i)) \subseteq \bigcup_{i\in S} f_i^{-1}\big(\pi\big(\Sad^\infty(\F)\big)\big) \subseteq \bigcup_{i=1}^m f_i^{-1}\big(\pi\big(\Sad^\infty(\F)\big)\big).
		$$

		(ii) Since $\Si^\infty$ is compact and   $\pi$ is continuous, it is sufficient to prove  $\Sad^\infty(\F)$ is closed.

		Let  $\{\bi_k\}$ in $\Sad^\infty(\F)$ be a sequence converging to $\bi\in \Si^\infty$. Then $\lim_{k\to\infty}|\bi_k\wedge\bi|= \infty$. For each fixed $n>0$, there exists $k_n$ such that $|\bi_{k_n}\wedge\bi|>n$, and it is clear that 
		\[
		\bi_{k_n}|_{|\bi_{k_n}\wedge\bi|}=\bi|_{|\bi_{k_n}\wedge\bi|}=\bi|_n\,\bj_n
		\]
		for some $\bj_n\in\Si^*$. Recall that $\widetilde{\bi|_n}=i_ni_{n-1}\cdots i_1$.  Since $\bi_{k_n}\in\Sad^\infty(\F)$, it follows from Lemma \ref{admis} (i) that
		\[
		\emptyset\ne f_{\widetilde{\bj_n}}^{-1}\Big(f_{\widetilde{\bi_{k_n}|_{|\bi_{k_n}\wedge\bi|}}}(X)\cap B\Big)\subset f_{\widetilde{\bi|_n}}(X)\cap B.
		\]
		Therefore, $\bi \in \Sad^\infty(\F)$,  and  $\Sad^\infty(\F)$ is closed. 
		
		(iii) The inclusion $\pi(\Sad^\infty(\F)) \subset B$  follows directly from Lemma \ref{admis}  (i) and \eqref{def_pi}. Since  $\pi(\Sad^\infty(\F))$ is compact,  it is sufficient to show that $P(\F)\subseteq\pi(\Sad^\infty(\F))$. 
		
		For every  $x\in P(\F)$, there exists $\bi=i_1\ldots i_n\in \Si^*$ such that $f_{\bi}(x)=x$. Observe that
		\[
		d(x,\mko)\le \f 1r d(f_{i_n}(x),f_{i_n}(\mko))\le \f1r d(f_{i_n}(x),\mko)+\f \mm r.
		\]
		Iterating this estimate along the word $\bi$, we obtain that
		\[
		d(x,\mko)=d(f_\bi(x),\mko)\le r^{-n}d(f_{\bi}(x),\mko)+\mm\sum_{i=1}^n r^{-i}= r^{-n}d(x,\mko)+\f{\mm(1-r^{-n})}{r-1}.
		\]
		Thus $d(x,\mko)\le \f{\mm}{r-1}$, which confirms that $x\in B$. Let $x_k=f_{\bi|_k}^{-1}(x)$ for $1\le k\le n$. By Lemma \ref{admis} (i), we have
		\[
		x_k=f_{\bi|_k}^{-1}(x)\in f_{\bi|_k}^{-1}\big(f_{\bi}(X)\cap B\big)\subset f_{\widetilde{\tilde{\bi}|_{n-k}}}(X)\cap B,\qquad 1\le k\le n-1.
		\]
		Furthermore, note that $x=f_{\bi}^{-1}(x)=f_{\widetilde{\tilde{\bi}|_{n-k}}}^{-1}(x_k)$. Let $\overline{\bi}=\tbi\,\tbi\,\tbi\ldots\in \Si^\infty$. It then follows from Lemma \ref{admis} (i) that for every integer $p = \ell n + k$ with $0 \le k < n-1$, we have
		\[
		x_{n-k}=f_{\bi|_{n-k}}^{-1}(x)\in f_{\bi|_{n-k}}^{-1}\Big(f_{\bi}^{\ell+1}(X)\cap B\Big)\subset f_{\widetilde{\overline{\bi}|_p}}(X)\cap B.
		\]
		Consequently, this implies that $\overline{\bi}\in \Sad^\infty(\F)$ and $x = \pi\big(\overline{\bi}\big) \in \pi\big(\Sad^\infty(\F)\big)$.

		If all mappings in $\F$ are surjective, then it is straightforward that $\overline{P(\F^{-1})}=\overline{P(\F)}$. By a classical result of Hutchinson \cite{Hutchinson1981}, we have $\overline{P(\F^{-1})}=K(\F^{-1})$.

		(iv) Since $G^*(\F)=\bigcup_{i=1}^mf_iG^*(\F) \cup\{f_1,\ldots,f_m\}$, the conclusion is straightforward.
		
		(v) Assume that $\diam (\fo_\F(a))<\infty$. Since $f_1(\fo_\F(a))\subset \fo_\F(a)$, it follows that
		\[
		r\diam \big(\fo_\F(a)\big)\le \diam \big(f_1(\fo_\F(a))\big)\le \diam \big(\fo_\F(a)\big).
		\]
		Hence $\diam\big(\fo_\F(a)\big)=0$, i.e., $\fo_\F(a)$ is a singleton. Furthermore, we have
		\[
		f_i(a) = f_i \circ f_j(a) \qquad \text{for all } i,j\in\Sigma,
		\]
		which implies that $a$ is a common fixed point of all mappings in $\F$. The converse is obvious.

		(vi)  Given  $a \notin \pi(\Sad^\infty(\F))$, we assume $\fo_\F(a)$ is not locally finite, i.e.,   there exists a bounded set $A \subset X$ containing $B$ such that the intersection $\fo_\F(a) \cap A$ is infinite. 
		
		Hence there exists a sequence of distinct finite words $\{\bi_k\}_{k=1}^\infty \subset \Si^*$ satisfying $|\bi_k| \to \infty$ and $f_{\bi_k}(a) \in A$. By Lemma \ref{diag}, there exist $\bi\in \Sigma^\infty$ and a subsequence $\{\bi_{k_p}\}$ such that  $N_p\coloneq\big|\widetilde{\bi_{k_p}}\wedge \bi\big|\to\infty\ (p\to\infty)$. For each $n>0$, there exists $p_n$ such that $N_{p_n}>n$, and   we have
		\[
		\widetilde{\bi_{k_{p_n}}}=\bi|_n\,\bj_{p_n}
		\]
		for some $\bj_{p_n}\in \Si^*$. Note that $f_{\bi_{k_{p_n}}}(a)\in A$. By the argument to \eqref{iterate}, we  choose $p_n$ sufficiently large such that $f_{\widetilde{\bj_{p_n}}}^{-1}(f_{\bi_{k_{p_n}}}(a))\in A$, and we have
		\[
		f_{\widetilde{\bj_{p_n}}}^{-1}\big(f_{\bi_{k_{p_n}}}(a)\big)\in f_{\widetilde{\bi|n}}(X)\cap A.
		\]
		Therefore, by Lemma \ref{admis} (ii), we obtain $\bi\in \Sad^\infty(\F,A)=\Sad^\infty(\F)$, and it implies that 
		\[
		a\equiv f_{\widetilde{\bi|n}}^{-1}\big(f_{\widetilde{\bj_{p_n}}}^{-1}\big(f_{\bi_{k_{p_n}}}(a)\big)\big)=\pi(\bi)\in \pi\big(\Sad^\infty(\F)\big),
		\]
		which contradicts the assumption that $a \notin \pi(\Sad^\infty(\F))$.
		
		If all mappings in $\F$ are surjective, then $\pi(\Sad^\infty(\F))=\pi(\Si^\infty)=K(\F^{-1})$.
	\end{proof}

	\begin{remark}
		Proposition \ref{onX} (iv) implies that the forward orbit $\fo_\F(a)$ is always structurally close to an invariant set of $\F$.
	\end{remark}

	The following simple lemma implies that the invariant set of an RIFS consists of forward orbits.
	\begin{lemma}\label{invariant}
		Let $\F=\{f_i\}_{i=1}^m$ be an RIFS on $(X,d)$. If $K$ is an invariant set of $\F$, then
		\[
		K=\bigcup_{a\in K}\fo_\F(a).
		\]
	\end{lemma}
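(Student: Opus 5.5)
We prove the two inclusions separately.

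For $\bigcup_{a\in K}\fo_\F(a)\subseteq K$, fix $a\in K$. Invariance gives $f_i(K)\subseteq\bigcup_{j=1}^m f_j(K)=K$ for every $i$. By induction on the word length $|\bi|$, we get $f_\bi(K)\subseteq K$ for every $\bi\in\Si^*$: if $f_\bj(K)\subseteq K$, then $f_{i\bj}(K)=f_i(f_\bj(K))\subseteq f_i(K)\subseteq K$. Hence $f_\bi(a)\in K$ for all $\bi\in\Si^*$, so $\fo_\F(a)=\{f_\bi(a):\bi\in\Si^*\}\subseteq K$.

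For the reverse inclusion $K\subseteq\bigcup_{a\in K}\fo_\F(a)$, take $x\in K$. The equality $K=\bigcup_{i}f_i(K)$ gives some index $i$ and some $a\in K$ with $x=f_i(a)$. Then $x\in\fo_\F(a)$, since $f_i\in G^*(\F)$. Note that we do not need $x\in\fo_\F(x)$, which could fail when $x\notin P(\F)$; we only need that $x$ lies in the orbit of some point of $K$, and invariance supplies exactly such a preimage.

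Both directions are elementary. The only point needing care is the one just noted: the orbit $\fo_\F(a)$ excludes the identity map, so the covering must come from the surjectivity of $\bigcup_i f_i$ onto $K$, not from $x$ itself.
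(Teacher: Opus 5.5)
Your proof is correct and matches the paper's argument: invariance gives $f(K)\subseteq K$ for every $f\in G^*(\F)$, which yields one inclusion. For the other, each $x\in K$ has the form $x=f_i(a)$ with $a\in K$, so $x\in\fo_\F(a)$. Your remark that the identity is excluded from $G^*(\F)$ correctly explains why the second inclusion needs a preimage in $K$ rather than $x$ itself.
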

	\begin{proof}
		By \eqref{eq: FUTF}, 		it is clear  that $f(K)\subseteq K$ for every $f\in G^*(\F)$. Thus, $\fo_\F(a)\subseteq K$ for all $a\in K$. Conversely, for  each $x\in K$, the self-similarity of $K$ implies that there exist $a\in K$ and $i\in\{1,\ldots,m\}$ such that $x=f_i(a)$, which means $x\in \fo_\F(a)$. Hence the conclusion follows.
	\end{proof}
	\begin{remark}
		Indeed, the proof reveals that the preceding result remains valid for a general iterated function system on a metric space.
	\end{remark}

	\subsection{Orbit decompositions and local finiteness}

			Proposition \ref{onX} (vi) provides a sufficient condition for a
			forward orbit to be locally finite. The following decomposition yields
			a complete characterization of local finiteness for forward orbits
			and will play a crucial role in deriving exact dimension formulas for
			the orbits and invariant sets of similarity RIFSs on $\R$.
		\begin{proposition}\label{decoposition}
			For any $a\in X$ and $A\subseteq X$, define
			\[
			A_\F(a)=\bigcup_{i=1}^mf_i\Big(\{a\}\cup\big(\fo_{\F}(a)\cap A\big)\Big)
			\Big\backslash A.
			\]
			Then
			\[
			\fo_{\F}(a)=(\fo_{\F}(a)\cap A)\cup A_\F(a)\cup \bigcup_{b\in A_\F(a)}\fo_\F(b).
			\]
			In particular, $\fo_{\F}(a)$ is locally finite if and only if $\fo_{\F}(a)\cap \pi\big(\Sad^\infty(\F)\big)$ is finite.
		\end{proposition}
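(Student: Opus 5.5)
The decomposition is a pure set identity. The local finiteness criterion will follow from it by choosing $A$ as a bounded neighbourhood of the admissible core $\pi(\Sad^\infty(\F))$ and applying Proposition \ref{onX} (vi) to the finitely many exit points.

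\emph{The identity.} The inclusion ``$\supseteq$'' is immediate. Indeed, $A_\F(a)\subseteq \fo_\F(a)$, because $f_i(a)\in\fo_\F(a)$ and $f_i(\fo_\F(a))\subseteq\fo_\F(a)$. Moreover $\fo_\F(b)\subseteq\fo_\F(a)$ for every $b\in\fo_\F(a)$.

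For ``$\subseteq$'', take $x=f_{\bi}(a)$ with $\bi=i_1\ldots i_n$, and suppose $x\notin A$. Consider the chain
\[
y_k=f_{i_{n-k+1}}\circ\cdots\circ f_{i_n}(a),\qquad k=1,\ldots,n,
\]
so that $y_n=x$. Let $k_0$ be the least $k$ with $y_k\notin A$; it exists since $y_n\notin A$. Then $y_{k_0}=f_{i}(y_{k_0-1})$, where either $k_0=1$ and the argument is $a$, or $y_{k_0-1}\in\fo_\F(a)\cap A$. Hence $y_{k_0}\in A_\F(a)$. Either $x=y_{k_0}$, or $x$ is obtained from $y_{k_0}$ by applying a nonempty word, in which case $x\in\fo_\F(y_{k_0})$.

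\emph{The criterion.} If $\fo_\F(a)$ is locally finite, its intersection with the bounded set $\pi(\Sad^\infty(\F))\subset B$ (Proposition \ref{onX} (iii)) is finite.

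Conversely, suppose $F=\fo_\F(a)\cap\pi(\Sad^\infty(\F))$ is finite. By compactness of the core (Proposition \ref{onX} (ii)), the distance from each point of $\fo_\F(a)\setminus F$ to the core is positive. Choose $A=\pi(\Sad^\infty(\F))\cup F'$, where the obstacle is to make $A$ contain no other orbit points near the core.

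The cleaner route is to take $A$ to be the closed $\eps$-neighbourhood $U_\eps$ of the core, and to show that $\fo_\F(a)\cap U_\eps$ is finite for small $\eps$. Suppose instead that infinitely many distinct orbit points $f_{\bi_k}(a)$ lie in $U_{1/k}$. I then run the compactness argument of Proposition \ref{onX} (vi). The words satisfy $|\bi_k|\to\infty$, so Lemma \ref{diag} gives a limit word $\bi$, and the backward iterates along prefixes show that $a=\pi(\bi)$ with $\bi$ admissible. Pulling back by partial words as in (vi), one likewise obtains infinitely many distinct orbit points $f_{\bj}(a)$ in the core itself. Here I would use a pigeonhole on the tails together with the expansion: points within $1/k$ of the core pulled back stay close to the core. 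This contradicts the finiteness of $F$.

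With $\fo_\F(a)\cap A$ finite, the set $A_\F(a)$ is finite. Each $b\in A_\F(a)$ lies outside $A$, hence outside the core, so $\fo_\F(b)$ is locally finite by Proposition \ref{onX} (vi). The orbit $\fo_\F(a)$ is then a finite union of locally finite sets, hence locally finite.

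The main obstacle is exactly this step: showing that finiteness of the orbit inside the core forces finiteness inside some neighbourhood of the core. I would attempt it by refining the argument of (vi) so that the limit points produced are themselves orbit points in the core. If that fails, I would use the simpler choice $A=B'$, a large ball around the core, and argue directly that accumulation of orbit points anywhere forces an accumulation within the core.
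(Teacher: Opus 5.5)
Your proof of the set identity is correct and matches the paper's up to bookkeeping: you take the first exit from $A$ along the chain $y_1,\dots,y_n$, while the paper takes the last entry into $A$. Either choice yields a point of $A_\F(a)$ from which $x$ is reached. The forward direction of the criterion (core bounded by Proposition \ref{onX}(iii), so the intersection is finite) is also as in the paper.

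The converse has a genuine gap, and it is self-inflicted. You single out as the main obstacle the claim that finiteness of $\fo_\F(a)$ inside the core forces finiteness inside some $\eps$-neighbourhood $U_\eps$. You never actually prove it. Your sketch asserts that one ``likewise obtains infinitely many distinct orbit points in the core.'' However, the core orbit points that the argument of Proposition \ref{onX}(vi) naturally produces are $f_{\widetilde{\bi|_n}}(a)=\pi(\sigma^n\bi)$. These need not be distinct: they form a finite set whenever $\bi$ is eventually periodic. So no contradiction with the finiteness of $F$ follows as written. The claim that pulled-back points ``stay close to the core'' is likewise unsupported, since the core is not backward-invariant in general. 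The neighbourhood statement is true, but only as a consequence of the proposition, not as a route to it.

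The missing observation is that no neighbourhood is needed: take $A=\pi(\Sad^\infty(\F))$ itself.
\begin{itemize}
\item The identity holds for every $A$, so $\fo_\F(a)\cap A=F$ is finite.
\item Hence $\#A_\F(a)\le m(\#F+1)$.
\item Each $b\in A_\F(a)$ lies in $X\setminus A$, i.e.\ outside the core. That is the only hypothesis Proposition \ref{onX}(vi) requires; a positive distance from the core plays no role.
\end{itemize}
Thus $\fo_\F(a)$ is a finite set together with finitely many locally finite orbits, hence locally finite. This is exactly the paper's proof, and it is what your final paragraph already does once $A$ is chosen this way.
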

		\begin{proof}
			It is immediate that
			\[
			(\fo_{\F}(a)\cap A)\cup A_\F(a)\cup \bigcup_{b\in A_\F(a)}\fo_\F(b)\subseteq \fo_{\F}(a).
			\]
			To prove the reverse inclusion, fix $\bi=i_1\ldots i_n\in\Si^*$. If $f_{i_n}(a)\notin A$, then
			$f_{i_n}(a)\in A_{\F}(a)$, and hence
			\[
			f_\bi(a)\in \{f_{i_n}(a)\}\cup \fo_{\F}(f_{i_n}(a))\subseteq A_\F(a)\cup \bigcup_{b\in A_\F(a)}\fo_\F(b).
			\] 
			Suppose now that $f_{i_n}(a)\in A$, and let $
			k=\min\{j:f_{i_j\ldots i_n}(a)\in A\}$. If $k=1$, then $f_\bi(a)\in\fo_{\F}(a)\cap A$ trivially. If
			$1<k\leq n$, the minimality of $k$ yields $f_{i_{k-1}\ldots i_n}(a)\in A_\F(a)$ which implies that 
			\[
			f_\bi(a)\in \{f_{i_{k-1}\ldots i_n}(a)\}\cup \fo_{\F}(f_{i_{k-1}\ldots i_n}(a))\subseteq A_\F(a)\cup \bigcup_{b\in A_\F(a)}\fo_\F(b).
			\]
			Thus we obtain 
			\[
			\fo_{\F}(a)=(\fo_{\F}(a)\cap A)\cup A_\F(a)\cup \bigcup_{b\in A_\F(a)}\fo_\F(b).
			\]
			This proves the asserted decomposition.

			If $\fo_{\F}(a)$ is locally finite, then Proposition \ref{onX} (iii) implies that $\fo_{\F}(a)\cap \pi\big(\Sad^\infty(\F)\big)$ is finite. Conversely, suppose that this intersection is finite and set $A=\pi\big(\Sad^\infty(\F)\big)$. Then $A_{\F}(a)$ is finite as well. Furthermore, for each
			$b\in A_{\F}(a)\subseteq X\setminus A$,
			Proposition \ref{onX}(vi) shows that $\fo_{\F}(b)$ is locally
			finite. The decomposition therefore expresses
			$\fo_{\F}(a)$ as the union of a finite set and finitely many
			locally finite orbits. Hence $\fo_{\F}(a)$ is locally finite.
		\end{proof}
		
	\begin{corollary}
	Let $\F=\{f_i\}_{i=1}^m$ be an RIFS on $(X,d)$ consisting of
	surjective maps. Then $\fo_{\F}(a)$ is locally finite for every
	$a\in X$ if and only if $\F$ is degenerate.
	\end{corollary}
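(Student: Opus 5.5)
The plan is to treat the two directions separately, using Proposition \ref{onX} for one and Proposition \ref{decoposition} for the other. Throughout, surjectivity of the maps means $\Sad^\infty(\F)=\Si^\infty$, so $\pi\bigl(\Sad^\infty(\F)\bigr)=K(\F^{-1})$, and $K(\F^{-1})=\overline{P(\F)}$ by Proposition \ref{onX}(iii).

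For the direction ``degenerate $\Rightarrow$ every orbit locally finite'', let $x_0$ be the common fixed point. By Proposition \ref{onX}(iii), $P(\F)\subseteq K(\F^{-1})$. I would first show that $K(\F^{-1})=\{x_0\}$. Since each $f_i^{-1}$ fixes $x_0$, the singleton $\{x_0\}$ satisfies $\{x_0\}=\bigcup_i f_i^{-1}(\{x_0\})$. By uniqueness of the Hutchinson attractor, $K(\F^{-1})=\{x_0\}$.

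Now fix $a\in X$. Then $\fo_\F(a)\cap K(\F^{-1})\subseteq\{x_0\}$ is finite, and Proposition \ref{decoposition} gives local finiteness. Alternatively, Proposition \ref{onX}(vi) handles $a\neq x_0$, and Proposition \ref{onX}(v) shows that $\fo_\F(x_0)=\{x_0\}$.

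For the converse, assume that every forward orbit is locally finite, and suppose $\F$ is not degenerate; I aim for a contradiction. The natural choice of base point is a fixed point. Take $x=x_1$, the fixed point of $f_1$, which exists and lies in $P(\F)\subseteq K(\F^{-1})$.

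By Proposition \ref{onX}(iv), $\fo_\F(x)$ is invariant under $\F$. By assumption it is locally finite, so Proposition \ref{decoposition} gives that $\fo_\F(x)\cap K(\F^{-1})$ is finite. Using invariance, I would get a contradiction as follows. Every $y\in\fo_\F(x)$ has the form $y=f_i(z)$ with $z\in\fo_\F(x)$, so preimages stay in the orbit.

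Start from $x$ itself, which lies in $K(\F^{-1})$, and apply all the maps $f_j^{-1}$ to points of $\fo_\F(x)\cap K$. The key point is that $f_j^{-1}$ maps $K(\F^{-1})$ into itself. Moreover, $x=f_{\bi}(x)$ for words $\bi=1^n$, and more importantly, for each $j$, the point $f_j(x)$ satisfies $f_j^{-1}(f_j x)=x$.

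This alone does not suffice. The cleaner route I would try first uses distinct fixed points, which is the main obstacle. Non-degeneracy gives some $f_j$ with $f_j(x_1)\ne x_1$, and I would use the points $p_n$, the fixed points of $f_1^n f_j$. Each $p_n$ lies in $P(\F)\subseteq K(\F^{-1})$, and $p_n\to x_1$ with the $p_n$ pairwise distinct for large $n$. Distinctness holds because $p_n=\pi(\overline{(j1^n)})$ and the $p_n$ approach $x_1$ at a rate like $r^{-n}$ without equalling it, since $p_n=x_1$ would force $f_j(x_1)=x_1$.

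Each $p_n$ has a locally finite orbit by hypothesis. To get the contradiction, I would show that a single orbit contains infinitely many points of $K$. Consider $\fo_\F(p_n)$, which contains $f_1^k f_j(p_n)$ and, through the period $\overline{j1^n}$, the shifted points $\pi(\sigma^k\overline{(j1^n)})$. These points all lie in $K$, but they are only finitely many, so this also fails.

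So instead I would pick a non-periodic word $\bi$ with $a=\pi(\bi)\in K$ such that infinitely many of the points $\pi(\sigma^k\bi)$ are distinct. The intended mechanism is $f_{\widetilde{\bi|_k}}^{-1}(a)=\pi(\sigma^k\bi)$, i.e. $\pi(\sigma^k\bi)=f_{i_k}^{-1}\circ\cdots\circ f_{i_1}^{-1}(a)$. However, this expresses the $\pi(\sigma^k\bi)$ as inverse images of $a$, not as forward images, so it does not place them in $\fo_\F(a)$.

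Hence the step to settle is producing a point of $K$ whose forward orbit meets $K$ infinitely often. Since $f_i(K)\not\subseteq K$ in general, I would look for such a point among points with $f_{\bi}(a)\in K$ for infinitely many $\bi$. The natural candidate is to exploit that $K=\bigcup_i f_i^{-1}(K)$, so each point of $K$ has preimages in $K$. I expect that establishing this step is where the real difficulty lies.
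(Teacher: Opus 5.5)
Your forward direction (degenerate $\Rightarrow$ every orbit locally finite) is correct and matches the paper: $K(\F^{-1})=\{x_0\}$, so $\fo_\F(a)\cap K(\F^{-1})$ is finite, and Proposition \ref{decoposition} applies.

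\textbf{The converse is not proved.} Your proposal ends by naming the missing step instead of supplying it. You abandoned the right mechanism because of a direction error in the coding map: the identity $\pi(\sigma^k\bi)=f^{-1}_{\widetilde{\bi|_k}}(a)$ you wrote is false. With the paper's convention $\pi(\bi)=\lim_n f_{i_1}^{-1}\circ\cdots\circ f_{i_n}^{-1}(x_n)$, one has $\pi(\bi)=f_{\widetilde{\bi|_k}}^{-1}\bigl(\pi(\sigma^k\bi)\bigr)$. Hence $\pi(\sigma^k\bi)=f_{i_k}\circ\cdots\circ f_{i_1}(\pi(\bi))=f_{\widetilde{\bi|_k}}(a)$ for $a=\pi(\bi)$; this identity is already derived in the proof of Theorem \ref{existenceofK}. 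So the shifted points are forward images of $a$. Every $a=\pi(\bi)\in K(\F^{-1})$ therefore has all of the points $\pi(\sigma^k\bi)$ inside $\fo_\F(a)\cap K(\F^{-1})$, which is exactly the fact you said you could not obtain.

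\textbf{How the argument finishes.} The paper's route is as follows.
\begin{itemize}
\item By hypothesis and Proposition \ref{decoposition}, $\fo_\F(a)\cap K(\F^{-1})$ is finite. Hence $\pi(\sigma^p\bi)=\pi(\sigma^q\bi)$ for some $p<q$.
\item Then $\pi(\sigma^p\bi)$ is the fixed point of $f_{i_q}\circ\cdots\circ f_{i_{p+1}}$, so it lies in $P(\F)$. Consequently $a\in g^{-1}(P(\F))$ with $g=f_{\widetilde{\bi|_p}}$.
\item $G^*(\F)$ is countable, and each $g$ is injective with at most one fixed point. Therefore $K(\F^{-1})$ is countable.
\item A countable attractor must be a singleton, which forces degeneracy.
\end{itemize}

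The singleton step is where your idea of two maps with distinct fixed points $x_1\ne x_2$ belongs. Both points lie in $K(\F^{-1})$ by Proposition \ref{onX}(iii). For $N$ large, the sets $f_i^{-N}(K(\F^{-1}))$ lie in the closed balls of radius $r^{-N}\diam K(\F^{-1})$ about $x_i$, and these balls are disjoint. So the subsystem $\{f_1^{-N},f_2^{-N}\}$ has an injective coding map, and its attractor, which is contained in $K(\F^{-1})$, is uncountable.

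Alternatively, you can run your contradiction directly. Take $\bi$ to be a non-eventually-periodic concatenation of the blocks $1^N$ and $2^N$. Injectivity of that coding makes the points $\pi(\sigma^{kN}\bi)$, $k\ge0$, pairwise distinct. They all lie in $\fo_\F(\pi(\bi))\cap K(\F^{-1})$, contradicting local finiteness.

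Your detour through the fixed points $p_n$ of $f_1^nf_j$ could not succeed, as you observed. Their codings are periodic, and the shifts of a periodic word give only finitely many points. What is needed is a non-periodic coding, and the corrected identity makes that usable.
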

\begin{proof}
	Suppose first that $\F$ is degenerate at $x_0\in X$. Since every map in $\F$ is
	surjective, we have $\pi\big(\Sad^\infty(\F)\big)=K(\F^{-1})=\{x_0\}$. Hence, for every $a\in X$, the intersection $\fo_{\F}(a)\cap K(\F^{-1})$ is finite. Proposition \ref{decoposition} therefore implies that
	$\fo_{\F}(a)$ is locally finite for every $a\in X$.
	
	Conversely, suppose that $\fo_{\F}(a)$ is locally finite for every
	$a\in X$. Fix $a\in K(\F^{-1})$, and choose
	$\bi=i_1i_2\ldots\in\Si^\infty$ such that $a=\pi(\bi)$. Since $\fo_{\F}(a)$ is locally finite, by the same Proposition, $\fo_{\F}(a)\cap K(\F^{-1})$ is finite. On the other hand, for every $n\in\Z_{>0}$,
	\[
	\pi(\sigma^n(\bi))=f_{\widetilde{\bi|_n}}(a)\in \fo_{\F}(a)\cap K(\F^{-1}).
	\]
	It follows that there exist integers $1\leq p<q$ such that
	\[
	\pi(\sigma^p(\bi))=\pi(\sigma^q(\bi))\Longrightarrow f_{i_q}\circ\cdots\circ f_{i_{p+1}}(\pi(\sigma^p(\bi)))=\pi(\sigma^p(\bi))\Longrightarrow a\in f^{-1}_{\widetilde{\bi|_p}}(P(\F^{-1})).
	\]
	 This yields that
	\[
	K(\F^{-1})\subseteq\bigcup_{g\in G^*(\F)}g^{-1}(P(\F)).
	\]
	Every $g\in G^*(\F)$ has at most one fixed point. Since
	$G^*(\F)$ is countable, $P(\F)$ is countable. Furthermore, every
	$g\in G^*(\F)$ is injective, and hence
	$g^{-1}(P(\F))$ is countable. The preceding inclusion therefore
	shows that $K(\F^{-1})$ is at most countable. Therefore the attractor $K(\F^{-1})$ is singleton. Proposition \ref{onX}(v) now
	implies that $\F$ is degenerate.
\end{proof}

	\subsection{Existence and structure of invariant sets}

	Strichartz \cite{Strichartz1996} provided the following fundamental result concerning invariant sets of RIFSs. For the sake of completeness, we provide a slightly modified proof below. The crux of the proof lies in the observation that the equation \eqref{eq: FUTF} for an invariant set $K$ makes it possible to choose preimages within $K$. In other words, for every $x\in K$, one can construct a sequence $\{x_n\}_{n=1}^\infty$ in $K$ satisfying $x_1=x$ and $x_n=f_{i_n}^{-1}(x_{n-1})$ for a suitable sequence of indices $i_n\in\Si$.
	\begin{theorem}\label{discreteinvariant}
		Let $\F=\{f_i\}_{i=1}^m$ be an RIFS on $(X,d)$. If $K$ is a locally finite invariant set of $\F$, then $K$ is a finite union of forward orbits. More precisely, we have
		\[
		K=\bigcup_{a\in K\cap P(\F)}\fo_\F(a),
		\]
		where $K\cap P(\F)$ is finite.  In particular, a locally finite invariant set exists if and only if $P(\F) \neq \emptyset$.
	\end{theorem}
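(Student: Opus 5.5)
The plan is to run the backward-chain argument described just before the statement, then use local finiteness twice: once to force a repetition in the chain, and once to bound $K\cap P(\F)$.

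\emph{Step 1: a backward chain in $K$.} Fix $x\in K$. Using \eqref{eq: FUTF} repeatedly, choose $x_1=x$ and, for each $n\ge1$, an index $i_{n+1}$ and a point $x_{n+1}\in K$ with $f_{i_{n+1}}(x_{n+1})=x_n$. The estimate leading to \eqref{iterate} applies verbatim:
\[
d(\mko,x_{n+1})\le r^{-n}d(\mko,x)+\mm\sum_{j=1}^n r^{-j}.
\]
Hence for all large $n$ the points $x_n$ lie in the bounded ball $B$ of \eqref{def_B}.

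\emph{Step 2: a repetition gives a periodic point.} Since $K$ is locally finite, $K\cap B$ is finite. By pigeonhole there are indices $p<q$ with $x_p=x_q$. Then the word $i_{p+1}\ldots i_q$ gives $f_{i_{p+1}}\circ\cdots\circ f_{i_q}(x_q)=x_p$. So $x_p$ is a fixed point of an element of $G^*(\F)$, that is, $x_p\in K\cap P(\F)$.

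\emph{Step 3: $x$ lies in the orbit of $x_p$.} By construction $x=f_{i_2}\circ\cdots\circ f_{i_p}(x_p)$ whenever $p\ge2$, so $x\in\fo_\F(x_p)$. If $p=1$, then $x=x_p$ is fixed by some $g\in G^*(\F)$, and $x=g(x)\in\fo_\F(x)$. In either case Lemma \ref{invariant} gives $\fo_\F(a)\subseteq K$ for every $a\in K$, and this yields
\[
K=\bigcup_{a\in K\cap P(\F)}\fo_\F(a).
\]

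\emph{Step 4: finiteness of $K\cap P(\F)$.} By Proposition \ref{onX}(iii), $P(\F)\subseteq\pi\bigl(\Sad^\infty(\F)\bigr)\subset B$, which is bounded. Local finiteness of $K$ then makes $K\cap P(\F)$ finite.

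\emph{Step 5: the equivalence.} If a locally finite invariant set exists, it is nonempty. By Step 3 it equals a union over $K\cap P(\F)$, so $K\cap P(\F)\ne\emptyset$ and hence $P(\F)\neq\emptyset$.

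Conversely, suppose $P(\F)\neq\emptyset$ and take $a\in P(\F)$. Proposition \ref{onX}(iv) shows that $\fo_\F(a)$ is invariant. It remains to show that it can be chosen locally finite. By Proposition \ref{decoposition}, this is equivalent to finiteness of $\fo_\F(a)\cap\pi\bigl(\Sad^\infty(\F)\bigr)$.

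\emph{Main obstacle.} I expect this last local-finiteness step to be the hardest part. The first thing I would try is to choose $a$ with minimal period $g=f_{\bi}$. I would then argue that the admissible words passing through points of $\fo_\F(a)$ in the compact core must eventually be periodic, and use the repetition argument of Step 2 on the core to bound how many such points there are. I am not certain this works in full generality. If the orbit of every periodic point accumulates (as in dense-spectrum examples of Akiyama--Komornik type), this step would fail, and the converse would need an additional hypothesis.
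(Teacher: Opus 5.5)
Your Steps 1--4 and the forward half of Step 5 follow the paper's argument. You build a backward chain in $K$, and the estimate behind \eqref{iterate} puts it eventually in $B$. A repetition in the finite set $K\cap B$ then produces a point of $K\cap P(\F)$ whose orbit contains $x$, and finiteness follows from $P(\F)\subset B$. Your separate treatment of $p=1$ is correct; the paper's indexing $x_n=f_{\bi|_n}^{-1}(x)$, $n\ge1$, avoids that case.

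The remaining gap is the converse, and you have located it correctly: Proposition \ref{onX}(iv) only shows that $\fo_\F(a)$ is invariant for $a\in P(\F)$. The paper's proof handles this direction by citing Proposition \ref{onX}(iv) alone, so it does not establish local finiteness either.

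Your proposed repair would not work, because a combinatorial choice such as minimal period is not enough. In $\{x\mapsto qx+d\}_{d\in D_m}$ with $1<q<m+1$ non-Pisot, $0$ has period $1$, yet $\fo_\F(0)=\mathcal E_{q,D_m}$ is dense. That system is nevertheless not a counterexample. In the affine setting on $\R$ the converse holds once the periodic point is chosen geometrically:
\begin{itemize}
\item Let $[\alpha,\beta]$ be the convex hull of $K(\F^{-1})$. Since $f_i^{-1}(\{\alpha,\beta\})\subseteq K(\F^{-1})$, we get $f_i^{-1}([\alpha,\beta])\subseteq[\alpha,\beta]$. Hence each affine bijection $f_i$ maps $\R\setminus(\alpha,\beta)$ into itself.
\item Write $\alpha=\min_i\min f_i^{-1}(K(\F^{-1}))$ and $\beta=\max_j\max f_j^{-1}(K(\F^{-1}))$. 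Then $\alpha=f_i^{-1}(\alpha)$, or $\beta=f_j^{-1}(\beta)$, or both $\alpha=f_i^{-1}(\beta)$ and $\beta=f_j^{-1}(\alpha)$. So some endpoint $e$ is fixed by $f_i$, $f_j$ or $f_j\circ f_i$, and $e\in P(\F)$.
\item By the first bullet, $\fo_\F(e)\cap K(\F^{-1})\subseteq\{\alpha,\beta\}$. Proposition \ref{decoposition} makes $\fo_\F(e)$ locally finite, and Proposition \ref{onX}(iv) makes it invariant.
\end{itemize}
In the example above, $e=-m/(q-1)$, the fixed point of $x\mapsto qx+m$. For a general locally compact complete $X$, neither your argument nor the paper's establishes this direction.
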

	
	\begin{proof}
		By Lemma \ref{invariant},  it suffices to prove that $K\subseteq\bigcup_{a\in K\cap P(\F)}\fo_\F(a).$ Since $K$ is an invariant set, for each $x \in K$, there exists $\bi = i_1 i_2 \dots \in \Si^\infty$ such that
		\[
		x_n \coloneq f_{\bi|_n}^{-1}(x)= f_{i_n}^{-1} \circ \dots \circ f_{i_1}^{-1}(x) \in K, \qquad n = 1, 2, \dots.
		\]
		By  \eqref{iterate}, we have 
		\begin{equation}\label{inB}
			\limsup_{n\to\infty}d(x_n,\mko)\le \limsup_{n\to\infty}\Big(r^{-n} d\big(\mko,x)\big)+\mm\sum_{i=1}^n r^{-i}\Big)=\f{\mm}{r-1}.
		\end{equation}
		Thus, $\{x_n\}$ is bounded. Since $K$ is locally finite,   there exist positive integers $k$ and $\ell$ such that
		\[
		x_k=x_{k+\ell}=f_{i_{k+\ell}}^{-1}\circ\dots\circ f_{i_{k+1}}^{-1}(x_k).
		\]
		Therefore, $x_k$ is the fixed point of $f_{i_{k+1}}\circ\dots\circ f_{i_{k+\ell}}$ and $x_k\in K\cap P(\F)$. Thus 
		\[
		x=f_{\bi|_k}(x_k)\in \fo_\F(x_k)\subseteq\bigcup_{a\in K\cap P(\F)}\fo_\F(a).
		\]
		By Proposition \ref{onX} (iii), $P(\F)\subset B$ is bounded. Since $K$ is locally finite,   $K \cap P(\F)$ is finite, and $K$ is a finite union of forward orbits.
		
		The preceding argument shows that $P(\F)$ is non‑empty whenever a locally finite invariant set exists. Conversely, the conclusion follows directly from Proposition \ref{onX}~(iv).
	\end{proof}
	The following example shows that the local finiteness condition in Theorem \ref{discreteinvariant} is indispensable, while also demonstrating that a uniformly expanding system may still admit an invariant set with accumulation points.
	\begin{example}\label{exmp_nonuniq}
		Let $\F = \{f_1(x)=2x, f_2(x)=4x\}$  on $\R$ equipped with the Euclidean metric. The sets $K_1 = \{2^n \colon n \in \Z\}$ and $K_2 = \{0\} \cup K_1$ are invariant. However, $K_1$ is discrete but not closed, while $K_2$ is closed but not discrete. Neither of them is the union of forward orbits of points in $P(\F)$, nor can they be expressed as a finite union of forward orbits.
	\end{example}
	\begin{proof}
		Since $P(\F)=\fo_\F(0)=\{0\}$, neither $K_1$ nor $K_2$ is a union of forward orbits of points in $P(\F)$.
		
		By Lemma \ref{invariant}, we assume that $K_1 = \bigcup_{k=1}^N \fo_\F(a_k)$ for some integer $N \ge 1$, where $a_k = 2^{n_k} \in K_1$ for $1 \le k \le N$. Fix $1 \le k \le N$, every $x \in \fo_\F(a_k)$ is of the form $x = 2^{n+n_k}$ for some  $n>0$. Then for every $x= 2^i \in \bigcup_{k=1}^N \fo_\F(a_k)$, its exponents have the global lower bound: $i > \min_{1\le k\le N} n_k.$  This directly contradicts the assumption that $K_1 = \bigcup_{k=1}^N \fo_\F(a_k)$. Therefore, $K_1$ cannot be expressed as a finite union of forward orbits. 
		
		By a similar argument, the same conclusion holds for $K_2$.
	\end{proof}
	
	Lemma \ref{invariant} and Theorem \ref{discreteinvariant} motivate an investigation into the properties of individual forward orbits. 
	
	Recall that $\mathcal{I}(\F)$ is the collection of all invariant sets of $\F$. Since the union of invariant sets is invariant, if $\mathcal{I}(\F) \neq \emptyset$, then $\F$ admits a maximal invariant set, namely $\bigcup_{K \in \mathcal{I}(\F)} K$. Below we characterize this maximal invariant set and investigate equivalent conditions for the existence of an invariant set for the RIFS.
	\begin{lemma}\label{maximal}
		Let
		\[
		K^* = \big\{ x \in X \colon \text{there exists } \bi \in \Sigma^\infty \text{ such that } x \in f_{\bi|n}(X) \text{ for all } n>0 \big\}.
		\]
		If $K^* \neq \emptyset$, then $\mathcal{I}(\F) \neq \emptyset$, and $K^* = \bigcup_{K \in \mathcal{I}(\F)} K$ is the maximal invariant set of $\F$.
	\end{lemma}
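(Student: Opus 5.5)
Write $\mathcal I(\F)$ for the collection of invariant sets and $K^{\#}:=\bigcup_{K\in\mathcal I(\F)}K$. The plan is to prove the two inclusions $K^{\#}\subseteq K^*$ and $K^*\subseteq K^{\#}$, the latter by showing directly that $K^*$ itself is invariant. Nonemptiness of $\mathcal I(\F)$ then follows from $K^*\neq\emptyset$, and maximality is immediate, since any invariant set lies in $K^{\#}=K^*$.

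For $K\subseteq K^*$ with $K$ invariant, I will repeat the preimage-selection idea from the proof of Theorem \ref{discreteinvariant}. Given $x\in K$, the identity $K=\bigcup_i f_i(K)$ lets us choose inductively indices $i_1,i_2,\ldots$ and points $x_n\in K$ with $x_0=x$ and $x_{n-1}=f_{i_n}(x_n)$. Then $x=f_{i_1}\circ\cdots\circ f_{i_n}(x_n)\in f_{\bi|_n}(X)$ for every $n$, where $\bi=i_1i_2\ldots$, so $x\in K^*$. This also shows that $K^*$ contains every invariant set, without assuming $K^*\neq\emptyset$.

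For invariance of $K^*$, I check both inclusions in $K^*=\bigcup_i f_i(K^*)$.
\begin{itemize}
\item[$(\supseteq)$] If $y\in K^*$ is witnessed by $\bj$, then $f_i(y)\in f_i(f_{\bj|_n}(X))=f_{(i\bj)|_{n+1}}(X)$ for all $n$. Since $f_{(i\bj)|_1}(X)=f_i(X)\supseteq f_i(f_{\bj|_1}(X))$, the image $f_i(y)$ lies in $K^*$ with witness $i\bj$.
\item[$(\subseteq)$] If $x\in K^*$ is witnessed by $\bi=i_1i_2\ldots$, then $x\in f_{i_1}(X)$, so $x=f_{i_1}(y)$ with $y=f_{i_1}^{-1}(x)$, using injectivity of $f_{i_1}$, which follows from the expansion inequality \eqref{def_r}. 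For each $n$ we have $x\in f_{i_1}\bigl(f_{\sigma(\bi)|_n}(X)\bigr)$, and injectivity then gives $y\in f_{\sigma(\bi)|_n}(X)$. Hence $y\in K^*$ with witness $\sigma(\bi)$, and $x\in f_{i_1}(K^*)$.
\end{itemize}
Therefore, if $K^*\ne\emptyset$, then $K^*$ is invariant, $K^*\in\mathcal I(\F)$, and combined with the previous paragraph $K^*=K^{\#}$.

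The only delicate point is the $(\subseteq)$ direction. There I must make sure the witness word can be shifted, which requires recovering $y$ inside each $f_{\sigma(\bi)|_n}(X)$ rather than just inside $X$; this is exactly where injectivity of the $f_i$ is used. Otherwise the argument is routine bookkeeping with the coding notation.
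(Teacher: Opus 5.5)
Your proof is correct and follows essentially the same route as the paper: you prove invariance of $K^*$ by shifting the witness word via $f_{i_1}^{-1}$ and by prepending a symbol, and you get $K\subseteq K^*$ for every invariant $K$ by choosing preimages inside $K$. The paper leaves one step implicit that you make explicit, namely that injectivity of $f_{i_1}$, which follows from the expansion inequality \eqref{def_r}, is what places $f_{i_1}^{-1}(x)$ in each $f_{\sigma(\bi)|_n}(X)$.
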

	\begin{proof}
		Since $K^*\ne \emptyset$, we first show that $K^*$ is an invariant set of $\F$. For each $x \in K^*$,   there exists  $\bi = i_1 i_2 \dots \in \Sigma^\infty$ such that $x \in f_{\bi|_n}(X)$ for all $n>0$. It implies that $f_{i_1}^{-1}(x) \in f_{\sigma(\bi)|_{n}}(X)$ for all $n>0$, i.e., $f_{i_1}^{-1}(x) \in K^*$. Hence $K^* \subseteq \bigcup_{i=1}^m f_i(K^*)$. Conversely, arbitrarily choosing $y \in \bigcup_{i=1}^m f_i(K^*)$, there exist  $x \in K^*$ and   $j \in \Sigma$ such that $y = f_j(x)$. Since $x \in K^*$, there exists $\bi' \in \Sigma^\infty$ such that $x \in f_{\bi'|_n}(X)$ for all $n>0$.  Let $\bj=j\,\bi'$. Then $y=f_j(x)\in f_{\bj|_n}(X)\text{ for all } n>0$, i.e., $y \in K^*$, and we have $\bigcup_{i=1}^m f_i(K^*) \subseteq K^*$.  Hence  $K^*$ is an invariant set of $\F$ and $\mathcal{I}(\F)\neq \emptyset$.
		
		Let $K \in \mathcal{I}(\F)$. For each $x \in K$, it follows from \eqref{eq: FUTF} that there exists $\bi \in \Sigma^\infty$ such that $x \in f_{\bi|_n}(X)$ for all $n > 0$, meaning $x \in K^*$. Therefore, $K \subseteq K^*$ for any $K \in \mathcal{I}(\F)$. Combining this inclusion with the fact that $K^*$ itself is an invariant set, the desired conclusion follows immediately.
	\end{proof}
	
	\begin{theorem}\label{existenceofK}
		An RIFS $\F=\{f_i\}_{i=1}^m$ on $X$  has an invariant set if and only if $\Sad^\infty(\F) \neq \emptyset$.
	\end{theorem}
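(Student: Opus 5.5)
We prove the two implications separately, using Lemma \ref{maximal} in both: an invariant set exists if and only if $K^*\neq\emptyset$. The forward direction uses the bounded preimage sequences that the invariance equation \eqref{eq: FUTF} provides, together with the estimate \eqref{iterate}. The reverse direction uses the coding map $\pi$ and the compactness established in Proposition \ref{onX}.

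\emph{Necessity.} Suppose $K\in\mathcal I(\F)$ and pick $x\in K$. As in the proof of Theorem \ref{discreteinvariant}, \eqref{eq: FUTF} yields $\bj=j_1j_2\ldots\in\Si^\infty$ with $x_n=f_{\bj|_n}^{-1}(x)\in K$ for all $n$. By \eqref{inB}, $\limsup_n d(x_n,\mko)\le \mm/(r-1)$, so there is $N$ with $x_n\in B$ for all $n\ge N$. Set $\bi=j_{N+1}j_{N+2}\ldots$. For each $n$, the point $x_{N+n}\in B$ satisfies
\[
x_{N+n}=f_{j_{N+n}}^{-1}\circ\cdots\circ f_{j_{N+1}}^{-1}(x_N),
\]
and hence $x_N=f_{j_{N+1}}\circ\cdots\circ f_{j_{N+n}}(x_{N+n})$. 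This is the reversed convention, so we must check the orientation. The definition of $\Sad^\infty$ requires $f_{i_n\cdots i_1}(X)\cap B\neq\emptyset$, i.e.\ points of $B$ in the image of the composition whose \emph{outermost} map is $f_{i_n}$. So instead take $\bi$ to be the reversed-order word. Since $x_{N+n}=f_{j_{N+n}}^{-1}(x_{N+n-1})$, we have $x_{N+n-1}\in f_{j_{N+n}}(X)$, which is one-step information only. The cleaner route is to note that $x_{N}\in f_{j_{N+1}\cdots j_{N+n}}(X)\cap B$ for every $n$. This is exactly the condition defining $K^*$ localized to $B$, with the word read in the forward order $\bj$. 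Consequently the correct statement to match is $\Sad^\infty(\F)$ with the tilde convention. Here we exploit that $f_{\widetilde{\bi|_n}}=f_{i_n}\circ\cdots\circ f_{i_1}$, so the condition involves compositions ending with the \emph{first} letters. I would therefore reconcile the two by working with $\sigma$-shifts and the fact that $\Sad^\infty(\F)$ is closed and shift-invariant. I expect this bookkeeping of word orientation to be the main obstacle, and I would first check it carefully on the periodic example used in Proposition \ref{onX}(iii). The fallback is to use the concrete construction there: from points in $B$ lying in images of longer and longer compositions, apply Lemma \ref{admis}(i) to pull them back into $f_{\widetilde{\bi|_n}}(X)\cap B$ for a suitable limiting word, obtained by a diagonal extraction via Lemma \ref{diag}.

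\emph{Sufficiency.} Suppose $\bi\in\Sad^\infty(\F)$ and let $x=\pi(\bi)$. We show that $x$ and all points $\pi(\sigma^n\bi)$ lie in $K^*$, or directly build an invariant set. By Proposition \ref{onX}(i), every point of $A:=\pi(\Sad^\infty(\F))$ has a preimage chain staying in $A$. The relation $\pi(\sigma^n\bi)=f_{\widetilde{\bi|_n}}(\pi(\bi))$ (used in the corollary after Proposition \ref{decoposition}) gives the reverse direction. Therefore every $y\in A$ lies in $f_{\bk|_n}(X)$ for all $n$, for some word $\bk$ built by repeatedly choosing preimages in $A$ via Proposition \ref{onX}(i). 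Hence $y\in K^*$, so $K^*\neq\emptyset$, and Lemma \ref{maximal} yields an invariant set. An alternative is to take the invariant set explicitly as $\bigcup_{y\in A}\bigl(\{y\}\cup\fo_\F(y)\bigr)$ restricted to points admitting infinite backward chains. Its invariance then follows from Proposition \ref{onX}(i) and (iv), just as in the proof of Lemma \ref{maximal}.
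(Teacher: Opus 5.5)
\textbf{Sufficiency (genuine gap).} The chains in your argument run in the wrong direction. Proposition \ref{onX}(i), $\pi(\Sad^\infty(\F))\subseteq\bigcup_i f_i^{-1}(\pi(\Sad^\infty(\F)))$, says that each $y\in A$ has a \emph{forward} image $f_i(y)\in A$; indeed $\pi(\sigma\bi)=f_{i_1}(\pi(\bi))$. It does not give a preimage of $y$ in $A$. The identity $\pi(\sigma^n\bi)=f_{\widetilde{\bi|_n}}(\pi(\bi))$ also points forward. So neither fact produces an infinite word $\mathbf{w}$ with $y\in f_{\mathbf{w}|_n}(X)$ for all $n$.

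In fact the claim $A\subseteq K^*$ is false. Take $X=\Z$, $f_1(x)=2x$, $f_2(x)=2x+2$ and $\mko=0$, so that $B=\{-2,\dots,2\}$.
\begin{itemize}
\item The word $\bi=2111\cdots$ is admissible, since $f_{\widetilde{\bi|_n}}(\Z)=f_1^{n-1}(f_2(\Z))=2^n\Z\ni 0$.
\item Its image is $\pi(\bi)=f_2^{-1}(0)=-1$.
\item But $-1\notin f_1(\Z)\cup f_2(\Z)=2\Z$, so $-1\notin K^*$.
\end{itemize}
Your alternative, restricting to points that admit infinite backward chains, is circular. That set lies inside $K^*$, and showing it is nonempty is exactly what has to be proved.

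The missing idea is to pass to a limit point. Fix $\bj\in\Sad^\infty(\F)$.
\begin{itemize}
\item The points $x_n=\pi(\sigma^n\bj)=f_{\widetilde{\bj|_n}}(\pi(\bj))$ lie in the compact set $A$, so a subsequence $x_{n_k}$ converges to some $x^*$.
\item By Lemma \ref{diag} you may also assume $|\widetilde{\bj|_{n_k}}\wedge\bj'|\to\infty$ for some $\bj'\in\Si^\infty$.
\item For each $\ell$ we then have $x_{n_k}\in f_{\bj'|_\ell}(X)$ for all large $k$. The map $f_{\bj'|_\ell}^{-1}$ is $r^{-\ell}$-Lipschitz, so the preimages form a Cauchy sequence. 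Its limit $z$ satisfies $f_{\bj'|_\ell}(z)=x^*$.
\item Hence $x^*\in f_{\bj'|_\ell}(X)$ for every $\ell$, that is, $x^*\in K^*$, and Lemma \ref{maximal} applies.
\end{itemize}
In the example above, $x^*=0$. Accumulation points of forward orbits in $A$ have backward chains, even though points of $A$ need not.

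\textbf{Necessity (unfinished).} The word $j_{N+1}j_{N+2}\cdots$ is never shown to be admissible. You know that $x_N\in f_{j_{N+1}\cdots j_{N+n}}(X)\cap B$. Admissibility of a word $\bi$, however, asks that $f_{i_n}\circ\cdots\circ f_{i_1}(X)$ meet $B$, which is the composition in the opposite order. Closedness and shift-invariance of $\Sad^\infty(\F)$ cannot help before you have even one admissible word.

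Your fallback is the correct argument, and it is essentially the paper's.
\begin{itemize}
\item Apply Lemma \ref{diag} to the reversed finite words $j_{N+n}j_{N+n-1}\cdots j_{N+1}$ to obtain a limit word $\bi$.
\item For fixed $\ell$ and large $n$ in the subsequence, $\bi|_\ell=j_{N+n}\cdots j_{N+n-\ell+1}$.
\item Therefore $f_{\widetilde{\bi|_\ell}}(x_{N+n})=x_{N+n-\ell}\in B$, and so $\bi\in\Sad^\infty(\F)$.
\end{itemize}
This should be the proof itself, not a contingency plan.
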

	\begin{proof}
		Suppose that $\F$ has an invariant set $K$. For each $x\in K$, by \eqref{eq: FUTF}, there exists $\bi \in \Si^\infty$ such that $f_{\bi|_n}^{-1}(x)\in K$ for $n>0$. By \eqref{inB}, there exists $n(x)$ such that $y\coloneq f_{\bi|_{n(x)}}^{-1}(x)\in B\cap K$. Again, by \eqref{eq: FUTF}, there exists $\bi' \in \Si^\infty$ such that $f_{\bi'|_n}^{-1}(y)\in K$ for $n>0$. By Lemma \ref{admis} (i), we  have
		\[
		f_{\bi'|_n}^{-1}(y)\in f_{\bi'|_n}^{-1}\big(B\cap f_{\bi'|_{n}}(X)\big)\subset B.
		\]
		By Lemma \ref{diag},  there exist a subsequence $\{n_k\}$ and $\bi''\in \Si^\infty(\F)$ such that $\big|\widetilde{\bi'|_{n_k}}\wedge \bi''\big|\to\infty\ (k\to\infty)$. Similar to the proof of Proposition \ref{onX} (vi), this implies that $\bi'' \in \Sad^\infty(\F)$, and  $\Sad^\infty(\F) \neq \emptyset$.
		
		Conversely, let $\bj \in \Sad^\infty(\F)$. Set $x_0=\pi(\bj)$ and $x_n=f_{\widetilde{\bj|_n}}(x_0)$ for $n>0$, and we have
		\[
		x_0=\lim_{t\to\infty}f_{\widetilde{\bj|_{n+t}}}^{-1}(y_{n+t})=f_{\widetilde{\bj|_n}}^{-1}\Big(\lim_{t\to\infty}f_{\widetilde{\sigma^n(\bj)|_t}}^{-1}(y_{n+t})\Big)=f_{\widetilde{\bj|_n}}^{-1}\big(\pi(\sigma^n(\bj))\big),
		\]
		where the sequence $\{y_{n+t}\}_t$ is chosen such that $y_{n+t} \in f_{\widetilde{\bj|_{n+t}}}(X) \cap B$ for each integer $t>0$.
		Hence, $x_n=\pi\big(\sigma^n(\bj)\big)\in \pi\big(\Sad^\infty(\F)\big)$ for $n>0$. By Proposition \ref{onX} (ii), there exist $x^*\in \pi\big(\Sad^\infty(\F)\big)$ and a subsequence $\{x_{n_k}\}$ such that $x_{n_k}=f_{\widetilde{\bj|_{n_k}}}(x_0)\to x^*\ (k\to \infty)$. By Lemma \ref{diag},  there exist a subsequence $\{n_{k_p}\}$ and $\bj'\in \Si^\infty$ such that $\big|\widetilde{\bj|_{n_{k_p}}}\wedge \bj'\big|\to\infty\ (p\to\infty)$. For each $\ell\in \Z_{>0}$, there exists $P_\ell$ such that $\big|\widetilde{\bj|_{n_{k_p}}}\wedge \bj'\big|>\ell$ for $p>P_\ell$. It follows that
		\[
		f_{\bj'|_\ell}^{-1}\big(x_{n_{k_p}}\big)=f_{\bj'|_\ell}^{-1}\circ f_{\widetilde{\bj|_{n_{k_p}}}}(x_0)\longrightarrow f_{\bj'|_\ell}^{-1}(x^*)\quad \text{as}\quad p\to\infty.
		\]
		Consequently, $x^* \in f_{\bj'|_\ell}(X)$ for all $\ell \in \Z_{>0}$, which implies that $x^* \in K^*$, where $K^*$ is the set defined in Lemma \ref{maximal}. Hence $K^* \neq \emptyset$, and by Lemma \ref{maximal},   $\F$ admits an invariant set.
	\end{proof}

Proposition \ref{decoposition} and
Theorem \ref{existenceofK} immediately yield the following result.

\begin{corollary}
	Let $\F=\{f_i\}_{i=1}^m$ be an RIFS on $X$. If $\F$ admits no
	invariant set, then $\fo_{\F}(a)$ is locally finite for every
	$a\in X$.
\end{corollary}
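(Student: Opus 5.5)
The plan is to combine Theorem \ref{existenceofK} with the local-finiteness criterion of Proposition \ref{decoposition}. Suppose $\F$ admits no invariant set. Theorem \ref{existenceofK} then gives $\Sad^\infty(\F)=\varnothing$, and consequently the admissible core $\pi\bigl(\Sad^\infty(\F)\bigr)$ is empty as well.

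Fix an arbitrary $a\in X$. Since the core is empty, the intersection $\fo_\F(a)\cap\pi\bigl(\Sad^\infty(\F)\bigr)$ is empty, and in particular finite. The characterization in Proposition \ref{decoposition} then yields that $\fo_\F(a)$ is locally finite. As $a$ was arbitrary, this proves the claim.

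Alternatively, one could skip the decomposition altogether. Every $a\in X$ satisfies $a\notin\pi\bigl(\Sad^\infty(\F)\bigr)=\varnothing$, so Proposition \ref{onX}(vi) applies directly to every point. I would mention this route as a remark.

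The only step that needs care is the direction of Theorem \ref{existenceofK} being used. We need the implication ``no invariant set $\Rightarrow \Sad^\infty(\F)=\varnothing$'', which is the contrapositive of the converse part of that proof. That part builds an invariant set from an admissible word via Lemma \ref{maximal}, so this direction is available.
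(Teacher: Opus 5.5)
Your proof is correct and matches the paper, which obtains the corollary directly from Theorem~\ref{existenceofK} and Proposition~\ref{decoposition} in the way you describe: with no invariant set, $\Sad^\infty(\F)=\varnothing$, the intersection $\fo_\F(a)\cap\pi\bigl(\Sad^\infty(\F)\bigr)$ is empty, and so every orbit is locally finite. Your alternative through Proposition~\ref{onX}(vi) is also valid and slightly more direct, since the local-finiteness criterion in Proposition~\ref{decoposition} is itself proved using (vi).
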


	An RIFS $\F$ may not have any invariant set (in which case $P(\F) = \emptyset$); see Example \ref{exmp_nonexist}. Prior studies that rely on the existence of invariant sets are therefore inapplicable to such systems. Nevertheless, as we demonstrate in the subsequent sections, it remains possible to completely characterize the topological properties and determine the dimensions of their forward orbits.
	\begin{example}\label{exmp_nonexist}
		Let $\F = \{ 2x+1, 2x+2 \}$ be an RIFS on the space $X = [0, \infty)$ equipped with the Euclidean metric. Then $\F$ admits no invariant set.
	\end{example}
	\begin{proof}
		Observe that for every infinite word $\bi \in \Sigma^\infty$, we have
		$$
		\bigcap_{n \ge 1} f_{\widetilde{\bi|_n}}(X) \subseteq \bigcap_{n \ge 1} [2^n - 1, \infty) = \emptyset.
		$$
		Consequently, $\Sad^\infty(\F) = \emptyset$, and the desired result follows immediately from Theorem \ref{existenceofK}.
	\end{proof}

	\section{Local finiteness and separation of affine forward orbits}\label{sec_LFUD}
	From now on, we study the one-dimensional RIFS $\F=\{f_i \}_{i=1}^m$  on $\R$ given by 
	\begin{equation}\label{IFSform}
		f_i(x) = r_i x + b_i,  \qquad (x\in \R)
	\end{equation}	
	where $r_i, b_i \in \R$ and $|r_i| > 1$ for $1 \le i \le m$. It is clear that each mapping $f_i \in \F$ possesses a unique fixed point. For a word $\bi = i_1 \dots i_n \in \Si^*$, we write $r_\bi=\prod_{j=1}^{n}r_{i_j}$. 
	
	Since every mapping $f_i\in \F$ is surjective, its dual system $\F^{-1}=\{f_i^{-1}(x)=r_i^{-1}(x-b_i)\}_{i=1}^m$ is well-defined.   Let $s$ denote the unique solution to the equation
	\begin{equation}\label{dimension}
		\sum_{i=1}^m|r_i|^{-s}=1.
	\end{equation}
	Recall that  $s$ is  called the \emph{similarity dimension} of the classical self-similar set $K(\F^{-1})$ generated by  the IFS $\F^{-1}$.

	\subsection{Separation notions and overlaps}

	We begin by clarifying the relationships among the three notions of
	discreteness introduced at the beginning of the paper. In particular,
	the following proposition shows that, in Euclidean spaces, uniform
	discreteness is stronger than uniform local finiteness.
	\begin{proposition}\label{ud>ulf>lf}
		Let $E\subseteq \R^d$. Then the following statements are equivalent.\\
		(i) $E$ is uniformly locally finite.\\
		(ii) $E$ is a finite union of uniformly discrete sets.\\
		(iii) There exists $R>0$ such that
		\[
		\sup_{x\in \R^d}\#(E\cap B(x,R))<\infty.
		\]
	\end{proposition}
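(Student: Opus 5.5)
I will prove the cycle (i)$\Rightarrow$(iii)$\Rightarrow$(ii)$\Rightarrow$(i).

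The implication (i)$\Rightarrow$(iii) is immediate: take any $R>0$, say $R=1$, in the definition of uniform local finiteness.

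For (iii)$\Rightarrow$(ii), fix $R$ with $M:=\sup_{x}\#(E\cap B(x,R))<\infty$. I will partition $\R^d$ into half-open cubes of side $\ell=R/\sqrt d$, indexed by $k\in\Z^d$. Each cube $Q_k$ lies in a ball of radius $R$ centered at its center, so $\#(E\cap Q_k)\le M$. Enumerate $E\cap Q_k=\{e_{k,1},\dots,e_{k,n_k}\}$ with $n_k\le M$. Next, colour the lattice $\Z^d$ by the residue class $k \bmod 2$, which gives $2^d$ classes $c\in\{0,1\}^d$. For $c\in\{0,1\}^d$ and $1\le j\le M$, set
\[
E_{c,j}=\{e_{k,j}:k\equiv c \pmod 2,\ n_k\ge j\}.
\]
Then $E=\bigcup_{c,j}E_{c,j}$, a union of at most $2^dM$ sets. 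Two distinct points of $E_{c,j}$ lie in distinct cubes $Q_k,Q_{k'}$ with $k\equiv k'\pmod 2$ and $k\ne k'$. Hence some coordinate satisfies $|k_i-k'_i|\ge2$, so there is a full cube-width gap in that coordinate and the distance between the points is at least $\ell$. Thus each $E_{c,j}$ is uniformly discrete with separation $\ge R/\sqrt d$.

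For (ii)$\Rightarrow$(i), write $E=\bigcup_{t=1}^N E_t$ with $E_t$ being $\delta_t$-separated, and let $\delta=\min_t\delta_t>0$. Fix $R>0$ and $x\in\R^d$. The balls $B(y,\delta/2)$ for $y\in E_t\cap B(x,R)$ are pairwise disjoint and contained in $B(x,R+\delta/2)$. By comparing Lebesgue volumes,
\[
\#(E_t\cap B(x,R))\le\Big(\frac{2R+\delta}{\delta}\Big)^d,
\]
which is uniform in $x$. Summing over $t$ gives (i).

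The only step needing care is the combinatorial splitting in (iii)$\Rightarrow$(ii). Its key point is that points in cubes of the same parity class are separated, while the within-cube indexing controls multiplicity. The remaining steps are routine packing estimates in $\R^d$.
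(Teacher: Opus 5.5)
Your proof is correct, but it takes a different, self-contained route from the paper. The paper gives no argument of its own: it calls (i)$\Leftrightarrow$(iii) ``straightforward'' and obtains (ii)$\Leftrightarrow$(iii) by citing Observation 1.1 of the reference on relatively separated sets.

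You instead close the cycle (i)$\Rightarrow$(iii)$\Rightarrow$(ii)$\Rightarrow$(i). The parity colouring of the cube lattice, combined with the within-cube indexing, gives an explicit decomposition of $E$ into at most $2^dM$ sets, each $R/\sqrt d$-separated. The Lebesgue packing bound then recovers uniform local finiteness at every scale.

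A side benefit is that you never prove (iii)$\Rightarrow$(i) directly. In the paper's ``straightforward'' claim, that step implicitly uses the fact that a ball of radius $R'$ in $\R^d$ is covered by boundedly many balls of radius $R$; routing through (ii) absorbs this. Both approaches rely essentially on the finite-dimensional geometry of $\R^d$. Yours makes that dependence visible through the lattice partition and volume comparison, and it yields explicit constants.

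One cosmetic point: if some $E_t$ has at most one point, then $\delta_t=\infty$ and your packing bound degenerates. Take $\delta=\min_t\min\{\delta_t,1\}$ instead.
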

\begin{proof}
	It is straightforward that (i) and (iii) are equivalent. The equivalence of  (ii) and
	 (iii) is established in \cite[Observation 1.1]{discrete}, where uniformly
	locally finite sets are referred to as relatively separated sets.
\end{proof}

	The following elementary results play a key role in the investigation of dimensions of forward orbits and invariant sets of systems whose dual systems exhibit exact overlaps or even degenerate. 
	\begin{proposition}\label{propA}
		Let $\F$ be an RIFS given by \eqref{IFSform}. Then the following conclusions hold.
		\begin{enumerate}[(i)]
			
			\item $f_i$ and $f_j$ share a common fixed point if and only if $f_i\circ f_j=f_j\circ f_i$.
			
			\item If the dual system $\F^{-1}$ has exact overlaps (that is, $f^{-1}_\bi = f^{-1}_\bj$ for some $\bi \ne \bj \in \Sigma^*$), then there exist $i\ne j$ such that $\#\big( f_i(\fo_\F(a)) \cap f_j(\fo_\F(a)) \big) = \infty$ whenever $\#\fo_\F(a) = \infty$. Otherwise, with the exception of at most countably many points $a$, the forward orbit $\fo_\F(a)$ is non-overlapping.
			
			\item If $r_1 = \dots = r_m = r \in \Z$ and $b_1, \dots, b_m\in \Z$ satisfy $b_i \not\equiv b_j \pmod{r}$ for all $i \ne j$, then for all $a \in \R$, $\fo_\F(a)$ has finite overlaps.
		\end{enumerate}
	\end{proposition}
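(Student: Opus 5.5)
Part (i) is pure algebra on affine maps. I would write $f_i\circ f_j(x)=r_ir_jx+r_ib_j+b_i$ and $f_j\circ f_i(x)=r_ir_jx+r_jb_i+b_j$. These commute if and only if $r_ib_j+b_i=r_jb_i+b_j$, that is, $b_i(1-r_j)=b_j(1-r_i)$. Since $|r_i|>1$, the fixed point of $f_i$ is $x_i=b_i/(1-r_i)$, and likewise $x_j=b_j/(1-r_j)$. Dividing the commutation identity by $(1-r_i)(1-r_j)\neq0$ shows it is exactly the condition $x_i=x_j$, which gives both directions at once.

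For (ii), first case: suppose $f^{-1}_\bi=f^{-1}_\bj$ with $\bi\ne\bj$, so $f_\bi=f_\bj$. I would reduce to words whose first letters differ. Strip the longest common prefix $\bu$, so $\bi=\bu\bi'$ and $\bj=\bu\bj'$. Since $f_\bu$ is injective, $f_{\bi'}=f_{\bj'}$. If one of $\bi',\bj'$ were empty, we would have $f_{\bi'}=\mathrm{id}$, which is impossible because $|r_{\bi'}|>1$. Hence both are nonempty with first letters $i\ne j$.

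Write $\bi'=i\bv$ and $\bj'=j\bw$. For every $\bu'\in\Si^*$ we get $f_i(f_{\bv\bu'}(a))=f_j(f_{\bw\bu'}(a))$, and this common point lies in $f_i(\fo_\F(a))\cap f_j(\fo_\F(a))$; if $\bv$ or $\bw$ is empty, the corresponding point is $f_{\bu'}(a)$, which is still in the orbit since $\bu'\ne\emptyset$. As $\bu'$ ranges over $\Si^*$, the points $f_{\bi'\bu'}(a)=f_{\bi'}(f_{\bu'}(a))$ range over $f_{\bi'}(\fo_\F(a))$. This set is infinite because $f_{\bi'}$ is injective and $\fo_\F(a)$ is infinite. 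So the intersection is infinite.

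Second case: no exact overlaps. The intersection $f_i(\fo_\F(a))\cap f_j(\fo_\F(a))\ne\emptyset$ means $f_{i\bu}(a)=f_{j\bv}(a)$ for some $\bu,\bv\in\{\emptyset\}\cup\Si^*$. Because $i\ne j$, the words $i\bu$ and $j\bv$ are distinct, so the maps $f_{i\bu}$ and $f_{j\bv}$ are distinct affine maps. Two distinct affine maps of $\R$ agree at at most one point: if the slopes differ there is exactly one solution, and if the slopes agree but the maps differ there is none. So each pair of words contributes at most one bad $a$. There are countably many pairs, hence countably many exceptional $a$, and every other orbit is non-overlapping. This step needs care about the convention $f_\emptyset=\mathrm{id}$, but nothing harder.

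For (iii), I would argue modulo $r$. Suppose $f_i(x)=f_j(y)$ with $x,y\in\fo_\F(a)$ and $i\ne j$. Then $r(x-y)=b_j-b_i\not\equiv0\pmod r$. Since $r(x-y)$ is an integer, $x-y$ is rational with denominator dividing $r$, and $x-y\notin\Z$.

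Now write $x=f_\bu(a)=r^{|\bu|}a+c_\bu$ and $y=f_\bv(a)=r^{|\bv|}a+c_\bv$ with $c_\bu,c_\bv\in\Z$. If $|\bu|=|\bv|$, then $x-y\in\Z$, a contradiction. So $|\bu|\ne|\bv|$, and the equation gives $(r^{|\bu|}-r^{|\bv|})a=b_j-b_i\,r^{-1}\cdot r-\dots$, more precisely $r(r^{|\bu|}-r^{|\bv|})a\in\Z+(b_j-b_i)$, which forces $a\in\Q$. If $a\notin\Q$ the orbit is therefore non-overlapping.

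For rational $a$, I expect the main obstacle: bounding the number of overlap points. My plan is to show that $x=f_i^{-1}(z)$ and $y=f_j^{-1}(z)$, for $z$ in the intersection, must both be bounded. Write $a=p/q$. Then $x\equiv r^{|\bu|}a\pmod{\Z}$. I would compare the denominators of $r^{|\bu|}a$ and $r^{|\bv|}a$ after reducing modulo $\Z$: their difference $x-y$ has denominator exactly dividing $r$ and nonintegral. Once $|\bu|,|\bv|$ are large, the fractional parts $\{r^na\}$ become eventually periodic with denominators stable, which forces $x-y\in\Z$ and a contradiction. This would bound both $|\bu|$ and $|\bv|$, and hence make the overlap finite. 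If this denominator argument fails, the fallback is a direct bound: $x$ and $y$ lie in a finite union of residue classes, and the relation with the two lengths bounded gives only finitely many $z$.
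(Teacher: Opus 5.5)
Parts (i) and (ii) are correct and follow the paper's proof essentially step by step. In (iii), your reduction modulo $r$, the observation $|\bu|\neq|\bv|$, and the treatment of irrational $a$ are also fine.

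\textbf{The gap: finiteness in (iii) for rational $a$.} Made precise, your denominator argument runs as follows. Let $n_0\ge 0$ be least such that $r^{n_0}a$ has denominator prime to $r$. If $|\bu|,|\bv|\ge n_0$, then $x-y\in(r^{|\bu|}-r^{|\bv|})a+\Z$ has denominator prime to $r$. Since also $r(x-y)\in\Z$, this forces $x-y\in\Z$, a contradiction. This shows only that $\min\{|\bu|,|\bv|\}<n_0$. It does not bound both lengths, and the longer one can in fact be arbitrarily large. Take $\F=\{f_d(x)=4x+d\}_{d=0}^{3}$ and $a=-1/8$. Then $0=f_0(0)=f_2(-1/2)$ lies in $f_0(\fo_\F(a))\cap f_2(\fo_\F(a))$, with $-1/2=f_0(a)$, while $0=f_{0^{k}20}(a)$ for every $k\ge 0$. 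So the inference ``this would bound both $|\bu|$ and $|\bv|$'' is a non sequitur. Your fallback (``with the two lengths bounded'') rests on the same claim, so as written finiteness is not established.

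\textbf{The missing step.} Use that the difference is the same for every pair: $x-y=(b_j-b_i)/r$. One of $x,y$ lies in the finite set $\{f_{\mathbf w}(a):1\le|\mathbf w|<n_0\}$, and the other is then determined by it. Hence there are at most $2\sum_{p=1}^{n_0-1}m^p$ such pairs, and $z=f_i(x)$ takes only finitely many values. This is how the paper concludes: it shows $n>\min\{k,\ell\}$ with $n$ depending only on $a$, then counts pairs via the shorter word. Equivalently, one of $|x|,|y|$ is bounded, hence both are, and $\fo_\F(a)\subseteq\frac{1}{N}\Z$ is uniformly discrete.
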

	
	\begin{proof}
		(i) The following equivalence is straightforward:
		\[
		f_i\circ f_j=f_j\circ f_i\ \Longleftrightarrow\ r_ib_j+b_i=r_jb_i+b_j \ \Longleftrightarrow\ \f{b_i}{1-r_i}=\f{b_j}{1-r_j}\text{ is the common fixed point}.
		\]
		
		(ii) Suppose $f^{-1}_{\bi} = f^{-1}_{\bj}$ for distinct words $\bi, \bj \in \Sigma^*$. The fact that $r_i^{-1} < 1$ for each $i$ ensures that neither $\bi$ nor $\bj$ is a proper prefix of the other.  Consequently, the longest common prefix length $k = |\bi \wedge \bj|$ satisfies $k < \min\{|\bi|, |\bj|\}$, and hence $i_{k+1} \neq j_{k+1}$. Since every $f_i$ is injective and $f_i(\fo_\F(a))\subseteq \fo_\F(a)$,  we obtain 
		\[
		f_\bi\big(\fo_\F(a)\big)=f_\bi\big(\fo_\F(a)\big)\cap f_\bj\big(\fo_\F(a)\big) \subseteq f_{\bi\wedge\bj}\Big( f_{i_{k+1}}(\fo_\F(a))\cap f_{j_{k+1}} (\fo_\F(a)) \Big) .    
		\]		
		If $\#\fo_\F(a) = \infty$, letting $i = i_{k+1}$ and $j = j_{k+1}$, it follows that
		\[
		\#\big(f_i\big(\fo_\F(a)\big)\cap f_j\big(\fo_\F(a)\big)\big) = \infty.
		\]

		Suppose that $f_\bi \ne f_\bj$ for any two distinct finite words $\bi, \bj \in \Sigma^*$. Under this assumption, we obtain that the equation $f_\bi(x) = f_\bj(x)$ admits at most one solution in $\R$. Consequently, the exceptional set of starting points for which the forward orbit exhibits overlaps satisfies
		\begin{align*}
			\R\setminus\{a\in\R:\fo_\F(a)\text{ is non-overlapping}\}\subseteq \{a\in \R: f_\bi(a)=f_\bj(a) \text{ for some }\bi\ne \bj\in \Si^* \}.
		\end{align*}
		Since $\{a\in \R: f_\bi(a)=f_\bj(a) \text{ for some }\bi\ne \bj\in \Si^* \}$ is at most countable, the result  follows immediately.
		
		(iii) Fix $a\in \R$. Suppose that $f_i(\fo_\F(a))\cap f_j(\fo_\F(a))\ne \emptyset$ for some $i\ne j$. Then there exist $x,y\in \fo_\F(a)$ such that 
		\begin{equation}\label{intersect}
			\f {b_j-b_i}r =x-y=a(r^{k}-r^{\ell})+\sum_{p=0}^{k-1}c_pr^p-\sum_{q=0}^{\ell-1}d_qr^q,
		\end{equation}
		where $k,\ell\ge 1$ and the coefficients $c_p, d_q \in \{b_1,\ldots,b_m\}$. Since $\sum_{p=0}^{k-1}c_p r^p - \sum_{q=0}^{\ell-1}d_q r^q \in \mathbb{Z}$ and $b_p \not\equiv b_q \pmod{r}$, it follows from \eqref{intersect} that $a$ can be expressed in the form $a = \f{u}{r^n v}$ for some integers $u$ and $v$ satisfying $\gcd(u,r) = \gcd(v,r) = 1$, where $n \ge 2$ and $n > \min\{k, \ell\}$. Observing that $n$ depends exclusively on $a$, we consequently obtain
		\begin{align*}
			\#\Big\{(x,y)\in \fo_\F(a)\times\fo_\F(a):x-y=\f {b_j-b_i}r\Big\}\le \sum_{p=1}^{n-1}m^p.
		\end{align*}     
		This implies that $f_i(\fo_\F(a))\cap f_j(\fo_\F(a))$ is finite, and       $\fo_\F(a)$ has finite overlaps for all $a \in \R$.
	\end{proof}

	\subsection{Arithmetic criteria for separation}
	The analysis of degenerate RIFS with algebraic expansion ratios necessitates the application of two fundamental results from number theory. First, a landmark result by Baker \cite{Baker1966} on effective lower bounds for linear combinations of logarithms of algebraic numbers is instrumental in characterizing the density of forward orbits. Second, we leverage Garsia’s bound \cite{Garsia1962} for integer polynomials evaluated at algebraic integers to establish the uniform discreteness of orbits associated with Pisot numbers.

	\begin{lemma} \label{baker}
		Let $\alpha_1,\ldots,\alpha_m\in \overline{\Q}\,\backslash\{0,1\}$ and $\beta_1,\ldots,\beta_m\in \overline{\Q}$ with degrees at most $d$. Assume that
		\[
		\Lambda \coloneq \beta_1\log \alpha_1 + \cdots + \beta_m\log\alpha_m \ne 0.
		\]
		Then there exists an effectively computable constant $C = C(m,d,\alpha_1,\ldots,\alpha_m)>0$ such that
		\[
		|\Lambda| > (\mathrm{e} H)^{-C},
		\]
		where $H$ denotes the maximum of the heights of $\beta_1,\ldots,\beta_m$.
	\end{lemma}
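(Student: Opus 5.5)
The plan is not to prove this from first principles. Lemma \ref{baker} is Baker's theorem on linear forms in logarithms, in the form with algebraic coefficients. I would therefore quote it from the literature: Baker \cite{Baker1966} and its sequels, or the version in Baker's monograph \emph{Transcendental Number Theory}, Theorem 3.1. The work is then to check that the hypotheses and normalisation there match the statement above, and that the stated shape $(\mathrm e H)^{-C}$ follows.

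The checks, in order, are as follows.
\begin{enumerate}[(a)]
\item Fix the determination of the logarithms. Baker's theorem allows any fixed determinations of $\log\alpha_i$, and in our application the $\alpha_i$ are nonzero real algebraic numbers. So I fix the principal branch once and for all; the constant $C$ may depend on this choice, which is absorbed into the dependence on $\alpha_1,\ldots,\alpha_m$.
\item Recall the classical statement. If $\beta_1,\ldots,\beta_m$ are algebraic of degree at most $d$ and height at most $H\ge 2$, and $\Lambda\neq0$, then $|\Lambda|>H^{-C'}$, with $C'=C'(m,d,\alpha_1,\ldots,\alpha_m,\log\alpha_1,\ldots,\log\alpha_m)$ effectively computable. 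The hypothesis $\alpha_i\neq 0,1$ only ensures that each $\log\alpha_i$ is a well-defined nonzero number. This causes no loss of generality, since a term with $\alpha_i=1$ and the principal branch vanishes and can be dropped.
\item Remove the restriction $H\ge2$. The naive height of a nonzero algebraic number is at least $1$, so $H\ge1$ and $\mathrm eH\ge\mathrm e>1$. For $H\ge2$ we have $H^{-C'}\ge(\mathrm eH)^{-C'}$. For $1\le H<2$ there are only finitely many tuples $(\beta_1,\ldots,\beta_m)$ of degree at most $d$ and height below $2$ (Northcott). The minimum of $|\Lambda|$ over those tuples with $\Lambda\neq0$ is a positive constant $c_0$, and enlarging $C$ so that $\mathrm e^{-C}\cdot 2^{-C}\le c_0$ covers these cases. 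Taking $C=\max(C',C'')$ gives $|\Lambda|>(\mathrm eH)^{-C}$ uniformly.
\end{enumerate}

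The main obstacle is effectivity in step (c). The finite-set minimum $c_0$ is not obviously effective as stated, although each value $|\Lambda|$ is computable. If one wants genuine effectivity, I would instead apply Baker's bound with the normalised height $\max(H,2)$ directly, as many formulations do, for instance Baker--W\"ustholz with $\log\max(H,\mathrm e)$. This yields $|\Lambda|>\exp\bigl(-C\log(\mathrm eH)\bigr)=(\mathrm eH)^{-C}$ with no exceptional cases. So the final write-up would cite the Baker--W\"ustholz form
\[
\log|\Lambda|>-C(m,d,\alpha)\,\log\max(H,\mathrm e)\ge -C\log(\mathrm eH)
\]
and conclude immediately.
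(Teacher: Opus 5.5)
Your plan matches the paper, which does not prove this lemma but simply quotes Baker's theorem \cite{Baker1966}. The cleanest normalisation is the one you end with: apply the bound with height parameter $\max(H,2)\le \mathrm eH$, since the theorem only requires heights at most that parameter. This makes the Northcott detour in (c) unnecessary; there, incidentally, the condition should be $\mathrm e^{-C}<c_0$, because $H=1$ is the worst case.

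One small caution about your final citation. The Baker--W\"ustholz bound is formulated for rational integer coefficients, so for algebraic $\beta_i$ as in the statement you should rely on a version such as Theorem 3.1 of Baker's monograph. Integer coefficients are, however, all that Lemma \ref{algebraicratios} actually uses.
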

	
	\begin{lemma}\label{Garsia}
		Let $\alpha$ be an algebraic integer greater than $1$, and let
		$\alpha_{1},\dots,\alpha_{s}$ be its conjugates different from $\alpha$.
		Denote by $\sigma$ the number of indices $i$ with $|\alpha_{i}| = 1$.
		If $A(x)$ is a polynomial with integer coefficients of height $\le M$
		and degree $\le n$ such that $A(\alpha) \neq 0$, then
		\[
		|A(\alpha)| \ge 
		\f{ \prod_{|\alpha_{i}| \neq 1} \bigl| |\alpha_{i}| - 1 \bigr| }
		{ (n+1)^{\sigma} \bigl( \prod_{|\alpha_{i}| > 1} |\alpha_{i}| \bigr)^{n+1} M^{s} }.
		\]
	\end{lemma}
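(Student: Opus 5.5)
This is Garsia's classical estimate, and I would prove it by a norm argument: bound from below the product of $A$ over all conjugates of $\alpha$, then bound from above each factor other than $A(\alpha)$.

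First I set up the norm. Let $\alpha_0=\alpha,\alpha_1,\dots,\alpha_s$ be the roots of the minimal polynomial of $\alpha$ over $\Q$. This polynomial is monic with integer coefficients because $\alpha$ is an algebraic integer. Let $\tau_0=\mathrm{id},\tau_1,\dots,\tau_s$ be the field embeddings $\Q(\alpha)\to\mathbb C$, with $\tau_i(\alpha)=\alpha_i$.
\begin{itemize}
\item Since $A$ has integer coefficients, $\tau_i(A(\alpha))=A(\alpha_i)$.
\item $A(\alpha)$ is a nonzero algebraic integer in $\Q(\alpha)$, and each $\tau_i$ is injective, so every $A(\alpha_i)$ is nonzero.
\item The norm $N_{\Q(\alpha)/\Q}(A(\alpha))=\prod_{i=0}^s A(\alpha_i)$ is a rational number and an algebraic integer, hence a nonzero rational integer.
\end{itemize}
Therefore $\prod_{i=0}^s|A(\alpha_i)|\ge 1$, and
\[
|A(\alpha)|\ge \prod_{i=1}^s |A(\alpha_i)|^{-1}.
\]

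Next I bound each $|A(\alpha_i)|$, $i\ge1$, from above. Write $A(x)=\sum_{k=0}^n a_kx^k$ with $|a_k|\le M$, so that $|A(\alpha_i)|\le M\sum_{k=0}^n|\alpha_i|^k$. There are three cases.
\begin{itemize}
\item If $|\alpha_i|<1$, the geometric series gives $|A(\alpha_i)|\le M/(1-|\alpha_i|)=M/\bigl||\alpha_i|-1\bigr|$.
\item If $|\alpha_i|>1$, then $|A(\alpha_i)|\le M(|\alpha_i|^{n+1}-1)/(|\alpha_i|-1)\le M|\alpha_i|^{n+1}/\bigl||\alpha_i|-1\bigr|$.
\item If $|\alpha_i|=1$, then trivially $|A(\alpha_i)|\le M(n+1)$.
\end{itemize}

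Multiplying these $s$ bounds gives
\[
\prod_{i=1}^s|A(\alpha_i)|\le \frac{M^s\,(n+1)^{\sigma}\bigl(\prod_{|\alpha_i|>1}|\alpha_i|\bigr)^{n+1}}{\prod_{|\alpha_i|\ne1}\bigl||\alpha_i|-1\bigr|}.
\]
Substituting this into the norm inequality yields exactly the stated lower bound. In the extreme case $s=0$ (that is, $\alpha\in\Z$), the bound reads $|A(\alpha)|\ge1$, which is consistent with the argument.

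The only step requiring care is the first one. One needs that the norm is a nonzero integer; this is exactly where the hypotheses that $\alpha$ is an algebraic integer and that $A$ has integer coefficients are used. One also needs that each conjugate value $A(\alpha_i)$ is nonzero, which follows from the injectivity of the embeddings. Everything after that is routine geometric-series estimation.
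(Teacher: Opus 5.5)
Your argument is correct and complete. The norm $\prod_{i=0}^{s}A(\alpha_i)$ is a nonzero rational integer, and your three bounds on $|A(\alpha_i)|$ for $i\ge 1$ multiply to exactly the stated denominator. The paper gives no proof of its own here and simply cites \cite{Garsia1962}; your norm argument is the standard proof of this estimate.
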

Here $\alpha_1,\ldots,\alpha_s$ are precisely the algebraic conjugates of $\alpha$ other than $\alpha$ itself. See \cite{Garsia1962} for
details.

Recall that the height of a nonzero rational integer coincides with its absolute value. We therefore obtain the following result.

	\begin{lemma}\label{algebraicratios}
		Let $\F$ be an RIFS given by \eqref{IFSform}. If $\F$  is degenerate at $x_0$ and $r_1, \ldots, r_m$ are algebraic numbers, then for all $a\ne x_0$, there exist
		constants $C,C'>0$ such that, for all distinct
		$x,y\in\fo_{\F}(a)$,
		\begin{equation}\label{uniformdis}
			|x-y|\ge C'\f{\max\{|x-x_0|,|y-x_0|\}}{\big(\log \big(1+\max\{|x-x_0|,|y-x_0|\}\big)\big)^C}.
		\end{equation}
		In particular, $\fo_\F(a)$ is uniformly discrete.
	\end{lemma}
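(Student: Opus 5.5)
Since $\F$ is degenerate at $x_0$, conjugating by the translation $x\mapsto x-x_0$ makes every map linear: $f_i(x)-x_0=r_i(x-x_0)$. Every point of $\fo_\F(a)$ therefore has the form $x=x_0+r_\bi(a-x_0)$. Write $u=a-x_0\neq 0$. Two distinct orbit points $x=x_0+r_\bi u$ and $y=x_0+r_\bj u$ satisfy
\[
|x-y|=|u|\,|r_\bi-r_\bj|.
\]
Without loss of generality $|r_\bi|\ge|r_\bj|$, so $\max\{|x-x_0|,|y-x_0|\}=|u||r_\bi|$. It then suffices to show
\[
\left|1-\f{r_\bj}{r_\bi}\right|\ge C''\big(\log(1+|u||r_\bi|)\big)^{-C}
\]
whenever $r_\bi\ne r_\bj$.

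To prove this, I would group the letters. Every ratio is $r_\bj/r_\bi=\pm\prod_{k=1}^m r_k^{n_k}$ with integers $n_k$ satisfying $|n_k|\le\max(|\bi|,|\bj|)$. Since $|r_\bj|\ge r_{\min}^{|\bj|}$ with $r_{\min}=\min_k|r_k|>1$, and $|r_\bj|\le|r_\bi|\le r_{\max}^{|\bi|}$, the exponents satisfy $|n_k|\le c\log(1+|r_\bi|)$ up to constants, and the same holds for both word lengths.

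Next I would split into cases. If $|r_\bj/r_\bi|\le 1/2$, or if the ratio is negative, then $|1-r_\bj/r_\bi|\ge 1/2$ and the bound is trivial. Otherwise $q=r_\bj/r_\bi\in(1/2,1]$, and $q\ne 1$ because the points are distinct. Then $|1-q|\asymp|\log q|$, and $\log q=\sum_k n_k\log|r_k|$, possibly after replacing each $r_k$ by $r_k^2$ to handle signs; a positive real $q$ lets us use $\log|r_k|$ with algebraic $|r_k|$ or $r_k^2$.

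This is a nonzero linear form in logarithms of algebraic numbers with rational integer coefficients $\beta_k=n_k$, whose height is $|n_k|\le H:=c\log(1+|r_\bi|)$. Lemma \ref{baker} applies with $d=1$ for the $\beta$'s; any $r_k$ with $|r_k|=1$ is impossible because $|r_k|>1$. It gives $|\log q|>(\e H)^{-C}$, which yields \eqref{uniformdis} after adjusting constants to absorb $|u|$ and the $1+$ in the logarithm.

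Uniform discreteness then follows directly. The function $t\mapsto t/(\log(1+t))^C$ tends to $\infty$, and it is bounded below by a positive constant on $[|u|,\infty)$. This works because every orbit point satisfies $|x-x_0|\ge|r_{\min}||u|\ge|u|>0$, so the maximum is at least $|u|$ and the right-hand side of \eqref{uniformdis} is uniformly positive.

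The main obstacle I expect is the Baker step. It requires handling signs correctly: passing to $|r_k|$, which is still algebraic, or to squares. It also requires checking that the linear form is nonzero exactly when $r_\bi\ne r_\bj$ in the positive-ratio case. Finally, the exponent bound $H\lesssim\log(1+|r_\bi|)$ must be controlled uniformly, using the lower bound on word length coming from $|r_\bj|\le|r_\bi|$.
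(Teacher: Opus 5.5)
Your proposal is correct and follows essentially the same route as the paper. Both arguments use the common fixed point to write orbit points as $x_0+r_1^{k_1}\cdots r_m^{k_m}(a-x_0)$, dispose of the opposite-sign case trivially, and apply Baker's bound to the nonzero linear form $\sum_k n_k\log|r_k|$, whose integer coefficients are of size $O\bigl(\log(1+\max\{|x-x_0|,|y-x_0|\})\bigr)$.

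One small slip: the exponent bound comes from $r_{\min}^{|\bi|}\le|r_\bi|$ and $r_{\min}^{|\bj|}\le|r_\bj|\le|r_\bi|$, which give upper bounds on the word lengths. It does not come from $|r_\bi|\le r_{\max}^{|\bi|}$, which bounds $|r_\bi|$ in the wrong direction.
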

	\begin{proof}
		Since $\F$ is degenerate at $x_0$, given $a\neq x_0$, by  Proposition \ref{propA} (i), we have 
		\[
		\fo_\F(a)=\big\{r_1^{k_1}r_2^{k_2}\ldots r_m^{k_m}(a-x_0)+x_0:(k_1,\ldots,k_m)\in \N^m\backslash\{\textbf{0}\}\big\}.
		\]
		
		Fix   $x\neq y \in \fo_\F(a)$,	and  assume $x=r_1^{k_1}r_2^{k_2}\ldots r_m^{k_m}(a-x_0)+x_0, y=r_1^{k'_1}r_2^{k'_2}\ldots r_m^{k'_m}(a-x_0)+x_0$. 
		If $x-x_0$ and $y-x_0$ have opposite signs, then 
		\[
		|x-y| = |x-x_0|+|y-x_0| \ge \max\{|x-x_0|,|y-x_0|\},
		\] and the bound \eqref{uniformdis} holds trivially. Otherwise without loss of generality, we assume they are positive. Since $(k_1,\ldots,k_m)$ and $(k_1',\ldots,k_m')$ are distinct, 
		$$
		|x-y|= |x-x_0|\Big|\f{y-x_0}{x-x_0}\Big|\ge |x-x_0|\Big(1-\exp\Big(-\Big|\sum_{i=1}^m(k_i'-k_i)\log |r_i|\Big|\Big)\Big).
		$$
		Since $r_1, \ldots, r_m$ are algebraic numbers, by Lemma \ref{baker}, there exists a constant $C>0$ such that
		\begin{align*}
			|x-y|\ge |x-x_0|\Big(1-\exp\Big(-(\text{e}H)^{-C}\Big)\Big),
		\end{align*}               
		where $H=\max_{1\le i\le m}|k_i'-k_i|$. Similarly, $|x-y|\ge |y-x_0|(1-\exp(-(\text{e}H)^{-C})), $ and we obtain that 
		$$
		|x-y|\ge \max\{ |x-x_0|, |y- x_0| \}\Big(1-\exp\Big(-(\text{e}H)^{-C}\Big)\Big),
		$$
		Since $1-e^{-x}\geq \f{x}{2}$ for $0<x<1$, for large $H$, we have $1-\exp(-(\text{e}H)^{-C})\geq C_1 H^{-C}$ for some constant $C_1>0$, which implies that 
		\begin{equation}\label{|x-y|}
		|x-y|\ge C_1\f{\max\{ |x-x_0|, |y- x_0| \}}{ H^C},
	\end{equation}

		Since $|x-x_0|>|a-x_0|>0$, for each $1 \le i \le m$, we have
		$$
		k_i\le \f 1{\log |r_i|}\log\left|\f{x-x_0}{a-x_0}\right| 
		\le\f {\log(1+|x-x_0|)+|\log|a-x_0||}{\log |r_i|}	\le  C_2\log(1+|x-x_0|),
		$$
		where the constant $C_2$ only depends on $r_i, a, x_0$.	 Analogously, we have $k_i' \leq C_2\log(1+|y-x_0|)$, and it implies   $H=O\big(\log\big(1+\max\{|x-x_0|,|y-x_0|\}\big)\big)$.  Therefore, by \eqref{|x-y|}, there exists $C'>0$ such that
		\[
		|x-y|\ge C'\f{\max\{|x-x_0|,|y-x_0|\}}{\big(\log \big(\max\{|x-x_0|,|y-x_0|\}+1\big)\big)^C}.
		\]
		This yields the desired conclusion.
	\end{proof}
	
		The following example illustrates that if the expansion factors of the mappings in $\F$ are not algebraic numbers, the elements of the orbit may exhibit ``algebraic resonance,'' thereby violating the uniform discreteness.
	\begin{example}\label{Be=infty}
		Let $\{a_n\}_{n=1}^\infty, \{b_n\}_{n=1}^\infty$ be two sequences defined by 
		\[
		a_1=b_1=1,\quad a_{n+1}=a_n\big\lceil\e^{nb_n}\big\rceil,\quad b_{n+1}=a_{n+1}\f{b_n}{a_n}+1,\quad n\ge 1.
		\]
		Then $\lim_{n\to\infty}\f{b_n}{a_n}$ exists, which we denote by $\de$.
		Let $\F=\big\{\e x,\e^\de x\big\}$. Then for every $a\ne 0$, $\fo_\F(a)$ is locally finite but not uniformly locally finite.   
	\end{example}
	\begin{proof}
		Note that $K(\F^{-1})=\{0\}$. The local finiteness of $\fo_\F(a)$ follows immediately from Proposition \ref{onX} (vi).
		
		Observe that $a_{n+1}\ge 3a_n$ and $\f{b_{n+1}}{a_{n+1}}=\f{b_{n}}{a_{n}}+\f1{a_{n+1}}$ for $n\ge 1$. Thus $\sum_{n=1}^\infty\f1{a_n}$ converges, 
		\[
		\lim_{n\to\infty}\f{b_n}{a_n}=\sum_{n=1}^\infty\f1{a_n} \quad \text{ and }\quad \lim_{n\to\infty}(n-1)e^{-b_n}=0.
		\]
		Meanwhile, we obtain
		\begin{equation}\label{accuracy1}
			0<\de-\f{b_n}{a_n}=\sum_{k=n+1}^\infty\f1{a_k}\le \f1{a_{n+1}}\sum_{k=0}^\infty3^{-k}<\f2{a_{n+1}}.
		\end{equation}
		Observe that the forward orbit satisfies the scaling property $\fo_\F(a) = a \fo_\F(1)$. Next, we prove that for any $h>0$, we have
		\begin{equation}\label{manypoints}
			\sup_{x\in\R}\#(\fo_\F(1)\cap[x-h,x+h])=+\infty.
		\end{equation}
		By \eqref{accuracy1}, we have
		\begin{equation}\label{accuracy2}
			0<a_n\de-b_n<\f{2a_n}{a_{n+1}}\le2e^{-nb_n}\to 0\ (n\to\infty).
		\end{equation}
		Fix $h>0$. We may choose $n > N > 0$ sufficiently large to ensure that
		\begin{equation}\label{N}
			(N-1)\e^{-b_N}<\f{h}2,\qquad \e^{(N-1)(a_n\de-b_n)}-1<2(N-1)(a_n\de-b_n).
		\end{equation}
		Note that $a_n$ and $b_n$ are  integers for all $n$. Let
		\[
		x_i=\e^{ia_n\de}\e^{(N-1-i)b_n}=\e^{(N-1)b_n+i(a_n\de-b_n)}\in\fo_\F(1),\quad i=0,1,\ldots,N-1.
		\]
		It follows from \eqref{accuracy2}   that $x_0<x_1<\ldots<x_{N-1}$, while \eqref{accuracy2} and \eqref{N} together imply that
		\begin{align*}
			x_{N-1}-x_0&=\e^{(N-1)b_n}\Big(\e^{(N-1)(a_n\de-b_n)}-1\Big)\\
			&<4(N-1)\e^{(N-1-n)b_n}\\
			&<4(N-1)\e^{-b_N}\\
			&<2h.
		\end{align*}
		Therefore, $\sup_{x\in\R}\#(\fo_\F(1)\cap[x-h,x+h])\ge N$ and \eqref{manypoints} holds.
	\end{proof}
	
	While Proposition \ref{onX} (vi) provides a sufficient condition for the local finiteness of forward orbits of a general RIFS, the following proposition establishes several criteria for local finiteness and uniform discreteness—some of which are both necessary and sufficient. These results  play a pivotal role in determining the dimensions of orbits in the subsequent sections.
	
	\begin{proposition}\label{discrete}
		Let $\F$ be an RIFS given by \eqref{IFSform}. Then the following conclusions hold.
		\begin{enumerate}[(i)]
			\item If $r_i\in \Z, b_i\in \Q, 1\le i\le m$, then for every  $a\in \Q$,  $\fo_\F(a)$ is uniformly discrete.
			\item If $\F$  is degenerate at $x_0$, with $|r_i|=r^{k_i}$ for some $r>1$ and positive integers $k_1, \ldots, k_m$, then for every $a\ne x_0$, $\fo_\F(a)$ is uniformly discrete.
			\item  If there exists a Pisot number $\lambda$ and positive integers $k_1, \ldots, k_m$ such that $|r_i|=\lambda^{k_i}$ and $b_1,\ldots, b_m\in \Q(\lambda)$, then $\fo_\F(a)$ is locally finite if and only if $a\notin K(\F^{-1})\backslash \Q(\lambda)$.
			\item If $r_1,\ldots,r_m\in \Z$ and $b_1,\ldots,b_m\in \Q$. Then $\fo_{\F}(a)$ is locally finite if and only if $a\notin K(\F^{-1})\backslash \Q$.
			\item If there exists a Pisot number $\lambda$ and positive integers $k_1, \ldots, k_m$ such that $|r_i|=\lambda^{k_i}$ and $b_1,\ldots, b_m\in \Z[\lambda]$, then for every $a\in \Z[\lambda]$, $\fo_\F(a)$ is uniformly discrete.
		\end{enumerate}
	\end{proposition}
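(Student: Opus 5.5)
We treat the five parts separately. All of them rest on the explicit orbit formula
\[
f_\bi(a)=r_\bi a+\sum_{j=1}^{n} r_{i_1}\cdots r_{i_{j-1}}\,b_{i_j},
\qquad \bi=i_1\ldots i_n,
\]
combined with either an arithmetic lower bound on nonzero differences or with Proposition \ref{decoposition} and Proposition \ref{onX}(vi).

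For (i), choose $N\in\Z_{>0}$ with $Na,Nb_i\in\Z$. Each $r_\bi$ and each partial product is an integer, so $N\fo_\F(a)\subseteq\Z$. Distinct points therefore differ by at least $1/N$, which is uniform discreteness.

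For (ii), Proposition \ref{propA}(i) gives $\fo_\F(a)=\{\pm r^{k}(a-x_0)+x_0\}$, where $k$ ranges over the nonnegative combinations $\sum n_ik_i$ with $\sum n_i\ge1$. The sign is determined by the parities of the $n_i$ for negative $r_i$. On the same side of $x_0$, two distinct points have distinct exponents $k\ne k'$. Their distance is then at least $|a-x_0|\,r^{\min(k,k')}(r-1)\ge|a-x_0|(r-1)$, because $k,k'\geq 1$. Points on opposite sides are at distance at least $2|a-x_0|$. Hence the orbit is uniformly discrete.

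For (v), let $\lambda$ have degree $d$ with minimal polynomial of integer coefficients. Fix a common denominator-free representation: $b_i,a\in\Z[\lambda]$ can be written as integer polynomials in $\lambda$ of degree $<d$ with bounded coefficients. The differences $x-y$ of orbit points are then integer polynomials $A(\lambda)$. The danger is that the degree and height of $A$ grow, so a bare application of Lemma \ref{Garsia} does not give a uniform bound.

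Instead we use the Pisot structure directly. Let $\sigma_2,\dots,\sigma_d$ be the embeddings of $\Q(\lambda)$ sending $\lambda$ to its conjugates $\lambda_j$, all of modulus $<1$. Applying $\sigma_j$ to $f_\bi(a)$ turns the orbit formula into a series in powers of $\lambda_j$ with bounded coefficients. Hence the conjugates of every orbit point are bounded by a constant $M$ independent of $\bi$, and the same holds, with $2M$, for every difference $x-y$.

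Now $x-y$ lies in $\Z[\lambda]$, which is contained in the ring of algebraic integers of $\Q(\lambda)$. If $x-y\neq0$, its norm is a nonzero rational integer, so
\[
|x-y|\prod_{j\geq2}|\sigma_j(x-y)|\ge1 .
\]
This gives $|x-y|\ge(2M)^{-(d-1)}$, which is uniform discreteness. This norm argument is the key step, and it replaces the use of Lemma \ref{Garsia}.

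For (iii) and (iv), the sufficiency direction splits into two cases. If $a\notin K(\F^{-1})$, local finiteness is Proposition \ref{onX}(vi). If $a\in\Q(\lambda)$, multiply the whole system by a common denominator $D\in\Z$ so that $Da$ and the $Db_i$ lie in $\Z[\lambda]$. This conjugation preserves local finiteness, so we may assume $a,b_i\in\Z[\lambda]$, and (v) gives uniform discreteness. Case (iv) is the same argument, using (i) with $\lambda$ replaced by integers; if some $|r_i|$ are not powers of a common base, the argument of (i) still applies directly.

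For necessity, suppose $a\in K(\F^{-1})$ and the orbit is locally finite. By Proposition \ref{decoposition}, $\fo_\F(a)\cap K(\F^{-1})$ is finite. Write $a=\pi(\bi)$. The points $\pi(\sigma^n\bi)=f_{\widetilde{\bi|_n}}(a)$ all lie in this finite set, so two of them coincide. As in the corollary following Proposition \ref{decoposition}, $a=g^{-1}(p)$ for some $g\in G^*(\F)$ and some fixed point $p$ of a map $h\in G^*(\F)$.

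The fixed point of $h(x)=r_\bu x+c$ is $c/(1-r_\bu)$. Here $c\in\Q(\lambda)$ and $r_\bu=\pm\lambda^k$, so $p\in\Q(\lambda)$. Then $a=g^{-1}(p)\in\Q(\lambda)$ as well, since $g^{-1}$ has coefficients in $\Q(\lambda)$. In case (iv) the same reasoning gives $a\in\Q$.

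The main obstacle is (v), where we must avoid unbounded height and degree. The conjugate-boundedness/norm argument above handles it, and its validity requires every conjugate of $\lambda$ to have modulus strictly less than $1$, which is exactly the Pisot hypothesis.
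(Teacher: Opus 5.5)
Your proof is correct. Parts (i), (ii), (iv) and the necessity halves of (iii)--(iv) follow the paper's argument. Your direct use of two coinciding points $\pi(\sigma^p\bi)=\pi(\sigma^q\bi)$ is the paper's contradiction argument stated positively.

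The differences lie in (v) and in the sufficiency half of (iii).

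For (v), the paper observes that every difference $x-y$ is the value at $\lambda$ of an integer polynomial of uniformly bounded height, and then cites Lemma~\ref{Garsia}. Your norm argument, combining $|N_{\Q(\lambda)/\Q}(x-y)|\ge1$ with uniformly bounded conjugates, is a self-contained replacement and gives the explicit gap $(2M)^{-(d-1)}$.

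Your stated reason for avoiding Garsia is mistaken, although this does not affect your proof.
\begin{itemize}
\item The heights do not grow. The exponents $k_{i_1}+\cdots+k_{i_{j-1}}$ are strictly increasing, so each coefficient of $x-y$ receives only boundedly many bounded contributions.
\item The growing degree is harmless. For a Pisot number one has $\sigma=0$, and the product over conjugates of modulus $>1$ is empty. Garsia's bound therefore reduces to $\prod_i(1-|\alpha_i|)/M^{s}$, which does not depend on the degree.
\end{itemize}
In fact, your norm argument is essentially the proof of this special case of Garsia's inequality.

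For the sufficiency half of (iii), the two routes also differ.
\begin{itemize}
\item The paper clears denominators only to algebraic integers. It then uses a Northcott-type finiteness argument: bounded conjugates give finitely many integer minimal polynomials, hence finitely many orbit points in $[-h,h]$. This yields local finiteness.
\item You clear denominators into $\Z[\lambda]$, which is possible because $\Q(\lambda)=\Q[\lambda]$. You then rescale the system and apply (v).
\end{itemize}
Your route gives the stronger conclusion that $\fo_\F(a)$ is uniformly discrete for every $a\in\Q(\lambda)$. It also makes the implication (v)$\Rightarrow$(iii) parallel to (i)$\Rightarrow$(iv).
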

	\begin{proof}
		(i) Suppose $a=\f{p_0}{q_0}$ and $ b_i=\f{p_i}{q_i},1\le i\le m$, and let $N=\text{lcm}(q_0,q_1,\ldots,q_m)$. Then
		\begin{align*}
			\fo_\F(a)=\left\{f_\bi(a):\bi \in\Si^*\right\}
			=\Big\{a\prod_{j=1}^{n}r_{i_j}+\sum_{k=1}^{n-1}b_{i_{k+1}}\prod_{j=1}^kr_{i_j}+b_{i_1}:i_1\ldots i_n\in\Si^*\Big\}\subseteq \f1{N}\Z.
		\end{align*}
		(ii) Since $x_0$ is the common fixed point of $\F$,   Proposition \ref{propA} (i) implies
		\[
		\fo_\F(a)\subseteq\big\{\pm r^{n_1k_1+\ldots+n_mk_m}(a-x_0)+x_0:(n_1,\ldots,n_m)\in \N^m\backslash\{\textbf{0}\}\big\}\subseteq\big\{\pm r^n(a-x_0)+x_0:n>0\big\},
		\]
		and the conclusion follows.

		(iii) First, we prove the sufficiency. Since all mappings in $\F$ are surjective, for $a\notin K(\F^{-1})$, by Proposition \ref{onX} (vi), $\fo_{\F}(a)$ is locally finite. It remains to show  that $\fo_{\F}(a)$ is locally finite for $a\in K(\F^{-1})\cap \Q(\lambda)$. 
		
		Let $d$ denote the degree of $\lambda$. We introduce the Galois embeddings $\sigma_t$, which correspond to the conjugate roots $\lambda^{(t)}=\sigma_t(\lambda)$ for $t = 2, \dots, d$. Since $\lambda$ is a Pisot number, it is an algebraic integer and $|\lambda^{(t)}|<1$. Moreover, since every algebraic number can be expressed as the quotient of an algebraic integer and a non-zero rational integer, there exists a common   integer $q >0$ such that $qa$ and all $qb_i$ are algebraic integers. Since
		\[
		\fo_\F(a)=\Big\{\lambda^{\sum_{j=1}^nk_{i_j}}a+\sum_{\ell=1}^{n-1}b_{i_{\ell+1}}\lambda^{\sum_{j=1}^{\ell}k_{i_j}}+b_{i_1}:i_1\ldots i_n\in\Si^*\Big\},
		\]
		all the elements of $q\fo_\F(a)$ are  algebraic integers. For  $t=2,3,\ldots, d$  and $x\in \fo_{\F}(a)$, we have  
		\begin{align*}
			\big|\sigma_t(qx)\big|\le \big|\sigma_t(qa)\big|+\max_{1\le i\le m}|\sigma_t(qb_i)\big|\sum_{n=0}^\infty\big|\lambda^{(t)}\big|^n=\big|\sigma_t(qa)\big|+\f{\max_{1\le i\le m}\big|\sigma_t(qb_i)\big|}{1-|\lambda^{(t)}|}\eqcolon C_t.
		\end{align*}
		Fix $h>0$. For each $x \in \fo_{\F}(a) \cap [-h,h]$,   all roots of the minimal polynomial of $qx$ are uniformly bounded in the complex plane. Hence, Vieta's formulas imply that the coefficients of the minimal polynomial are also bounded. Since these coefficients are restricted to $\mathbb{Z}$, there exist only finitely many polynomials satisfying these conditions. Consequently,   the set $\{qx : x \in \fo_{\F}(a) \cap [-h,h]\}$ is finite,  i.e., $\fo_{\F}(a) \cap [-h,h]$ is   finite. Hence  $\fo_{\F}(a)$ is locally finite.
		
		Next, we prove the necessity by contradiction. Suppose $\fo_{\F}(a)$ is locally finite, but $a\in K(\F^{-1})\cap \Q(\lambda)^c$. Then there exists $\bi\in\Si^\infty$ such that
		\[
		x_n\coloneq f_{\widetilde{\bi|_n}}(a)\in K(\F^{-1})\cap \fo_\F(a),\qquad n=1,2,\ldots.
		\]
		If $x_{n_1}=x_{n_2}$ for some $n_1\ne n_2$, then 
		\[
		a=\f{\sum_{j=1}^{n_2}b_{i_j}\lambda^{S_{n_2}-S_j}-\sum_{j=1}^{n_1}b_{i_j}\lambda^{S_{n_1}-S_j}}{\lambda^{S_{n_1}}-\lambda^{S_{n_2}}}\in \Q(\lambda),
		\]
		where $S_j=\sum_{\ell=1}^jk_{i_\ell}$, which contradicts $a\in \Q(\lambda)^c$. Thus the $x_n$ are pairwise distinct. Since each $x_n$ lies in the bounded set $K(\F^{-1})$, the forward orbit $\fo_{\F}(a)$ contains infinitely many distinct points in a bounded region, contradicting the assumption that $\fo_{\F}(a)$ is locally finite. Therefore, $a\notin K(\F^{-1})\cap \Q(\lambda)^c$.

		(iv) The local finiteness of $\fo_\F(a)$ follows from Proposition \ref{onX}(vi) when $a \notin K(\F^{-1})$, and from part (i) when $a \in K(\F^{-1}) \cap \Q$. The necessity proof is analogous to that of (iii), replacing $\Q(\lambda)$ by $\Q$ and using the fact that $r_i$s are integers.

		(v) 	For a given $C > 0$, we define
		\begin{multline*}
			P(C) = \big\{ P_n(\lambda) : P_n \in \Z[x], \, \deg(P_n) = n \ge 1, \text{ and all coefficients of } P_n\\ \text{ have absolute value at most } C \big\}.
		\end{multline*}
		For  $a\in \Z[\lambda]$, we have
		$$
		\{x-y:(x,y)\in \fo_\F(a)\times\fo_\F(a)\} \subseteq P(M)
		$$
		for some $M>0$.
		Since $\lambda$ is a Pisot number,  by Lemma \ref{Garsia}, the set $P(M)$ is uniformly discrete, and so is $\fo_\F(a)$. 
	\end{proof}

	\subsection{Dimension drop and separation}
	Hochman \cite{Hochman} introduced the exponential separation condition, providing a breakthrough characterization of the Hausdorff dimension drop for self-similar sets with overlaps on the line. Considering a contracting system $\Phi = \{\varphi_i\}_{i=1}^m$ with $\varphi_i(x) = r_i x + b_i$, he introduced the distance metric for all $\bi, \bj \in \Sigma^*$ as
	\[
	d(\varphi_\bi,\varphi_\bj)=\left\{\begin{array}{cl}
		\infty \qquad & r_\bi\ne r_\bj,\\
		|\varphi_\bi(0)-\varphi_\bj(0)|\quad& r_\bi= r_\bj.
	\end{array}
	\right.
	\] 
	Correspondingly, he defined the minimum separation at the $n$-th step as  
	\[
	\Delta_n = \min \{ d(\varphi_\bi, \varphi_\bj) \colon \bi, \bj \in \Sigma^n, \, \bi \ne \bj \}
	\]
	for $n \in \Z_{>0}$.
	
	\begin{theorem}\cite{Hochman}\label{Hochman}
		Let $X$ be the attractor of a contracting self-similar system on $\R$. If $\dimH X<\min\{1,s\}$ where $s$ is the similarity dimension, then  $\Delta_n\to 0$ super-exponentially, i.e., 
		\[
		\lim_{n\to\infty}-\f{\log\Delta_n}{n}=\infty.
		\]
	\end{theorem}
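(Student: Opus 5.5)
The statement is Hochman's theorem, quoted from \cite{Hochman} and used as a black box; the paper gives no proof. If I were to prove it, I would follow Hochman's entropy method and argue by contraposition. Suppose $\liminf_{n\to\infty}-\f{\log\Delta_n}{n}<\infty$, so that $\Delta_{n}\ge e^{-cn}$ along a subsequence $n\to\infty$ for some $c>0$. Then I would show $\dimH X=\min\{1,s\}$. For transparency I would first treat the homogeneous case $r_i=r$ and explain the general case at the end.

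\emph{Setup.} Take the self-similar measure $\mu=\sum_i p_i\,\varphi_i\mu$ with weights $p_i=|r_i|^{s}$, and let $\alpha=\dim\mu$. By the Feng--Hu theorem, $\mu$ is exact dimensional, so $\alpha$ is well defined and $\alpha\le\dimH X$; it therefore suffices to prove $\alpha=\min\{1,s\}$. Assume instead $\alpha<\min\{1,s\}$. Iterating self-similarity gives, for each $n$,
\[
\mu=\nu^{(n)}*\bigl(S_{r^n}\mu\bigr),\qquad \nu^{(n)}=\sum_{\bi\in\Sigma^n}p_\bi\,\delta_{\varphi_\bi(0)},
\]
where $S_{r^n}$ denotes scaling by $r^n$. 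All entropies below are taken with respect to dyadic partitions $\mathcal D_k$ and normalized by the number of scales.

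\emph{Two entropy inputs.}
\begin{enumerate}[(a)]
\item \emph{Separation gives full entropy for $\nu^{(n)}$ at fine scale.} Along the subsequence, the atoms of $\nu^{(n)}$ are $e^{-cn}$-separated. Hence at resolution $q n$, with $q$ chosen large relative to $c$, every atom lies in its own dyadic cell, so
\[
\tfrac{1}{n}H\bigl(\nu^{(n)},\mathcal D_{qn}\bigr)\approx H(p)=s\log(1/r).
\]
\item \emph{The convolution gains no entropy between scales.} Since the convolution reproduces $\mu$, the entropy of $\mu$ over the scale window from $n\log(1/r)$ to $qn$ is about $\alpha(q-\log(1/r))n$, which is essentially that of $S_{r^n}\mu$ on the same window. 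Combining with (a) and $\alpha<s$, the measure $\nu^{(n)}$ must carry a substantial amount of entropy in this window, about $(s-\alpha)\log(1/r)\,n$. Yet convolving it with $S_{r^n}\mu$ adds almost no entropy there.
\end{enumerate}

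\emph{Inverse theorem.} The core is Hochman's inverse theorem for entropy of convolutions: if $H_m(\eta*\theta)\le H_m(\theta)+\delta$, then at most scales either the components of $\theta$ are nearly uniform (entropy close to $1$ per scale), or the components of $\eta$ are nearly atomic. Apply it with $\eta=\nu^{(n)}$ and $\theta=S_{r^n}\mu$. A separate lemma shows that the components of $\mu$ have entropy close to $\alpha<1$ at most scales; this uses the exact dimensionality of $\mu$ together with its self-similarity. So the components of $\mu$ are not uniform, and the components of $\nu^{(n)}$ must be nearly atomic at most scales of the window. By the local-to-global formula for multiscale entropy, $\nu^{(n)}$ then has negligible entropy in the window. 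This contradicts (b), so $\alpha=\min\{1,s\}$.

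\emph{Main obstacle.} The hardest step is proving the inverse theorem itself. It requires a multiscale decomposition of entropy into component entropies, a Berry--Esseen-type argument showing that convolving with a non-atomic component spreads mass, and a Kaimanovich--Vershik/Pl\"unnecke-type step for repeated self-convolutions. The other difficult ingredient is the lemma that components of $\mu$ have entropy concentrated near $\alpha$. For non-homogeneous systems, I would randomize over the contraction ratio by stopping words at $|r_\bi|\approx r^n$. This is why the metric $d(\varphi_\bi,\varphi_\bj)$ is set to $\infty$ when $r_\bi\ne r_\bj$: only words with equal ratio need to be separated.
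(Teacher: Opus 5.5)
The paper gives no proof of this statement (it is imported from \cite{Hochman} as a black box), and your outline is a faithful sketch of Hochman's own argument: contraposition under exponential separation along a subsequence, full entropy of $\nu^{(n)}$ at resolution $qn$, vanishing entropy gain of $\nu^{(n)}*S_{r^n}\mu$ on the window between scales $r^n$ and $2^{-qn}$, and the inverse theorem combined with the uniform entropy dimension of $\mu$. When you write it out, apply the inverse theorem to the rescaled level-$r^n$ components of $\nu^{(n)}$ (via concavity of conditional entropy and Markov's inequality), not to $\nu^{(n)}$ globally; and in the non-homogeneous case, since $\Delta_n$ only controls words of one fixed length, it is simplest to keep $\Sigma^n$ and condition on the value of $r_{\bi}$ (polynomially many values, costing $O(\log n)$ entropy), whereas stopping words of varying lengths need an extra transfer argument via $\Delta_{n+1}\le\Delta_n$ and the identity $\varphi_{\bi\bj}(0)-\varphi_{\bj\bi}(0)=(1-t)\bigl(\varphi_{\bi}(0)-\varphi_{\bj}(0)\bigr)$ for $r_{\bi}=r_{\bj}=t$.
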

	As an application of the preceding results, we are able to establish a characterization of the uniform discreteness and overlapping behavior for the forward orbits of an RIFS whose dual system exhibits a dimension drop. Notably, this characterization aligns perfectly with mathematical intuition.
	
	\begin{corollary}\label{dimensiondrop}
		Let $\F$ be an RIFS given by \eqref{IFSform}. If $\dimH K(\F^{-1}) < \min\{1, s\}$, then $\F$ admits no non-singleton forward orbit that simultaneously exhibits uniform discreteness and finite overlaps. More precisely, if $\F^{-1}$ exhibits exact overlaps, then no non-singleton forward orbit has finite overlaps; otherwise, no forward orbit is uniformly discrete. 
	\end{corollary}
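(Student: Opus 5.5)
The argument splits according to whether the dual system $\F^{-1}$ has exact overlaps; the two cases give the two statements of the corollary.

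\emph{Case 1: $\F^{-1}$ has exact overlaps.} Let $\fo_\F(a)$ be a non-singleton orbit. By Proposition \ref{onX} (v) it is then infinite. Proposition \ref{propA} (ii) supplies indices $i\ne j$ with $\#\bigl(f_i(\fo_\F(a))\cap f_j(\fo_\F(a))\bigr)=\infty$. Hence $\fo_\F(a)$ does not have finite overlaps. This case needs neither the dimension-drop hypothesis nor any discreteness assumption.

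\emph{Case 2: no exact overlaps.} Then $f_\bi^{-1}\ne f_\bj^{-1}$ for all distinct $\bi,\bj\in\Si^*$, and Theorem \ref{Hochman} applies to $\Phi=\F^{-1}$. Since $\dimH K(\F^{-1})<\min\{1,s\}$, we get $\Delta_n\to0$ super-exponentially. So for every $n$ there are distinct $\bi,\bj\in\Si^n$ with $r_\bi=r_\bj$ and
\[
\bigl|f_\bi^{-1}(0)-f_\bj^{-1}(0)\bigr|=\Delta_n,
\]
where $\Delta_n>0$ because the maps are distinct and have equal ratio. I will transfer this to the forward orbit using
\[
f_\bk(a)=r_\bk\bigl(a-f_\bk^{-1}(0)\bigr).
\]
With $r_\bi=r_\bj$ this gives
\[
\bigl|f_\bi(a)-f_\bj(a)\bigr|=|r_\bi|\,\Delta_n\le (\max_k|r_k|)^n\Delta_n .
\]
Super-exponential decay gives $(\max|r_k|)^n\Delta_n\to0$. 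Moreover $f_\bi(a)\ne f_\bj(a)$, since $f_\bi$ and $f_\bj$ have the same slope and different intercepts, so $f_\bi-f_\bj$ is a nonzero constant. Thus $\fo_\F(a)$ contains pairs of distinct points at arbitrarily small distance, and it is not uniformly discrete, for every $a$. A singleton orbit is trivially uniformly discrete, but in this case the system cannot be degenerate: degeneracy with $m\ge2$ maps would force commuting maps and hence exact overlaps. So the statement ``no forward orbit is uniformly discrete'' holds as written, and I would add a remark on this.

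\emph{Main obstacle.} The only delicate point is the interpretation of Hochman's $d(\varphi_\bi,\varphi_\bj)$. Its finiteness requires equal contraction ratios, so the minimizing pair $\bi,\bj$ satisfies $r_\bi=r_\bj$, signs included. I would also check the convention in Hochman's distance, since $\varphi_\bi(0)$ versus the translation part may differ by the factor $r_\bi$ depending on convention. Either choice changes the bound only by a factor of at most $C^n$, which super-exponential decay absorbs. The bookkeeping above already accounts for this, so the remaining work is routine.
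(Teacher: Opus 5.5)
Your proposal is correct and follows the paper's proof essentially step for step. The exact-overlap case goes through Proposition \ref{propA}~(ii), and the remaining case through Hochman's theorem combined with the identity $f_\bi(a)=r_\bi\bigl(a-f_\bi^{-1}(0)\bigr)$ for a minimizing pair with $r_\bi=r_\bj$. One small slip: such an equal-ratio pair is guaranteed only for sufficiently large $n$ (or for $n\ge2$ by swapping two distinct symbols, as the paper does), not for every $n$; this is harmless, since only large $n$ matter.
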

	\begin{proof}
		Fix $a \in \R$. If $\F^{-1}$ exhibits exact overlaps—which is equivalent to $\Delta_n = 0$ for all sufficiently large $n$—then Proposition \ref{propA} (ii) yields that $\fo_\F(a)$ has infinite overlaps, provided it is not a singleton.

		Otherwise, $f_\bi \ne f_\bj$ for all finite words $\bi\neq  \bj \in \Sigma^*$, and it implies $\Delta_n > 0$ for all $n \ge 1$.
		For each integer $n \ge 2$, let $\bi_n, \bj_n \in \Sigma^n$ be the specific pair of distinct words that achieves the minimum $\Delta_n$, i.e.,
		\[
		\Delta_n = d\big(f_{\bi_n}^{-1}, f_{\bj_n}^{-1}\big).
		\]
		Since the scalar multiplication commutes, one can easily find distinct words in $\Sigma^n$ with identical contraction ratios (e.g., by swapping  two distinct characters). This guarantees that $\Delta_n < \infty$, which strictly forces $r_{\bi_n} = r_{\bj_n}$.  On one hand,
		\begin{align*}
			|f_{\bi_n}(a)-f_{\bj_n}(a)|=|f_{\bi_n}(0)-f_{\bj_n}(0)|=|r_{\bi_n}||f_{\bi_n}^{-1}(0)-f_{\bj_n}^{-1}(0)|=|r_{\bi_n}|\Delta_n>0.
		\end{align*}
		On the other hand, since  $\dimH K(\F^{-1}) <\min\{1,s\}$,  by Theorem \ref{Hochman}, it follows that 
		\[
		|f_{\bi_n}(a)-f_{\bj_n}(a)|=|r_{\bi_n}|\Delta_n\le \Big(\max_{1\le i\le m}|r_i|\Big)^n\Delta_n\longrightarrow 0\ (n\to\infty).
		\]
		This demonstrates that $\fo_\F(a)$ contains distinct points that become arbitrarily close, and thus it fails to be uniformly discrete.
	\end{proof}
	\begin{remark}
		As revealed by the proof of the preceding corollary, a striking difference between the RIFS $\F$ and the classical system $\F^{-1}$ is that when $\Delta_n > 0$ decays to $0$ at a super-exponential rate, it induces a dimension drop in $\F^{-1}$, yet it simultaneously increases the density of the forward orbits of $\F$.
	\end{remark}

	\section{Semigroup growth and dimensions of affine forward orbits}\label{sec_MBdim}
	In this section we study the upper and lower mass dimensions and the Beurling dimension of forward orbits of the RIFS given by \eqref{IFSform}. The behavior of these dimensions varies markedly with the nature of the
	system: in particular, it depends on whether the system is degenerate or not, and on whether the forward orbit is locally finite or uniformly locally finite. Several examples illustrating these dependencies are collected at the end of the section.

	\subsection{The main dimension theorem}
	
		\begin{theorem}\label{massandbeu}
			Let $\F$ be an RIFS of the form \eqref{IFSform}. Then the limit
			\[
			\lim_{R\to+\infty}
			\f{\log\#\{f_{\bi}:|r_{\bi}|\leq R\}}{\log R}
			\]
			exists and satisfies
			\[
			\dimB K(\F^{-1})\le\lim_{R\to+\infty}\f{\log\#\{f_\bi:|r_\bi|\le R\}}{\log R}\le s.
			\]
			Denote this value by $d_{\F}$. Moreover, for any $a\in\R$,
			\[
			\dimM \fo_{\F}(a)=\begin{cases}
				d_\F, & \text{\ if\, }\fo_{\F}(a)\text{\ is locally finite},\\
				\infty, & \text{otherwise},
			\end{cases}
			\]
			and
			\[
			\dimbe \fo_{\F}(a)=\begin{cases}
				d_\F\le 1, & \text{\ if\, }\fo_{\F}(a)\text{\ is uniformly locally finite},\\
				\infty, & \text{otherwise}.
			\end{cases}
			\]
		\end{theorem}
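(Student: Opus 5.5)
The plan has three parts: the existence of $d_\F$ with its two bounds, the mass-dimension formula, and the Beurling-dimension formula.

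\textbf{Existence of $d_\F$ and the bounds.} Write $G_\F(R)=\#\{f_\bi:|r_\bi|\le R\}$. The key observation is approximate submultiplicativity. Every map $g$ with $|g'|\le RS$ factors as $g=f_\bi f_\bj$ with $|r_\bi|\le R$ and $|r_\bj|\le cS$, where $c=\max_i|r_i|$. To see this, cut the word at the first place where the accumulated ratio exceeds $R/c$ and adjust by one letter. Hence
\[
G_\F(RS)\le G_\F(R)\,G_\F(cS).
\]
Since $G_\F$ is monotone, a Fekete-type argument applied to $\log G_\F(c e^{t})$ gives existence of the limit. The upper bound $d_\F\le s$ is immediate, because the number of words with $|r_\bi|\le R$ is $O(R^s)$; this follows from the renewal inequality $\sum_i N(R/|r_i|)+1\ge N(R)$ by induction.

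For the lower bound $\dimB K(\F^{-1})\le d_\F$, I would cover $K(\F^{-1})$ at scale $1/R$. Use the stopping set of words with $|r_\bi|\approx R$. Distinct words giving the same map $f_\bi$ give the same cylinder $f_\bi^{-1}(K)$, of diameter $\lesssim 1/R$. So $N_{1/R}(K)\lesssim G_\F(cR)$, which yields $\overline{\dim}_B K\le d_\F$. I would quote the existence of $\dimB$ for self-similar sets, which is classical via submultiplicativity of covering numbers.

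\textbf{Mass dimension.} If $\fo_\F(a)$ is not locally finite, some bounded window contains infinitely many points, so $\#(\fo\cap[-h,h])=\infty$ and the dimension is $\infty$. Assume now that the orbit is locally finite. By Proposition \ref{decoposition} with $A=\pi(\Sad^\infty)=K(\F^{-1})$, the orbit is a finite set plus finitely many orbits $\fo_\F(b)$ with $b\notin K(\F^{-1})$. Mass dimensions are finitely stable, so it suffices to treat $b$ at positive distance $\delta$ from $K$.

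Use the identity $f_\bi(b)=r_\bi(b-f_\bi^{-1}(0))$, and note $f_\bi^{-1}(0)\in$ a bounded neighborhood of $K$. This gives
\[
\delta|r_\bi|\le|f_\bi(b)|\le C|r_\bi|.
\]
\emph{Upper count.} Points in $[-h,h]$ come from maps with $|r_\bi|\le h/\delta$, so their number is at most $G_\F(h/\delta)$.

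\emph{Lower count.} Distinct maps $g\ne g'$ can give the same point $g(b)=g'(b)$. I would bound the multiplicity: points with $|f_\bi(b)|\in[h,2h]$ force $|r_\bi|\in[h/C,2h/\delta]$. The number of possible ratios $|r_\bi|=\prod|r_i|^{n_i}$ in that range is $O((\log h)^m)$. For a fixed ratio $\rho$ (and sign), the map $g\mapsto g(b)$ is injective, since $g$ is determined by $g(0)$ and $g(b)=\rho b+g(0)$. Therefore
\[
\#\{\text{points}\le Ch\}\ge G_\F(h)/(\log h)^m,
\]
up to constants, and both bounds give exponent $d_\F$.

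\textbf{Beurling dimension.} If the orbit is not uniformly locally finite, the $\sup$ over windows of fixed size is infinite and the dimension is $\infty$. Otherwise, $\dimbe\ge\dimM=d_\F$ is trivial, and $\dimbe\le1$ holds because a uniformly locally finite subset of $\R$ has at most $O(h)$ points in any window of length $h$. This gives $d_\F\le1$.

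For $\dimbe\le d_\F$, I would take a window $[x-h,x+h]$ and split the orbit points by the size $|r_\bi|$. Points with $|r_\bi|\le h^{1+\eps}$ number at most $G_\F(h^{1+\eps})\lesssim h^{(1+\eps)(d_\F+\eps)}$. For a point $f_\bi(b)$ with larger ratio, write $\bi=\bu\bv$ with $|r_\bu|\approx h^{1+\eps}$ chosen by stopping. Then $f_\bi(b)=f_\bu(y)$ with $y\in\fo(b)$, and $f_\bu$ expands by at least $h^{1+\eps}$, so for each $\bu$ only $O(1)$ points $y$ land in the window, by uniform local finiteness at scale $h^{-\eps}\le1$. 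The count is therefore $\lesssim G_\F(h^{1+\eps})$, and letting $\eps\to0$ gives the bound.

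I expect this Beurling upper bound, together with the multiplicity control in the lower mass count, to be the main obstacle. In particular, reducing to orbits based outside $K$ requires care with the finite-orbit decomposition, which has to hold uniformly over translates.
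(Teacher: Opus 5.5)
Your proposal is correct and follows essentially the same route as the paper. That route is: $G_\F(RS)\le G_\F(R)\,G_\F(\oor S)$ for the existence of $d_\F$ (your Fekete step on $\log G_\F(\oor e^t)$ is a slightly cleaner substitute for the de Bruijn--Erd\H{o}s lemma), word counting for $d_\F\le s$, cylinders indexed by distinct maps for $\dimB K(\F^{-1})\le d_\F$, Proposition~\ref{decoposition} with $f_\bi(b)=r_\bi(b-f_\bi^{-1}(0))$ and the $O((\log R)^m)$ ratio count for the mass dimension, and cutting words at ratio about $h$ for the Beurling bound. That last step can be run directly on $\fo_\F(a)$ with $|r_\bu|\in[h,\oor h)$, so you need neither $\eps$ nor any uniformity of the decomposition over translates.

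Two small repairs are needed:
\begin{enumerate}[(i)]
\item Your bound $\delta|r_\bi|\le|f_\bi(b)|$ is not literally true, since $f_\bi^{-1}(0)$ lies only near $K(\F^{-1})$ and $f_\bi(b)=0$ can occur. However, $|f_\bi(b)|\ge\delta|r_\bi|-\mathrm{dist}(0,K(\F^{-1}))$ holds for every word and suffices.
\item If $A_\F(a)=\emptyset$, the orbit is a singleton of a degenerate system, so you must also show $d_\F=0$ there (Corollary~\ref{df=0}). This follows from your ratio count, because every $g\in G^*(\F)$ then fixes $x_0$ and is determined by $g'$.
\end{enumerate}
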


Since mass and Beurling dimensions are finitely stable, Theorems
\ref{discreteinvariant} and \ref{massandbeu} immediately yield the
following corollary.

	\begin{corollary}
	Let $\F$ be an RIFS given by \eqref{IFSform}, and let $K$ be an invariant set of $\F$. Then
	\[
	\dimM K=\begin{cases}
		d_\F, & \text{\ if\, }K\text{\ is locally finite},\\
		\infty, & \text{otherwise},
	\end{cases}
	\]
	and
	\[
	\dimbe K=\begin{cases}
		d_\F\le 1, & \text{\ if\, }K\text{\ is uniformly locally finite},\\
		\infty, & \text{otherwise}.
	\end{cases}
	\]
\end{corollary}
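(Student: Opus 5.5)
The plan is to reduce the corollary to Theorem \ref{massandbeu} through the orbit decomposition of Theorem \ref{discreteinvariant}, using finite stability of the dimensions. Two facts about finite unions are needed. Mass dimensions are finitely stable: $\#\bigl((E_1\cup\cdots\cup E_k)\cap[-h,h]\bigr)$ lies between $\max_j\#(E_j\cap[-h,h])$ and $k$ times that maximum. The factor $k$ disappears after taking logarithms and dividing by $\log h$. Hence $\dimUM$ and $\dimLM$ of the union are the maxima of those of the pieces. The same sandwich, uniform in the centre $x$, gives $\dimbe(E_1\cup\cdots\cup E_k)=\max_j\dimbe E_j$.

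The first case is $K$ locally finite. By Theorem \ref{discreteinvariant}, $K=\bigcup_{a\in K\cap P(\F)}\fo_\F(a)$ with $K\cap P(\F)$ finite and nonempty. Each $\fo_\F(a)\subseteq K$ is locally finite. Theorem \ref{massandbeu} then gives $\dimM\fo_\F(a)=d_\F$ for each such $a$, and finite stability yields $\dimM K=d_\F$.

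If moreover $K$ is uniformly locally finite, every orbit piece is a subset of $K$ and hence uniformly locally finite. Theorem \ref{massandbeu} then gives $\dimbe\fo_\F(a)=d_\F\le1$, so $\dimbe K=d_\F\le1$.

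The second case is $K$ not locally finite. Then some bounded interval $[-h_0,h_0]$ meets $K$ in infinitely many points. So $\#(K\cap[-h,h])=\infty$ for all $h\ge h_0$, and by the counting definition $\dimLM K=\dimUM K=\infty$; likewise $\dimbe K=\infty$.

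The remaining case is $K$ locally finite but not uniformly locally finite, where we must show $\dimbe K=\infty$. Here I would argue directly, since the decomposition still holds. Suppose every piece $\fo_\F(a)$, $a\in K\cap P(\F)$, were uniformly locally finite. A finite union of uniformly locally finite sets is uniformly locally finite, so $K$ would be too, a contradiction. Thus some piece $\fo_\F(a)$ fails to be uniformly locally finite, and Theorem \ref{massandbeu} gives $\dimbe\fo_\F(a)=\infty$. Monotonicity of $\dimbe$ under inclusion then gives $\dimbe K=\infty$.

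The only step requiring care is that Theorem \ref{discreteinvariant} applies only in the locally finite case. That is why the non-locally-finite case must be handled directly from the definitions rather than through orbit pieces; beyond this the argument is bookkeeping.
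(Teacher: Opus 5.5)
Your proposal is correct and follows the paper's route: decompose a locally finite $K$ into finitely many orbits $\fo_\F(a)$, $a\in K\cap P(\F)$, via Theorem \ref{discreteinvariant}, apply Theorem \ref{massandbeu} to each piece, and conclude by finite stability, handling the remaining cases directly from the definitions (a step the paper leaves implicit). One correction: your general claim that $\dimLM$ of a finite union is the maximum over the pieces is false (Remark \ref{Rmk_fsta}), since $\liminf_h\max_j$ can exceed $\max_j\liminf_h$; this does not affect the proof, because every piece already satisfies $\dimLM=\dimUM=d_\F$, so your sandwich gives $\log\#(K\cap[-h,h])/\log h\to d_\F$ directly.
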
	

	Note that under local finiteness and non-degeneracy, the identity
\[
\dimM\fo_\F(a)=d_\F
\]
may hold even when $d_\F>1$; see Example \ref{ex_LF}. By contrast, the
stronger assumption of uniform local finiteness necessarily forces
$d_\F\leq 1$, so the identity
\[
\dimbe\fo_\F(a)=d_\F
\]
is consistent with the ambient dimension of $\R$.

	\subsection{Existence and bounds for \texorpdfstring{$d_\F$}{dF}}

 We  denote the set of all proper prefixes of $\bi=i_1\ldots i_n\in\Si^*$  by
\[
(\bi)=\{\emptyset,i_1,i_1i_2,\ldots,i_1i_2\ldots i_{n-1}\}.
\]
Given $\de > 1$ and $k \in \Z_{>0}$, we define
\begin{gather*}
	\Lambda_k(\de) = \big\{ \bi \in \Si^* \colon |r_{\bi^{-}}| < \de^k \le |r_{\bi}| \big\}, \qquad
	\Xi_k(\de) = \bigcup_{\bi \in \Lambda_k(\de)} (\bi).
\end{gather*}
It is well known that $\Lambda_k(\de)$ is a cut set of the symbolic tree. Equivalently, the symbolic space admits the following disjoint decomposition:
\begin{equation}\label{cutset}
	\Si^\infty = \bigsqcup_{\bi \in \Lambda_k(\de)} \Big\{ \bi\,\bj\, \colon \bj \in \Si^\infty \Big\}.
\end{equation}
For convenience,  set
\[
\ur = \min_{1\le i\le m} |r_i|,\qquad \oor = \max_{1\le i\le m} |r_i|,\qquad b = \max_{1\le i\le m} |b_i|,\qquad  c = \f{b}{\ur-1}
\] 
and write $G(R)=G_\F(R)$, as in \eqref{defofdf}.

The following lemma, due to de Bruijn and Erdős \cite{Erdos1952}, provides a criterion for the existence of the limit of a nearly subadditive sequence. 
\begin{lemma}\label{nearsubadditivity}
	Let $f \colon [1,+\infty) \to \R$ be a non-negative monotonically increasing function satisfying $\sum_{n=1}^\infty f(n)/n^2 < \infty$. Suppose $M\ge 1$ is given and $\{a_n\}$ is a sequence such that
	\[
	a_{p+q} \le a_p + a_q + f(p+q), \qquad p,q \in \Z_{>M}.
	\]
	Then the limit $\lim_{n\to\infty}a_n/n$ exists.
\end{lemma}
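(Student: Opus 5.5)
The statement is the classical near-subadditivity lemma of de Bruijn and Erdős. I would prove it by a Fekete-type argument: block decomposition plus binary splitting, with Cauchy condensation controlling the error terms.

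\emph{Step 1: reduce the summability hypothesis to two usable facts.} Since $f$ is non-negative and nondecreasing,
\[
\sum_{2^j<k\le 2^{j+1}}\f{f(k)}{k^2}\ \ge\ \f{f(2^j)}{2^{j+2}} .
\]
Hence $\sum_{j}f(2^j)/2^j<\infty$, and more generally $\sum_j f(2^j t)/(2^jt)<\infty$ uniformly for $t\ge1$, with tails tending to $0$. Similarly, $\sum_{n<k\le 2n}f(k)/k^2\ge f(n)/(4n)$, so $f(n)/n\to0$.

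\emph{Step 2: the multiples of a fixed $m$.} Let $L=\liminf_{n\to\infty}a_n/n\in[-\infty,\infty)$, fix $\eps>0$, and pick $m>2M$ so that $a_m/m<L+\eps$ (or $a_m/m<-1/\eps$ if $L=-\infty$). Choose $m$ also so large that $\sum_{j\ge0}f(2^{j+1}m)/(2^jm)<\eps$; this is possible by the tail statement in Step 1. For $k\ge1$, bound $a_{km}$ by splitting $k$ blocks into $\lfloor k/2\rfloor$ and $\lceil k/2\rceil$ and recursing. At recursion depth $j$ there are at most $2^j$ nodes, each a multiple of $m$ of size at most $\lceil k/2^j\rceil m\le 2^{J-j}m$ when $2^{J-1}<k\le 2^J$. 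Every split is legal since both parts are $\ge m>M$. Summing the error terms $f(\cdot)$ over all levels gives
\[
a_{km}\le k\,a_m+\sum_{j=0}^{J-1}2^j f(2^{J-j}m)\le k\,a_m+2k\sum_{i\ge1}\f{f(2^i m)}{2^{i-1}}\cdot\f{1}{2}\le k\,a_m+km\,\eps ,
\]
after reindexing $i=J-j$ and using $2^J<2k$ (constants adjusted). Thus $a_{km}/(km)\le a_m/m+\eps$.

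\emph{Step 3: general $n$.} For $n>3m$, write $n=km+\rho$ with $\rho\in(M,M+m]$ and $k\ge1$. Then
\[
a_n\le a_{km}+a_\rho+f(n)\le k\,a_m+km\,\eps+\max_{M<\rho\le M+m}a_\rho+f(n).
\]
Divide by $n$ and let $n\to\infty$. Using $km/n\to1$ and $f(n)/n\to0$ from Step 1, we get $\limsup a_n/n\le a_m/m+\eps\le L+2\eps$, or an arbitrarily negative bound when $L=-\infty$. Hence $\limsup a_n/n\le L=\liminf a_n/n$, so the limit exists in $[-\infty,\infty)$.

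The main obstacle is Step 2: the bookkeeping for the uneven binary splits when $k$ is not a power of two. One must check that the lengths at each level are bounded by roughly $2^{J-j}m$ and that the number of nodes is at most $2^j$, so that the error sum collapses to the condensed series from Step 1. If the uneven splitting becomes messy, my fallback is to treat $k=2^J$ first and then write a general $k$ as a sum of distinct powers of two, which adds only $O(\log k)$ further $f(n)$ terms. These are $o(n)$ because $\log n\cdot f(n)/n\to0$. I would verify that last fact from the condensation bound, again using the monotonicity of $f$.
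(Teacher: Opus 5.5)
Your proof is correct. The paper does not prove this lemma; it quotes it from de Bruijn and Erdős \cite{Erdos1952}. So your argument is a self-contained replacement, not a variant of an internal proof.

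The points you flagged as delicate check out. Since $\lceil\lceil x\rceil/2\rceil=\lceil x/2\rceil$, every node at depth $j$ of your splitting tree has at most $\lceil k/2^j\rceil\le 2^{J-j}$ blocks. Hence internal nodes occur only at depths $j\le J-1$, and the total error is at most $2^J\sum_{i\ge1}f(2^im)2^{-i}\le km\sum_{j\ge0}f(2^{j+1}m)/(2^jm)<km\eps$. The tail condition you impose on $m$ is justified by the condensation bound $\sum_{i\ge1}f(2^im)/(2^im)\le 4\sum_{k>2m}f(k)/k^2$.

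For comparison, the de Bruijn--Erdős theorem, in the form usually quoted, assumes the inequality only for comparable indices, $\tfrac12 p\le q\le 2p$. Your Step 2 uses only such balanced splits. Your Step 3, however, writes $n=km+\rho$ with $\rho$ bounded, which requires the full-range hypothesis. The lemma as stated provides exactly that, and it is what keeps your proof short. In the paper's only use of the lemma (Lemma \ref{dfexists}), the error term is a constant, so an eventually-subadditive Fekete lemma applied to $a_n$ plus that constant would already suffice.

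Three minor remarks.

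First, you take $L<+\infty$ for granted. This follows from Steps 2--3 run with any large $m$, which give $\limsup a_n/n<\infty$.

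Second, concluding that the limit exists in $[-\infty,\infty)$ is the right reading of the statement, since $-\infty$ does occur (e.g.\ $a_n=-n^2$ with $f\equiv0$). In the paper's application $a_n=\log G(2^n)\ge0$, so the limit is finite there.

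Third, the justification you give for the fallback is false. Monotonicity together with $\sum f(n)/n^2<\infty$ yields $f(n)/n\to0$, but not $f(n)\log n/n\to0$. For a counterexample, put $f=0$ on $[1,2)$ and $f=2^{J_\ell}/J_\ell$ on $[2^{J_\ell},2^{J_{\ell+1}})$ with $J_\ell=2^\ell$. Each block contributes at most $2/J_\ell$ to the series, so it converges, yet $f(n)\log_2 n/n=1$ at every $n=2^{J_\ell}$. The fallback could be repaired by bounding the error at each partial sum by $f$ of twice its leading power of two and summing as in Step 2. Since Step 2 already handles uneven splits correctly, you do not need it.
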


	\begin{lemma}\label{dfexists}
		The limit
		\[
		\lim_{R\to+\infty}\f{\log G(R)}{\log R}
		\]
		which defines $d_\F$ exists.
	\end{lemma}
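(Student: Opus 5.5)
The plan is to apply the de Bruijn–Erdős criterion (Lemma \ref{nearsubadditivity}) to the sequence
\[
a_k=\log G(\de^k)
\]
for a fixed $\de>1$. We will show that $a_{p+q}\le a_p+a_q+f(p+q)$ with $f(n)=C\log n$. Such an $f$ is monotone and satisfies $\sum f(n)/n^2<\infty$, so the lemma gives the existence of $\lim_k a_k/(k\log\de)$. Since $G$ is monotone in $R$, interpolating between $\de^k$ and $\de^{k+1}$ then yields the limit along all $R\to\infty$.

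The key step is a near-submultiplicativity estimate
\[
G(\de^{p+q})\le C'(p+q)^{C}\,G(\de^p)\,G(\de^q).
\]
Take a distinct map $g\in G^*(\F)$ with $|g'|\le\de^{p+q}$, and choose a word $\bi$ with $g=f_\bi$. Split $\bi=\bu\bv$, where $\bu$ is the longest prefix with $|r_\bu|\le\de^p$. Then $|r_\bv|\le \de^{p+q}/|r_\bu|\le \de^q\oor$, because extending $\bu$ by one letter exceeds $\de^p$. Thus $g=f_\bu\circ f_\bv$, with $f_\bu$ counted in $G(\de^p)$ and $f_\bv$ counted in $G(\oor\de^q)$. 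This gives $G(\de^{p+q})\le G(\de^p)G(\oor\de^q)$.

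The defect is therefore $G(\oor R)$ versus $G(R)$. Write any $h$ with $|h'|\le\oor R$ as $h=f_{\bv'}\circ f_j$, where $f_j$ is the last letter, so that $|r_{\bv'}|\le \oor R/\ur$. Peeling letters this way does not reduce the count by a constant factor. A better approach is to bound $G(\oor R)\le m^{L}G(R)$, where $L$ is a fixed number of letters needed to drop the derivative by a factor $\oor$: every word with $|r|\le\oor R$ can be written as a prefix with $|r|\le R$ followed by at most $L=\lceil\log\oor/\log\ur\rceil+1$ letters. Hence $G(\oor R)\le m^L G(R)$, and in fact $f$ can be taken constant, so the sequence is genuinely subadditive up to a constant. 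I expect this step, controlling the boundary overshoot so that distinct maps are counted injectively (the decomposition map $g\mapsto(f_\bu,f_\bv)$ need not be well defined, but choosing one word per $g$ suffices for an upper bound), to be the only real obstacle. If the overshoot control fails, the fallback is to count the possible values of $|r_\bu|$ in the window $(\de^p/\oor,\de^p]$; there are polynomially many in $p$ by the polylogarithmic bound on expansion ratios mentioned in the introduction, which fits the $f(n)=C\log n$ form.

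Finally, subadditivity in the form
\[
a_{p+q}\le a_p+a_q+L\log m
\]
holds for $p,q>M$. Lemma \ref{nearsubadditivity} then gives existence of $\lim a_k/k$. For general $R\in[\de^k,\de^{k+1}]$, monotonicity of $G$ gives $a_k/((k+1)\log\de)\le \log G(R)/\log R\le a_{k+1}/(k\log\de)$, and both bounds tend to the same limit.
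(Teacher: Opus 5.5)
Your proposal is correct and is essentially the paper's proof: the longest-prefix split giving $G(R_1R_2)\le G(R_1)\,G(\oor R_2)$, then trimming a bounded number of trailing letters to get $G(\oor R)\le C\,G(R)$ with $C$ depending only on $m$ and $\log\oor/\log\ur$, and then Lemma \ref{nearsubadditivity} (which with a constant error term is just Fekete's lemma) applied to $\log G(\de^n)$. The only slip is cosmetic: the trimming constant should be $\sum_{k=0}^{L}m^k<m^{L+1}$ rather than $m^L$ (and the identity has to be allowed as a factor), which changes nothing since $G(R)\to\infty$; your explicit interpolation between $\de^k$ and $\de^{k+1}$ using monotonicity of $G$ also fills in a step the paper leaves implicit.
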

	\begin{proof}
		Let $R_1$ and $R_2$ be sufficiently large, and let
		$\bi\in\Si^*$ satisfy $|r_{\bi}|\leq R_1R_2$. If
		$|r_{\bi}|>R_1$, write $\bi=\bu\bv$, where $\bu$ is the longest
		prefix of $\bi$ such that $|r_{\bu}|\leq R_1$. By the maximality
		of $\bu$, we have
		\[
		|r_\bv|=\f{|r_\bi|}{|r_\bu|}<\oor\f{|r_\bi|}{R_1}\le \oor R_2.
		\]
	If $|r_{\bi}|\leq R_1$, we instead take $\bu=\bi$ and
	$\bv=\emptyset$. Since $|r_{\emptyset}|=1\leq\oor R_2$, it follows
	in either case that
		\[
		G(R_1R_2)\leq G(R_1)G(\oor R_2).
		\]

		Set $\alpha=\Big\lceil\f{\log \oor}{\log \ur}\Big\rceil$. Then $\ur^\alpha\geq\oor$. For each $\bj\in\Si^*$ satisfying
		$|r_{\bj}|\leq\oor R_2$, remove the last
		$\min\{\alpha,|\bj|\}$ symbols from $\bj$. The remaining prefix,
		denoted by $\bu'$, satisfies $|r_{\bu'}|\leq R_2$.
		Since at most $\alpha$ symbols are removed, we obtain
		\[
		G(\oor R_2)\le \sum_{k=0}^\alpha m^k G(R_2)<m^{\alpha+1}G(R_2).
		\]
		Therefore, 
		\[
		G(R_1R_2)\le G(R_1)G(\oor R_2)<m^{\alpha+1}G(R_1)G(R_2)
		\]
		for all sufficiently large $R_1$ and $R_2$.
		Let $a_n = \log G(2^n)$ for $n \in \mathbb{Z}_{>0}$. It follows that, for all sufficiently large $p$ and $q$, we have
		\[
		a_{p+q}< a_p+a_q+(\alpha+1)\log m.
		\]
		By Lemma \ref{nearsubadditivity}, the limit  $\lim_{n\to\infty} a_n/n$ exists. Hence $\lim_{R \to +\infty} \f{\log G(R)}{\log R}$ exists, completing the proof.
	\end{proof}
	
		Kesseböhmer and Niemann \cite[Proposition 5.11]{Niemann2022} established the existence of the box dimension for the attractor of any $C^1$-IFS in $\R$, which immediately yields the following conclusion.
		\begin{proposition}\label{boxexists}
		Let $\F$ be an RIFS given by \eqref{IFSform}. Then $\dimB K(\F^{-1})$ exists.
	\end{proposition}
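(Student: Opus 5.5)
The quickest route is simply to invoke \cite[Proposition 5.11]{Niemann2022}: $\F^{-1}$ is a finite family of affine, hence $C^1$, contractions of $\R$, so its attractor has a box-counting dimension. For completeness, I would also give a self-contained argument for the affine case, based on an almost supermultiplicative packing count.

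If $K=K(\F^{-1})$ is a singleton, there is nothing to prove, so assume $D:=\diam K>0$. For $\eta>0$ let $P(\eta)$ be the maximal cardinality of an $\eta$-separated subset of $K$. It is standard that $P(\eta)$ is comparable to the covering number $N_\eta(K)$ up to a change of $\eta$ by a factor $2$. Hence it suffices to show that $\lim_{\eta\to0}\log P(\eta)/(-\log\eta)$ exists.

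The key step is the estimate
\[
P(\de\de')\ \ge\ P(3\de)\,P\bigl(\oor\,\de'/D\bigr)
\qquad\text{for all small }\de,\de'>0 .
\]
To prove it, fix a maximal $3\de$-separated set $\{x_1,\dots,x_N\}\subseteq K$ with $N=P(3\de)$, and choose $\bi_k\in\Si^\infty$ with $x_k=\pi(\bi_k)$. Truncate each $\bi_k$ at the first level $n_k$ with $|r_{\bi_k|_{n_k}}|\ge D/\de$. Then the cylinder $K_k:=f^{-1}_{\bi_k|_{n_k}}(K)$ contains $x_k$, has diameter at most $\de$, and is a similar copy of $K$ with ratio $\rho_k=|r_{\bi_k|_{n_k}}|^{-1}\ge \de/(\oor D)$. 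By the $3\de$-separation of the $x_k$, the cylinders $K_k$ are pairwise at distance $\ge\de$. Inside each $K_k$, the image of a maximal $(\oor\de'/D)$-separated subset of $K$ is $\rho_k\oor\de'/D\ge\de\de'$ separated. Taking the union over $k$ gives a $\de\de'$-separated subset of $K$ of the required size.

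Set $\eta=\de$, $\eta'=\oor\de'/D$ and $a(t)=\log P(e^{-t})$, which is nondecreasing in $t$. The inequality becomes
\[
a(t+t')\ \ge\ a(t-\log 3)+a(t'+\log(D/\oor))\ \ge\ a(t)+a(t')-C
\]
for large $t,t'$. The second inequality uses that $P(\eta)/P(c\eta)$ is bounded for fixed $c$, by the doubling property of $\R$: each $\eta$-ball contains at most $O(1/c)$ points that are $c\eta$-separated. Thus $-a(t)+C$ is subadditive along integers, up to the monotone interpolation between integers. Fekete's lemma, or Lemma \ref{nearsubadditivity}, then gives the existence of $\lim a(t)/t$, which is $\dimB K$.

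The main point needing care is the bookkeeping of constants in the step $P(\eta)\asymp P(c\eta)$ and in passing from integer to real $t$ via monotonicity. Overlaps cause no difficulty, because only the lower bound on the packing count is used and the disjointness comes from separation of the centers, not from any separation condition on $\F^{-1}$.
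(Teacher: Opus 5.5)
Your first route is exactly the paper's proof. The paper establishes this proposition only by citing Kesseb\"ohmer--Niemann's theorem that the attractor of any $C^1$-IFS on $\R$ has a box-counting dimension, applied to the affine contractions $\F^{-1}$.

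Your self-contained argument is a genuinely different and correct route. It is the classical almost-supermultiplicativity argument for self-similar sets, in the spirit of Falconer's implicit theorems. Exact self-similarity of cylinders gives $P(\delta\delta')\ge P(3\delta)\,P(c\,\delta')$, and Lemma \ref{nearsubadditivity} with constant $f$, applied to $-\log P(e^{-n})$, together with monotonicity in $t$, gives the limit. Compared with the citation, your route is elementary, uses no separation hypothesis, and relies only on the doubling property of the ambient space, so it applies verbatim to self-similar sets in $\R^d$. The citation buys generality: for genuinely $C^1$ maps, your step ``the cylinder is a similar copy of $K$ with ratio $\rho_k$'' would have to be replaced by a bounded-distortion estimate.

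One constant needs fixing. You only know $\rho_k>\delta/(\overline{r}D)$, so the image of an $(\overline{r}\delta'/D)$-separated set is merely $\delta\delta'/D^2$-separated, not $\delta\delta'$-separated. Take an $(\overline{r}D\delta')$-separated subset of $K$ instead, i.e.\ $c=\overline{r}D$; the resulting shift $a(t'-\log(\overline{r}D))$ is still absorbed into $C$. Also, with the paper's convention $f_{\mathbf{i}}^{-1}=f_{i_n}^{-1}\circ\cdots\circ f_{i_1}^{-1}$, the cylinder containing $\pi(\mathbf{i}_k)$ is $f^{-1}_{\widetilde{\mathbf{i}_k|_{n_k}}}(K)$, not $f^{-1}_{\mathbf{i}_k|_{n_k}}(K)$.
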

	
	\begin{lemma}\label{tree}
		For all $\de>1$ and $k\in\Z_{>0}$, we have $(m-1)\#\Xi_k(\de)=\#\Lambda_k(\de)-1$.
	\end{lemma}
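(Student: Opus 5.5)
The identity is the standard edge/vertex count for a finite $m$-ary tree, and I would prove it that way. Let $T=\Xi_k(\de)\cup\Lambda_k(\de)$, viewed as a set of vertices of the symbolic tree rooted at $\emptyset$. I will show that $T$ is a finite subtree in which every vertex of $\Xi_k(\de)$ has exactly $m$ children, all lying in $T$, and every vertex of $\Lambda_k(\de)$ is a leaf. Counting edges by their lower endpoints then gives $\#T-1=m\,\#\Xi_k(\de)$. Since $\Xi_k(\de)\cap\Lambda_k(\de)=\emptyset$, we have $\#T=\#\Xi_k(\de)+\#\Lambda_k(\de)$. Substituting yields $(m-1)\#\Xi_k(\de)=\#\Lambda_k(\de)-1$.

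\emph{Characterisation of $\Xi_k(\de)$.} I claim that $\bj\in\Xi_k(\de)$ if and only if $|r_\bj|<\de^k$.
\begin{itemize}
\item[] If $\bj$ is a proper prefix of some $\bi\in\Lambda_k(\de)$, then $\bj$ is a prefix of $\bi^-$. Since every $|r_i|>1$, this gives $|r_\bj|\le|r_{\bi^-}|<\de^k$.
\item[] Conversely, suppose $|r_\bj|<\de^k$. Extend $\bj$ by the symbol $1$ repeatedly. The moduli grow by the factor $|r_1|>1$, so there is a first extension $\bi$ with $|r_\bi|\ge\de^k$. This $\bi$ is strictly longer than $\bj$ and satisfies $|r_{\bi^-}|<\de^k$. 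Hence $\bi\in\Lambda_k(\de)$ and $\bj\in(\bi)$.
\end{itemize}
The characterisation has three consequences. First, $\Xi_k(\de)$ and $\Lambda_k(\de)$ are disjoint, because every $\bi\in\Lambda_k(\de)$ has $|r_\bi|\ge\de^k$. Second, both sets are finite, since words in them have length at most $k\log\de/\log\ur+1$. Third, for $\bj\in\Xi_k(\de)$ and each symbol $i$, the child $\bj i$ satisfies $|r_{(\bj i)^-}|=|r_\bj|<\de^k$. So $\bj i$ lies in $\Xi_k(\de)$ if $|r_{\bj i}|<\de^k$, and in $\Lambda_k(\de)$ otherwise.

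\emph{Parents and the root.} Every nonempty word of $T$ has its parent (the word with the last symbol removed) in $\Xi_k(\de)$:
\begin{itemize}
\item[] for $\bi\in\Lambda_k(\de)$ this holds because $|r_{\bi^-}|<\de^k$;
\item[] for $\bj\in\Xi_k(\de)$ this holds because $|r_{\bj^-}|\le|r_\bj|<\de^k$.
\end{itemize}
The root $\emptyset$ lies in $\Xi_k(\de)$, since $|r_\emptyset|=1<\de^k$ (here $\de>1$ and $k\ge1$ are used). It is the only word of $T$ without a parent.

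\emph{Counting.} Every nonempty vertex of $T$ has exactly one parent. Every $\bj\in\Xi_k(\de)$ has exactly $m$ distinct children, all in $T$. Counting pairs (child, parent) in two ways gives $\#T-1=m\,\#\Xi_k(\de)$, as required. There is no real obstacle here. The only point needing care is the characterisation of $\Xi_k(\de)$, specifically that every word of modulus below $\de^k$ is a proper prefix of some element of $\Lambda_k(\de)$. That fact is what guarantees every child of an interior vertex is in $T$, and it is exactly the cut-set property \eqref{cutset}.
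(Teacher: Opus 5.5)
Your proof is correct and follows the paper's route. The paper likewise views $\Lambda_k(\de)\cup\Xi_k(\de)$ as the vertex set of a full $m$-ary tree, with leaves $\Lambda_k(\de)$ and internal vertices $\Xi_k(\de)$, and cites the standard leaf count for such trees; you verify this tree structure in full (disjointness, finiteness, closure under children and parents) and derive the count by double counting edges instead of citing it.
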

	
	\begin{proof}
		Fix $\de>1$. Observe that $\bi j\in \Lambda_k(\de)\cup \Xi_k(\de)$ for each $\bi\in \Xi_k(\de)$ and $1\le j\le m$. Therefore, $\Lambda_k(\delta) \cup \Xi_k(\delta)$ is precisely the vertex set of a full $m$-ary tree, where $\Lambda_k(\delta)$ is the set of leaves and $\Xi_k(\delta)$ is the set of internal vertices. This conclusion follows from a classical result in graph theory, see, for example, \cite[Section 11.1, Theorem 4]{rosen2019}.
	\end{proof}

	\begin{lemma}\label{dfrange}
		We have
		\[
		\dimB K(\F^{-1})\leq d_{\F}\leq s.
		\]
	\end{lemma}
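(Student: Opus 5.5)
The two inequalities are proved separately: the upper bound by counting words, the lower bound by covering $K(\F^{-1})$ with images of a fixed interval under distinct dual maps. Throughout, the elements of $G^*(\F)$ are counted as distinct maps $f_\bi$; two words with the same map give the same dual map $f_\bi^{-1}$.

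\emph{Upper bound $d_\F\le s$.} Observe that
\[
G(R)\le\#\{\bi\in\Si^*:|r_\bi|\le R\},
\]
since several words may define the same map. Every such word satisfies $|r_\bi|^{-s}R^{s}\ge 1$. Group the words by length $n$ and use $\sum_{\bi\in\Si^n}|r_\bi|^{-s}=1$. Because $|r_\bi|\ge \ur^{\,n}$, only lengths $n\le \log R/\log\ur$ occur. Hence
\[
G(R)\le R^{s}\Big(1+\f{\log R}{\log \ur}\Big),
\]
and dividing logarithms by $\log R$ gives $d_\F\le s$. The existence of the limit is Lemma \ref{dfexists}.

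\emph{Lower bound $\dimB K(\F^{-1})\le d_\F$.} By Proposition \ref{boxexists} the box dimension exists, so it suffices to bound the upper box dimension.
\begin{enumerate}[1.]
\item Fix a compact interval $I\supseteq K(\F^{-1})$, for instance $I=[-c,c]$, which is invariant under every $f_i^{-1}$.
\item Let $\de>1$ and $k$ be large, and use the cut set $\Lambda_k(\de)$. By \eqref{cutset} and invariance,
\[
K(\F^{-1})\subseteq\bigcup_{\bi\in\Lambda_k(\de)} f_{\bi}^{-1}(I).
\]
\item Each piece has length $|I|/|r_\bi|\le |I|\de^{-k}$.
\item Words $\bi$ defining the same map $f_\bi$ give the same piece. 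So the number of distinct pieces is at most the number of distinct maps $f_\bi$ with $\bi\in\Lambda_k(\de)$.
\item For $\bi\in\Lambda_k(\de)$ we have $|r_\bi|<\oor\de^k$, so this number is at most $G(\oor\de^k)$.
\end{enumerate}
Therefore $N_{|I|\de^{-k}}(K(\F^{-1}))\le G(\oor\de^k)$. Taking $\log$ and dividing by $k\log\de$, the left side tends to the box dimension along the scales $\de^{-k}$, which is enough since the scales are geometric. The right side tends to $d_\F$, because $\log G(\oor\de^k)/\log(\oor\de^k)\to d_\F$ and $\log(\oor\de^k)/(k\log\de)\to1$. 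This gives $\dimB K(\F^{-1})\le d_\F$.

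\emph{Main obstacle.} The step needing care is step 4, that the dual cylinder images depend only on the map $f_\bi$ and not on the word. This holds because $f_\bi^{-1}(I)$ is determined by $f_\bi$ alone, and Lemma \ref{dfexists} gives convergence along every sequence $R\to\infty$. Lemma \ref{tree} is not needed here.
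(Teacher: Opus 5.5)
Your argument is correct in substance, and the lower bound is essentially the paper's own. You cover $K(\F^{-1})$ by images of a fixed interval under the dual cylinder maps indexed by a cut set. You then bound the number of distinct pieces by the number of distinct maps, i.e.\ by $G$ at a comparable scale. The paper does the same with $\de=\oor$, using balls of radius $D\oor^{-k}$ about the points $f_{\widetilde{\bi}}^{-1}(0)$.

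Your upper bound takes a more elementary route. The paper uses the cut set $\Lambda_k(\oor)$, the estimate \eqref{cutnumber} and the tree identity of Lemma \ref{tree}. You instead bound each length separately by $\sum_{\bi\in\Si^n}(R/|r_\bi|)^s=R^s$ and sum over the $O(\log R)$ possible lengths. The extra $\log R$ factor does not affect the exponent, so this suffices. The paper's bookkeeping gives the sharper bound $\#\{\bi:|r_\bi|\le R\}=O(R^s)$, and the same machinery is reused in Theorem \ref{mass}.

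One indexing point needs fixing, and it sits in the step you flagged. You use the convention $f_\bi^{-1}=(f_\bi)^{-1}=f_{i_n}^{-1}\circ\cdots\circ f_{i_1}^{-1}$. With that convention, \eqref{cutset} yields
\[
K(\F^{-1})=\bigcup_{\bi\in\Lambda_k(\de)}f_{i_1}^{-1}\circ\cdots\circ f_{i_n}^{-1}\bigl(K(\F^{-1})\bigr)=\bigcup_{\bi\in\Lambda_k(\de)}f_{\widetilde{\bi}}^{-1}\bigl(K(\F^{-1})\bigr),
\]
not a union of the sets $f_\bi^{-1}(\cdot)$. The reversed family $\{\widetilde{\bi}:\bi\in\Lambda_k(\de)\}$ is in general not a cut set. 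For example, with $r_1=2$, $r_2=10$, $\de^k=15$, it equals $\{12,21,22,211,1111,2111\}$. This contains both $21$ and $211$, and contains no prefix of $1112\ldots$. So your inclusion does not follow from \eqref{cutset}.

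Use the pieces $f_{\widetilde{\bi}}^{-1}(I)$ instead, as the paper does. Since $|r_{\widetilde{\bi}}|=|r_\bi|\in[\de^k,\oor\de^k)$, each piece still has length at most $|I|\de^{-k}$. The number of distinct pieces is at most $\#\{f_{\widetilde{\bi}}:\bi\in\Lambda_k(\de)\}\le G(\oor\de^k)$, so the rest of your argument is unchanged.
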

\begin{proof}
	Since $\sum_{i=1}^m|r_i|^{-s}=1$ and $\Lambda_k(\oor)$ is a cut set of the symbolic tree, we have \[
	\sum_{\bi\in\Lambda_k(\oor)}|r_{\bi}|^{-s}=1
	\]
	for every $k\in\Z_{>0}$. It follows that
	\begin{equation}\label{cutnumber}
	\oor^{ks}\le\#\Lambda_k(\oor)< \oor^{(k+1)s},\qquad k\in\Z_{>0}.
	\end{equation}
	Observe that every word $\bi\in\Si^*$ satisfying $|r_{\bi}|\leq\oor^k$
	belongs either to $\Lambda_k(\oor)$ or to $\Xi_k(\oor)$.
	Therefore, Lemma \ref{tree} and \eqref{cutnumber} give
	\begin{align*}
	 G(\oor^k)&\le \#\big\{\bi\in\Si^*:|r_\bi|\le\oor^k \big\}\\
	 &\le \#\Lambda_k(\oor)+\#\Xi_k(\oor)=\f{m\#\Lambda_k(\oor)-1}{m-1}\le\f{m\oor^{(k+1)s}-1}{m-1},\quad k\in\Z_{>0}.
	\end{align*}
Thus $d_\F\le s$.

For the reverse inequality, choose $D>0$ such that $K(\F^{-1})\subseteq (-D,D)$. By \eqref{cutset}, for each $k\in \Z_{>0}$,
\[
K(\F^{-1})\subseteq \bigcup_{\bi \in \Lambda_k(\oor)}f_{\tilde{\bi}}^{-1}((-D,D))=\bigcup_{\bi \in \Lambda_k(\oor)}B\Big(f_{\tilde{\bi}}^{-1}(0),\f{D}{|r_\bi|}\Big)\subseteq\bigcup_{\bi \in \Lambda_k(\oor)}B\big(f_{\tilde{\bi}}^{-1}(0),\oor^{-k}D\big).
\]
Consequently,
\[
N_{D\oor^{-k}}\bigl(K(\F^{-1})\bigr)
\le\#\bigl\{
f_{\widetilde{\bi}}^{-1}(0):
\bi\in\Lambda_k(\oor)
\bigr\}.
\]
Combining this with Lemma \ref{dfexists} and Proposition \ref{boxexists}, we obtain
\begin{align*}
	\dimB K(\F^{-1})&\le \limsup_{k\to +\infty} \f{\#\big\{f_{\tilde{\bi}}^{-1}(0):\bi\in\Lambda_k(\oor)\big\}}{k\log\oor-\log D}\\
	&\le \limsup_{k\to \infty} \f{\#\big\{f_{\tilde{\bi}}:\bi\in\Lambda_k(\oor)\big\}}{k\log\oor-\log D}\le\limsup_{k\to \infty}\f{\log G(\oor^{k+1})}{k\log\oor-\log D}=d_\F.
\end{align*}
This completes the proof.
\end{proof}

\begin{lemma}\label{simplex}
There exists a constant $C>0$ such that
\[
\#\bigl\{r_{\bi}:|r_{\bi}|\leq R\bigr\}\le C(\log R)^m
\]
for all sufficiently large $R$.
\end{lemma}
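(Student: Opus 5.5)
Since multiplication of real numbers is commutative, the ratio $r_{\bi}$ of a word $\bi=i_1\ldots i_n$ depends only on its letter counts and its sign: writing $k_j=\#\{\ell:i_\ell=j\}$ for $1\le j\le m$, we have
\[
r_{\bi}=\prod_{j=1}^m r_j^{k_j},\qquad |r_{\bi}|=\prod_{j=1}^m |r_j|^{k_j}.
\]
We therefore bound the number of possible values of $r_{\bi}$ by the number of exponent vectors $(k_1,\ldots,k_m)\in\Z_{\ge0}^m$ that can occur for a word with $|r_{\bi}|\le R$. Distinct words may share a vector, and distinct vectors may give the same ratio; both only decrease the count, so this is an upper bound.

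The key constraint on the vector is
\[
\sum_{j=1}^m k_j\log|r_j|=\log|r_{\bi}|\le \log R .
\]
Since $|r_j|\ge\ur>1$ for every $j$, it gives
\[
\sum_{j=1}^m k_j\le \f{\log R}{\log\ur}=:L .
\]
Hence every admissible vector lies in the discrete simplex $\{k\in\Z_{\ge0}^m:\sum_j k_j\le L\}$. The number of lattice points in this simplex is
\[
\binom{\lfloor L\rfloor+m}{m}\le (L+1)^m\quad\text{(up to a constant)},
\]
which for large $R$ is at most $C'(\log R)^m$ with $C'$ depending on $m$ and $\ur$. Alternatively, one can use the crude bound $(L+1)^m$ directly, since each $k_j\in\{0,1,\ldots,\lfloor L\rfloor\}$. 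This also covers the empty word, whose ratio is $1$.

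There is no real obstacle. The only point to note is that $r_{\bi}$ is a signed quantity, but the sign is determined by the exponent vector, being $\prod_j\operatorname{sgn}(r_j)^{k_j}$. So the map from vectors to values $r_{\bi}$ is well defined, and the count of values is at most the count of vectors. Taking $C$ slightly larger than $(1/\log\ur+1)^m$ absorbs the additive constants once $R$ is large, which gives
\[
\#\{r_{\bi}:|r_{\bi}|\le R\}\le C(\log R)^m .
\]
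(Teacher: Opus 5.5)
Your proposal is correct and follows essentially the same route as the paper. Both arguments map each word to its exponent vector $(k_1,\ldots,k_m)$ and count lattice points satisfying $\sum_j k_j\log|r_j|\le\log R$. The only difference is that the paper bounds this count by the box $\prod_{i=1}^m\bigl(\frac{\log R}{\log|r_i|}+1\bigr)$, whereas you use the uniform bound $\sum_j k_j\le \log R/\log\underline{r}$, which is an equally valid way to finish.
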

\begin{proof}
	For each $\bi\in\Si^*$, let $k_i$ denote the number of occurrences
	of the symbol $i$ in $\bi$. Then
		\begin{align*}
		\#\big\{r_\bi:|r_\bi|\le R\big\}
		&\le\#\big\{(k_1,\ldots,k_m)\in \Z_{\ge0}^m\backslash\{\mathbf 0\}:  |r_1^{k_1}\ldots r_m^{k_m}|\le R\big\}\\
		&=\#\Big\{(k_1,\ldots,k_m)\in \Z_{\ge0}^m\backslash\{\mathbf 0\}: \sum_{i=1}^m k_i\log |r_i|\le \log R\Big\}\\
		&< \prod_{i=1}^m\bigg(\f{\log R}{\log |r_i|}+1\bigg)\le C(\log R)^m
	\end{align*}
for some constant $C>0$ and all sufficiently large $R$.
\end{proof}
\begin{corollary}\label{df=0}
	If $\F$ is degenerate, then $d_\F=0$.
\end{corollary}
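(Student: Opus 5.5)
If $\F$ is degenerate at $x_0$, then every element of $G^*(\F)$ is an affine map fixing $x_0$, so it has the form $g(x)=r_{\bi}(x-x_0)+x_0$. Hence $g$ is determined entirely by its derivative $g'=r_{\bi}$. The plan is therefore to bound $G(R)$ by the number of distinct expansion ratios and then apply Lemma \ref{simplex}.

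First I will check the identification of maps with ratios. By Proposition \ref{propA}(i), or directly, each $f_i(x)=r_i(x-x_0)+x_0$, since $x_0=b_i/(1-r_i)$ for every $i$. By induction on word length,
\[
f_{\bi}(x)=r_{\bi}(x-x_0)+x_0, \qquad \bi\in\Si^* .
\]
Consequently, $f_{\bi}=f_{\bj}$ if and only if $r_{\bi}=r_{\bj}$. This gives the bound
\[
G(R)=\#\{f_{\bi}:|r_{\bi}|\le R\}=\#\{r_{\bi}:|r_{\bi}|\le R\}.
\]

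Second, I will apply Lemma \ref{simplex} to get $G(R)\le C(\log R)^m$ for all large $R$. Then
\[
0\le\f{\log G(R)}{\log R}\le\f{\log C+m\log\log R}{\log R}\longrightarrow 0 .
\]
By Lemma \ref{dfexists} the limit defining $d_\F$ exists, so $d_\F=0$. Nonnegativity is automatic because $G(R)\ge1$ once $R\ge\oor$.

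There is no real obstacle here. The only point needing care is the observation that the common fixed point forces every composition to be a pure dilation about $x_0$, and this follows from a one-line induction.
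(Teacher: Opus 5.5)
Your proof is correct and is essentially the paper's argument: both use degeneracy to reduce $G(R)$ to a count of expansion data and then apply the polylogarithmic bound of Lemma \ref{simplex}. The only difference is that you identify each $f_{\bi}$ directly by its ratio $r_{\bi}$ (as a dilation about $x_0$), whereas the paper uses commutativity from Proposition \ref{propA}(i) to index maps by exponent vectors; this does not change the argument.
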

\begin{proof}
By Proposition \ref{propA} (i) and the argument used in the proof of
Lemma \ref{simplex}, there exists a constant $C>0$ such that
\[
\begin{aligned}
	G(R)\le\#\big\{(k_1,\ldots,k_m)\in \Z_{\ge0}^m\backslash\{\mathbf 0\}:  |r_1^{k_1}\ldots r_m^{k_m}|\le R\big\}\le C(\log R)^m
\end{aligned}
\]
for all sufficiently large $R$. Consequently,
\[
d_{\F}=\lim_{R\to\infty}\f{\log G(R)}{\log R}=0.
\]
\end{proof}

	\subsection{Mass and Beurling dimension formulas}

\begin{theorem}\label{massdimoflocallyfiniteorbits}
	If $\fo_{\F}(a)$ is locally finite, then
	\[
	\dimM\fo_{\F}(a)=d_{\F}.
	\]
\end{theorem}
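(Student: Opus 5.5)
Throughout, $N(h):=\#\bigl(\fo_\F(a)\cap[-h,h]\bigr)$. The plan is to first treat starting points $a\notin K(\F^{-1})=\pi(\Sad^\infty(\F))$, and then reduce the general locally finite case to this one via Proposition \ref{decoposition}. If $\F$ is degenerate at $x_0$, then $d_\F=0$ by Corollary \ref{df=0}. For $a\ne x_0$, Proposition \ref{propA}(i) shows that every orbit point has the form $r_1^{k_1}\cdots r_m^{k_m}(a-x_0)+x_0$, so the counting argument of Lemma \ref{simplex} gives $N(h)=O((\log h)^m)$ and hence $\dimM\fo_\F(a)=0$. For $a=x_0$ the orbit is a singleton and has dimension $0$. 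From now on assume $\F$ is non-degenerate.

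\emph{Case $a\notin K(\F^{-1})$.} The key identity is $f_\bi(a)=r_\bi\bigl(a-f_\bi^{-1}(0)\bigr)$, where $f_\bi^{-1}(0)=f_{\bi}^{-1}(0)$ lies in $K(\F^{-1})$ or at least in a fixed bounded set $[-D,D]$. Since $K(\F^{-1})$ is compact and $a\notin K(\F^{-1})$, we will work with $\eta:=\operatorname{dist}(a,K(\F^{-1}))>0$. We need $f_\bi^{-1}(0)$ to be close to $K(\F^{-1})$ when $|\bi|$ is large. To arrange this, write $f_\bi^{-1}(y)$ with $y\in K(\F^{-1})$: then $|f_\bi^{-1}(0)-f_\bi^{-1}(y)|\le |y|/|r_\bi|\to0$. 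So, after discarding finitely many maps, $|a-f_\bi^{-1}(0)|\ge\eta/2$. Combined with the trivial bound $|a-f_\bi^{-1}(0)|\le|a|+D$, this gives
\[
\tfrac{\eta}{2}|r_\bi|\le |f_\bi(a)|\le (|a|+D)|r_\bi|.
\]
The upper bound on $N(h)$ then follows at once: every point of $\fo_\F(a)\cap[-h,h]$ equals $g(a)$ for some distinct $g\in G^*(\F)$ with $|g'|\le 2h/\eta$, up to finitely many exceptions, so $N(h)\le G(2h/\eta)+O(1)$ and $\dimUM\le d_\F$. For the lower bound, distinct maps may send $a$ to the same point. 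If $f_\bi(a)=f_\bj(a)$ and $r_\bi=r_\bj$, then $f_\bi=f_\bj$, since an affine map is determined by its slope and the image of one point. Hence each orbit point $x$ has at most $\#\{r_\bi:|r_\bi|\le R\}\le C(\log R)^m$ preimage maps with $|g'|\le R$, by Lemma \ref{simplex}. Therefore
\[
N\bigl((|a|+D)R\bigr)\ge \frac{G(R)}{C(\log R)^m},
\]
and since $\lim\log G(R)/\log R=d_\F$ exists by Lemma \ref{dfexists}, we obtain $\dimLM\ge d_\F$. So $\dimM\fo_\F(a)=d_\F$.

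\emph{General locally finite case.} Take $A=\pi(\Sad^\infty(\F))=K(\F^{-1})$ in Proposition \ref{decoposition}. Local finiteness makes $\fo_\F(a)\cap A$ finite, so $A_\F(a)$ is a finite set of points lying outside $A$. The decomposition then expresses $\fo_\F(a)$ as a finite set together with finitely many orbits $\fo_\F(b)$ with $b\notin K(\F^{-1})$. Each of these orbits has mass dimension exactly $d_\F$ by the previous case. Upper and lower mass dimensions are finitely stable in the sense we need: the upper dimension is the maximum over the pieces, and the lower dimension is at least that of any single infinite piece since counts are monotone under inclusion. Also $A_\F(a)\ne\varnothing$ when the orbit is infinite (Proposition \ref{onX}(v)). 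Together these give $\dimM\fo_\F(a)=d_\F$.

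The main obstacle is the lower bound: controlling the multiplicity of the map $g\mapsto g(a)$, and ensuring $|a-g^{-1}(0)|$ is bounded away from zero uniformly. The first is handled by the polylogarithmic count of ratios. The second is why the argument is routed through $a\notin K(\F^{-1})$ and the exit decomposition, rather than being run directly for $a\in K(\F^{-1})$, where $g^{-1}(0)$ can accumulate at $a$.
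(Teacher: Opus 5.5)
Your proof is correct and follows the paper's route. For $a\notin K(\F^{-1})$ you use the identity $f_\bi(a)=r_\bi\bigl(a-f_\bi^{-1}(0)\bigr)$, a uniform lower bound on $|a-f_\bi^{-1}(0)|$ for all but finitely many words, and the polylogarithmic count of ratios from Lemma \ref{simplex}; the general case then follows from the exit decomposition of Proposition \ref{decoposition} with $A=K(\F^{-1})$. Replacing the paper's infimum $\delta$ by $\operatorname{dist}(a,K(\F^{-1}))$, treating the degenerate case separately, and bounding the lower mass dimension of the finite union by monotonicity are only cosmetic differences; the monotonicity step is slightly more careful than the paper's appeal to finite stability.
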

\begin{proof}
	For $h>0$, $N(h)=\#(\fo_{\F}(a)\cap[-h,h])$. We first consider the case $a\notin K(\F^{-1})$. In this case,
	Proposition \ref{onX} (vi) ensures that $\fo_{\F}(a)$ is locally
	finite.
	
	Observe that if $f_\bi(a)=f_\bj(a)$ and $r_\bi=r_\bj$ for some $\bi,\bj\in\Si^*$, then $f_\bi=f_\bj$. Consequently, Lemma \ref{simplex} yields
	\begin{align}
	\#\big\{f_\bi(a):|r_\bi|\le R\big\}\le G(R)&\le \#\{r_\bi:|r_\bi|
	\le R\}
	\#\big\{f_\bi(a):|r_\bi|\le R\big\}\notag\\
	&\le C(\log R)^m\#\big\{f_\bi(a):|r_\bi|\le R\big\}.\label{est1}
	\end{align}
Set 
\[
\de \coloneq \inf \big\{ |a - f_\bi^{-1}(0)| \colon \bi \in \Sigma^* \text{ and } f_\bi^{-1}(0) \ne a \big\}.
\]
Since the accumulation points of
$\{f_{\bi}^{-1}(0):\bi\in\Si^*\}$ are contained in
$K(\F^{-1})$, the assumption $a\notin K(\F^{-1})$ implies that
$\delta>0$. Furthermore, for $\bi=i_1\ldots i_n\in\Si^*$, we have
\begin{equation}\label{premageof0}
	|f_{\bi}^{-1}(0)|=\bigg|\sum_{k=1}^n\f{b_{i_k}}{\prod_{j=k}^nr_{i_j}}\bigg|<b\sum_{k=1}^\infty\ur^{-k}=c.
\end{equation}
It follows that, for all sufficiently large $h$,
\begin{equation}\label{est2}
	\begin{aligned}
		N(\de h)&\le 1+\#\big\{f_\bi(a):0<|f_\bi(a)|=|r_\bi(a-f_\bi^{-1}(0))|\le \de h\big\}\\
		&\le 1+\#\big\{f_\bi(a):|r_\bi|\le h\big\}\\
		&\le 2\#\big\{f_\bi(a):|r_\bi|\le h\big\}
	\end{aligned}
\end{equation}
and
\begin{equation}\label{est3}
	\begin{aligned}
	N\big((|a|+c)h\big)
	&=\#\big\{f_\bi(a):|f_\bi(a)|=|r_\bi(a-f_\bi^{-1}(0))|\le (|a|+c) h\big\}\\
	&\ge  \#\big\{f_\bi(a):|r_\bi|\le h\}.
 \end{aligned}
\end{equation}
Combining Lemma \ref{dfexists} with \eqref{est1}, \eqref{est2} and \eqref{est3}, we obtain
\begin{equation}\label{mass=df}
\lim_{h\to+\infty}\f{\log N(h)}{\log h}=\lim_{h\to+\infty}\f{\log\#\{f_\bi(a):|r_i|\le h\}}{\log h}=d_\F.
\end{equation}
Therefore $\dimM \fo_{\F}(a)=d_\F$.

Now suppose that $a\in K(\F^{-1})$ and that $\fo_{\F}(a)$ is
locally finite. By Proposition \ref{onX} (v), the orbit $\fo_{\F}(a)$ is
either a singleton or an infinite set. In the former case, Corollary \ref{df=0} yields $\dimM\fo_{\F}(a)=d_\F=0$. It remains to consider the case where $\fo_{\F}(a)$ is infinite.  Applying Proposition \ref{decoposition} with
$A=K(\F^{-1})$, we obtain
\[
\fo_{\F}(a)=(\fo_{\F}(a)\cap K(\F^{-1}))\cup \mathcal{E}_\F(a)\cup \bigcup_{b\in \mathcal{E}_\F(a)}\fo_\F(b),
\]
where $\mathcal E_{\F}(a)$ is a nonempty finite subset of
$\R\setminus K(\F^{-1})$. By the first part of the proof,
\[
\dimM\fo_{\F}(b)=d_{\F}\quad\text{for every}\quad b\in\mathcal E_{\F}(a).
\]
Since mass dimension is stable under finite unions, it follows that
\[
\dimM\fo_{\F}(a)=d_{\F}.
\]
\end{proof}

\begin{theorem}\label{bedimoflocallyfiniteorbits}
	If $\fo_{\F}(a)$ is uniformly locally finite, then
	\[
	\dimbe\fo_{\F}(a)=d_{\F}\le 1.
	\]
	Otherwise $\dimbe\fo_{\F}(a)=\infty.$
\end{theorem}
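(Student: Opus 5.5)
The two cases of the statement split naturally; I would handle the failure case first, since it is quick. If $\fo_{\F}(a)$ is not uniformly locally finite, then by Proposition \ref{ud>ulf>lf}(iii), for every $R>0$ we have $\sup_x\#(\fo_{\F}(a)\cap[x-R,x+R])=\infty$. Fix $h$ large. Then for every $\alpha$ the quantity $\sup_x \#(\fo_{\F}(a)\cap[x-h,x+h])/h^\alpha$ is infinite, so $\mathcal D^+_\alpha(\fo_\F(a))=\infty$ for every $\alpha>0$. Hence $\dimbe\fo_{\F}(a)=\infty$.

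Now assume $\fo_{\F}(a)$ is uniformly locally finite; in particular it is locally finite. The lower bound is immediate: taking $x=0$ gives $\dimbe\fo_\F(a)\ge\dimUM\fo_\F(a)=d_\F$ by Theorem \ref{massdimoflocallyfiniteorbits}. The bound $d_\F\le1$ also follows quickly. Uniform local finiteness gives a constant $M$ with at most $M$ points in each unit interval, so $\#(\fo_\F(a)\cap[-h,h])\le M(2h+1)$. Thus $\dimM\fo_\F(a)\le1$, and therefore $d_\F\le 1$.

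The main work is the upper bound $\dimbe\fo_\F(a)\le d_\F$. I would first reduce to the case $a\notin K(\F^{-1})$. The singleton case is trivial, with $d_\F=0$ by Proposition \ref{onX}(v) and Corollary \ref{df=0}. Otherwise I would use the decomposition of Proposition \ref{decoposition} with $A=K(\F^{-1})$, together with the finite stability of Beurling dimension. Each sub-orbit $\fo_\F(b)\subseteq\fo_\F(a)$ is still uniformly locally finite.

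So let $a\notin K(\F^{-1})$ and fix an interval $I=[x-h,x+h]$. Write each orbit point as $f_\bi(a)=r_\bi(a-f_\bi^{-1}(0))$, where $|a-f_\bi^{-1}(0)|\in[\delta,|a|+c]$ whenever it is nonzero (this is the $\delta$ from the proof of Theorem \ref{massdimoflocallyfiniteorbits}). I split the points in $I$ by the size of $|r_\bi|$.

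\emph{Points with $|r_\bi|\le h$.} There are at most $\#\{f_\bi(a):|r_\bi|\le h\}\le G(h)=h^{d_\F+o(1)}$ of these.

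\emph{Points with $|r_\bi|>h$.} Such points satisfy $|f_\bi(a)|\ge\delta|r_\bi|$. I would group them by the value of $\rho=r_\bi$; by Lemma \ref{simplex}, only polylogarithmically many values lie in any range $[h,R]$. Fix a value $\rho$. The points $\rho(a-f_\bi^{-1}(0))$ lying in $I$ have $f_\bi^{-1}(0)$ in an interval of length $2h/|\rho|\le 2$. Truncate $\bi$ to $\bi=\bu\bv$ with $|r_\bu|\approx h$, using the cut-set argument of Lemma \ref{dfexists}. Then
\[
f_\bi(a)=f_\bu(f_\bv(a)),\qquad f_\bi^{-1}(0)=f_\bv^{-1}(f_\bu^{-1}(0)).
\]
For a fixed $\bv$, the distinct maps $f_\bu$ with $|r_\bu|\lesssim h$ number at most $G(\oor h)$. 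The issue is bounding how many distinct $\bv$ can contribute.

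This is where I expect the main obstacle. Here I would try to exploit uniform local finiteness a second time. Points $f_\bu(y)$ with a common $f_\bu$ and different $y=f_\bv(a)$ are separated by $|r_\bu|\cdot|y-y'|$. By uniform discreteness of the finitely many pieces given by Proposition \ref{ud>ulf>lf}(ii), $|y-y'|\gtrsim 1$ within a piece, so the points are $\gtrsim h$ apart. Hence each $f_\bu$ contributes $O(1)$ points to $I$. Summing over the distinct $f_\bu$ and the polylogarithmically many ratio classes gives
\[
\sup_x\#(\fo_\F(a)\cap[x-h,x+h])\le C(\log h)^{m} G(\oor h)=h^{d_\F+o(1)}.
\]
This yields $\dimbe\fo_\F(a)\le d_\F$. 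If this grouping argument fails, the fallback is a counting argument via a renewal-type recursion on $\sup_x N(x,h)$, using the invariance relation $\fo_\F(a)=\bigcup_i f_i(\fo_\F(a))\cup\{f_i(a)\}$ together with $G(R_1R_2)\le m^{\alpha+1}G(R_1)G(R_2)$.
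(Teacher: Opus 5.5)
Your proposal is correct, and its decisive step is the paper's. You write each word with $|r_\bi|>h$ as $\bi=\bu\bv$ with $h\le|r_\bu|<\oor h$, and use uniform local finiteness (with $Q=\sup_x\#(\fo_\F(a)\cap[x-1,x+1])<\infty$) to see that each of the at most $G(\oor h)$ distinct maps $g=f_\bu$ contributes at most $Q+1$ points to $[x-h,x+h]$, since $g^{-1}([x-h,x+h])$ has radius $h/|g'|\le 1$; this yields $\sup_x\#(\fo_\F(a)\cap[x-h,x+h])\le G(h)+(Q+1)G(\oor h)$.

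The reduction to $a\notin K(\F^{-1})$, the $\delta$-bound and the grouping by ratio values are all unnecessary. The factor $(\log h)^m$ should also be dropped: for $|x|\gg h$ the ratios of orbit points in $[x-h,x+h]$ have size comparable to $|x|$, so Lemma \ref{simplex} bounds the number of such ratio classes only by a power of $\log|x|$, which is not uniform in $x$. Your per-$f_\bu$ count already covers all $\bv$ at once and needs no such factor.
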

\begin{proof}
	If
	\[
	Q\coloneq\sup_{x\in\R} \#(\fo_{\F}(a)\cap[x-1,x+1])=\infty,
	\]
	then $\dimbe\fo_{\F}(a)=\infty.$ Now suppose that $Q<\infty$ and hence $\fo_{\F}(a)$ is uniformly locally finite.
	
	Fix  $h>0$. For every word $\bi\in\Si^*$ satisfying $|r_\bi|>h$, we have $\bi=\bu\bv$ for which $h\le |r_\bu|<\oor h$. Thus
	\begin{equation}\label{be1}
	\big\{f_\bi(a):|r_\bi|> h,\bi\in\Si^*\big\}\subseteq\bigcup_{\substack{g\in G^*(\F)\\h\le |g'|<\oor h}}\Big(g\big(\fo_{\F}(a)\big)\cup \{g(a)\}\Big).
	\end{equation}
For every such $g$ and every $x\in\R$, we have
 \begin{align}
 	\#\big(g(\fo_{\F}(a))\cap [x-h,x+h]\big)&=\#\big(\fo_{\F}(a)\cap g^{-1}([x-h,x+h])\big)\notag\\
 	&=\#\Big(\fo_{\F}(a)\cap \Big[g^{-1}(x)-\f{h}{|g'|},g^{-1}(x)+\f{h}{|g'|}\Big]\Big)\notag\\
 	&\le Q.\label{be2}
 \end{align}
Combining \eqref{be1} and \eqref{be2}, we
obtain
\begin{align*}
	\sup_{x\in \R}\#\big(\fo_{\F}(a)\cap [x-h,x+h]\big)\le \#\big\{f_\bi(a):|r_\bi|\le h\big\}+ (Q+1)G(\oor h).
\end{align*}
By \eqref{mass=df}, it follows that
\[
\dimbe \fo_{\F}(a)\le d_\F.
\]
On the other hand, Theorem \ref{massdimoflocallyfiniteorbits} implies $\dimbe \fo_{\F}(a)\ge \dimM \fo_{\F}(a)=d_\F.$  

Finally, for every $h>1$ and $x\in\R$, the interval
$[x-h,x+h]$ can be covered by at most $\lceil h\rceil+1$
intervals of length $2$. Consequently, 
\[
\sup_{x\in \R}\#\big(\fo_{\F}(a)\cap [x-h,x+h]\big)\le Q(\lceil h\rceil+1).
\]
By the definition, $\dimbe\fo_{\F}(a)\le 1$, which completes the proof.
\end{proof}

\begin{proof}[proof of Theorem \ref{massandbeu}]
	The result follows immediately from Lemmas \ref{dfexists} and \ref{dfrange} and Theorems \ref{massdimoflocallyfiniteorbits} and \ref{bedimoflocallyfiniteorbits}.
\end{proof}

	\subsection{Pressure, exact overlaps, and degeneracy}

The following proposition gives two equivalent alternative
definitions of $d_{\F}$.
\begin{proposition}\label{equivalencedefofdf}
	For every $t\geq0$, the limit
	\[
	P(t):=\lim_{n\to\infty}\f{1}{n}\log\sum_{g\in\{f_{\bi}:\bi\in\Si^n\}}|g^\prime|^{-t}
	\]
	exists. Moreover, the following three quantities coincide:\\
	(i) $d_\F$;\\
	(ii) the unique zero of $P(t)$;\\
	(iii) $\inf\Big\{t\ge 0:\sum_{g\in G^*(\F)}|g^\prime|^{-t}<\infty\Big\}$.
\end{proposition}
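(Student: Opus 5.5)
The plan is to work with the length-$n$ sets of distinct maps
\[
G_n:=\{f_{\bi}:\bi\in\Si^n\},\qquad Z_n(t):=\sum_{g\in G_n}|g'|^{-t}.
\]
I would prove the three claims in order: existence of $P(t)$, uniqueness of its zero $t_0$, and then $t_0=\theta=d_\F$, where $\theta$ denotes the critical exponent in (iii).

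\emph{Existence of $P(t)$.} Every element of $G_{n+k}$ can be written as $g\circ h$ with $g\in G_n$ and $h\in G_k$. Hence the map $(g,h)\mapsto g\circ h$ from $G_n\times G_k$ onto $G_{n+k}$ is surjective. Since $|(g\circ h)'|=|g'||h'|$, this gives the submultiplicativity
\[
Z_{n+k}(t)\le Z_n(t)Z_k(t).
\]
Fekete's lemma then shows that $P(t)=\lim_n\frac1n\log Z_n(t)$ exists in $[-\infty,\infty)$. It is finite because
\[
\#G_n\,\oor^{-nt}\le Z_n(t)\le m^n\ur^{-nt}.
\]

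\emph{Unique zero.} Each $g\in G_n$ satisfies $\ur^n\le|g'|\le\oor^n$. Therefore, for $\eps>0$,
\[
\oor^{-n\eps}Z_n(t)\le Z_n(t+\eps)\le \ur^{-n\eps}Z_n(t),
\]
which gives $-\eps\log\oor\le P(t+\eps)-P(t)\le-\eps\log\ur$. So $P$ is Lipschitz and strictly decreasing with $P(t)\to-\infty$. Moreover $P(0)=\lim_n\frac1n\log\#G_n\ge0$, because $\#G_n\ge1$. Hence $P$ has a unique zero $t_0\ge0$.

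\emph{$t_0=\theta$.} A fixed $g\in G^*(\F)$ can lie in $G_n$ only when $\log|g'|/\log\oor\le n\le\log|g'|/\log\ur$, so for at most $C_0\log|g'|+1$ values of $n$. This gives
\[
\sum_{g\in G^*(\F)}|g'|^{-t}\le\sum_{n\ge1}Z_n(t)\le\sum_{g\in G^*(\F)}\bigl(C_0\log|g'|+1\bigr)|g'|^{-t}.
\]
If $t>t_0$, then $P(t)<0$, so $\sum_n Z_n(t)<\infty$ and therefore $\theta\le t$. If $t<t_0$, then $P(t)>0$, so $Z_n(t)\to\infty$ and the right-hand series diverges. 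Since $\log x\le C_\eta x^{\eta}$, it follows that $\sum_g|g'|^{-(t-\eta)}=\infty$ for every $\eta>0$, and hence $\theta\ge t-\eta$. Together these give $\theta=t_0$.

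\emph{$\theta=d_\F$.} This is the usual Dirichlet-series argument using $G(R)=\#\{g:|g'|\le R\}$ and the existence of $d_\F$ (Lemma \ref{dfexists}). If $t>d_\F$, choose $\eps>0$ with $d_\F+\eps<t$. Grouping dyadically gives
\[
\sum_g|g'|^{-t}\le\sum_k 2^{-kt}G(2^{k+1})\le C\sum_k2^{-kt}2^{k(d_\F+\eps)}<\infty.
\]
If $t<d_\F$, then
\[
\sum_g|g'|^{-t}\ge R^{-t}G(R)=R^{-t+d_\F+o(1)}\to\infty.
\]
Thus $\theta=d_\F$.

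The main obstacle is the bookkeeping in the third step. A single map may be represented by words of different lengths, so $\sum_n Z_n$ and $\sum_{g\in G^*(\F)}$ differ. The proof has to show that this multiplicity is only logarithmic in $|g'|$, which is exactly what the bounds $\ur^n\le|g'|\le\oor^n$ on $G_n$ provide. The remaining steps are routine.
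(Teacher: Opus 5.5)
Your proof is correct and follows the paper's route: submultiplicativity of $Z_n(t)$ with Fekete's lemma, the bracket $\oor^{-n\eps}Z_n(t)\le Z_n(t+\eps)\le\ur^{-n\eps}Z_n(t)$ for strict monotonicity, the bound $\sum_{g\in G^*(\F)}|g'|^{-t}\le\sum_n Z_n(t)$, and the Dirichlet-series abscissa of convergence, which you verify by dyadic grouping where the paper cites the Bohr--Cahen formula. The multiplicity count you flag as the main obstacle is not needed: since $G_n\subseteq G^*(\F)$, you have $Z_n(t)\le\sum_{g\in G^*(\F)}|g'|^{-t}$ for every $n$, so convergence of the full series forces $P(t)\le0$ and hence $t\ge t_0$, which is how the paper obtains the reverse inequality without your $\eta$-loss.
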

\begin{proof}
	For $t\geq0$ and $n\in\Z_{>0}$, set
	\[
	Z_n(t)=\sum_{g\in\{f_{\bi}:\bi\in\Si^n\}}|g^\prime|^{-t}.
	\]
	Every element of $\{f_{\bi}:\bi\in\Si^{n+k}\}$ can be expressed
	as the composition of an element generated by a word of length
	$n$ and one generated by a word of length $k$. Consequently,
	\[
	Z_{n+k}(t)\leq Z_n(t)Z_k(t).
	\]
	Thus $\{\log Z_n(t)\}_{n\geq1}$ is subadditive, and Fekete's Lemma  gives that
	\begin{equation}\label{P(t)_1}
		P(t)=\lim_{n\to\infty}\f{\log Z_n(t)}n=\inf_{n\ge 1}\f{\log Z_n(t)}n
	\end{equation}
	is well defined for every $t\ge0$. We next show that $P(t)$ has a unique zero. Let $0\leq t<u$. For every
	$g\in\{f_{\bi}:\bi\in\Si^n\}$, we have
	\[
	\ur^n\leq |g^\prime|\leq\oor^n.
	\]
	Consequently,
	\[
	\oor^{-n(u-t)}Z_n(t)\leq Z_n(u)=\sum_{g\in\{f_{\bi}:\bi\in\Si^n\}}|g^\prime|^{-t}|g^\prime|^{-(u-t)}\leq \ur^{-n(u-t)}Z_n(t).
	\]
	Passing to the limit gives
	\begin{equation}\label{P(t)_2}
		P(t)-(u-t)\log\oor\le P(u)\le P(t)-(u-t)\log\ur.
	\end{equation}
	It follows that $P$ is continuous and strictly decreasing on
	$[0,\infty)$. Moreover, $0\le P(0)\le \log m$, while
	\[
	P(t)\leq P(0)-t\log\ur\longrightarrow-\infty
	\quad\text{as\ }t\to\infty.
	\]
	Hence $P(t)$ has a unique zero in $[0,\infty)$, which we denote by
	$\eta$.
	
	By the Bohr--Cahen formula for general Dirichlet series, together
	with Lemma \ref{dfexists}, we have
	\[
	d_{\F}=\lim_{R\to+\infty}\f{\log G(R)}{\log R}=\inf\left\{t\geq0:\sum_{g\in G^*(\F)}|g'|^{-t}<\infty\right\}.
	\]
	
	For any $t>\eta$, by \eqref{P(t)_2}, for all sufficiently large $n\in \Z_{>0}$, 
	\[
	Z_n(t)\le \ur^{n(\eta-t)}.
	\]
	Therefore,
	\[
	\sum_{g\in G^*(\F)}|g^\prime|^{-t}\le \sum_{n=1}^\infty Z_n(t)<\infty.
	\]
	This yields that $d_\F\le\eta$. Conversely, suppose that
	\[
	\sum_{g\in G^*(\F)}|g'|^{-t}=C_t<\infty
	\]
	for some $t\ge0$. Then $Z_n(t)\le C(t)$ for all $n\in\Z_{>0}$. It follows that $P(t)\le 0=P(\eta)$ and hence $t\ge \eta$, which completes the proof. 
\end{proof}

\begin{proposition}\label{df=s}
	We have $d_\F=s$ if and only if $\F^{-1}$ has no exact overlaps.
\end{proposition}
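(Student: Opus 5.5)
Both directions will go through the pressure characterization of Proposition \ref{equivalencedefofdf}: $d_\F$ is the unique zero of $P(t)=\lim_n\frac1n\log Z_n(t)$, where $Z_n(t)$ sums over the \emph{distinct} maps $\{f_\bi:\bi\in\Si^n\}$. The similarity dimension $s$ is the unique zero of the symbolic pressure $\log\sum_{i}|r_i|^{-t}$, because $\sum_{\bi\in\Si^n}|r_\bi|^{-t}=(\sum_i|r_i|^{-t})^n$. Note that $f_\bi^{-1}=f_\bj^{-1}$ if and only if $f_\bi=f_\bj$. So "no exact overlaps" means exactly that $\bi\mapsto f_\bi$ is injective on $\Si^*$.

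\emph{Sufficiency.} Suppose $\F^{-1}$ has no exact overlaps. Then $Z_n(t)=(\sum_i|r_i|^{-t})^n$, hence $P(t)=\log\sum_i|r_i|^{-t}$, whose zero is $s$, and $d_\F=s$. Alternatively, with $t=s$ we get $\sum_{g\in G^*(\F)}|g'|^{-s}=\sum_{n\ge1}1=\infty$, so $d_\F\ge s$, and Lemma \ref{dfrange} gives the reverse inequality.

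\emph{Necessity.} Suppose there are $\bi\ne\bj$ with $f_\bi=f_\bj$. As in the proof of Proposition \ref{propA}(ii), neither word is a prefix of the other. Comparing derivatives gives $|r_\bi|=|r_\bj|$. I want a strict drop $P(s)<0$. Since $P$ is strictly decreasing, this gives $d_\F<s$.

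To get the drop, set $L=\max\{|\bi|,|\bj|\}$. Since $|r_\bi|=|r_\bj|$ and $\ur>1$, ratio considerations alone do not force $|\bi|=|\bj|$. I will therefore pad: replace $\bi,\bj$ by $\bi\bj$ and $\bj\bi$. These satisfy $f_{\bi\bj}=f_\bi f_\bj=f_\bj f_\bi=f_{\bj\bi}$, both have length $N=|\bi|+|\bj|$, and they are distinct because $\bi\ne\bj$ and neither is a prefix of the other. So I have two distinct words $\bu\ne\bv\in\Si^N$ with $f_\bu=f_\bv$.

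Now use submultiplicativity: $P(s)=\inf_n\frac1n\log Z_n(s)\le\frac1N\log Z_N(s)$. Because the words $\bu$ and $\bv$ give the same map, counted once,
\[
Z_N(s)\le\sum_{\bw\in\Si^N}|r_\bw|^{-s}-|r_\bu|^{-s}=1-|r_\bu|^{-s}<1 .
\]
Hence $P(s)<0=P(d_\F)$, and strict monotonicity yields $d_\F<s$.

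The main obstacle is making sure that merging identical maps lowers $Z_N(s)$ by a definite positive amount; the equal-length padding handles that. The rest is bookkeeping: check that the infimum form \eqref{P(t)_1} holds for all $n$, including $n=N$, and that the case $m=1$ is trivial (there no exact overlaps are possible, and $s=0=d_\F$).
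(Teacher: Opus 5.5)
Your proposal is correct and follows the paper's proof essentially step for step. Sufficiency comes from $P(t)=\log\sum_i|r_i|^{-t}$. For necessity, you pass to an equal-length pair $\bi\bj\neq\bj\bi$ defining the same map and bound $Z_N(s)\le 1-|r_{\bi\bj}|^{-s}<1$ in the infimum formula \eqref{P(t)_1}. The only cosmetic difference is that the paper pads only when the two overlapping words have different lengths, whereas you pad unconditionally; this works equally well.
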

\begin{proof}
	Let $P(t)$ be the pressure function defined in
	Proposition \ref{equivalencedefofdf}, and set
	\[
	Z_n(t)
	=
	\sum_{g\in\{f_{\bi}:\bi\in\Si^n\}}|g'|^{-t}.
	\]
	
	First assume that $f_\bi\ne f_{\bj},\bi,\bj\in \Si^*$ whenever $\bi\ne \bj$. Then
	\[
	P(t)=\lim_{n\to\infty}\f{1}{n}\log\sum_{\bi\in\Si^n}|r_\bi|^{-t}=\log \sum_{i=1}^m|r_i|^{-t}.
	\]
	According to Proposition \ref{equivalencedefofdf}, $d_\F=s$ is the unique zero of $P(t)$.
	
Conversely, suppose that $\F^{-1}$ has an exact overlap. We first claim that the two words can be chosen to have the same length. Indeed, without loss of generality, assume that $|\bu|<|\bv|$. Then $f_{\bu\bv}=f_{\bv\bu}$. Moreover, $\bu\bv\neq\bv\bu$. Otherwise, comparing the first
$|\bu|$ symbols in the equality $\bu\bv=\bv\bu$ gives $\bu=\bv|_{|\bu|}.$ Hence $|r_\bu|<|r_\bv|$, which contradicts $f_{\bu}=f_{\bv}$. Therefore, we obtain another exact overlap between two words of same length. We may assume that there exist distinct words $\bu,\bv$ with $|\bu|=|\bv|$ such that $f_\bu=f_\bv$.
	
    Consequently,
    \[
    Z_{|\bu|}(s)\le \sum_{\bi\in \Si^{|\bu|}}|r_\bi|^{-s}-|r_\bu|^{-s}=1-|r_\bu|^{-s}<1.
    \] 
    It follows from \eqref{P(t)_1} that
    \[
    P(s)=\inf_{n\ge 1}\f{\log Z_n(s)}n\le \f{\log Z_{|\bu|}(s)}{|\bu|}<0.
    \]
    Since $P(t)$ is strictly decreasing on $[0,\infty)$ and its unique
    zero is $d_{\F}$ by Proposition \ref{equivalencedefofdf}, we conclude
    that $d_\F<s$.
\end{proof}

In the degenerate case, Corollary \ref{df=0}, Proposition \ref{onX},
and Theorem \ref{massdimoflocallyfiniteorbits} imply that the mass
dimension of every forward orbit is equal to
$d_{\F}=0$. The following proposition establishes the converse,
showing that such a vanishing mass dimension can occur only when the
system is degenerate. Moreover, by applying the results obtained in
the previous section, we provide a sufficient condition under which
the Beurling dimension of every forward orbit also vanishes in the
degenerate case. This reveals a connection between the sparsity of
forward orbits and certain arithmetic properties of the system.

	\begin{proposition}\label{degenerate}
	We have $d_\F=0$ if and only if $\F$ is degenerate. Moreover, if $\F$ is degenerate and $r_1,\ldots,r_m$ are algebraic numbers, then
	\[
	\dimbe\fo_{\F}(a)=0
	\]
	for every $a\in\R$.
	\end{proposition}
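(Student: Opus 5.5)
One direction is already in hand: if $\F$ is degenerate, Corollary \ref{df=0} gives $d_\F=0$. For the converse I will argue by contraposition. Suppose $\F$ is non-degenerate. By Proposition \ref{propA} (i), some pair $f_i,f_j$ with $i\ne j$ has distinct fixed points, equivalently $f_i\circ f_j\ne f_j\circ f_i$. The plan is to show that $G_\F(R)$ grows at least like a positive power of $R$. To do this I will build a free sub-semigroup. Set $u=f_if_j$ and $v=f_jf_i$. Both have derivative $r_ir_j$ but different translation parts, so $u\ne v$ and $u'=v'$.

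The key claim is that two distinct words of equal length in the letters $u,v$ give distinct maps. Consider two such words $w_1\ne w_2$ of length $n$. Strip their common prefix, using injectivity of the affine maps and the left cancellation this allows. This reduces to an identity $u\circ\phi=v\circ\psi$, where $\phi,\psi$ are compositions of the same length and hence have equal derivative. Write $\phi(x)=\lambda x+\alpha$ and $\psi(x)=\lambda x+\beta$, and let $t=r_ir_j$. The identity $u\circ\phi=v\circ\psi$ gives
\[
t\alpha+u(0)=t\beta+v(0),\qquad\text{so}\qquad |\alpha-\beta|=|u(0)-v(0)|/|t|>0.
\]
Equality of maps alone does not yield a contradiction here, so I need the translations to be controlled. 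The device I will use is to work in dual coordinates. Words in $u^{-1},v^{-1}$ are contractions, and $\phi^{-1}(0)$ lies in a bounded region of size about $c$ by \eqref{premageof0}. Replacing $u,v$ by high powers $u^N$ and $v^N$ makes $|u^N(0)-v^N(0)|$ large relative to $c$, or more precisely makes the dual attractor of $\{u^{-N},v^{-N}\}$ satisfy the strong separation condition. Under strong separation, the dual contraction IFS $\{u^{-N},v^{-N}\}$ has no exact overlaps. Two distinct words would then send the attractor into disjoint first-level cylinders, a contradiction. This step needs $u^{-N}$ and $v^{-N}$ to have distinct fixed points. That holds because $u$ and $v$ have the same ratio but are different maps, so they are distinct translates of each other and their fixed points differ. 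With equal ratios $\rho=|t|^{-N}$ and fixed points $p\ne q$, the two first-level images of the attractor have diameter about $\rho\cdot\operatorname{diam}$ and are centred near $p$ and $q$. Hence for large $N$ they are disjoint. I expect this separation step to be the main obstacle, and I will check it carefully.

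Given the separation, the $2^n$ words of length $n$ in $u^N,v^N$ give $2^n$ distinct elements of $G^*(\F)$, each with derivative of absolute value $|t|^{Nn}$. Therefore
\[
G_\F(|t|^{Nn})\ge 2^n,\qquad\text{so}\qquad d_\F\ge \frac{\log 2}{N\log|t|}>0.
\]

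For the Beurling statement, let $\F$ be degenerate at $x_0$. If $a=x_0$, the orbit is a singleton and $\dimbe=0$. If $a\ne x_0$, Lemma \ref{algebraicratios} shows $\fo_\F(a)$ is uniformly discrete, hence uniformly locally finite. Theorem \ref{bedimoflocallyfiniteorbits} then gives $\dimbe\fo_\F(a)=d_\F$, and this equals $0$ by Corollary \ref{df=0}.
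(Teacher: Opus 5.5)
Your proof is correct, but it reaches $d_\F>0$ by a different mechanism than the paper. The paper takes $f_1,f_2$ with distinct fixed points and chooses $N$ so that $\mathcal S^{-N}=\{f_1^{-N},f_2^{-N}\}$ satisfies the strong separation condition. It then argues through box dimensions of dual attractors: $d_\F\ge\dimB K(\F^{-1})\ge\dimB K(\mathcal S^{-N})=s'>0$. This uses the lower bound of Lemma \ref{dfrange} and the classical dimension formula for self-similar sets under strong separation.

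You use strong separation only combinatorially, to show that $u^N,v^N$ generate a free subsemigroup of $G^*(\F)$, and then you count semigroup elements directly. The commutator choice $u=f_if_j$, $v=f_jf_i$ makes all $2^n$ words of length $n$ share the derivative $|t|^{Nn}$. So $G_\F(|t|^{Nn})\ge 2^n$ is immediate, and no cut-set counting is needed.

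Your route is self-contained and stays at the level of semigroup growth, which is the natural viewpoint for $d_\F$. It also gives the explicit bound $d_\F\ge\log 2/(N\log|t|)$. The paper's route is shorter given the lemmas already in place, and it yields the bound $s'$, tied to a dual attractor. You could also have used $f_i^N,f_j^N$ directly, since they already have distinct fixed points, at the cost of counting words with unequal ratios.

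In the write-up, make the uniform-in-$N$ diameter bound explicit. Since $u^{-1},v^{-1}$ map $K(\F^{-1})$ into itself, the attractor $K_N$ of $\{u^{-N},v^{-N}\}$ satisfies $K_N\subseteq K(\F^{-1})\subseteq[-c,c]$ by \eqref{premageof0}. Hence $u^{-N}(K_N)$ and $v^{-N}(K_N)$ lie in balls of radius $2c|t|^{-N}$ about $p\ne q$, and they are disjoint once $4c|t|^{-N}<|p-q|$.

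You can drop the preliminary direct attempt in your ``key claim'' paragraph. Freeness of $\{u,v\}$ itself genuinely fails when $t$ is a root of a nonzero polynomial with coefficients in $\{0,\pm1\}$ (e.g.\ $t$ the golden ratio), which is exactly why passing to powers is needed. The Beurling part coincides with the paper's argument.
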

\begin{proof}
	If $\F$ is degenerate, $d_\F=0$ follows immediately from Corollary \ref{df=0}. 
	
	Conversely, suppose that $\F$ is non-degenerate. Then there exist
	two maps in $\F$, say
	\[
	f_1(x)=r_1x+b_1,\qquad f_2(x)=r_2x+b_2,
	\]
	with distinct fixed points $x_1$ and $x_2$, respectively. Since $|r_i| > 1,i=1,2$, there exists an integer $N$ such that
	\begin{equation}\label{SSC}
		\big(|r_1|^{-N} + |r_2|^{-N}\big)\diam K(\F^{-1}) < |x_1-x_2|.
	\end{equation}
	For the sake of simplicity, we denote $\mathcal{S} = \{f_1, f_2\}$ and $\mathcal{S}^{-N} = \{f_1^{-N}, f_2^{-N}\}$. Since $f_i^{-N}(x) = r_i^{-N}(x-x_i) + x_i$ for $i=1,2$, and Proposition \ref{onX} (iii) implies that $x_1, x_2 \in K(\F^{-1})$, we deduce that
	\[
	f_i^{-N}(K(\mathcal{S}^{-N}))\subseteq f_i^{-N}(K(\mathcal{S}^{-1}))\subseteq \overline{B(x_i,|r_i|^{-N}\diam K(\mathcal{S}^{-1}))},\quad i=1,2.
	\]
	Combining it with \eqref{SSC} implies that $f_1^{-N}(K(\mathcal{S}^{-N}))\cap f_2^{-N}(K(\mathcal{S}^{-N}))=\emptyset$. In other words, the system $\mathcal{S}^{-N}$ satisfies the strong separation condition. Therefore, by Lemma \ref{dfrange}, we obtain 
	\[
	d_\F \ge \dimB K(\F^{-1}) \ge \dimB K(\mathcal{S}^{-N}) = s' > 0,
	\]
	where $s'$ is the unique positive solution to the equation $|r_1|^{-Ns'} + |r_2|^{-Ns'} = 1$.
	
	Finally, suppose that $\F$ is degenerate and that
	$r_1,\ldots,r_m$ are algebraic numbers. By
	Lemma \ref{algebraicratios}, every non-singleton forward orbit of
	$\F$ is uniformly discrete. Hence Theorem \ref{bedimoflocallyfiniteorbits}
	implies
	\[
	\dimbe\fo_{\F}(a)=d_{\F}=0
	\]
	for every $a\in\R$.
\end{proof}

	\begin{lemma}\label{overlap}
		Given sets $E_1, \ldots, E_m$, if there exists a constant $C>0$ such that $\#(E_i \cap E_j) \le C$ for all $i \neq j$, then
		\[
		\sum_{i=1}^m \# E_i-\#\Big(\bigcup_{i=1}^m E_i\Big)\le \f{m(m-1)}2 C.
		\]
	\end{lemma}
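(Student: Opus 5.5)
The plan is to bound the overcount $\sum_i\#E_i-\#\bigl(\bigcup_i E_i\bigr)$ point by point. Both sides of the inequality refer only to the multiplicity function
\[
\nu(x)=\#\{i:x\in E_i\}.
\]
The statement is trivial unless every pairwise intersection is finite, so we may assume this. If the $E_i$ themselves are finite, a direct double count gives
\[
\sum_{i=1}^m\#E_i=\sum_{x\in\bigcup_i E_i}\nu(x),
\]
and therefore
\[
\sum_{i=1}^m\#E_i-\#\Big(\bigcup_{i=1}^m E_i\Big)=\sum_{x\in\bigcup_i E_i}\bigl(\nu(x)-1\bigr).
\]

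Next I will use the elementary inequality $\nu-1\le\binom{\nu}{2}$, valid for every integer $\nu\ge1$. It holds because $\binom{\nu}{2}-(\nu-1)=\frac{(\nu-1)(\nu-2)}{2}\ge0$. Summing over $x$ and exchanging the order of summation gives
\[
\sum_{x}\binom{\nu(x)}{2}=\sum_{1\le i<j\le m}\#(E_i\cap E_j)\le \binom{m}{2}C=\frac{m(m-1)}{2}C,
\]
since the left-hand side counts triples $(x,i,j)$ with $i<j$ and $x\in E_i\cap E_j$. This yields the claim for finite sets.

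For possibly infinite $E_i$ I will read the left-hand side as the excess, which is the only way it is used later with truncations such as $E_i\cap[-h,h]$. Concretely, I will first prove the inequality for arbitrary finite subsets $F_i\subseteq E_i$; these inherit the bound $\#(F_i\cap F_j)\le C$. Equivalently, I will note that $\nu(x)\ge2$ only on the set
\[
\bigcup_{i<j}(E_i\cap E_j),
\]
which has at most $\binom m2 C$ points. So the overcount is a finite quantity, and the computation above applies verbatim.

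The only step needing care is this bookkeeping for infinite sets, where $\infty-\infty$ must be avoided. I will state explicitly that the identity is applied to finite sets, as in the intended application to $f_i(\fo_\F(a))\cap[-h,h]$. The combinatorial core is routine.
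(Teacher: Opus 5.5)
Your proposal is correct and follows essentially the same route as the paper, which likewise introduces the multiplicity $k(x)=\#\{i:x\in E_i\}$, writes the overcount as $\sum_{x}(k(x)-1)$, and bounds it by $\sum_x\binom{k(x)}{2}=\sum_{i<j}\#(E_i\cap E_j)\le\binom{m}{2}C$. Your extra bookkeeping for possibly infinite $E_i$ is a harmless refinement that the paper omits; in its application to $O_{\bi}(E)$ the sets are already finite because the orbit is locally finite and $E$ is bounded.
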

	\begin{proof}
		Let $S=\bigcup_{i=1}^m E_i$ and define
		\[
		k(x)=\#\{1\le i\le m:x\in E_i\},\qquad x\in S.
		\]
		Suppose that $x\in E_{i_1}\cap E_{i_2}\cap\ldots\cap E_{i_{k(x)}}$ for $1\le i_1<\ldots<i_{k(x)}\le m.$ Then
		\[
		\sum_{1\le i<j\le m}\#(E_i\cap E_j)=\sum_{x\in S}\sum_{1\le s<t\le k(x)}1=\sum_{x\in S} \begin{pmatrix}
			k(x)\\
			2
		\end{pmatrix}.
		\]
		Therefore,
		\begin{equation*}
			\sum_{i=1}^m \# E_i-\# S=\sum_{x\in S}(k(x)-1)\le \sum_{x\in S}\begin{pmatrix}
				k(x)\\
				2
			\end{pmatrix}=\sum_{1\le i<j\le m}\#(E_i\cap E_j)\le \begin{pmatrix}
				m\\
				2
			\end{pmatrix}C.    \qedhere
		\end{equation*}  
	\end{proof}

	\subsection{Polynomial orbit growth under finite overlaps}

Theorems \ref{massdimoflocallyfiniteorbits} and \ref{bedimoflocallyfiniteorbits}, together with Proposition \ref{df=s}, show that, under the corresponding discreteness assumptions, the relevant dimensions of forward orbits of a non-degenerate system are equal to $s$. We now strengthen these dimension identities by imposing the additional finite-overlap condition. More precisely, we prove that, for all sufficiently large $h$, the quantities
\[
\#\bigl(\fo_\F(a) \cap [-h,h]\bigr)\quad\text{and}\quad\sup_{x\in\R} \#\bigl(\fo_\F(a) \cap [x-h, x+h]\bigr)
\]
under their respective hypotheses, are comparable to $h^s$.
	\begin{theorem}\label{mass}
		Let $\F$ be a non‑degenerate RIFS given by \eqref{IFSform} and let $a \in \R$. If the orbit $\fo_\F(a)$ is locally finite and has finite overlaps, then there exists a constant $C_1 > 1$ such that
		\[
		C_1^{-1} h^s \le \#\bigl(\fo_\F(a) \cap [-h,h]\bigr) \le C_1 h^s
		\]
		for all sufficiently large $h$, where $s$ is defined in \eqref{dimension}. In particular, $d_\F=\dimM \fo_{\F}(a)=s$.
		
		Furthermore, if $\fo_\F(a)$ is uniformly locally finite and has finite overlaps, then there exists a constant $C_2 > 1$ such that
		\[
		C_2^{-1} h^s \le \sup_{x\in\R} \#\bigl(\fo_\F(a) \cap [x-h, x+h]\bigr) \le C_2 h^s
		\]
		for all sufficiently large $h$. In particular, $d_\F=\dimM \fo_{\F}(a)=\dimbe \fo_{\F}(a)=s\le1$.
	\end{theorem}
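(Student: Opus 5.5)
The plan is to compare the counting function $N(h)=\#(\fo_\F(a)\cap[-h,h])$ with itself at different scales through the self-similarity relation of Proposition \ref{onX}(iv), namely
\[
\fo_\F(a)=\bigcup_{i=1}^m f_i\bigl(\fo_\F(a)\bigr)\cup\{f_1(a),\ldots,f_m(a)\},
\]
and then to use Lemma \ref{overlap} to turn the union into a sum up to a bounded error. First I fix the constant. By finite overlaps, $\#\bigl(f_i(\fo_\F(a))\cap f_j(\fo_\F(a))\bigr)\le C_0$ for $i\ne j$. Since $f_i(\fo_\F(a))\cap[-h,h]=f_i\bigl(\fo_\F(a)\cap f_i^{-1}[-h,h]\bigr)$ and $f_i^{-1}[-h,h]=[(-h-b_i)/r_i,(h-b_i)/r_i]$ lies between $[-(h-b)/|r_i|,(h-b)/|r_i|]$ and $[-(h+b)/|r_i|,(h+b)/|r_i|]$, injectivity together with Lemma \ref{overlap} gives the two-sided renewal-type inequality
\[
\sum_{i=1}^m N\Bigl(\f{h-b}{|r_i|}\Bigr)-C_3\le N(h)\le \sum_{i=1}^m N\Bigl(\f{h+b}{|r_i|}\Bigr)+m,
\]
with $C_3=\binom m2C_0$. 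The shifts $\pm b$ are absorbed by setting $\tilde N_\pm(h)=N(h\mp c)$ with $c=b/(\ur-1)$, because $(h+c+b)/|r_i|\le (h/|r_i|)+c$. This yields clean inequalities $N_+(h)\le\sum_i N_+(h/|r_i|)+m$ and $N_-(h)\ge\sum_i N_-(h/|r_i|)-C_3$.

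For the upper bound, I induct over scales. Suppose $N_+(t)\le A t^s-B$ holds for $t\in[h_0,\oor h_0]$, where $B=m/(m-1)$ and $A$ is large. Then for $h\in[\oor h_0,\oor^2h_0]$, each argument $h/|r_i|$ is at least $h_0$ but may exceed the range already controlled. So I would instead propagate over all $t\ge h_0$ by induction on $\lceil\log(t/h_0)/\log\ur\rceil$, using $\sum|r_i|^{-s}=1$. This gives $\sum_i(A(h/|r_i|)^s-B)+m=Ah^s-mB+m=Ah^s-B$. The base range $[h_0,\oor h_0]$ is handled by local finiteness, since $N$ is finite there and $A$ can be taken large.

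For the lower bound, the same induction applies with $N_-(t)\ge A' t^s+B'$, where $B'=C_3/(m-1)$. The base case requires $N_-(t)\ge A't^s+B'$ on $[h_0,\oor h_0]$, that is, $N_-(h_0)>B'$. This is where non-degeneracy enters: by Proposition \ref{onX}(v) the orbit is infinite, so $N(h)\to\infty$ and $h_0$ can be chosen with $N_-(h_0)$ exceeding $B'+1$. Then $A'$ is chosen small. I expect this base-case step, making the lower bound positive despite the additive loss $C_3$, to be the main obstacle. The fact that $N$ grows unboundedly resolves it. Two-sided bounds at scale $h\pm c$ then transfer to $N(h)\asymp h^s$. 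Consequently $\dimM\fo_\F(a)=s$, and Theorem \ref{massdimoflocallyfiniteorbits} gives $d_\F=s$.

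For the Beurling statement, the lower bound follows from the central count. For the upper bound, I let $M(h)=\sup_x\#(\fo_\F(a)\cap[x-h,x+h])$. The same decomposition gives $M(h)\le\sum_i M(h/|r_i|)+m$, with no shift needed because translates are absorbed into the supremum. $M$ is finite by uniform local finiteness via Proposition \ref{ud>ulf>lf}. The same induction then yields $M(h)\le C_2h^s$. Finally, $s\le1$ follows from Theorem \ref{bedimoflocallyfiniteorbits}, since $d_\F=s$.
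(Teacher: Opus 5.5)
Your argument is correct. It reaches the estimates by a different mechanism from the paper's.

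\textbf{The paper's route.} The paper unrolls the exact counting identity, with overlap defects $O_{\bi}(E)$ and the leftover set $L$, all the way down the cut set $\Lambda_k(\oor)$ to obtain \eqref{countcut}. It controls translations once, at the leaves, through $|f_{\bi}^{-1}(0)|<c$. It controls the accumulated additive errors through the tree identity $(m-1)\#\Xi_k(\oor)=\#\Lambda_k(\oor)-1$ of Lemma \ref{tree}, together with $\oor^{ks}\le\#\Lambda_k(\oor)<\oor^{(k+1)s}$.

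\textbf{Your route.} You keep only a one-step sub/super-relation and propagate it by strong induction on scales with the ansatz $At^s-B$ (respectively $A't^s+B'$). Choosing $B=m/(m-1)$ and $B'=C_3/(m-1)$, the fixed points of $x\mapsto mx-m$ and $x\mapsto mx-C_3$, is exactly what replaces Lemma \ref{tree}. The shifted counting functions absorb the translation drift one step at a time.

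\textbf{What each buys.} Your version is more elementary and self-contained. The same template also handles the Beurling supremum, where taking the supremum over centres removes the need for any shift. The paper's version yields an explicit multiscale identity that displays every error term, which is closer in spirit to the renewal equation of Section \ref{sec_Cden}. It also makes the Beurling upper bound immediate, since each leaf preimage has radius $<1$ and so contains at most $Q$ points. The non-degeneracy input is the same in both: the orbit is infinite, so the count at some base scale exceeds the additive overlap loss.

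\textbf{A notational slip.} The comparison functions must be $N_+(h)=N(h+c)$ for the upper bound and $N_-(h)=N(h-c)$ for the lower bound, that is $N(h\pm c)$, not $N(h\mp c)$. Your own justification $(h+c+b)/|r_i|\le h/|r_i|+c$ uses this version, and it holds because $c(|r_i|-1)\ge c(\ur-1)=b$.

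\textbf{Two minor points.} The condition $m\ge2$ needed for $B=m/(m-1)$ is automatic: with $m=1$ the single map has a fixed point, so the system would be degenerate. The finiteness of $M(h)$ follows directly from the definition of uniform local finiteness, with no need for Proposition \ref{ud>ulf>lf}.
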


	\begin{proof}
	Let $\mu_a$ be the counting measure on $\fo_\F(a)$, and set $L=\fo_\F(a)\backslash\bigcup_{i=1}^m f_i(\fo_\F(a))$. Since $\fo_\F(a)$   has finite overlaps, there exists $C>0$ such that 
	\[
	\mu_a(f_i(\fo_\F(a))\cap f_j(\fo_\F(a)))\le C,\quad i\ne j.
	\]
	
	For a bounded set $E\subseteq\R$ and $\bi\in\Si^*\cup\{\emptyset\}$,
	define
	\[
	\begin{aligned}
		O_{\bi}(E)=\sum_{i=1}^m\#\bigl(
		f_{\bi}^{-1}(E)\cap f_i(\fo_{\F}(a))\bigr)-\#\left(\bigcup_{i=1}^m\bigl(f_{\bi}^{-1}(E)\cap f_i(\fo_{\F}(a))\bigr)\right),
	\end{aligned}
	\]
	where $f_{\emptyset}$ is the identity map. By
	Lemma \ref{overlap},
	\begin{equation}\label{-}
		0\leq O_{\bi}(E)\leq\f{m(m-1)}{2}C.
	\end{equation}
	Since each $f_i$ is bijective, we have
	\[
	\begin{aligned}
		\mu_a\bigl(f_{\bi}^{-1}(E)\bigr)&=\mu_a\Big(\bigcup_{i=1}^m\big(f_{\bi}^{-1}(E)\cap f_i(\fo_\F(a))\big)\Big)+\mu_a( E\cap L)\\
		&=\sum_{i=1}^m\mu_a\bigl(f_{\bi i}^{-1}(E)\bigr)
		+\mu_a\bigl(f_{\bi}^{-1}(E)\cap L\bigr)
		-O_{\bi}(E).
	\end{aligned}
	\]
	As in the proof of Lemma \ref{tree},
	$\Lambda_k(\oor)\cup\Xi_k(\oor)$ forms the vertex set of a finite
	full $m$-ary tree whose leaves are the elements of
	$\Lambda_k(\oor)$. Iterating the preceding identity along this tree
	gives
	\begin{equation}\label{countcut}
		\begin{aligned}
			\mu_a(E)=
			\sum_{\bi\in\Lambda_k(\oor)}
			\mu_a\bigl(f_{\bi}^{-1}(E)\bigr)+
			\sum_{\bi\in\Xi_k(\oor)}
			\mu_a\bigl(f_{\bi}^{-1}(E)\cap L\bigr)-
			\sum_{\bi\in\Xi_k(\oor)}O_{\bi}(E).
		\end{aligned}
	\end{equation}
	
	For every $\bi\in\Si^*$, we have
	\[
	f_{\bi}^{-1}(x)
	=
	r_{\bi}^{-1}x+f_{\bi}^{-1}(0).
	\]
	By \eqref{premageof0}, $|f_{\bi}^{-1}(0)|<c$, and hence
	\[
	|r_{\bi}|^{-1}|x|-c
	\leq
	|f_{\bi}^{-1}(x)|
	\leq
	|r_{\bi}|^{-1}|x|+c.
	\]
	Consequently, whenever $|r_{\bi}|^{-1}h>c$,
	\begin{equation}\label{contain}
		\begin{aligned}
			[-|r_{\bi}|^{-1}h+c,\ |r_{\bi}|^{-1}h-c]\subseteq
			f_{\bi}^{-1}([-h,h])\subseteq
			[-|r_{\bi}|^{-1}h-c,\ |r_{\bi}|^{-1}h+c].
		\end{aligned}
	\end{equation}
	Proposition \ref{onX} (iv) gives
	\[
	\mu_a\bigl(f_{\bi}^{-1}(E)\cap L\bigr)
	\leq
	\mu_a(L)
	\leq m.
	\]
	Combining \eqref{countcut}, \eqref{-}, and \eqref{contain} with
	Lemma \ref{tree} and \eqref{cutnumber}, we obtain
	\begin{align}
	&\,\Big(\mu_a([-\oor^{-k-1}h+c,\oor^{-k-1}h-c])-\f{Cm\oor^{s}}{2}\Big)\oor^{ks}\notag\\
	\le&\, \sum_{\bi\in\Lambda_k(\oor)}\mu_a([-|r_\bi|^{-1}h+c,|r_\bi|^{-1}h-c])-\f{m(m-1)}2C\#\Xi_k(\oor)\notag\\
	\le&\ \mu_a([-h,h])\notag\\
	\le&\, \sum_{\bi\in\Lambda_k(\oor)}\mu_a([-|r_\bi|^{-1}h-c,|r_\bi|^{-1}h+c])+m\#\Xi_k(\oor)\notag\\
	\le&\,\big( \mu_a([-\oor^{-k}h-c,\oor^{-k}h+c])+m\big)\oor^{(k+1)s}.\label{increaseconmtrol}
\end{align}
	
Since $\F$ is non-degenerate, Proposition \ref{onX} (v) implies
that $\fo_{\F}(a)$ is infinite and unbounded. We may therefore choose $\ell\in\Z_{>0}$ sufficiently
large that $\oor^{\ell-1}>c$ and
\[
\mu_a\bigl(
[-\oor^{\ell-1}+c,\ \oor^{\ell-1}-c]
\bigr)
\geq
\f{Cm\oor^s}{2}+1.
\]
For each sufficiently large $h$, let $k=k(h)$ be the unique integer
such that
\[
\oor^{k+\ell}\leq h<\oor^{k+\ell+1}.
\]
Applying the upper estimate in \eqref{increaseconmtrol}, we obtain
\[
\begin{aligned}
	\mu_a([-h,h])
	&\le\left(\mu_a\bigl([-\oor^{\ell+1}-c,\ \oor^{\ell+1}+c]\bigr)+m\right)\oor^{(k+1)s}\\
	&\leq\left(\mu_a\bigl([-\oor^{\ell+1}-c,\oor^{\ell+1}+c]\bigr)+m\right)h^s.
\end{aligned}
\]
On the other hand, the lower estimate gives
\begin{align}
	\mu_a([-h,h])\geq\left(\mu_a\bigl([-\oor^{\ell-1}+c,\oor^{\ell-1}-c]\bigr)-\f{Cm\oor^s}{2}\right)\oor^{ks}\geq\oor^{ks}>\oor^{-(\ell+1)s}h^s.\label{kb}
\end{align}
Thus there exists $C_1>1$ such that
\begin{equation}\label{h^s}
	C_1^{-1}h^s
	\leq
	\#\bigl(\fo_{\F}(a)\cap[-h,h]\bigr)
	\leq
	C_1h^s
\end{equation}
for all sufficiently large $h$. It follows that
$\dimM\fo_{\F}(a)=s$, and
Theorem \ref{massdimoflocallyfiniteorbits} therefore yields
\[
d_{\F}=\dimM\fo_{\F}(a)=s.
\]

Suppose now that $\fo_{\F}(a)$ is uniformly locally finite, and set
\[
Q=\sup_{x\in\R}\#\bigl(\fo_{\F}(a)\cap[x-1,x+1]\bigr)<\infty.
\]
For each sufficiently large $h$, let $k'=k'(h)$ be the unique
integer satisfying
\[
\oor^{k'-1}\leq h<\oor^{k'}.
\]
For every $\bi\in\Lambda_{k'}(\oor)$, we have
$|r_{\bi}|\geq\oor^{k'}>h$. Hence
$f_{\bi}^{-1}([x-h,x+h])$ is an interval of radius
$h/|r_{\bi}|<1$, and therefore
\[
\mu_a\bigl(f_{\bi}^{-1}([x-h,x+h])\bigr)
\leq Q.
\]
Using \eqref{countcut}, \eqref{cutnumber}, and
$\mu_a(L)\leq m$, we obtain
\[
\begin{aligned}
	\mu_a([x-h,x+h])\le Q\#\Lambda_{k'}(\oor)+
	m\#\Xi_{k'}(\oor)\le(Q+m)\oor^{(k'+1)s}\le(Q+m)\oor^{2s}h^s.
\end{aligned}
\]
Taking the supremum over $x\in\R$ and combining this estimate with
\eqref{h^s}, we find a constant $C_2>1$ such that
\[
C_2^{-1}h^s
\leq
\sup_{x\in\R}
\#\bigl(\fo_{\F}(a)\cap[x-h,x+h]\bigr)
\leq
C_2h^s
\]
for all sufficiently large $h$. Finally,
Theorem \ref{bedimoflocallyfiniteorbits} gives
\[
d_{\F}=\dimM\fo_{\F}(a)=\dimbe\fo_{\F}(a)=s\leq1.
\]
\end{proof}
	
	Theorem \ref{mass} extends Theorem 3.1 of \cite{Deng2009} by relaxing several assumptions: specifically, $\F$ need not be defined on a uniformly discrete set in Euclidean space, the starting point $a$ need not belong to $P(\F)$, and the forward orbit is not required to be non-overlapping.
\begin{corollary}
	Suppose that there exists $a\in\R$ such that $\fo_{\F}(a)$ is
	uniformly discrete and has finite overlaps. Then
	\[
	d_\F=\dimB K(\F^{-1})=\dimH K(\F^{-1})=\begin{cases}
		0,\quad &\text{\ if\ }\F\text{\ is degenerate},\\
		s, &\text{\ if\ }\F\text{\ is non-degenerate}.
	\end{cases}
	\]
\end{corollary}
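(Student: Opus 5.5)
Split according to whether $\F$ is degenerate, since the hypothesis is used differently in the two cases.

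\emph{Degenerate case.} Corollary \ref{df=0} gives $d_\F=0$. By Proposition \ref{onX}(iii), $K(\F^{-1})=\overline{P(\F)}$. If $\F$ is degenerate at $x_0$, then every $g\in G^*(\F)$ has the form $g(x)=r_g(x-x_0)+x_0$ (Proposition \ref{propA}(i)), so its unique fixed point is $x_0$. Hence $K(\F^{-1})=\{x_0\}$, and its Hausdorff and box dimensions are $0$. No separation hypothesis is needed here.

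\emph{Non-degenerate case.} The given orbit $\fo_\F(a)$ is uniformly discrete, hence uniformly locally finite by Proposition \ref{ud>ulf>lf}, and in particular locally finite. It also has finite overlaps, so Theorem \ref{mass} applies and gives $d_\F=s\le 1$. Lemma \ref{dfrange} together with Proposition \ref{boxexists} then yields $\dimH K(\F^{-1})\le\dimB K(\F^{-1})\le d_\F=s$. It remains to prove $\dimH K(\F^{-1})\ge s$, which is the main step.

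\emph{Step 1: no exact overlaps.} Since $\fo_\F(a)$ is infinite (Proposition \ref{onX}(v)) and has finite overlaps, Proposition \ref{propA}(ii) rules out exact overlaps in $\F^{-1}$.

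\emph{Step 2: Hochman's dichotomy.} Suppose $\dimH K(\F^{-1})<s=\min\{1,s\}$. Corollary \ref{dimensiondrop} states that no non-singleton forward orbit can be both uniformly discrete and have finite overlaps. More concretely, in the absence of exact overlaps, Theorem \ref{Hochman} produces pairs $\bi_n\ne\bj_n\in\Si^n$ with $r_{\bi_n}=r_{\bj_n}$ and
\[
0<|f_{\bi_n}(a)-f_{\bj_n}(a)|=|r_{\bi_n}|\Delta_n\le \oor^{\,n}\Delta_n\to0 .
\]
This contradicts uniform discreteness of $\fo_\F(a)$. Therefore $\dimH K(\F^{-1})\ge s$.

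\emph{Step 3: conclusion.} Combining the bounds gives $\dimH K(\F^{-1})=\dimB K(\F^{-1})=d_\F=s$. The point I would double-check is that Hochman's theorem needs the strict inequality $\dimH<\min\{1,s\}$. Since we already have $s\le1$, this is exactly $\dimH<s$, so the contradiction argument applies as stated.
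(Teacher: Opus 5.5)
Your proposal is correct and follows the paper's proof essentially step for step. The degenerate case goes through Corollary~\ref{df=0} and the singleton attractor; the non-degenerate case uses Theorem~\ref{mass} for $d_\F=s\le 1$, Lemma~\ref{dfrange} for $\dimH K(\F^{-1})\le\dimB K(\F^{-1})\le d_\F$, and Corollary~\ref{dimensiondrop} (Hochman) for $\dimH K(\F^{-1})\ge s$, with your separate Step~1 on exact overlaps already contained in Corollary~\ref{dimensiondrop}, so redundant but harmless.
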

\begin{proof}
	Suppose first that $\F$ is degenerate. Then $K(\F^{-1})$ is a
	singleton, and Corollary \ref{df=0} gives
	\[
	d_\F=\dimB K(\F^{-1})=\dimH K(\F^{-1})=0.
	\]
	
	Now suppose that $\F$ is non-degenerate. By
	Proposition \ref{onX} (v), the orbit $\fo_{\F}(a)$ is infinite.
	Theorem \ref{mass} yields $d_\F=s\le 1$. Corollary \ref{dimensiondrop} therefore gives $\dimH K(\F^{-1})=s$. On the other hand, Lemma \ref{dfrange} gives
	\[
	\dimH K(\F^{-1})\le\dimB K(\F^{-1})\le d_{\F}
	\le s.
	\]
	Combining these relations, we obtain the desired result.
\end{proof}

	Finally, we provide two examples that highlight the role of the algebraic conditions and the interplay between local finiteness, uniform discreteness, and the various notions of dimension.
	
	We first revisit Example \ref{Be=infty} to establish that the algebraic hypothesis on the expansion ratios $r_i$ in Proposition \ref{degenerate} is indispensable. Specifically, in the absence of this condition, the Beurling dimension of a forward orbit may diverge to infinity, even if its mass dimension remains zero. More generally, the Beurling dimension of a forward orbit need not be bounded by the ambient dimension unless the set satisfies uniform local finiteness.
	\begin{example}[Example \ref{Be=infty} revisited]\label{Bedimsion=infty}
		Let $\{a_n\}_{n=1}^\infty, \{b_n\}_{n=1}^\infty$ be the sequences defined by
		\[
		a_1=b_1=1,\quad a_{n+1}=a_n\big\lceil\e^{nb_n}\big\rceil,\quad b_{n+1}=a_{n+1}\f{b_n}{a_n}+1,\quad n\ge 1,
		\]
		and let $\delta = \lim_{n\to\infty} b_n/a_n$.  Consider the RIFS $\F=\{\e x,\ \e^{\delta} x\}$. Then for every $a\in\R\setminus\{0\}$,
		\[
		\dimM\fo_\F(a)=0 \qquad\text{and}\qquad \dimbe\fo_\F(a)=\infty.
		\]
	\end{example}
	
	\begin{proof}
		Since $\F$ is degenerate at $0$, by Proposition \ref{degenerate}, we have $\dimM\fo_\F(a)=0$ for all $a\in\R$.
		
		However, recall from Example \ref{Be=infty} that for $a \neq 0$ and any $h > 0$, we have 
		\[
		\sup_{x\in\R}\#\bigl(\fo_\F(a)\cap[x-h,x+h]\bigr)=\infty .
		\]
		It is a direct consequence of the definition of the Beurling dimension that $\dimbe\fo_\F(a) = \infty$ whenever $a \neq 0$.
	\end{proof}

	The previous example dealt with a degenerate system. We now present a non-degenerate RIFS admitting a forward orbit that is locally finite and non-overlapping, yet whose mass dimension can indeed be strictly greater than $1$. This illustrates the first part of Theorem \ref{mass} and demonstrates that, absent the stronger assumption of uniform discreteness, the mass dimension of a locally finite orbit need not be constrained by the dimension of the ambient space.
	
	\begin{example}\label{ex_LF}
	Let $N\geq 1$ be an integer, and consider the RIFS
	\[
	\F=\{f_1(x)=\e x,\ f_2(x)=\e x+1,\ldots,
	f_{N+1}(x)=\e x+N\}
	\]
	on $\R$, where $\e$ denotes Euler's number. Then the forward orbit
	$\fo_\F(1)$ is locally finite and non-overlapping, and
	\[
	\dimM\fo_\F(1)=s=\log(N+1).
	\]
	Moreover, the inverse system $\F^{-1}$ has no exact overlaps.
	
	If $N\geq2$, then $\F$ admits no uniformly locally finite forward
	orbits and, consequently, no uniformly locally finite invariant sets.
	\end{example}
	
	\begin{proof}
		The convex hull of $K(\F^{-1})$ is $[-\f{N}{\e-1},0]$ for every $N\ge 1$. Since every map of $\F$ is surjective and $1\notin K(\F^{-1})$, Proposition \ref{onX} (vi) implies that $\fo_\F(1)$ is locally finite.
		
		We next show that the orbit is non-overlapping. Suppose that $
		f_\bi(1)=f_\bj(1)$
		for some finite words $\bi,\bj\in\Sigma^*$. We may write
		\[
		f_\bi(1)=\e^{|\bi|}+P_\bi(\e),
		\qquad
		f_\bj(1)=\e^{|\bj|}+P_\bj(\e),
		\]
		where $P_\bi,P_\bj\in\Z[x]$, with $
		\deg P_\bi<|\bi|$ and $
		\deg P_\bj<|\bj|.$
		The coefficients of these polynomials are precisely the translation
		digits associated with the corresponding words. Since $\e^{|\bi|}+P_\bi(\e)=\e^{|\bj|}+P_\bj(\e)$
	and $\e$ is transcendental, the two integer polynomials
		\[
		x^{|\bi|}+P_\bi(x)
		\quad\text{and}\quad
		x^{|\bj|}+P_\bj(x)
		\]
		must coincide. It follows first that $|\bi|=|\bj|$ and then, by
		comparing their coefficients, that $\bi=\bj$. Hence
		\[
		f_\bi(1)\neq f_\bj(1)
		\qquad\text{whenever}\qquad\bi\neq\bj.
		\]
		In particular, the sets $
		f_i\bigl(\fo_\F(1)\bigr),
		\, 1\leq i\leq N+1,$
		are pairwise disjoint. Therefore, $\fo_\F(1)$ is non-overlapping. By Theorem \ref{mass}, we have 
		$$
		\dimM\fo_{\F}(1)=s=\ln(N+1).
		$$ 
		The fact that $f_{\bi}(1)\neq f_{\bj}(1)$ for $\bi\neq\bj$ also shows that the maps $f_{\bi}$ and $f_{\bj}$ are distinct; thus $\F^{-1}$ admits	no exact overlaps.
		
		Finally, suppose that $N\geq2$ and that $\fo_\F(a)$ is uniformly
		locally finite for some $a\in\R$. By
		Theorem \ref{bedimoflocallyfiniteorbits} and
		Proposition \ref{df=s}, we have
		\[
		\dimbe\fo_{\F}(a)=d_\F=s=\ln(N+1)\le 1,
		\]
		which is impossible.
		Therefore, $\F$ admits no uniformly locally finite forward orbits, and consequently, no uniformly locally finite invariant sets.
	\end{proof}
	
	\begin{remark}
		For $N \ge 2$, it is clear that $K(\F^{-1})=[-\f{N}{\e-1},0]$ and the dual IFS does not satisfy the open set condition. Even though $K(\F^{-1})$ exhibits a considerable degree of overlap, $\F$ still admits an abundance of non-overlapping locally finite forward orbits.  
	\end{remark}

	\section{Renewal asymptotics for forward-orbit counting}\label{sec_Cden}

	In this section, we apply the renewal theorem to determine the asymptotic central densities of forward orbits.
	For convenience, we recall that, for $\alpha>0$, the upper and lower $\alpha$-central densities of a countable set $E\subseteq\R$ are defined by
	\begin{equation}\label{def_CA+-}
		\mathcal C_\alpha^+(E)
		=
		\limsup_{h\to\infty}
		\f{\#(E\cap[-h,h])}{h^\alpha},
		\qquad
		\mathcal C_\alpha^-(E)
		=
		\liminf_{h\to\infty}
		\f{\#(E\cap[-h,h])}{h^\alpha}.
	\end{equation}
	If these quantities agree, their common value is denoted by $\mathcal C_\alpha(E)$.

	\subsection{The main asymptotic theorem}
	
	\begin{theorem}\label{asymptotic}
		Let $\F$ be a non-degenerate RIFS of the form \eqref{IFSform}, let
		$a\in\R$, and let $s$ be defined by \eqref{dimension}. Suppose that
		$\fo_\F(a)$ is non-overlapping and uniformly locally finite. Then the
		following statements hold.
		\begin{enumerate}[(i)]
			\item Suppose that $
			\f{\log|r_i|}{\log|r_j|}\notin\Q$
			for some $i\neq j$. Then the upper and lower $s$-central densities
			of $\fo_\F(a)$ coincide. Their common value is
			\[
		\begin{aligned}
			\mathcal{C}_s\bigl(\fo_\F(a)\bigr)&= \f{1}{ s\sum_{i=1}^{m} |r_i|^{-s}\log |r_i|}    \Biggl[\,
			\sum_{x\in\fo_\F(a)\cap (-1,1)}\Bigl(\,\sum_{i=1}^{m}\min\big\{1,|r_ix|^{-s}\big\}-1\Bigr)  \\
			&\hspace{3.9cm} 
			+ \,\sum_{x\in\fo_\F(a)} \sum_{i=1}^{m}
			\Bigl( \min\big\{1,|r_i x+b_i|^{-s}\big\} - \min\big\{1,|r_i x|^{-s}\big\} \Bigr)\\
			&\hspace{6.8cm} + \sum_{x\in\fo_\F(a)\setminus\bigcup_{i=1}^m f_i(\fo_\F(a))}\min\big\{1,|x|^{-s}\big\}
			\Biggr],
		\end{aligned}
		\]
			Here and throughout, we adopt the convention $0^{-s}=\infty$.
			
			\item Suppose that $
			\f{\log|r_i|}{\log|r_j|}\in\Q
			\,\text{for all }1\leq i,j\leq m.$
			Equivalently, there exist $r>1$ and positive integers
			$k_1,\ldots,k_m$ with
			\[
			\gcd(k_1,\ldots,k_m)=1
			\qquad\text{and}\qquad
			|r_i|=r^{k_i}
			\quad (1\leq i\leq m).
			\]
			Then there exist a constant $C'>1$ and a function
			$\phi\colon[1,\infty)\to(0,\infty)$ such that
			\[
			{C'}^{-1}\leq\phi(h)\leq C',
			\qquad
			\phi(rh)=\phi(h)
			\quad\text{for all }h\geq1,
			\]
			and
			\[
			\lim_{h\to\infty}
			\left|
			\f{
				\#\bigl(\fo_\F(a)\cap[-h,h]\bigr)
			}{h^s}
			-\phi(h)
			\right|
			=0.
			\]
		\end{enumerate}
	\end{theorem}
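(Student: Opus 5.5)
The plan is to recast the orbit count as a renewal equation in logarithmic scale and then apply the classical renewal theorem (Feller's version for defective-free probability vectors on $\R$, non-lattice and lattice cases).

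\emph{Setting up the equation.} Write $O=\fo_\F(a)$, $N(h)=\#(O\cap[-h,h])$, and $L=O\setminus\bigcup_i f_i(O)$. By Proposition \ref{onX} (iv) we have $\#L\le m$, and non-overlapping turns it into a disjoint decomposition $O=L\sqcup\bigsqcup_{i=1}^m f_i(O)$. Since each $f_i$ is a bijection, this gives
\[
N(h)=\sum_{i=1}^m\#\bigl(O\cap f_i^{-1}([-h,h])\bigr)+\#\bigl(L\cap[-h,h]\bigr).
\]
Here $f_i^{-1}([-h,h])$ is the interval $[-h/|r_i|,h/|r_i|]$ translated by $-b_i/r_i$. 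Set $p_i=|r_i|^{-s}$, so that $\sum_i p_i=1$. For $t\in\R$ put $Z(t)=e^{-st}N(e^t)\mathbf 1_{t\ge0}$. The identity above becomes
\[
Z(t)=\sum_{i=1}^m p_i\,Z(t-\log|r_i|)+z(t).
\]
The remainder $z(t)$ is $e^{-st}$ times three kinds of terms: the differences $\#(O\cap f_i^{-1}[-e^t,e^t])-N(e^t/|r_i|)$, the $L$-term, and the corrections coming from the cutoff at $t=0$.

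\emph{Controlling the remainder.} Uniform local finiteness gives $Q=\sup_x\#(O\cap[x-1,x+1])<\infty$. Each translated interval differs from the centred one by two intervals of length $|b_i/r_i|\le c$, so each difference is bounded by $2Q(c+1)$. The cutoff corrections are supported on a bounded $t$-range. Hence $|z(t)|\le C e^{-st}$ for $t\ge0$, and $z$ vanishes (or is bounded) near $-\infty$. Moreover $z$ is a finite combination of $e^{-st}$ times step functions with locally finitely many jumps, because $O$ is locally finite. Together with the exponential domination, this makes $z$ directly Riemann integrable. The one point I expect to need care is checking that $Z(t-\log|r_i|)$ with $t<\log|r_i|$ is correctly absorbed into $z$. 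This is exactly where the sum over $x\in O\cap(-1,1)$ in the formula comes from.

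\emph{Applying the renewal theorem.} Put $\mu=\sum_i p_i\log|r_i|$.
\begin{itemize}
\item In case (i) the distribution $\sum_i p_i\delta_{\log|r_i|}$ is non-lattice. The renewal theorem then gives $Z(t)\to\mu^{-1}\int_\R z$.
\item In case (ii) the distribution is lattice with span $\log r$, since $\gcd(k_i)=1$. The renewal theorem gives $Z(t)-\tilde\phi(t)\to0$, where
\[
\tilde\phi(t)=\f{\log r}{\mu}\sum_{n\in\Z}z(t+n\log r)
\]
is $\log r$-periodic. We then set $\phi(h)=\tilde\phi(\log h)$. The two-sided bound $C'^{-1}\le\phi\le C'$ follows from Theorem \ref{mass}, because $\phi(h)$ is a limit of values of $N(r^nh)/(r^nh)^s$, which lie in $[C_1^{-1},C_1]$.
\end{itemize}

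\emph{Computing the constant.} The main computational obstacle is identifying $\mu^{-1}\int z$ with the displayed expression. I would expand each count as a sum of indicators over points $x\in O$ and use the elementary identity
\[
\int_0^\infty e^{-st}\mathbf 1_{\{|y|\le e^t\}}\,dt=\tfrac1s\min\{1,|y|^{-s}\}.
\]
\begin{itemize}
\item The translated-interval term, applied to $y=r_ix+b_i$, and the centred term, applied to $y=r_ix$, produce the double sum $\sum_{x}\sum_i\bigl(\min\{1,|r_ix+b_i|^{-s}\}-\min\{1,|r_ix|^{-s}\}\bigr)$.
\item The $L$-term produces the last sum.
\item The cutoff correction (points with $|x|<1$, for which $\int_0^\infty$ and $\int_{\log|r_i|\,\cdot}$ differ) produces the sum over $O\cap(-1,1)$.
\end{itemize}
Interchanging sum and integral must be justified by absolute convergence. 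For that I would use the estimate $\bigl||r_ix+b_i|^{-s}-|r_ix|^{-s}\bigr|\lesssim|x|^{-s-1}$ for large $|x|$, together with $\#(O\cap[-h,h])\asymp h^s$ from Theorem \ref{mass}, which makes $\sum_x|x|^{-s-1}$ converge by summation by parts.
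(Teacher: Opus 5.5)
Your proposal is correct and follows essentially the same route as the paper. It uses the same logarithmic renewal equation, with the boundary terms $|r_i|^{-s}Z(t-\log|r_i|)$ for $0\le t<\log|r_i|$ absorbed into the forcing term (which yields the sum over $\fo_\F(a)\cap(-1,1)$), the same direct-Riemann-integrability argument via domination by $Ce^{-st}$, and the same indicator integrals $\int_0^\infty e^{-st}\mathbf{1}_{\{|y|\le e^t\}}\,dt$ for the constant. The one step you should spell out is in case (ii): the lattice renewal theorem only gives convergence along $x+n\log r$ for each fixed $x$, but the statement needs the full limit in $h$. To get it, use, as the paper does, $Z(x+n\log r)=\sum_{k\le n}u_{n-k}\,z(x+k\log r)$ with $u_k\to\log r\big/\sum_i|r_i|^{-s}\log|r_i|$ and $|z(t)|\le Ce^{-st}$, which gives convergence uniformly in $x\in[0,\log r)$. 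Once that is in place, your deduction of ${C'}^{-1}\le\phi\le C'$ directly from Theorem \ref{mass} is a cleaner alternative to the paper's lower bound on $\sum_n Z(x+n\log r)$.
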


In the arithmetic case, the $s$-central density exists precisely when the periodic profile $\phi$ is constant; Example \ref{eg_cdnote} shows that
nonconstant profiles do occur.
	The proof of Theorem \ref{asymptotic}  relies on the following renewal theorem, and we refer readers to \cite{Asmussen2003} for details.	
	\begin{theorem}\label{renewal}
		Assume that $Z(t)$ is directly Riemann integrable on $[0,+\infty)$, and let $H(t)$ be the locally bounded solution to the renewal equation
		\[
		H(t)=Z(t)+\int_0^t H(t-s) \text{d} F(s),\quad t\ge 0,
		\]
		where $F(s)$ is a cumulative distribution function on $[0,+\infty)$. Then $H=\big(\sum_{n=0}^{\infty}F^{*n}\big)*Z$. 
		\begin{enumerate}[(i)]
			\item If $F$ is non-arithmetic, then
			\[
			\lim_{t\to+\infty}H(t)=\left\{\begin{array}{ll}
				\f{\int_0^{+\infty}Z(t)\text{d}t}{\int_0^{+\infty}t\text{d}F(t)},\quad &\int_0^{+\infty}t\dd F(t)<+\infty\vspace{0.3em},\\
				0, &\int_0^{+\infty}t\text{d}F(t)=+\infty.
			\end{array}\right.
			\]
			\item If $F$ is arithmetic with span $d$, then for $0\le x<d$,
			\[
			\lim_{n\to\infty}H(x+nd)=\left\{\begin{array}{ll}
				\f{d}{\int_0^{+\infty}t\text{d}F(t)}\sum_{k=0}^\infty Z(x+kd),\quad &\int_0^{+\infty}t\text{d}F(t)<+\infty\vspace{0.3em},\\
				0, &\int_0^{+\infty}t\text{d}F(t)=+\infty.
			\end{array}\right.
			\]
		\end{enumerate}
	\end{theorem}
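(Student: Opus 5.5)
This is the classical key renewal theorem, which the paper quotes from \cite{Asmussen2003}. The plan is to reduce it to Blackwell's renewal theorem for the renewal measure $U=\sum_{n\ge0}F^{*n}$, and then to pass from indicator functions to directly Riemann integrable $Z$ by approximation.

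\textbf{Representation.} Assume, as is implicit, that $F(0)<1$; otherwise $U$ is infinite. Then some $n_0$ has $F^{*n_0}(t)<1$, so $F^{*kn_0}(t)\le F^{*n_0}(t)^k$ decays geometrically and $U([0,t])<\infty$ for each $t$. This gives the subadditivity bound $U([t,t+h])\le U([0,h])$, which will be used repeatedly. Iterating the equation $n$ times gives
\[
H=\sum_{k=0}^{n-1}F^{*k}*Z+F^{*n}*H .
\]
Since $H$ is locally bounded on $[0,t]$, the remainder is at most $\sup_{[0,t]}|H|\,F^{*n}(t)\to0$. Hence $H=U*Z$, and this also proves uniqueness among locally bounded solutions. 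Existence follows because $U*Z$ is locally bounded and solves the equation.

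\textbf{Blackwell's theorem.} In the non-arithmetic case with $\mu=\int t\,dF<\infty$ we need
\[
U((t,t+h])\to h/\mu .
\]
In the arithmetic case with span $d$ we need $U(\{nd\})\to d/\mu$. When $\mu=\infty$ both limits are $0$.
\begin{enumerate}[(i)]
\item The arithmetic case is the discrete renewal theorem (Erd\H{o}s--Feller--Pollard). I would prove it by the coupling or generating-function argument, using the aperiodicity condition $\gcd=1$ after rescaling by $d$.
\item For the non-arithmetic case I would first try a coupling argument. Take a delayed renewal process started from the stationary delay distribution $\mu^{-1}(1-F(x))\,dx$; its renewal measure is exactly Lebesgue$/\mu$. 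Then couple it with the zero-delayed process to within $\varepsilon$.
\item The coupling needs the random walk of differences of independent copies to return within $\varepsilon$ of the origin. This uses non-arithmeticity: the symmetrised increment distribution generates a dense subgroup, and the Chung--Fuchs recurrence theorem applies to a mean-zero walk.
\item An alternative is Feller's analytic proof. One extracts, by Helly selection, limits of the shifted measures $U(t+\cdot)$. Each such limit satisfies the Choquet--Deny equation $\nu=\nu*F$, so it is a multiple of Lebesgue measure by non-arithmeticity. The constant is identified as $1/\mu$ from $\int_0^t(1-F(t-s))\,dU(s)=1$.
\end{enumerate}

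\textbf{Passing to directly Riemann integrable $Z$.} Direct Riemann integrability means that for each $h>0$ the upper and lower step functions $\overline Z_h,\underline Z_h$ on the grid $h\mathbb Z$ have finite sums, and the two sums differ by $o(1)$ times $1/h$ as $h\to0$.
\begin{enumerate}[(i)]
\item For a step function $\sum_k c_k\mathbf 1_{[kh,(k+1)h)}$ with $\sum|c_k|<\infty$, Blackwell gives termwise convergence of $U*(\cdot)(t)$ to $\mu^{-1}\sum c_kh$.
\item The uniform bound $U([t,t+h])\le U([0,h])$ makes the tail $\sum_{k>K}|c_k|\,U([0,h])$ uniformly small. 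So dominated convergence applies to the series.
\item Sandwiching $\underline Z_h\le Z\le\overline Z_h$ and letting $h\to0$ yields $\lim H(t)=\mu^{-1}\int_0^\infty Z$.
\item In the arithmetic case, take $t=x+nd$ and evaluate $U*Z$ on the lattice. This gives $\sum_k Z(x+kd)\,U(\{(n-k)d\})\to(d/\mu)\sum_k Z(x+kd)$. Dominated convergence is justified by $\sum_k|Z(x+kd)|<\infty$, which follows from direct Riemann integrability, together with boundedness of $U(\{jd\})$.
\end{enumerate}

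\textbf{Main obstacle.} The hard step is Blackwell's theorem in the non-arithmetic case, specifically the $\varepsilon$-coupling or Choquet--Deny step. That is where non-arithmeticity is genuinely used. Everything else is bookkeeping with the subadditivity bound on $U$.
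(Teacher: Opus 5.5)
Your outline is correct, and it is the standard textbook proof. You iterate to get $H=U*Z$ with $U=\sum_{n\ge0}F^{*n}$, which also gives uniqueness. You prove Blackwell's theorem. You then pass to directly Riemann integrable $Z$ by the step-function sandwich, using $U([t,t+h])\le U([0,h])$. The paper gives no proof of its own here; it quotes the result from \cite{Asmussen2003}, whose argument follows this same route.

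There is one error in your first (coupling) route to Blackwell's theorem. If the difference walk advances both copies in lockstep, its increments are $X_k-X_k'$, and these need not be non-lattice even when $F$ is non-arithmetic. For example, with $F=\frac12(\delta_1+\delta_{1+\sqrt2})$ the differences lie in $\{0,\pm\sqrt2\}$, so the walk stays in a coset of $\sqrt2\,\mathbb Z$. Your claim that the symmetrised increment distribution generates a dense subgroup then fails, and the $\varepsilon$-coupling succeeds only with probability less than one. The fix is to advance one process at a time, chosen by independent fair coin flips. The increments are then $X$ or $-X'$, whose support generates the same closed group as $\operatorname{supp}F$, namely all of $\mathbb R$.

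Your Choquet--Deny route has no such problem, and it also covers $\mu=\infty$: Fatou in $\int_0^t(1-F(t-s))\,dU(s)=1$ forces the constant to be $0$. The coupling construction does not apply there, since there is no stationary delay and the walk may be transient.
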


	\begin{proof}[Proof of Theorem \ref{asymptotic}]
		We restrict our attention to the case where all $r_i$ are positive numbers; the general case can be treated analogously.

		Let $\mu_a$ be the counting measure on $\fo_\F(a)$ and let  $L=\fo_\F(a)\backslash\bigcup_{i=1}^m f_i(\fo_\F(a))$. Since $\fo_\F(a)$ is non-overlapping, 
		$$
		\mu_a(E)=     \sum_{i=1}^m \mu_a(f_i^{-1}(E))+\mu_a(E\cap L).
		$$
		It follows  that for all $h>0$, 
		\begin{equation}\label{=}
			\mu_a([-h,h])=\sum_{i=1}^m\mu_a\Big(\Big[-\f{h+b_i}{r_i},\f{h-b_i}{r_i}\Big]\Big)+\mu_a([-h,h]\cap L)=\sum_{i=1}^m\mu_a\Big(\Big[-\f{h}{r_i},\f{h}{r_i}\Big]\Big)+\widetilde{L}(h),
		\end{equation}
		where 
		\[
		\widetilde{L}(h)=\sum_{i=1}^m\Big(\mu_a\Big(\Big[\f{-h-b_i}{r_i},\f{h-b_i}{r_i}\Big]\Big)-\mu_a\Big(\Big[-\f{h}{r_i},\f{h}{r_i}\Big]\Big)\Big)+\mu_a([-h,h]\cap L).
		\]
		Note that for each $1\le i\le m$, the points of discontinuity of the function 
		\[
		h \mapsto \mu_a \left( \left[ \f{-h-b_i}{r_i}, \f{h-b_i}{r_i} \right] \right) - \mu_a \left( \left[ -\f{h}{r_i}, \f{h}{r_i} \right] \right)
		\]
		are contained in the uniformly locally finite set
		\[
		\big\{ r_i|x| : x \in \fo_\F(a) \big\} \cup \big\{ |r_i x + b_i| : x \in \fo_\F(a) \big\},
		\]
		and the function $h \mapsto \mu_a([-h,h]\cap L)$ has at most $m$ discontinuities. Since $\fo_{\F}(a)$ is uniformly locally finite and $\mu_a([-h,h]\cap L)\le m$, there exists a constant $M>0$ such that
		\begin{equation}\label{boundedabove}
			\begin{aligned}
				|\widetilde{L}(h)|\le\sum_{i=1}^m\left|\mu_a\Big(\Big[\f{-h-b_i}{r_i},\f{h-b_i}{r_i}\Big]\Big)-\mu_a\Big(\Big[-\f{h}{r_i},\f{h}{r_i}\Big]\Big)\right|+\mu_a([-h,h]\cap L)\le M.
			\end{aligned}
		\end{equation}
		Therefore, $\widetilde{L}(h)$ is Riemann integrable over any finite interval in $[1,+\infty)$. 
		
		Denote $H(t)=\e^{-s t}\mu_a([-\e^t,\e^t])$ and $G(t)=\e^{-st}\widetilde{L}(\e^t)$ for $ t\in \R$ with $\sum_{i=1}^m r_i^{-s} = 1$. Note that
		\[
		\int_0^t H(t-y)\, \text{d}F(y)=\sum_{i:\log r_i\le t}r_i^{-s}H(t-\log r_i),\qquad t\ge0,
		\]
		where $F$ is the probability measure that puts mass $r_i^{-s}$ at $\log r_i$ for $1\le i\le m$.
		Therefore, we have
		\[
		H(t)=Z(t)+\int_0^t H(t-y)\,\text{d}F(y),\quad t\ge0,
		\]
		where, by \eqref{=},
		\begin{equation}\label{Z(t)}
			Z(t)=H(t)-\sum_{i:\log r_i\le t}r_i^{-s}H(t-\log r_i)=G(t)+\sum_{i:\log r_i> t}r_i^{-s}H(t-\log r_i).
		\end{equation}
		By \eqref{boundedabove}, $|G(t)|$ is bounded by a monotonically non-increasing, Lebesgue integrable function. Since $G(t)$ is Riemann integrable on finite intervals, it is directly Riemann integrable on $[0,+\infty)$ according to \cite[Chapter 5, Proposition 4.1]{Asmussen2003}. Furthermore, as the sum $\sum_{i:\log r_i> t} r_i^{-s} H(t-\log r_i)$ is bounded, piecewise continuous, and compactly supported in $[0,+\infty)$, it follows by the same proposition that $Z(t)$ is also directly Riemann integrable.
		
		(i) Since $\f{\log r_i}{\log r_j}\notin \Q$ for some $i$ and $j$,  $F$ is non-arithmetic.  
		By \eqref{Z(t)}, we are able to explicitly evaluate $\int_0^\infty Z(t)\dd t$ in two parts.  Since 
		\[
		\int_{0}^{\log r_i} H(t-\log r_i)\dd t
		= \int_{-\log r_i}^{0} H(u)\dd u
		= \int_{r_i^{-1}}^{1} v^{-s-1}\mu_a([-v,v])\dd v, 
		\]
		we have 
		\begin{equation} \label{eq_dpart}
			\int_{0}^{\infty}\!\!\!\!\!  \sum_{i:\log r_i>t}\!\!\! r_i^{-s} H(t-\log r_i) \dd t
			= \sum_{i=1}^{m} r_i^{-s}\int_{0}^{\log r_i}\!\!\! H(t-\log r_i)\dd t =\sum_{i=1}^{m} r_i^{-s}\int_{r_i^{-1}}^{1} \f{\mu_a([-v,v])}{v^{s+1}}\dd v .
		\end{equation}
		The integral involves only orbit points with $|v|\le 1$, so it is a finite sum.
		
		Since   $\mu_a=\sum_{x\in\fo_\F(a)}\delta_x$ where $\delta_x$ denotes the Dirac mass at $x$, we may rewrite $\widetilde L(h)$ as 
		\[
		\widetilde L(h)=\sum_{x\in\fo_\F(a)}\Bigl(\sum_{i=1}^{m}
		\Bigl[\mathbf{1}_{[\,\f{-h-b_i}{r_i},\f{h-b_i}{r_i}\,]}(x)
		-\mathbf{1}_{[-\f{h}{r_i},\f{h}{r_i}]}(x)\Bigr]
		+ \ind_{L}(x)\mathbf{1}_{[-h,h]}(x)\Bigr).
		\]
		For a fixed $x$, the condition $x \in [\f{-h-b_i}{r_i}, \f{h-b_i}{r_i}]$ is equivalent to $|r_ix + b_i| \le h$, while $x \in [-\f{h}{r_i}, \f{h}{r_i}]$ corresponds to $ |r_ix|\le h$. Consequently, these indicators, when viewed as functions of $h$, can be reformulated as
		\[
		\mathbf{1}_{[\,\f{-h-b_i}{r_i},\f{h-b_i}{r_i}\,]}(x)
		=\mathbf{1}_{[\,|r_i x+b_i|,\infty)}(h),\qquad
		\mathbf{1}_{[-\f{h}{r_i},\f{h}{r_i}]}(x)
		=\mathbf{1}_{[\,|r_ix|,\infty)}(h).
		\]
		A direct computation shows that for any $A,B\ge0$,
		\begin{align*}
			\int_{1}^{\infty} h^{-s-1}\bigl(\mathbf{1}_{[A,\infty)}(h)
			-\mathbf{1}_{[B,\infty)}(h)\bigr)\dd h
			&=\f1s\Bigl(\big(\max\{1,A\}\big)^{-s}-\big(\max\{1,B\}\big)^{-s}\Bigr)\\
			&=\f1s\Bigl(\min\{1,A^{-s}\}-\min\{1,B^{-s}\}\Bigr).
		\end{align*}
		Applying this with $A=|r_i x+b_i|$, $B=|r_ix|$, we obtain
		\begin{eqnarray*}
			&&\int_{0}^{\infty} G(t)\dd t    =  \int_{0}^{\infty} e^{-st}\widetilde L(e^{t})\dd t
			= \int_{1}^{\infty} h^{-s-1}\widetilde L(h)\dd h  \nonumber\\ 
			&=& \f1s\sum_{x\in\fo_\F(a)}\Biggl(\sum_{i=1}^{m}
			\Bigl(\min\{1,|r_i x+b_i|^{-s}\}-\min\{1,|r_i x|^{-s}\}\Bigr)
			+ \ind_{L}(x)\,\min\{1,|x|^{-s}\}\Biggr).
		\end{eqnarray*}
		The absolute convergence of this double series is guaranteed by \eqref{boundedabove}. Moreover, we note that the last term simplifies to
		\[
		\sum_{x\in\fo_\F(a)}\ind_{L}(x)\,\min\{1,|x|^{-s}\}=\sum_{x\in L}\min\{1,|x|^{-s}\}.
		\]
		Therefore, we obtain
		\begin{equation}\label{eq_G-part}
			\int_{0}^{\infty} G(t)\dd t=\f1s\Biggl(\,\sum_{x\in\fo_\F(a)}\sum_{i=1}^{m}
			\Bigl(\min\{1,|r_i x+b_i|^{-s}\}-\min\{1,|r_i x|^{-s}\}\Bigr)
			+ \sum_{x\in L}\min\{1,|x|^{-s}\}\Biggr).
		\end{equation}
		
		Analogous arguments yield the following 
		\begin{align}
			\sum_{i=1}^{m} r_i^{-s}\int_{r_i^{-1}}^{1} \f{\mu_a([-v,v])}{v^{s+1}}\dd v&=\f1 s\sum_{x\in\fo_\F(a)\cap[-1,1]}\sum_{i=1}^{m} r_i^{-s}\bigl(\min\{r_i^s,|x|^{-s}\}-1\bigr)\notag\\
			&=\f1 s\sum_{x\in\fo_\F(a)\cap[-1,1]}\sum_{i=1}^{m} \bigl(\min\{1,|r_ix|^{-s}\}-r_i^{-s}\bigr)\notag\\
			&=\f1 s\sum_{x\in\fo_\F(a)\cap[-1,1]}\Bigl(\sum_{i=1}^{m} \min\{1,|r_ix|^{-s}\}-1\Bigr)\notag\\
			&=\f1 s\sum_{x\in\fo_\F(a)\cap(-1,1)}\Bigl(\sum_{i=1}^{m} \min\{1,|r_ix|^{-s}\}-1\Bigr).\label{LLL}
		\end{align}
		By combining \eqref{eq_dpart}, \eqref{eq_G-part}, and \eqref{LLL}, the total integral of $Z(t)$ admits the representation
		\begin{equation}
			\begin{aligned}
				\int_{0}^{\infty} Z(t)\dd t
				&= \f{1}{ s}    \Biggl[\,
				\sum_{x\in\fo_\F(a)\cap (-1,1)}\Bigl(\,\sum_{i=1}^{m}\min\big\{1,|r_ix|^{-s}\big\}-1\Bigr)  \\
				&\hspace{3cm} 
				+ \,\sum_{x\in\fo_\F(a)} \sum_{i=1}^{m}
				\Bigl( \min\big\{1,|r_i x+b_i|^{-s}\big\} - \min\big\{1,|r_i x|^{-s}\big\} \Bigr)\\
				&\hspace{8.6cm} + \sum_{x\in L}\min\big\{1,|x|^{-s}\big\}
				\Biggr].
			\end{aligned}
		\end{equation}
		The existence of $\int_0^{\infty} Z(t) dt$ is thus established. Furthermore, observing that the mean of the renewal distribution is given by
		\[
		\int_0^{+\infty} y\,\text{d}F(y)=\sum_{i=1}^m r_i^{-s}\log r_i<+\infty,
		\]
		it follows from  Theorem \ref{renewal} that 
		\[
		\mathcal{C}^+_s\bigl(\fo_\F(a)\bigr)=\mathcal{C}^-_s\bigl(\fo_\F(a)\bigr) =\lim_{h\to+\infty}\f{\mu_a([-h,h])}{h^{s}}
		= \lim_{t\to+\infty} H(t)     = \f{ \int_{0}^{\infty} Z(t)\dd t}
		{ \sum_{i=1}^{m} r_i^{-s}\log r_i}.
		\]
		This yields exactly the conclusion stated in the theorem.

		(ii) Since  there exist a real number $r > 1$ and positive integers $k_1, \dots, k_m$ with $\gcd(k_1, \dots, k_m) = 1$ such that $r_i = r^{k_i}$ for all $1 \le i \le m$, $F$ is arithmetic  with span $\log r$. Similar to (i), for $x\in[0,\log r)$, we have
		\begin{align*}
			&\sum_{n=0}^\infty Z(x+n\log r)=\lim_{n\to\infty}\Big(\sum_{k=0}^n H(x+k\log r)-\sum_{k=0}^n\sum_{i:\log r_i\le x+k\log r}r_i^{-s}H(x+k\log r-\log r_i)\Big)\\
			&=\lim_{n\to\infty}\Big(\sum_{k=0}^n H(x+k\log r)-\sum_{i=1}^m\sum_{k=k_i}^nr_i^{-s}H(x+k\log r-\log r_i)\Big)\\
			&=\e^{-s x}\lim_{n\to\infty}\Big(\sum_{k=0}^n r^{-s k}\mu_a([-\e^xr^k,e^xr^k])-\sum_{i=1}^m\sum_{k=k_i}^nr^{-s k}\mu_a([-\e^xr^{k-k_i},\e^xr^{k-k_i}])\Big)\\
			&\ge\e^{-s x} \varliminf_{n\to\infty}\Big(\sum_{k=0}^n r^{-s k}\mu_a([-\e^xr^k,e^xr^k])-\sum_{i=1}^mr^{-s k_i}\sum_{k=0}^{n-\min_ik_i}r^{-s k}\mu_a([-\e^xr^k,e^xr^k])\Big)\\
			&=\e^{-s x}\varliminf_{n\to\infty}\sum_{k=n-\min_i k_i+1}^nr^{-s k}\mu_a([-\e^xr^k,e^xr^k]).
		\end{align*}
		By Theorem \ref{mass}, we obtain
		\[
		\sum_{n=0}^\infty Z(x+n\log r)\ge C\ur, \qquad x\in [0,\log r).
		\]
		By  \eqref{boundedabove} and \eqref{Z(t)}, we further obtain that
		\begin{align}
			&\sum_{n=0}^\infty |Z(x+n\log r)|=\sum_{n=0}^{\max k_i} |Z(x+n\log r)|+\sum_{n=\max k_i+1}^{\infty}|G(x+n\log r)|\notag\\
			&\le (\max k_i+1)\sup_{t\in [0,(\max k_i+1)\log r)}|Z(t)|+M\e^{-sx}\sum_{n=\max k_i+1}^{\infty}r^{-sn}\notag\\
			&< (\max k_i+1)\sup_{t\in [0,(\max k_i+1)\log r)}|Z(t)|+\f{M}{1-r^{-s}}\eqcolon M',\qquad x\in [0,\log r).\label{M'}
		\end{align}
		
		Define
		\[
		\tilde{\phi}(x)=\f{\sum_{n=0}^\infty Z(x\bmod \log r+n\log r)}{\sum_{i=1}^m r_i^{-s}\log r_i}\log r, \qquad x\ge 0.
		\]
		It follows that $\tilde{\phi}(x)$ is bounded and bounded away from zero on $[0,+\infty)$, and is periodic with period $\log r$. Since $\int_0^{+\infty} y\,\text{d}F(y)<+\infty,$ by Theorem \ref{renewal}, it suffices to show that $\{H(x+n\log r)\}_{n=1}^\infty$ converges uniformly to $\tilde{\phi}(x)$ on $[0,\log r)$. 
		
		Since $F = \sum_{i=1}^m r^{-k_i s} \delta_{k_i \log r}$ in this case, it follows that 
		\[
		\sum_{n=0}^\infty F^{*n} = \sum_{k=0}^\infty u_{k} \delta_{k \log r},
		\]
		where $\delta_a$ denotes the Dirac mass at $a$. Moreover,
		by \cite[Chapter 1, Corollary 2.3]{Asmussen2003}, we have
		\begin{equation*}
			\lim_{k\to\infty} u_k=\f{\log r}{\sum_{i=1}^m r_i^{-s}\log r_i}\eqcolon u.
		\end{equation*}
		Hence, $\sup_k|u_k|<C'$ for some $C'>0$ and  for each $\eps>0$, there is $N$ such that
		\begin{equation}\label{controlh}
			|u_k-u|<\eps,\quad k>N.
		\end{equation}
		It follows from Theorem \ref{renewal} that for each $n\in \Z_{>0}$,
		\begin{align*}
			H(x+n\log r)&=\Big(\sum_{k=0}^\infty F^{*k}\Big)*Z(x+n\log r)\\
			&=\sum_{k=0}^{\lfloor \f{x+n\log r}{\log r}\rfloor}u_k Z( x+n\log r-k\log r)\\
			&=\sum_{k=0}^nu_{n-k}Z( x+k\log r),\quad x\in[0,\log r).
		\end{align*}
		Finally, by \eqref{boundedabove}, \eqref{Z(t)}, \eqref{M'} and \eqref{controlh}, for sufficiently large $n$ (independent of $x$), we obtain
		\begin{align*}
			&\big|H( x+n\log r)-\tilde{\phi}(x)\big|\\
			\le\ & \sum_{k=0}^{n-N-1}|u_{n-k}-u||Z( x+k\log r)|+\sum_{k=n-N}^{n}|u_{n-k}-u||Z( x+k\log r)|+u\sum_{k=n+1}^{\infty}|Z( x+k\log r)|\\
			=\ & \sum_{k=0}^{n-N-1}|u_{n-k}-u||Z( x+k\log r)|+\sum_{k=n-N}^{n}|u_{n-k}-u||G( x+k\log r)|+u\sum_{k=n+1}^{\infty}|G( x+k\log r)|\\
			<\ & M'\eps+2C'(N+1)Mr^{-s(n-N)}+uM\sum_{k=n+1}^{\infty}r^{-sk}\longrightarrow M'\eps\quad (n\to\infty),
		\end{align*}
		yielding that
		\[
		\sup_{x\in [0,\log r)}\big|H( x+n\log r)-\tilde{\phi}(x)\big|\to 0\ (n\to\infty).
		\]
		Thus, $\tilde{\phi}\circ\log$ meets the requirements.
	\end{proof}

	\subsection{Homogeneous digit systems}

	The following example evaluates the upper and lower $s$-central densities of $\fo_\F(0)$ for a classical family of homogeneous RIFS on the integer lattice, thereby illustrating that the central density need not exist. This behavior is consistent with Theorem \ref{asymptotic} (ii), which establishes that the asymptotic density function exhibits multiplicatively periodic oscillations (although the amplitude of these oscillations may decay, allowing the ultimate limit to still exist).
	\begin{example}\label{eg_cdnote}
		Let $\lambda > 1$ be an integer and consider the RIFS $\F_D = \{f_i(x) = \lambda x + d_i\}_{d_i\in D}$, where $0 \in D \subseteq \{0, 1, \ldots, \lambda-1\}$ and $\#D \ge 2$.  Let $\mu_D$ be the homogeneous self-similar measure 
		 supported on 
		\[
		-K(\F_D^{-1})=\bigg\{\sum_{j=1}^\infty \lambda^{-j}d_{i_j}:\text{ each } d_{i_j}\in D\bigg\}.
		\]
		Let $s = \f{\log \#D}{\log \lambda}$,  and define 
		\[
		\qquad \phi_D(h)=\f{ \mu_D([0,h])}{h^s},\qquad h\in (0,1].
		\]
		Then $\fo_{\F_D}(0)$ is uniformly discrete, non-overlapping,  and the upper and lower $s$-central densities of $\fo_{\F_D}(0)$ are given by 
		\begin{equation}\label{eq_Cmm}
			\begin{split}
				\mathcal{C}^+_s\bigl(\fo_{\F_D}(0)\bigr)=\limsup_{h\to 0^+}\phi_D(h)=\max_{h\in [\lambda^{-1},1)}\phi_D(h),\\
				\mathcal{C}^-_s\bigl(\fo_{\F_D}(0)\bigr)=\liminf_{h\to 0^+}\phi_D(h)=\min_{h\in [\lambda^{-1},1)}\phi_D(h).
			\end{split}
		\end{equation}	
		In particular, when $\lambda=3$, we obtain the following exact values:
		\begin{equation}\label{eq_lambda=3}
			\begin{aligned}
				\mathcal{C}^+_s\bigl(\fo_{\F_{\{0,2\}}}(0)\bigr)&=1, &\qquad \mathcal{C}^-_s\bigl(\fo_{\F_{\{0,2\}}}(0)\bigr)&=2^{-\f{\log 2}{\log 3}},\\
				\mathcal{C}^+_s\bigl(\fo_{\F_{\{0,1\}}}(0)\bigr)&=2^{\f{\log 2}{\log 3}}, &\qquad \mathcal{C}^-_s\bigl(\fo_{\F_{\{0,1\}}}(0)\bigr)&=1.
			\end{aligned}
		\end{equation}
		Moreover, the $s$-central density of $\fo_{\F_{\{0,1,2\}}}(0)$ exists and $\mathcal{C}_s\bigl(\fo_{\F_{\{0,1,2\}}}(0)\bigr)=1.	$	
	\end{example}
	\begin{proof}
		By Proposition \ref{discrete} (i),  $\fo_{\F_D}(0)$ is uniformly discrete. Since $x \equiv d_i \pmod{\lambda}$ for all $x \in f_i\big(\fo_{\F_D}(0)\big)$, it is clear that
		\[
		f_i\big(\fo_{\F_D}(0)\big) \cap f_j\big(\fo_{\F_D}(0)\big) = \emptyset   \quad \text{ for}\quad  i \ne j. 
		\]
		Thus $\fo_{\F_D}(0)$ is non-overlapping. 
		
		For every integer $n\ge 1$, observe that $\big\{f_\bi(0):\bi\in \Si^n\big\}\subseteq \big\{f_\bi(0):\bi\in \Si^{n+1}\big\}\subset [0,\infty)$ and 
		\[
		\min \Big(\big\{f_\bi(0):\bi\in \Si^{n+1}\big\}\setminus\big\{f_\bi(0):\bi\in \Si^{n}\big\}\Big)=\min\big(D\setminus\{0\}\big)\lambda^n\ge\lambda^n.
		\]
		Write $g_i(x) = \lambda^{-1}(x+d_i)$ for $d_i \in D$. Then, for any $h \in [\lambda^{n-1}, \lambda^n)$, we have
		\begin{align*}
			\#\big(\fo_{\F_D}(0)\cap [-h,h]\big)&=\#\big(\fo_{\F_D}(0)\cap [0,h]\big)\\
			&=\#\big\{f_{\bi}(0):f_{\bi}(0)\le h,\bi\in \Si^n\big\}\\
			&=\#\big\{g_{\bi}(0): g_{\bi}(0)\le \lambda^{-n}h,\bi\in\Si^n\big\}.
		\end{align*}
		Since $\mu_D(g_{\bi}([0,1]))$
		$=(\#D)^{-n}$ for each $\bi\in \Si^n$ and $g_i((0,1))\cap g_j ((0,1))=\emptyset$ for distinct $i,j$, it follows that
		\[
		\#\big(\fo_{\F_D}(0)\cap [-h,h]\big)=(\#D)^n\mu_D([0,\lambda^{-n}h])+\epsilon(h),
		\]
		where $|\epsilon(h)|\le 1$. Therefore, we obtain
		\begin{equation}\label{density}
			\f{\#\big(\fo_{\F_D}(0)\cap [-h,h]\big)}{h^s}=\f{(\#D)^n\mu_D([0,\lambda^{-n}h])}{h^s}+\epsilon(h)h^{-s}=\phi_D(\lambda^{-n}h)+\epsilon(h)h^{-s}.
		\end{equation}
		Note that $\lambda^{-n}h\in [\lambda^{-1},1)$ and  $\phi_D(h)=\f{\mu_D([0,h])}{h^s},\,h\in (0,1]$ is continuous and satisfies
		\begin{align*}
			\phi_D(\lambda^{-1}h)&=\f{{(\# D)}^{-1}\sum_{d_i\in D}\mu_D(g_i^{-1}([0,\lambda^{-1}h]))}{\lambda^{-s}h^{s}}\\
			&=\f{{(\# D)}^{-1}\sum_{d_i\in D}\mu_D([-d_i,h-d_i]))}{\lambda^{-s}h^{s}}\\
			&=\f{{(\# D)}^{-1}\mu_D([0,h])}{\lambda^{-s}h^{s}}\\
			&=\f{\mu_D([0,h])}{h^s}=\phi_D(h).
		\end{align*}
		Since $\phi_D$ is continuous and $\phi_D(1) = \phi_D(\lambda^{-1})$, combining \eqref{density} gives
		\[
		\mathcal{C}^+_s\bigl(\fo_{\F_D}(0)\bigr)=\limsup_{h\to 0^+}\phi_D(h)=\max_{h\in [\lambda^{-1},1)}\phi_D(h).
		\]
		The same argument works for the lower central density.

		Let $\lambda = 3$. We begin by analyzing the case $D = \{0, 2\}$. Over the sub-interval $[1/3, 2/3]$, the measure remains constant, yielding
		\begin{gather*}
			\max_{h\in[1/3,\,2/3]}\phi_{\{0,2\}}(h)=\phi_{\{0,2\}}(1/3)=3^s\mu_{\{0,2\}}([0,1/3])=1,\\ \min_{h\in[1/3,\,2/3]}\phi_{\{0,2\}}(h)=\phi_{\{0,2\}}(2/3)=\Big(\f 32\Big)^s\mu_{\{0,2\}}([0,2/3])=2^{-s}.
		\end{gather*}
		For $h\in (2/3,1)$, the self-similarity of $\mu_{\{0,2\}}$ implies that 
		\begin{align*}
			\phi_{\{0,2\}}(h)=\f{\mu_{\{0,2\}}([0,1/3])+\mu_{\{0,2\}}([2/3,h])}{h^s}
			=\f{2^{-1}+2^{-1}\mu_{\{0,2\}}([0,3h-2])}{h^s}.
		\end{align*}
		Let $M_{\{0,2\}}=\max_{h\in[1/3,1)}\phi_{\{0,2\}}(h)\ge 1$ and
		$m_{\{0,2\}}=\min_{h\in[1/3,1)}\phi_{\{0,2\}}(h)\le 2^{-s}$.
		With the change of variables $t=3h-2$, the extrema on $(2/3,1)$
		satisfy
		\begin{gather*}
			\sup_{h\in(2/3,\,1)}\phi_{\{0,2\}}(h)=\sup_{t\in(0,1)}\f{1+\mu_{\{0,2\}}([0,t])}{(2+t)^s}\le \sup_{t\in(0,1)}\f{1+M_{\{0,2\}}t^s}{(2+t)^s}=\f{1+M_{\{0,2\}}}{2}\le M_{\{0,2\}},\\
			\inf_{h\in(2/3,\,1)}\phi_{\{0,2\}}(h)=\inf_{t\in(0,1)}\f{1+\mu_{\{0,2\}}([0,t])}{(2+t)^s}\ge \inf_{t\in(0,1)}\f{1+m_{\{0,2\}}t^s}{(2+t)^s}=2^{-s}.
		\end{gather*}
		Hence the global extrema on $[1/3,1)$ are  attained on the
		first subinterval $[1/3,2/3]$, giving
		\[
		M_{\{0,2\}}=\max_{h\in[1/3,\,2/3]}\phi_{\{0,2\}}(h)=1,\qquad
		m_{\{0,2\}}=\min_{h\in[1/3,\,2/3]}\phi_{\{0,2\}}(h)=2^{-s}.
		\]
		
		An analogous argument applies to the case $D = \{0, 1\}$. Since $-K(\mathcal{F}_D^{-1}) \subset [0, 1/2]$, the measure remains constant over the interval $[1/2, 1)$. This yields
		\begin{gather*}
			\max_{h\in[1/2,\,1)}\phi_{\{0,1\}}(h)=\phi_{\{0,1\}}(1/2)=2^s\mu_{\{0,1\}}([0,1/2])=2^s,\\ \inf_{h\in[1/2,\,1)}\phi_{\{0,1\}}(h)=\phi_{\{0,1\}}(1)=\mu_{\{0,1\}}([0,1])=1.
		\end{gather*}
		Furthermore, for $h\in [1/3,1/2)$, we have
		\[
		\phi_{\{0,1\}}(h)=\f{\mu_{\{0,1\}}([0,1/3])+\mu_{\{0,1\}}([1/3,h])}{h^s}
		=\f{2^{-1}+2^{-1}\mu_{\{0,1\}}([0,3h-1])}{h^s}.
		\]
		By analogous reasoning, we deduce that
			\begin{gather*}
			\sup_{h\in[1/3,\,1/2)}\phi_{\{0,1\}}(h)=\sup_{t\in[0,1/2)}\f{1+\mu_{\{0,1\}}([0,t])}{(1+t)^s}\le \sup_{t\in[0,1/2)}\f{1+M_{\{0,1\}}t^s}{(1+t)^s}=\f{2^s+M_{\{0,1\}}}{2}\le M_{\{0,1\}},\\
			\inf_{h\in[1/3,\,1/2)}\phi_{\{0,1\}}(h)=\inf_{t\in[0,1/2)}\f{1+\mu_{\{0,1\}}([0,t])}{(1+t)^s}\ge \inf_{t\in[0,1/2)}\f{1+m_{\{0,1\}}t^s}{(1+t)^s}=1.
		\end{gather*}
		Combining this with the fact that $\phi_{\{0,1\}}(1/3) = \phi_{\{0,1\}}(1) = 1$, we deduce that the global extrema are completely determined by the interval $[1/2, 1)$, yielding
		\[
		M_{\{0,1\}}=\max_{h\in[1/2,\,1)}\phi_{\{0,1\}}(h)= 2^s,\qquad m_{\{0,1\}}=\min_{h\in[1/2,\,1)}\phi_{\{0,1\}}(h)= 1.
		\]
		
		Finally, for $D = \{0, 1, 2\}$, since $\mathcal{O}_{\mathcal{F}_{\{0,1,2\}}}^+(0) = \mathbb{Z}_{\ge 0}$, we have $\mathcal{C}_s\bigl(\fo_{\F_{\{0,1,2\}}}(0)\bigr)=1.$
	\end{proof}

	\section{Discrete Hausdorff dimensions of lattice orbits}\label{sec_DHDlat}

	We first recall the definitions introduced by Barlow and Taylor \cite{BarlowTaylor1992}. For $x\in\Z^d$ and $n\in\Z_{>0}$, let
	\[
	\begin{aligned}
		C(x,n)&=\{y\in\Z^d:x_i\le y_i<x_i+n,\ 1\le i\le d\},\\
		V(x,n)&=\{y\in\Z^d:x_i-n/2\le y_i<x_i+n/2,\ 1\le i\le d\},
	\end{aligned}
	\]
	and set $\mathscr C=\{C(x,n):x\in\Z^d,\ n\in\Z_{>0}\}$. For a finite set $A\subset\Z^d$, write
	\[
		d(A)=\min\{r\in\Z_{>0}:A\subseteq C(x,r)\text{ for some }x\in\Z^d\}.
	\]
	Let $V_n=V(0,2^n)$ for $n\ge1$, let $S_1=V_1$, and let $S_n=V_n\setminus V_{n-1}$ for $n\ge2$. Thus $d(V_n)=d(S_n)=2^n$. For finite $F\subset\Z^d$, $A\subseteq\Z^d$, and $\alpha>0$, define
	\[
		\nu_\alpha(A,F)=\min\left\{\sum_i\left(\f{d(U_i)}{d(F)}\right)^\alpha:U_i\in\mathscr C,\quad A\cap F\subseteq\bigcup_iU_i
		\right\}.
	\]
	The discrete Hausdorff dimension and the lower discrete Hausdorff dimension of $A$ are, respectively,
	\[
		\dimdH A=\inf\left\{\alpha\ge0:\sum_{n=1}^\infty\nu_\alpha(A,S_n)<\infty\right\},
	\]
	and
	\[
		\dimL A=\inf\left\{\alpha\ge0:
		\lim_{n\to\infty}\nu_\alpha(A,S_n)=0\right\}.
	\]
	The resulting dimensions are independent of the choice of origin; see
	Remark \ref{origin}. The upper mass and Beurling dimensions on $\Z^d$
	are defined analogously using Euclidean balls (or the cubes $V(x,R)$). Moreover, for every $A\subseteq\Z^d$, we have
	\[
		\dimL A\le\dimdH A\le\dimUM A\le\dimbe A.
	\]
\begin{remark}\label{origin}
	All the dimensions defined above are independent of the choice of
	base point whenever a base point occurs in their definitions. For
	$x\in\Z^d$ and $n\in\Z_{>0}$, write
	\[
	S_n(x)=V(x,2^n)\setminus V(x,2^{n-1}).
	\]
	Given $x_1,x_2\in\Z^d$, for
	every sufficiently large $n$,
	\[
		S_n(x_1)\subseteq V\bigl(x_2,2^n+2||x_1-x_2||_{\infty}\bigr)\setminus V\bigl(x_2,2^{n-1}-2||x_1-x_2||_{\infty}\bigr)\subseteq
		\bigcup_{j=-1}^{1}S_{n+j}(x_2).
	\]
	Moreover, for every $r>0$, $
	V(x_1,r)\subseteq V\bigl(x_2,r+2||x_1-x_2||_{\infty}\bigr).$
	Interchanging $x_1$ and $x_2$ gives the corresponding reverse comparisons. Since the multiplicative and additive constants arising
	from these inclusions disappear in the relevant asymptotic limits, none of the dimensions depends on the chosen base point.
\end{remark}

\begin{remark}\label{Rmk_fsta}
	The dimensions $\dimL$, $\dimdH$, $\dimUM$, and $\dimbe$ are finitely
	stable. More precisely, for any finite collection of sets
	$E_1,\ldots,E_n$,
	\[
	\dim\left(\bigcup_{i=1}^n E_i\right)=\max_{1\leq i\leq n}\dim E_i,\qquad\dim\in\{\dimL,\dimdH,\dimUM,\dimbe\}.
	\]
	In contrast, the lower mass dimension $\dimLM$ need not be finitely
	stable.
	
	None of these dimensions is countably stable in general. By
	monotonicity, one always has
	\[
	\dim\left(\bigcup_{i=1}^{\infty}E_i\right)
	\geq
	\sup_{i\geq1}\dim E_i,
	\]
	but the inequality may be strict. For example, $
	\Z^d=\bigcup_{x\in\Z^d}\{x\},$
	while every singleton has dimension zero and $\Z^d$ has dimension $d$.
\end{remark}

	In this section we study invariant sets of the RIFS $\F=\{f_i(x)=r_i x+b_i : \Z\to\Z\}_{i=1}^m$ with $|r_i|\in\Z_{>1}$ and $b_i\in\Z$, first investigated by
	Strichartz \cite{Strichartz1996} in 1996. Under the finite overlap condition, we obtain a complete determination of all dimensions – a  contrast with
	classical self‑similar sets, where such formulas usually require the open set condition.
	\begin{corollary}\label{cor_dssSt}
		Let $K\subseteq \Z$ be an invariant set of a non-degenerate RIFS on $\Z$ of the form \eqref{IFSform} satisfying $|r_i| \in \Z_{>1}$ and $b_i \in \Z$. If $K$ has finite overlaps, then we have
		\[
		\dimL K=\dimdH K=\dimH K(\F^{-1})=\dimM K=\dimbe K=d_\F=s,
		\]
		where $s$ is given by \eqref{dimension}.
	\end{corollary}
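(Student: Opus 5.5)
The plan is to reduce everything to individual forward orbits and then use finite stability. Since $K\subseteq\Z$, it is uniformly discrete and hence locally finite. By Theorem \ref{discreteinvariant}, $K\cap P(\F)$ is a finite nonempty set $\{a_1,\ldots,a_q\}$ and
\[
K=\bigcup_{j=1}^q\fo_\F(a_j).
\]
Each $a_j$ is an integer, since $a_j\in K\subseteq\Z$. Each orbit $\fo_\F(a_j)\subseteq K\subseteq\Z$ is uniformly discrete, hence uniformly locally finite. Because $\fo_\F(a_j)\subseteq K$, we have $f_i(\fo_\F(a_j))\cap f_k(\fo_\F(a_j))\subseteq f_i(K)\cap f_k(K)$, so every orbit inherits finite overlaps from $K$. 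Non-degeneracy and Proposition \ref{onX}(v) show that every orbit is infinite.

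For the mass and Beurling dimensions, I will apply the second part of Theorem \ref{mass} to each $\fo_\F(a_j)$. This gives $\dimM\fo_\F(a_j)=\dimbe\fo_\F(a_j)=d_\F=s$ for every $j$. Finite stability of $\dimUM$ and $\dimbe$ (Remark \ref{Rmk_fsta}) then gives $\dimUM K=\dimbe K=s$. For the lower mass dimension, which is not finitely stable in general, I will argue directly: $\#(K\cap[-h,h])\ge\#(\fo_\F(a_1)\cap[-h,h])\ge C_1^{-1}h^s$, so $\dimLM K\ge s$, and therefore $\dimM K=s$. The identity $\dimH K(\F^{-1})=s$ follows from the corollary after Theorem \ref{mass}, applied to the uniformly discrete orbit $\fo_\F(a_1)$ with finite overlaps in the non-degenerate case.

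For the Barlow--Taylor dimensions, the upper bound is immediate: $\dimL K\le\dimdH K\le\dimUM K=s$. For the lower bound, I will invoke Theorem \ref{thm:main} (stated in the introduction and proved in this section) for the rational starting point $a_1\in\Z$. Since $b_i\in\Z$ and $a_1\in\Z$, the orbit already lies in $\Z$, so I expect to be able to take $N=1$ in that theorem. This gives $\dimL\fo_\F(a_1)=s$, and monotonicity then yields $\dimL K\ge s$ and hence $\dimdH K\ge s$.

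The main obstacle is this lower bound, if Theorem \ref{thm:main} only provides the statement for some unspecified $N$. In that case I would either check that $\dimL$ is invariant under the dilation $x\mapsto Nx$ on $\Z$, or run the discrete mass-distribution argument directly. For the latter, I would put the counting measure on $\fo_\F(a_1)\cap S_n$ normalized by $2^{ns}$, and use the tree decomposition \eqref{countcut} with the uniform estimate of Theorem \ref{mass} to bound the mass of any cube $C(x,\ell)\subseteq S_n$ by a constant times $(\ell/2^n)^s$. This forces $\nu_\alpha(K,S_n)\gtrsim1$ for $\alpha\le s$, and so $\dimL K\ge s$.
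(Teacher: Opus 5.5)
Your proposal is correct and follows essentially the paper's route: the paper derives the corollary in one line from Theorem \ref{discreteinvariant} (decomposing $K$ into the finitely many orbits $\fo_\F(a)$, $a\in K\cap P(\F)$), the orbit-level Theorem \ref{thm:main}, and finite stability. Your separate monotonicity argument for $\dimLM$, which is not finitely stable in general, is a welcome bit of extra care.

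The fallback is unnecessary. In the proof of Theorem \ref{thm:main}, $N$ is the lcm of the denominators of $a$ and the $b_i$, hence $N=1$ when $a_1\in\Z$ and $b_i\in\Z$. As sketched, the fallback would in any case need adjusting, because $\fo_\F(a_1)\cap S_n$ can be empty for infinitely many $n$. One therefore only gets $\nu_\alpha(K,S_n)\gtrsim1$ along a subsequence, which still suffices for $\dimL K\ge s$.
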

In view of Theorem \ref{discreteinvariant} and the finite stability property stated in Remark \ref{Rmk_fsta}, the preceding result is an
immediate consequence of the following theorem on forward orbits.
	\begin{theorem}\label{thm:main}
		Let $\F=\{f_i(x)=r_ix+b_i\}_{i=1}^m$ be a non-degenerate RIFS such that $|r_i|\in \Z_{>1}$ and $b_i\in \Q$ for $1\le i\le m$, and let $a\in \Q$. If $\fo_\F(a)$ has finite overlaps, then there exists an integer $N$ such that $N\fo_\F(a)\subset\Z$ and
		\[
		\dimL (N\fo_\F(a))=\dimdH  (N\fo_\F(a))=\dimM\fo_\F(a)=\dimbe \fo_\F(a)=\dimH K(\F^{-1})=d_\F=s,
		\]
		where $s$ is given by \eqref{dimension}.
	\end{theorem}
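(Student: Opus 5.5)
Most of the chain of equalities will come from earlier results; the only new ingredient is a lower bound for the discrete Hausdorff dimensions. Choose $N$ as in the proof of Proposition \ref{discrete}(i), namely the lcm of the denominators of $a$ and the $b_i$. Then $N\fo_\F(a)\subseteq\Z$, and $\fo_\F(a)$ is uniformly discrete, hence uniformly locally finite and locally finite. The orbit is non-degenerate and has finite overlaps, so Theorem \ref{mass} gives
\[
\#\bigl(\fo_\F(a)\cap[-h,h]\bigr)\asymp h^s,\qquad \sup_x\#\bigl(\fo_\F(a)\cap[x-h,x+h]\bigr)\asymp h^s,
\]
and also $d_\F=\dimM\fo_\F(a)=\dimbe\fo_\F(a)=s\le1$. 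The Corollary after Theorem \ref{mass} (uniform discreteness plus finite overlaps) gives $\dimH K(\F^{-1})=s$. Scaling by $N$ changes counts only by a constant change in $h$, so the mass and Beurling dimensions of $N\fo_\F(a)$ are also $s$. The general inequalities $\dimL\le\dimdH\le\dimUM$ then reduce everything to proving $\dimL(N\fo_\F(a))\ge s$.

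For the lower bound I will use a discrete mass distribution argument on each shell $S_n$. Put $E=N\fo_\F(a)$ and $\mu=$ counting measure on $E$. The key estimate is a uniform upper bound
\[
\#\bigl(E\cap C(x,\ell)\bigr)\le C\,\ell^s\qquad\text{for all }x\in\Z,\ \ell\ge1,
\]
which is exactly the upper Beurling estimate of Theorem \ref{mass}, valid for large $\ell$. Small $\ell$ are covered by uniform local finiteness, after enlarging $C$. From Theorem \ref{mass} I also need a lower bound $\#(E\cap S_n)\ge c\,2^{ns}$ for large $n$. This follows from the two-sided central estimate: $\#(E\cap V_n)-\#(E\cap V_{n-1})\ge C_1^{-1}2^{ns}-C_12^{(n-1)s}$ up to constants. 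This may be negative if $C_1$ is large, so instead I will compare $V_n$ with $V_{n-k}$ for a fixed large $k$. The annulus $V_n\setminus V_{n-k}$ is a union of $k$ shells, so some shell $S_j$ with $n-k<j\le n$ carries $\gtrsim 2^{ns}$ points.

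Now take any cover of $E\cap S_j$ by cubes $U_i$. Then
\[
c\,2^{js}\le\#(E\cap S_j)\le\sum_i\#(E\cap U_i)\le C\sum_i d(U_i)^s,
\]
so $\sum_i(d(U_i)/d(S_j))^s\ge c/C$. This gives $\nu_s(E,S_j)\ge c'$ along a sequence of $j$ with bounded gaps.

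To conclude, fix $\alpha<s$. The same computation with the trivial bound $\#(E\cap U)\le C d(U)^{s}$ gives
\[
\sum_i\Bigl(\tfrac{d(U_i)}{2^j}\Bigr)^\alpha\ \ge\ 2^{j(s-\alpha)}\,\min\Bigl(1,\ \sum_i\Bigl(\tfrac{d(U_i)}{2^j}\Bigr)^s\Bigr)\ \gtrsim\ 2^{j(s-\alpha)},
\]
using $d(U_i)\le 2^{j+1}$ without loss of generality. This tends to infinity along the good $j$, and the good shells occur in every window of length $k$. Because the liminf over all $n$ could still see the bad shells, I will instead apply the argument to each individual shell. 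By uniform scaling of the orbit ($f_\bi$ maps pieces of the orbit to larger pieces with comparable counts, as in the cut-set argument \eqref{countcut}), every shell $S_n$ contains at least $c\,2^{ns}$ points, so $\nu_\alpha(E,S_n)\to\infty$ for every $\alpha<s$. Hence $\dimL E\ge s$.

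The main obstacle is this per-shell lower bound $\#(E\cap S_n)\gtrsim 2^{ns}$ for every large $n$. If the window argument proves insufficient, I will obtain it from \eqref{increaseconmtrol} applied to an annulus $E=[-h,h]\setminus[-h/2,h/2]$: the pieces $f_\bi^{-1}(E)$ for $\bi\in\Lambda_k(\oor)$ are annuli of bounded scale, each containing at least one orbit point once the scale is chosen suitably. This works because the orbit meets every sufficiently large fixed annulus up to the bounded shift $c$.
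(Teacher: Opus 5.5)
Your argument is correct up to and including the window step, and that step already finishes the proof. The error is in the ``upgrade'' you add afterwards. The definition is $\dimL A=\inf\{\alpha\ge0:\lim_{n\to\infty}\nu_\alpha(A,S_n)=0\}$. So to get $\dimL E\ge s$ you only need that, for each $\alpha<s$, $\nu_\alpha(E,S_n)$ does \emph{not} tend to $0$. Infinitely many good shells suffice, and the bad ones are irrelevant. For a good shell $S_j$ and any cover, put $t_i=d(U_i)/2^j$. Either some $t_i\ge1$, so the sum is at least $1\ge\nu_s(E,S_j)$, or all $t_i<1$ and then $t_i^\alpha\ge t_i^s$. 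Hence $\nu_\alpha(E,S_j)\ge\nu_s(E,S_j)\ge c'$ along your bounded-gap sequence, which gives $\dimL E\ge s$.

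Your last two paragraphs should be deleted, because both claims in them are false.

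First, $\nu_\alpha(A,F)\le1$ always, since $F$ can be covered by one cube of side $d(F)$. So $\nu_\alpha(E,S_n)\to\infty$ is impossible. Your display with the factor $2^{j(s-\alpha)}$ already fails for that one-cube cover. The bound $d(U_i)\le2^{j+1}$ only yields $\sum_i t_i^\alpha\ge2^{\alpha-s}\sum_i t_i^s$.

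Second, the per-shell bound $\#(E\cap S_n)\gtrsim2^{ns}$ does not hold under the hypotheses of the theorem. Take $f_1(x)=5x$, $f_2(x)=5x+4$ and $a=0$. This system is non-degenerate and non-overlapping, since the two images of the orbit lie in distinct residue classes mod $5$. Here $E=\fo_\F(0)$ is the set of nonnegative integers whose base-$5$ digits lie in $\{0,4\}$, so $E\cap[5^k,4\cdot5^k)=\emptyset$. If $2^{n-2}\in[5^k,2\cdot5^k]$, then $[2^{n-2},2^{n-1})\subseteq[5^k,4\cdot5^k)$. Hence $E\cap S_n=\emptyset$ and $\nu_\alpha(E,S_n)=0$ for infinitely many $n$. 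So no argument, including the annulus version of \eqref{increaseconmtrol}, can give a lower bound on every shell. In fact the theorem genuinely depends on the definition asking only for $\nu_\alpha(E,S_n)\not\to0$.

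With that part removed, your proof is a valid variant of the paper's. The paper applies Barlow--Taylor's Lemma \ref{distr} on the full cube $V_n$ to the normalized counting measure $\mu_a/\mu_a(V_n)$. It uses the same two estimates from Theorem \ref{mass} and obtains $\nu_s(E,V_n)\ge C_1/(2C_2)$ for all large $n$. The passage from $V_n$ to the shells is left implicit there. It follows from $\nu_s(E,V_n)\le\sum_{j\le n}2^{(j-n)s}\nu_s(E,S_j)$, which forces $\nu_s(E,S_j)\not\to0$. Your direct covering count on shells is the same mass-distribution idea done by hand. It works on the shells from the start and also shows that good shells occur with bounded gaps.
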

	
	To prove this theorem, we use a discrete version of the mass distribution principle proved by Barlow and Taylor \cite{BarlowTaylor1992}.
	\begin{lemma}\label{distr}
		Let  $F$ be a finite set in $\Z^d$ with $d(F)=2^n$. Let $A\subseteq F$ and $\mu$ be a measure on $A$. If there exists a constant $c>0$ such that for every $x\in\Z^d$ and $0\le k\le n$, 
		\[
		\mu(A\cap V(x,2^k))\le c2^{(k-n)\alpha},
		\]
		then $\nu_\alpha (A,F)\ge c^{-1}2^{-1}\mu(A)$.
	\end{lemma}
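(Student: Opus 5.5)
The plan is a covering-sum argument, run in the reverse direction of the usual mass distribution principle. Fix an arbitrary admissible cover $A\subseteq\bigcup_i U_i$ with $U_i=C(x_i,n_i)\in\mathscr C$; it suffices to show
\[
\sum_i\Big(\f{n_i}{2^n}\Big)^\alpha\ \ge\ \f{\mu(A)}{2c}.
\]
Taking the minimum over covers then gives $\nu_\alpha(A,F)\ge c^{-1}2^{-1}\mu(A)$. Since $\mu(A)\le\sum_i\mu(A\cap U_i)$, it is enough to prove the termwise bound
\[
\mu(A\cap U_i)\le 2c\,(n_i/2^n)^\alpha
\]
for every cube, and then sum.

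First I dispose of large cubes. Since $d(F)=2^n$, the set $F$ lies in some $C(y,2^n)$. Putting the centre at $y+2^{n-1}(1,\ldots,1)$ shows that this cube equals a cube $V(\cdot,2^n)$. The hypothesis with $k=n$ then gives $\mu(A)\le c$. Hence, if some $n_i\ge 2^n$, the sum is already at least $1\ge\mu(A)/c$, and we are done. So assume $n_i<2^n$ for all $i$, and choose $k\in\{0,\ldots,n\}$ with $2^{k-1}<n_i\le 2^k$ (for $n_i=1$ take $k=0$, where $V(x,1)=\{x\}$ and the bound is immediate).

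The key geometric step is to cover $C(x_i,n_i)$ by translates $V(y,2^{k-1})$ and apply the hypothesis at scale $k-1$:
\[
\mu(A\cap V(y,2^{k-1}))\le c\,2^{(k-1-n)\alpha}\le c\,(n_i/2^n)^\alpha,
\]
where the last inequality uses $n_i>2^{k-1}$. The point of this choice is that it produces no $\alpha$-dependent loss. The cruder alternative, containing $C(x_i,n_i)$ in a single $V(y,2^k)$, would lose a factor $2^\alpha$.

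In the direction of the lattice relevant here (the lemma is invoked in Theorem \ref{thm:main} for $N\fo_\F(a)\subseteq\Z$), an interval of $n_i\le 2^k$ consecutive integers is covered by exactly two consecutive blocks of length $2^{k-1}$, namely $V(x_i+2^{k-2},2^{k-1})$ and $V(x_i+3\cdot2^{k-2},2^{k-1})$. This gives the factor $2$ and hence the stated constant $c^{-1}2^{-1}$. The case $k=1$, where $2^{k-2}$ is not an integer, is handled directly: the cube then consists of at most two points, each with mass at most $c\,2^{-n\alpha}$.

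The main obstacle is keeping the multiplicity of the covering at $2$, independently of both $\alpha$ and the dimension. In $\Z^d$ the naive subdivision into half-side cubes needs $2^d$ pieces. To retain the factor $2$ there, I would first try to split only along one coordinate direction. That is, I would cover $C(x_i,n_i)$ by two slabs and bound each slab by a single $V(\cdot,2^k)$ only when $n_i\ge 2^{k-1/\alpha}$, falling back to scale $k-1$ otherwise. The goal is to reduce every case to at most two applications of the hypothesis. If this fails for general $d$, the argument above still proves the statement exactly as needed in the one-dimensional lattice setting of Section \ref{sec_DHDlat}.
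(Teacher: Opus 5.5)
The paper gives no proof of this lemma; it simply quotes it from Barlow and Taylor, so your argument has to stand on its own.

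\textbf{The case $d=1$.} Here your argument is correct and complete. Each step checks out:
\begin{itemize}
\item the reduction to the termwise bound $\mu(A\cap U_i)\le 2c\,(n_i/2^n)^\alpha$;
\item the use of $k=n$ to get $\mu(A)\le c$ when some $n_i\ge 2^n$;
\item the direct treatment of $n_i\in\{1,2\}$;
\item for $k\ge2$, the cover of $C(x_i,n_i)$ by $V(x_i+2^{k-2},2^{k-1})$ and $V(x_i+3\cdot 2^{k-2},2^{k-1})$.
\end{itemize}
This is exactly the case needed in Theorem \ref{thm:main}. There $\alpha=s\le 1$, so even the single block $V(x_i+2^{k-1},2^k)$ would suffice, since its loss is only $2^{\alpha}\le 2$.

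\textbf{The case $d\ge2$.} Your general-$d$ part is not a proof, and the slab idea cannot be completed. When $n_i\ge 2^{k-1/\alpha}$, the whole cube already fits in one $V(\cdot,2^k)$ with loss at most $2$, so no slabs are needed. The trouble is the window $2^{k-1}<n_i<2^{k-1/\alpha}$, which is nonempty once $\alpha>1$. There a slab of thickness $2^{k-1}$ still has $d-1$ sides longer than $2^{k-1}$. It therefore needs $2^{d-1}$ blocks of side $2^{k-1}$, and you are back to $2^d$ applications of the hypothesis.

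In fact no argument can work, because the constant $2^{-1}$ is false in higher dimension. Here is a counterexample in $\Z^4$.
\begin{itemize}
\item Take $\alpha=3$, $n=2$, $c=2^6$.
\item On $\{0,1,2\}^4$, put mass $1$ at each point with at most one coordinate equal to $1$, and mass $\tfrac12$ at each point with exactly two coordinates equal to $1$.
\item Let $A$ be the support and $F=A\cup\{(3,0,0,0)\}$, so $d(F)=4$.
\end{itemize}
Every $V(x,2)$ meets $\{0,1,2\}^4$ inside a block $o+\{0,1\}^4$ with $o\in\{0,1\}^4$. Such a block contains exactly $\binom4j$ points with $j$ coordinates equal to $1$, so its mass is at most $1+4+3=8=c\,2^{(1-2)3}$. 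Single points carry at most $1=c\,2^{-6}$, and $\mu(A)=16+32+12=60\le c$. So the hypothesis holds at $k=0,1,2$. Yet $A\subseteq C(0,3)$, which gives
$\nu_3(A,F)\le (3/4)^3=27/64<60/128=c^{-1}2^{-1}\mu(A)$.

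What does hold in $\Z^d$ is $\nu_\alpha(A,F)\ge c^{-1}2^{-\min\{d,\alpha\}}\mu(A)$. Your argument gives this directly, using either the $2^d$ blocks of side $2^{k-1}$ or the single block of side $2^k$. You should therefore present your proof as a proof of the lemma for $d=1$ (or for $\alpha\le1$). Say explicitly that the statement as printed needs that restriction, or a $d$-dependent constant, rather than offering the slab sketch as a possible route to general $d$.
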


	\begin{proof} [Proof of Theorem \ref{thm:main}]
		We write $\F'=\{f'_i(x)=r_ix+b_iN\}_{i=1}^m$. 
		It follows from the proof of Proposition \ref{discrete} (i) that there exists an integer $N$ such that
		\[
		N\fo_\F(a)=\fo_{\F'}(aN)\subseteq \Z.
		\]
		By Theorem \ref{mass}, for each $a\in \Q$, there exist positive numbers $C_1,C_2$ such that
		\begin{equation}\label{control}
			\mu_a([-h,h])\ge C_1h^{s},\qquad \mu_a([x-h,x+h])\le C_2h^{s}
		\end{equation}
		for each $x\in \R$ and large $h$, where $\mu_a$ is the counting measure on $\fo_{\F'}(aN)$. Furthermore,
		\[
		\dimM(N\fo_\F(a))=\dimM\fo_{\F'}(aN)=s.
		\]
		
		For each large $n$, we define a measure $\mu_{a,n}$ on $\fo_{\F'}(aN)\cap V(0,2^n)$ by
		\[
		\mu_{a,n}(E)=\f{\mu_a(E)}{\mu_a(V_n)}, \qquad E\subseteq \fo_{\F'}(aN)\cap V_n.
		\]
		For all integers $x$ and $0\le k\le n$ , by \eqref{control}, we have
		\[
		\mu_{a,n}(\fo_{\F'}(aN)\cap V(x,2^k)\cap V_n)=\f{\mu_a( V(x,2^k))}{\mu_a(V_n)}\le \f{C_2}{C_1}2^{(k-n)s}.
		\]
		Since $d(V_n)=2^n$, it follows from Lemma \ref{distr} that
		\[
		\nu_s(\fo_{\F'}(aN),V_n)=\nu_s(\fo_{\F'}(aN)\cap V_n,V_n)\ge \f{C_1}{2C_2}\mu_{a,n}(\fo_{\F'}(aN)\cap V_n)=\f{C_1}{2C_2}>0.
		\]
		According to the definition of $\dimL(\fo_{\F'}(aN))$, we have
		\[
		\dimL(\fo_{\F'}(aN))\ge s
		\]
		By Proposition \ref{discrete} (i),   $\fo_\F(a)$ is uniformly discrete.  Since $\fo_\F(a)$ also has finite overlaps, it follows from Corollary \ref{dimensiondrop} and Theorem \ref{mass} that
		\[
		\dimH K(\F^{-1})=\dimM \fo_\F(a)=\dimbe \fo_\F(a)=d_\F=s,
		\]
		and the result follows.
	\end{proof}

	\section{A \texorpdfstring{$p$-adic}{p-adic} Counterpart}\label{sec_padic}
	
	Given a prime number $p$, the field $\mathbb{Q}_p$ of $p$-adic numbers is defined as the completion of $\mathbb{Q}$ with respect to the non-Archimedean $p$-adic norm $|\cdot|_p$. This norm is defined by $|0|_p = 0$; for $x \neq 0$, write $x = p^{\nu_p(x)} \f{m}{n}$, where $\nu_p(x) = \operatorname{ord}_p x \in \mathbb{Z}$, and $m$ and $n$ are not divisible by $p$, then $|x|_p = p^{-\nu_p(x)}$. This norm in $\mathbb{Q}_p$ satisfies the strong triangle inequality $|x + y|_p \leq \max(|x|_p, |y|_p)$, and consequently $d_p(x,y) = |x-y|_p$ defines an ultrametric on $\mathbb{Q}_p$. For a subset $E \subset \mathbb{Q}_p$, we denote its closure by $\pcl{E}$.

	Let $E$ be a non-empty bounded subset in $\Q_p$. Denote by $N_r(E)$ the smallest number of discs of radii $r$ that cover $E$.  The upper and lower $p$-adic
	box-counting dimensions of $E$ are respectively defined as
	\[
	\overline{\dim}_{\mathrm{B},\mathrm{p}}E=\limsup_{r\to 0^+}\f{\log N_r(E)}{-\log r},\qquad \underline{\dim}_{\mathrm{B},\mathrm{p}}E=\liminf_{r\to 0^+}\f{\log N_r(E)}{-\log r}.
	\]
	If $\overline{\dim}_{\mathrm{B},\mathrm{p}}E=\underline{\dim}_{\mathrm{B},\mathrm{p}}E$, then the $p$-adic box-counting dimension of $E$ exists, denoted by $\dimpB E$.
	See \cite{Qiuhua2019} for details.

	Consider the  RIFS 
	\begin{equation}\label{padicIFS}
		\F = \{f_i(x) = (-1)^{a_i}p^{k_i}x + b_i\}_{i=1}^m
	\end{equation}
	on $\Q$, where $\,b_i \in \mathbb{Q}$, $a_i \in \{0,1\}$ and $k_i \in \mathbb{Z}_{>0}$ for each $1 \le i \le m$. However, since $f_i:\Q_p\to \Q_p$ is contractive with ratio $|(-1)^{a_i}p^{k_i}|_p=p^{-k_i}<1$, $\F$ has a unique non-empty compact attractor $K\subset \Q_p$ satisfying
	\[
	K=\bigcup_{i=1}^m f_i(K).
	\]
	
	In the following, given $E\subset \Q_p$, for each $k\in\Z_{>0}$, we write $N_k(E)$ for the smallest number of discs of diameters $p^{-k}$ covering $E$.
	
	The preceding results allow us to identify the mass and Beurling
	dimensions of every rational forward orbit with the $p$-adic
	box-counting dimensions of both the orbit and the attractor.

	\begin{theorem}\label{thm_dimpadic}
	Let $\F$ be an RIFS of the form \eqref{padicIFS}, and let $K\subseteq\Q_p$ be its attractor. Then the $p$-adic box-counting
	dimension of $K$ exists. Moreover, for every $a\in\Q$, the $p$-adic
	box-counting dimension of $\fo_\F(a)$ exists, and
			\[
			\dimM\fo_\F(a)=\dimbe\fo_{\F}(a)=\dim_{\mathrm{B},\mathrm{p}}\fo_\F(a)= \dimpB K=d_\F.
			\]	
	\end{theorem}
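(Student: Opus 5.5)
The Euclidean half is already available. By Proposition \ref{discrete} (i), with $r_i=(-1)^{a_i}p^{k_i}\in\Z$ and $b_i,a\in\Q$, every rational orbit $\fo_\F(a)$ is uniformly discrete, hence uniformly locally finite. Theorem \ref{massandbeu} then gives
\[
\dimM\fo_\F(a)=\dimbe\fo_\F(a)=d_\F .
\]
The remaining task is to compute the $p$-adic box dimensions of $\fo_\F(a)$ and of $K$, and to show both equal $d_\F$. We may first rescale by a common denominator $N$, as in the proof of Theorem \ref{thm:main}. This changes $p$-adic distances only by the constant factor $|N|_p$, so box dimensions are unchanged. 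We therefore assume $a,b_i\in\Z$; the subcase $p\mid N$ is handled by the same constant factor.

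Key observation: a word $\bi$ with $|r_\bi|=p^n$ satisfies $f_\bi(x)=\pm p^nx+f_\bi(0)$. Maps in $G^*(\F)$ with derivative of absolute value at most $p^n$ therefore send the $p$-adic unit disc $\Z_p$ into discs of radius at most $p^{-n}$ in the $p$-adic metric. We run the cut-set argument at scale $p^{-n}$. Every point of $\fo_\F(a)\cup K$ lies in $f_\bi(\Z_p)$ (or equals $f_\bi(a)$) for some $\bi\in\Lambda_n(p)$, or is one of the finitely many points $f_\bi(a)$ with $\bi\in\Xi_n(p)$. Each $f_\bi(\Z_p)$ is a disc of radius $|r_\bi|_p\le p^{-n}$ centred at $f_\bi(0)$; here $a\in\Z\subset\Z_p$, and $K\subset\Z_p$ because the $f_i$ map $\Z_p$ into itself.

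Upper bound. Distinct maps give the covering discs, so
\[
N_n(\fo_\F(a)\cup K)\le \#\{f_\bi:\bi\in\Lambda_n(p)\}+\#\{f_\bi(a):\bi\in\Xi_n(p)\}\le 2G(\oor p^n).
\]
Hence both upper $p$-adic box dimensions are at most $d_\F$ (Lemma \ref{dfexists}).

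Lower bound. I would use the Euclidean count: $\fo_\F(a)\cap[-h,h]$ has about $h^{d_\F}$ points, by \eqref{mass=df}. I would try to show that points $f_\bi(a)$ with $|r_\bi|\le p^n$ fall into at most polylogarithmically many $p$-adic discs of radius $p^{-n}$ per residue class. The worry is that two distinct orbit integers of size up to $h\approx p^n$ may be congruent mod $p^n$; a residue class mod $p^n$ meets $[-h,h]$ in only $O(h/p^n+1)=O(1)$ integers. Choosing $h=p^n/C$ with the constant $C$ coming from $|a|+c$ in \eqref{est3}, we get
\[
N_n(\fo_\F(a))\gtrsim \#(\fo_\F(a)\cap[-p^n/C,p^n/C]) ,
\]
since distinct integers in an interval of length less than $p^n$ are $p$-adically at distance more than $p^{-n}$. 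Therefore
\[
\liminf_n \frac{\log N_n(\fo_\F(a))}{n\log p}\ge d_\F .
\]
So the $p$-adic box dimension of $\fo_\F(a)$ exists and equals $d_\F$.

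For $K$ I would use $\pcl{\fo_\F(a)}=\fo_\F(a)\cup K$, which holds because the contraction $f_\bi$ sends $a$ within $p^{-|\bi|}$ of $K$. Box dimension is closure-invariant, so the covering numbers of $\fo_\F(a)\cup K$ and of $\fo_\F(a)$ agree. I expect the main obstacle to be the lower bound for $K$ itself, since $N_n(K)$ could a priori be smaller than $N_n(\fo_\F(a))$. The plan is to take $a$ to be a fixed point of some $f_j$, which lies in $K$ and is rational; then $\fo_\F(a)\subset K$ and $N_n(K)\ge N_n(\fo_\F(a))$. This gives $\dimpB K=d_\F$, and with it the equality for every rational $a$.
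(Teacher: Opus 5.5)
Your proof is correct, and it departs from the paper's in how the upper bound and the attractor are handled.

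\textbf{The paper's route.} The paper proves a two-sided sandwich, Lemma \ref{massandbox}, between $N_k(\fo_\F(a))$ and Euclidean orbit counts. Its lower half is exactly your observation: distinct rationals with bounded denominators in a Euclidean window of size $\ll p^k$ are $p$-adically $p^{-k}$-separated. Its upper half truncates each word at the first index where $\sum k_{i_j}>k+N$. It then bounds the truncated orbit points by a Euclidean count in $[-C_2p^{k+\delta},C_2p^{k+\delta}]$. Both halves are converted into $d_\F$ through Theorem \ref{massandbeu}. The attractor is handled by picking $a\in K\cap\Q$ and using Lemma \ref{closure}, $\pcl{\fo_\F(a)}=K$, together with closure-invariance of box dimension.

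\textbf{Your route.} You instead bound the covering number directly by the number of distinct maps on the cut set $\Lambda_n(p)$. This uses the product formula $|r_\bi|\,|r_\bi|_p=1$ and the identity $f_\bi(\Z_p)=f_\bi(0)+r_\bi\Z_p$. Together these give $N_n(\fo_\F(a)\cup K)\lesssim G(\oor p^n)$ from Lemma \ref{dfexists} alone, covering $K$ and the orbit at once. For $K$ you then need only $\fo_\F(a)\subseteq K$ at a rational fixed point, so the closure lemma becomes unnecessary.

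\textbf{What each buys.} Your upper bound is more intrinsic: it mirrors the proof of $\dimB K(\F^{-1})\le d_\F$ in Lemma \ref{dfrange}. It also does not use the Euclidean orbit theory or the rationality of $a$; it works for any $a$ in a $p$-adic disc containing the $b_i$. The paper's sandwich, in contrast, gives a scale-by-scale comparison of $p$-adic covering numbers with Euclidean counting functions. That is a stronger quantitative link between the two geometries, though it is not needed for the dimension statement.

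\textbf{Small corrections.}
\begin{enumerate}[(i)]
\item In your key observation the inequality is reversed. Maps with $|g'|\ge p^n$, not $\le p^n$, send $\Z_p$ into discs of radius at most $p^{-n}$. This is what you actually use on $\Lambda_n(p)$.
\item In the lower bound, any $C>2$ suffices.
\item The Euclidean asymptotics should be cited from Theorem \ref{massandbeu}, not \eqref{mass=df}, which was derived only for $a\notin K(\F^{-1})$.
\item The empty word in $\Xi_n(p)$ adds one point to your count, which is harmless.
\end{enumerate}
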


	We begin by recalling a classical result in $\mathbb{Q}_p$. For a detailed proof, we refer the reader to \cite[Corollary 5.1.2]{Fernando2020}.
	\begin{lemma}\label{convergence}
		Given a sequence  $\{a_n\}$ in $\Q_p$,  $\sum_{n=1}^\infty a_n$ converges if and only if $\lim_{n\to\infty}|a_n|_p=0$.
	\end{lemma}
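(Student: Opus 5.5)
The plan is to prove both directions directly from the ultrametric (strong triangle) inequality and the completeness of $\Q_p$. Write $S_n=\sum_{j=1}^n a_j$ for the partial sums. Convergence of $\sum_{n=1}^\infty a_n$ means that $\{S_n\}$ converges in $\Q_p$, and since $\Q_p$ is complete this is the same as $\{S_n\}$ being Cauchy with respect to $d_p$.

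For necessity, suppose the series converges, say $S_n\to S$. Then
\[
|a_n|_p=|S_n-S_{n-1}|_p\le\max\{|S_n-S|_p,\,|S-S_{n-1}|_p\}\longrightarrow0 .
\]
This direction is the same as in $\R$; the ordinary triangle inequality would also suffice.

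For sufficiency, assume $|a_n|_p\to0$. For $n\ge1$ and $k\ge1$, apply the strong triangle inequality $k-1$ times, by induction on $k$:
\[
|S_{n+k}-S_n|_p=\Bigl|\sum_{j=n+1}^{n+k}a_j\Bigr|_p\le\max_{n<j\le n+k}|a_j|_p\le\sup_{j>n}|a_j|_p .
\]
The right-hand side does not depend on $k$ and tends to $0$ as $n\to\infty$. Hence $\{S_n\}$ is Cauchy in $(\Q_p,d_p)$. Since $\Q_p$ is by definition the completion of $\Q$ with respect to $|\cdot|_p$, it is complete, so $\{S_n\}$ converges and the series converges.

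There is no substantial obstacle. The one point where the non-Archimedean structure is essential is the displayed estimate in the sufficiency direction. There a single term-size bound controls arbitrarily long blocks of the series, which fails in $\R$, where the harmonic series diverges. The only other care needed is to invoke completeness of $\Q_p$ explicitly, rather than to work inside $\Q$, where the limit need not exist.
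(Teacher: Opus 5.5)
Your proof is correct and complete. The paper gives no argument of its own and instead cites \cite[Corollary 5.1.2]{Fernando2020}, whose proof follows the same route as yours: the strong triangle inequality shows the partial sums are Cauchy exactly when the terms tend to $0$, and completeness of $\Q_p$ then gives convergence.
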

	
	\begin{lemma}\label{closure}
		Let $\F$ be an RIFS given by \eqref{padicIFS}. For each $a \in \mathbb{Q}$, we have
		\[
		\pcl{\fo_\F(a)}=\fo_\F(a)\cup K.
		\]
		In particular, if $a\in K\cap \Q$, then $\pcl{\fo_\F(a)}=K$.
	\end{lemma}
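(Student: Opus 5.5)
We prove the two inclusions separately, working throughout in $\Q_p$, where each $f_i$ is a contracting similarity with ratio $p^{-k_i}$. The attractor admits the coding description
\[
K=\Bigl\{\lim_{n\to\infty}f_{\bi|_n}(x):\bi\in\Si^\infty\Bigr\},
\]
and the limit does not depend on the starting point $x$. To justify this, note that $f_{\bi|_n}(x)=(\pm)p^{k_{i_1}+\cdots+k_{i_n}}x+\sum_{j=1}^n(\pm)p^{k_{i_1}+\cdots+k_{i_{j-1}}}b_{i_j}$. The $p$-adic absolute value of the $j$-th term is at most $p^{-(j-1)}\max_i|b_i|_p\to0$, so the series converges by Lemma \ref{convergence}. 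The leading term also tends to $0$, since $|p^{k_{i_1}+\cdots+k_{i_n}}x|_p\le p^{-n}|x|_p$.

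\emph{Inclusion $\supseteq$.} Clearly $\fo_\F(a)\subseteq\pcl{\fo_\F(a)}$. Given $y\in K$, we write $y=\lim_n f_{\bi|_n}(a)$ for some $\bi\in\Si^\infty$, using the independence of the starting point with $x=a$. Each $f_{\bi|_n}(a)$ lies in $\fo_\F(a)$, so $y\in\pcl{\fo_\F(a)}$.

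\emph{Inclusion $\subseteq$.} Let $y\in\pcl{\fo_\F(a)}\setminus\fo_\F(a)$. Then there are distinct points $f_{\bi_n}(a)\to y$ in $\Q_p$ with pairwise distinct words $\bi_n$. Only finitely many words have bounded length, so $|\bi_n|\to\infty$ along a subsequence.

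By Lemma \ref{diag}, after passing to a further subsequence there is $\bi\in\Si^\infty$ with $\ell_n:=|\bi_n\wedge\bi|\to\infty$. Write $\bi_n=(\bi|_{\ell_n})\bj_n$. The orbit $\fo_\F(a)$ is not $p$-adically bounded a priori, but $\fo_\F(a)\subseteq\frac1N\Z$ for some $N$ by the proof of Proposition \ref{discrete} (i). Hence $|f_{\bj_n}(a)|_p\le|N|_p^{-1}=:D$ uniformly in $n$.

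The map $f_{\bi|_{\ell_n}}$ contracts distances by $p^{-\sum_{j\le\ell_n}k_{i_j}}\le p^{-\ell_n}$. Therefore
\[
|f_{\bi_n}(a)-f_{\bi|_{\ell_n}}(a)|_p\le p^{-\ell_n}\max\{D,|a|_p\}\to0.
\]
Since $f_{\bi|_{\ell_n}}(a)\to\pi_p(\bi)\in K$, where $\pi_p(\bi)$ denotes the limit in the coding description, we conclude $y=\pi_p(\bi)\in K$.

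\emph{The case $a\in K\cap\Q$.} Because $K$ is invariant, $f(K)\subseteq K$ for every $f\in G^*(\F)$, so $\fo_\F(a)\subseteq K$. Combined with the main statement this gives $\pcl{\fo_\F(a)}=K$.

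The step most likely to need care is the uniform bound on the tails $f_{\bj_n}(a)$. In the real setting this role was played by Lemma \ref{admis}, but here the rationality of $a$ and the $b_i$ supplies it through the inclusion in $\frac1N\Z$. If one prefers to avoid that inclusion, the same bound follows directly from the ultrametric inequality: inductively,
\[
|f_\bj(a)|_p\le\max\bigl\{|a|_p,\max_i|b_i|_p\bigr\}
\]
for every word $\bj$, because each $f_i$ multiplies by a factor of norm less than $1$ and then adds some $b_i$.
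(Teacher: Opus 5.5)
Your proof is correct and is essentially the paper's argument. The paper extends the coding map $\pi_{p,a}$ to the compact space $\Sigma^*\cup\Sigma^\infty$ and uses that a continuous map on a compact space commutes with closures; your appeal to Lemma \ref{diag} supplies that compactness, and your uniform tail bound $|f_{\bj_n}(a)|_p\le\max\{D,|a|_p\}$ is exactly the continuity of $\pi_{p,a}$ at infinite words, which the paper asserts without proof.
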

	\begin{proof}
		Since
		\[
		\lim_{\ell\to+\infty}\Big|p^{\sum_{j=1}^\ell k_{i_j}}(-1)^{\sum_{j=1}^\ell a_{i_j}}b_{i_{\ell+1}}\Big|_p=\lim_{n\to+\infty}\Big|p^{\sum_{j=1}^n k_{i_j}}(-1)^{\sum_{j=1}^na_{i_j}}a\Big|_p=0,
		\]
		it follows from Lemma \ref{convergence} that the limit
		$\lim_{n\to\infty}f_{\bi|_n}(a)$ exists in $\Q_p$ for each $a\in \Q$ and $\bi\in \Si^\infty$. 
		
		We define the coding mapping $\pi_{p,a}:\Si^*\cup \Si^\infty\longrightarrow \Q_p$ by
		\[
		\pi_{p,a}(\bi)=\left\{\begin{array}{ll}
			f_{\bi}(a)\quad & \text{if\, } \bi\in\Si^*,\\
			\lim_{n\to\infty}f_{\bi|_n}(a) & \text{if\, }\bi\in\Si^\infty.
		\end{array}
		\right.
		\]
		Recall from Corollary \ref{denseofsymbolic} that the domain $\Si^* \cup \Sigma^\infty$ is compact. Since $\pi_{p,a}$ is continuous on a compact domain, it commutes with the closure operator. By Corollary \ref{denseofsymbolic}, we obtain
		\begin{align*}
			\pcl{\fo_\F(a)}&=\pcl{\pi_{p,a}(\Si^*)}\\
			&=\pi_{p,a}\big(\overline{\Si^*}\big)\\
			&=\pi_{p,a}\big(\Si^*\cup \Si^\infty\big)\\
			&=\pi_{p,a}\big(\Si^*\big)\cup \pi_{p,a}\big( \Si^\infty\big)\\
			&=\fo_\F(a)\cup K.
		\end{align*}
		From the self-similarity relation  $K=\bigcup_{i=1}^m f_i(K)$, it is straightforward to deduce that if $a\in K$, then $\fo_\F(a)\subset K$. 
	\end{proof}
	\begin{remark}
		A close inspection of the preceding proof reveals that the conclusion remains valid for the orbits of uniformly contracting systems on complete metric spaces.
	\end{remark}

	\begin{lemma}\label{massandbox}
		Let $\F$ be an RIFS given by \eqref{padicIFS}. For each $a\in \Q$, there exist $C_1,C_2>0$ and $\delta>0$ such that for all $k\in \Z_{>0}$,
		\[
		\#\big(\fo_\F(a)\cap(-C_1p^k,C_1p^k)\big)\le N_k(\fo_\F(a))\le \#\big(\fo_\F(a)\cap[-C_2p^{k+\delta},C_2p^{k+\delta}]\big).
		\]
	\end{lemma}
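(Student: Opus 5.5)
The plan is to prove the two inequalities separately. The only input from earlier is Proposition \ref{discrete}(i): the orbit lies in $\frac1N\Z$ for a fixed integer $N$. Everything else comes from the explicit form
\[
f_{\bu}(x)=\pm p^{k_{\bu}}x+f_{\bu}(0),\qquad k_{\bu}=\sum_{j}k_{u_j}.
\]
Here $k_{\bu}$ measures both the Euclidean expansion $|r_{\bu}|=p^{k_{\bu}}$ and the $p$-adic contraction $|r_{\bu}|_p=p^{-k_{\bu}}$.

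\emph{Lower bound.} Let $x\ne y$ be orbit points with $|x|,|y|<C_1p^k$, where $C_1=1/(2N)$.
\begin{itemize}
\item Then $N(x-y)$ is a nonzero integer with $|N(x-y)|<p^k$.
\item If $x$ and $y$ lay in a common disc of diameter $p^{-k}$, then $v_p(x-y)\ge k$. Since $N$ is an integer, $v_p(N(x-y))\ge k$ as well.
\item A nonzero integer with $p$-adic valuation at least $k$ has absolute value at least $p^k$, which is a contradiction.
\end{itemize}
So any two such points lie in different discs of diameter $p^{-k}$, and no cover of $\fo_\F(a)$ by such discs can use fewer discs than there are such points. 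This gives $\#(\fo_\F(a)\cap(-C_1p^k,C_1p^k))\le N_k(\fo_\F(a))$.

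\emph{Upper bound.}
\begin{itemize}
\item First record a uniform $p$-adic bound on the orbit. By the strong triangle inequality and induction on word length,
\[
|f_{\bv}(a)|_p\le \max\{|a|_p,\max_i|b_i|_p\}=:p^{D}\qquad\text{for all }\bv\in\Si^*.
\]
Hence, for every prefix $\bu$ of a word $\bw=\bu\bv$,
\[
|f_{\bw}(a)-f_{\bu}(a)|_p=p^{-k_{\bu}}\,|f_{\bv}(a)-a|_p\le p^{-k_{\bu}+D}.
\]
\item Fix $k$ and take $\bw\in\Si^*$. If $k_{\bw}<k+D$, the point $f_{\bw}(a)$ is used as its own center. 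Otherwise let $\bu$ be the shortest prefix of $\bw$ with $k_{\bu}\ge k+D$. Then $f_{\bw}(a)$ lies in the closed disc of diameter $p^{-k}$ centered at $f_{\bu}(a)$.
\item In every case the center is $f_{\bu}(a)$ for some word with $k_{\bu}<k+D+\max_i k_i=:k+\delta$.
\item Euclidean size of the centers: by \eqref{premageof0},
\[
|f_{\bu}(a)|=p^{k_{\bu}}\,\bigl|a-f_{\bu}^{-1}(0)\bigr|\le(|a|+c)\,p^{k+\delta}.
\]
\end{itemize}
So the discs centered at the orbit points in $[-C_2p^{k+\delta},C_2p^{k+\delta}]$, with $C_2=|a|+c$, cover $\fo_\F(a)$. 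This gives the upper bound.

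The main point needing care is the upper bound, specifically the choice of $\delta$. It must absorb two things: the $p$-adic size $p^{D}$ of the orbit, which may exceed $1$ when $a$ or some $b_i$ has $p$ in the denominator, and the overshoot of at most $\max_i k_i$ when stopping at the first prefix with $k_{\bu}\ge k+D$. One should also confirm that the ultrametric makes "diameter $p^{-k}$" and "radius $p^{-k}$" interchangeable for closed discs. The degenerate case, where the orbit is a single point, is trivial.
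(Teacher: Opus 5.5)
Your proof is correct and follows essentially the paper's route. The lower bound uses a common denominator and the fact that a nonzero integer divisible by $p^k$ has absolute value at least $p^k$. The upper bound truncates each word at the first prefix with $k_{\bu}\ge k+D$ and bounds that orbit point in Euclidean norm; your identity $f_{\mathbf{w}}(a)-f_{\bu}(a)=r_{\bu}(f_{\bv}(a)-a)$ is a tidier form of the paper's term-by-term expansion.

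One normalization is needed: take $D\ge 0$, as the paper does with $\max\{0,-\nu_p(a),-\nu_p(b_1),\dots,-\nu_p(b_m)\}$. Otherwise, when $a$ and all $b_i$ are divisible by $p$, the stopping prefix can be the empty word, whose point $a$ need not lie in $\fo_\F(a)$ (since $\Si^*$ excludes $\emptyset$), and $\delta=D+\max_i k_i$ can fail to be positive.
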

	\begin{proof}
		Fix $a\in \Q$ and $k\in \Z_{>1}$. 
		It follows from the proof of Proposition \ref{discrete} (i) that there exists $M>0$ such that the absolute values of the denominators of the elements in $\fo_\F(a)$ are bounded above by $M$. Note that for $x,y\in \fo_\F(a)\cap(-\f{p^k}{2M^2},\f{p^k}{2M^2}), x=a_1/b_1\ne y=a_2/b_2, \gcd(a_1,b_1)=\gcd(a_2,b_2)=1$, we have 
		\[
		|a_1b_2-a_2b_1|=|b_1b_2||x-y|<p^k,
		\]
		which implies that $\nu_p(a_1b_2-a_2b_1)<k$. It follows that
		\[
		|x-y|_p=\Big|\f{a_1b_2-a_2b_1}{b_1b_2}\Big|_p=\f{|a_1b_2-a_2b_1|_p}{|b_1b_2|_p}\ge |a_1b_2-a_2b_1|_p>p^{-k}.
		\]
		Thus no two distinct elements of $\fo_\F(a)\cap(-\f{p^k}{2M^2},\f{p^k}{2M^2})$ can lie in the same ball of radius $p^{-k}$ in $\mathbb{Q}_p$. That is, 
		\[
		\#\Big(\fo_\F(a)\cap\Big(-\f{p^k}{2M^2},\f {p^k}{2M^2}\Big)\Big)\le N_k(\fo_\F(a)).
		\]
		
		Fix $x\in \fo_\F(a)$. Then there exists $n\in \Z_{>0}$ and $\bi=i_1\ldots i_n\in \Si^*$ such that
		\[
		x=f_\bi(a)= p^{\sum_{j=1}^n k_{i_j}}(-1)^{\sum_{j=1}^na_{i_j}}a+\sum_{\ell=1}^{n-1}p^{\sum_{j=1}^\ell k_{i_j}}(-1)^{\sum_{j=1}^\ell a_{i_j}}b_{i_{\ell+1}}+b_{i_1}.
		\]
		Set $N=\max\{0,-\nu_p(a),-\nu_p(b_1),\ldots,-\nu_p(b_m)\}, b=\max_i |b_i|$. If there exists $t\in\{2,\ldots,n\}$ such that $\sum_{j=1}^{t-1}k_{i_j}\le k+N< \sum_{j=1}^{t}k_{i_j}$, then
		\begin{align*}
			&\Big|x-f_{\bi|_t}(a)\Big|_p\\
			=&\,\Big|p^{\sum_{j=1}^n k_{i_j}}(-1)^{\sum_{j=1}^na_{i_j}}a-p^{\sum_{j=1}^t k_{i_j}}(-1)^{\sum_{j=1}^ta_{i_j}}a+\sum_{\ell=t}^{n-1}p^{\sum_{j=1}^\ell k_{i_j}}(-1)^{\sum_{j=1}^\ell a_{i_j}}b_{i_{\ell+1}}\Big|_p\\
			\le &\,\max\Big\{\Big|p^{\sum_{j=1}^n k_{i_j}}a\Big|_p,\,\Big|p^{\sum_{j=1}^t k_{i_j}}a\Big|_p,\,\Big|p^{\sum_{j=1}^t k_{i_j}}b_{i_{t+1}}\Big|_p,\ldots,\,\Big|p^{\sum_{j=1}^{n-1} k_{i_j}}b_{i_n}\Big|_p\Big\}\\
			<&\, p^{-k}.
		\end{align*}
		Consequently, it follows from the ultrametricity of $\mathbb{Q}_p$ that if $f_{\bi|_t}(a)$ is contained in a ball $B \subset \mathbb{Q}_p$ of radius $p^{-k}$, then $x \in B$.
		Note that
		\[
		|f_{\bi|_t}(a)|\le p^{k+N+\max_ik_i}|a|+b\sum_{i=0}^{k+N-1}p^i<C_2p^{k+N+\max_ik_i},
		\]
		where $C_2=|a|+\f{b}{p-1}$. Moreover, if $\sum_{j=1}^{n}k_{i_j}\le k+N$, then  it is straightforward that 
		\[
		x\in[-C_2p^{k+N},C_2p^{k+N}]\subset\Big[-C_2p^{k+N+\max_ik_i},C_2p^{k+N+\max_ik_i}\Big].
		\]
		Consequently, it follows that
		\[
		N_k(\fo_\F(a))\le \#\Big(\fo_\F(a)\cap\Big[-C_2p^{k+N+\max_ik_i},C_2p^{k+N+\max_ik_i}\Big]\Big).
		\]
	\end{proof}

	\begin{proof}[Proof of Theorem \ref{thm_dimpadic}]
			Recall that taking the $p$-adic closure does not change either the
			upper or the lower $p$-adic box-counting dimension. Thus, for every
			$a\in\Q$,
		\[
		\overline{\dim}_{\mathrm{B},\mathrm{p}}\fo_\F(a)=\overline{\dim}_{\mathrm{B},\mathrm{p}}\pcl{\fo_\F(a)},\qquad \underline{\dim}_{\mathrm{B},\mathrm{p}}\fo_\F(a)=\underline{\dim}_{\mathrm{B},\mathrm{p}}\pcl{\fo_\F(a)}.
		\]
		By  Proposition \ref{discrete} (i), Theorem \ref{massandbeu} and Lemma \ref{massandbox}, for every $a\in \Q$, we have
		\[
		\overline{\dim}_{\mathrm{B},\mathrm{p}}\fo_\F(a)=\underline{\dim}_{\mathrm{B},\mathrm{p}}\fo_\F(a)=\dimM\fo_\F(a)=\dimbe\fo_{\F}(a).
		\]
		Consequently, the $p$-adic box-counting dimension of
		$\fo_\F(a)$ exists and equals $d_\F$ for every $a\in\Q$.
		
	Since
	\[
	P(\F)\subseteq K\cap\Q,
	\]
	we have $K\cap\Q\neq\emptyset$. Choose $a\in K\cap\Q$. By Lemma \ref{closure}, $\pcl{\fo_\F(a)}=K.$
		Therefore,
		\[
		\overline{\dim}_{\mathrm{B},\mathrm{p}}K
		=\overline{\dim}_{\mathrm{B},\mathrm{p}}\fo_\F(a)
		=d_\F=\underline{\dim}_{\mathrm{B},\mathrm{p}}K
		=\underline{\dim}_{\mathrm{B},\mathrm{p}}\fo_\F(a).
		\]
		Hence the $p$-adic box-counting dimension of $K$ exists and satisfies
		\[
		\dimpB K=d_\F.
		\]
		This completes the proof.
	\end{proof}

	\section*{Acknowledgments}
	The initial research question was suggested by Prof. Lifeng Xi, and the authors wish to thank Prof. Xi for his helpful comments during the research.

\end{document}